\newtheorem{thm}{Theorem}[section]
\newtheorem{lemma}[thm]{Lemma}
\newtheorem{prop}[thm]{Proposition}
\newtheorem{cor}[thm]{Corollary}
\theoremstyle{remark}
\newtheorem{remark}[thm]{Remark}
\theoremstyle{definition}
\newtheorem{defi}[thm]{Definition}
\newtheorem{convention}[thm]{}
\newtheorem{lemmadefi}[thm]{Lemma - Definition}
\numberwithin{equation}{section}
\newenvironment{sis}{\left\{\begin{aligned}}{\end{aligned}\right.}
\newtheorem{example}[thm]{Example}
\newcommand{\wt}{\widetilde}
\newcommand{\ov}{\overline}
\newcommand{\un}{\underline}
\newcommand{\Aut}{\operatorname{Aut}}
\newcommand{\End}{\operatorname{End}}
\newcommand{\id}{\operatorname{id}}
\newcommand{\Stab}{\operatorname{Stab}}
\newcommand{\Fix}{\operatorname{Fix}}
\newcommand{\Lie}{\operatorname{Lie}}
\newcommand{\ad}{\operatorname{ad}}
\newcommand{\Ad}{\operatorname{Ad}}
\newcommand{\tr}{\operatorname{tr}}
\newcommand{\Hol}{\operatorname{Hol}}
\newcommand{\Tr}{\operatorname{Tr}}
\newcommand{\Gras}{\operatorname{Gr}}
\renewcommand{\Re}{\operatorname{Re}}
\renewcommand{\Im}{\operatorname{Im}}
\newcommand{\calD}{{ \mathcal D}}
\newcommand{\calF}{{ \mathcal F}}
\newcommand{\calH}{{ \mathcal H}}
\newcommand{\calO}{{ \mathcal O}}
\newcommand{\calQ}{{ \mathcal Q}}
\newcommand{\bbA}{{\mathbb A}}
\newcommand{\bbC}{{\mathbb C}}
\newcommand{\bbG}{{\mathbb G}}
\newcommand{\bbH}{{\mathbb H}}
\newcommand{\bbN}{{\mathbb N}}
\newcommand{\bbO}{{\mathbb O}}
\newcommand{\bbP}{{\mathbb P}}
\newcommand{\bbR}{{\mathbb R}}
\newcommand{\bbS}{{\mathbb S}}
\newcommand{\bbZ}{{\mathbb Z}}
\newcommand{\p}{\mathfrak p}
\renewcommand{\k}{\mathfrak k}
\newcommand{\g}{\mathfrak{g}}
\newcommand{\gS}{\mathfrak{S}}
\renewcommand{\u}{\mathfrak{u}}
\newcommand{\Gm}{\operatorname{\bbG_m}}
\newcommand{\GL}{\operatorname{GL}}
\newcommand{\GS}{\operatorname{S}}
\newcommand{\SL}{\operatorname{SL}}
\newcommand{\PSL}{\operatorname{PSL}}
\renewcommand{\O}{\operatorname{O}}
\newcommand{\SO}{\operatorname{SO}}
\newcommand{\PSO}{\operatorname{PSO}}
\newcommand{\SOnc}{\operatorname{SO}^{\rm nc}}
\newcommand{\PSOnc}{\operatorname{PSO}^{\rm nc}}
\newcommand{\SOc}{\operatorname{SO}^{\rm c}}
\newcommand{\PSOc}{\operatorname{PSO}^{\rm c}}
\newcommand{\Sp}{\operatorname{Sp}}
\newcommand{\PSp}{\operatorname{PSp}}
\newcommand{\Spnc}{\operatorname{Sp}^{\rm nc}}
\newcommand{\PSpnc}{\operatorname{PSp}^{\rm nc}}
\newcommand{\U}{\operatorname{U}}
\newcommand{\SU}{\operatorname{SU}}
\newcommand{\PSU}{\operatorname{PSU}}
\newcommand{\G}{\mathbb{G}}
\newcommand{\gl}{\operatorname{\mathfrak{gl}}}
\renewcommand{\sl}{\operatorname{\mathfrak{sl}}}
\newcommand{\so}{\operatorname{\mathfrak{so}}}
\renewcommand{\sp}{\operatorname{\mathfrak{sp}}}
\newcommand{\su}{\operatorname{\mathfrak{su}}}
\newcommand{\sonc}{\operatorname{\mathfrak{so}^{\rm nc}}}
\newcommand{\soc}{\operatorname{\mathfrak{so}^{\rm c}}}
\newcommand{\spnc}{\operatorname{\mathfrak{sp}^{\rm nc}}}
\newcommand{\Herm}{\operatorname{Herm}}
\renewcommand{\SS}{\mathcal{S}}
\renewcommand{\P}{\mathcal{P}}
\begin{document}

\title[Hermitian symmetric manifolds]{A tour on Hermitian symmetric manifolds}

\author{Filippo Viviani}
\address{ Dipartimento di Matematica,
Universit\`a Roma Tre,
Largo S. Leonardo Murialdo 1,
00146 Roma (Italy)}
\email{filippo.viviani@gmail.com}

\begin{abstract}
 Hermitian symmetric manifolds  are Hermitian manifolds which are homogeneous and such that every point has a  symmetry preserving the Hermitian structure. The aim of these notes is to present an introduction to this important class of manifolds, trying to survey the several different perspectives from which Hermitian symmetric manifolds can be studied.
\end{abstract}

\maketitle

\tableofcontents


\section{Introduction}

 A Hermitian symmetric manifold (or \textbf{HSM} for short) is a Hermitian manifold which is homogeneous and such that every point has a symmetry preserving the Hermitian structure. First studied by \'Elie Cartan \cite{Car2}, they are the specialization  of the notion of Riemannian symmetric manifolds (introduced by \'Elie Cartan himself in \cite{Car1}) to complex manifolds.

HSMs  (or more generally Riemannian symmetric manifolds) arise in a wide variety of mathematical contexts: representation theory, harmonic analysis, automorphic forms, complex analysis, differential geometry, algebra (Lie theory and Jordan theory), number theory and algebraic geometry. For example, in algebraic geometry, HSMs arise often as (orbifold) fundamental covers  of moduli spaces, such as the moduli space of polarized abelian varieties (possibly with level structures or with fixed endomorphism algebras), the moduli space of polarized K3 surfaces, the moduli space of polarized irreducible symplectic manifolds, etc..

Due to their frequent occurrence in different areas of mathematics, there is a vast literature on HSMs (e.g. \cite{AMRT}, \cite{Bor}, \cite{BJ}, \cite{FKKLR}, \cite{Hel}, \cite{Koe},  \cite{Loo1}, \cite{Loo2}, \cite{Loo4}, \cite{Mok},  \cite{PS}, \cite{Sat}, \cite{Wol2}) dealing with the various aspects of the theory. This vast literature, however, makes it difficult for a non-expert to have a global overview on the subject. The aim of these notes is to survey the different points of view on HSMs, so that a beginner can orient himself inside the vast literature. For this reason, we have chosen to give very few proofs of the results presented, referring the reader to the relevant literature for complete proofs.

\vspace{0.2cm}

Let us now examine more in detail the contents of the paper. In studying HSMs, the reader should keep in mind  the (well known) classification of HSMs  of (complex) dimension one.
Namely, a HSM of complex dimension one is isomorphic to one of the following HSMs:
\begin{enumerate}
\item The complex manifold $\bbC/\Lambda$, where $\Lambda\subset \bbC$ is a discrete additive subgroup, endowed
 with the Hermitian structure induced by the standard Euclidean metric $g=dxdx+dydy$ on $\bbC=\bbR^2_{(x,y)}$
(which has constant zero curvature).
The translations $z\mapsto z+a$ (with $a\in \bbC$) act transitively  via holomorphic isometries
and the inversion symmetry at $[0]\in \bbC/\Lambda$ is given by $s_{[0]}: z\mapsto -z$.

\item The upper half space $\calH:=\{z=x+iy\in \bbC\: : \: \Im z=y>0\}$ endowed  with a Hermitian structure induced by the hyperbolic metric
$g=\frac{dxdy}{y^2}$ (which has constant negative curvature). The group $\SL_2(\bbR)$ acts transitively via M\"obius transformations (which are holomorphic isometries)
$$\begin{pmatrix}
a &b  \\
c & d
\end{pmatrix}\cdot z :=\frac{az+b}{cz+d} $$
and the inversion symmetry at $i\in \calH$ is given by $\displaystyle s_i: z\mapsto -\frac{1}{z}$.

\item The complex projective line $\bbP^1_{\bbC}$ with the Fubini-Studi Hermitian metric (with constant positive curvature), which is induced by pulling back the Euclidean metric on the two dimensional sphere $\bbS^2\subset \bbR^3$ via the diffeomorphism $\bbP^1_{\bbC}\cong \bbS^2$ induced  via stereographic projection from the north pole $N=(1,0,0)\in \bbS^2$.
The group $\SO_3(\bbR)$ acts transitively on $\bbS^2$ via rotations (which are holomorphic isometries of $\bbP^1_{\bbC}\cong \bbS^2$)
and the inversion symmetry at the north pole $N$ is given by the rotation $s_N:(x,y,z)\mapsto (x,-y,-z)$.

\end{enumerate}
Note that, according to the Riemann's uniformization theorem, the unique simply connected complex manifolds of dimension one are $\bbC$, $\calH$ and $\bbP^1_{\bbC}$:

The above trichotomy in dimension one extends to arbitrary dimensions (see the \emph{Decomposition Theorem} \ref{T:decom}): any HSM can be written uniquely as the product of a HSM of the form $\bbC^n/\Lambda$ for some discrete additive subgroup $\Lambda\subset \bbC^n$ (which is called the Euclidean factor of the HSM), of  a HSM of non-compact type (i.e. a product of irreducible non-compact HSMs) and a HSM of compact type (i.e. a product of irreducible compact HSMs).

The rest of Section \ref{S:Herm} is devoted to the study of non-Euclidean HSM, i.e. those for which the Euclidean factor  in the above decomposition is trivial.

Hermitian symmetric manifolds of compact or of non-compact type admit another natural incarnation.
Namely, HSMs of compact type are  exactly the \emph{cominuscle rational homogeneous projective varieties}, i.e. those varieties
isomorphic to a quotient of the form $G/P$, where $G$ is a semisimple complex Lie group and $P\subset G$ is a parabolic subgroup whose unipotent radical is abelian.
We review this description in \S\ref{SS:cominu}.

HSMs of non-compact type admit a canonical embedding (the so called Harish-Chandra embedding) inside a complex vector space in such a way that they become
\emph{bounded symmetric domains}. And, conversely, any bounded symmetric domain becomes a HSM of non-compact type when it is endowed with the Bergman metric.
We review this correspondence between HSMs of non-compact type and bounded symmetric domains in \S\ref{SS:domain}.

There is a natural correspondence between HSMs of non-compact type and HSMs of compact type, which we review in \S\ref{SS:dual}. Moreover, this correspondence satisfies the property that each HSM of non-compact type is canonically realized (via the so called \emph{Borel embedding}) as an open subset inside the associated HSM of compact type (which is called its compact dual). We review the Borel embedding in \S\ref{SS:HC-B}.

Irreducible HSMs of compact or non-compact type can be classified using \emph{Lie theory}. Indeed, they are diffeomorphic to a quotient of the form $G/K$ where $G$ is a simple Lie group
(compact in the compact type case and non-compact in the non-compact type case) and $K\subset G$ is a maximal compact proper subgroup whose center is equal to
$\bbS^1$. We review this description in \S\ref{SS:HSMLiegrp}.

By passing to the associated Lie algebras, we get a correspondence between non-Euclidean HSMs and irreducible \emph{Hermitian symmetric Lie algebras}, which is the datum of a simple real Lie algebra $\g$ together with an involution $\theta:\g \to \g$ such that its $+1$ eigenvalue has a one-dimensional center. See \S\ref{SS:HSMLiealg} for more details.

There is an alternative approach to the study of HSMs of non-compact type based on Jordan theory rather than Lie theory. Indeed, there is a natural bijection between HSMs of non-compact type and \emph{Hermitian Jordan triple systems}; see \S\ref{SS:HSM-JTS} for more details.

Using either Lie theory or Jordan theory, it is possible to give a classification of irreducible HSMs of non-compact type (and of their compact duals).
They are divided into four infinite families, called (following Siegel's notation) $I_{p,q}$, $II_n$, $III_n$ and $IV_n$, and two exceptional cases, called $V$ and $VI$. Section \S\ref{S:irrHSM} is devoted to a detailed analysis of each of the above mentioned irreducible HSMs. In particular, we make explicit, in each of the cases, the general properties of HSMs presented in Section \S\ref{S:Herm}.

Section \S\ref{S:bound} is devoted to the study of the boundary components of HSMs of non-compact type. More precisely, fix a HSM of non-compact type and realize it as a bounded symmetric domain $D\subset \bbC^N$ via its Harish-Chandra embedding. The closure $\ov D$ of $D$ inside $\bbC^N$ can be partitioned into several equivalence classes for the equivalence relation of being connected through a chain of holomorphic disks. Each of these equivalence classes, called  \emph{boundary components} of $D$, is indeed again a HSM of non-compact type which is realized as a bounded symmetric domain inside its linear span in $\bbC^N$.

Boundary components can be classified via their normalizer subgroups, which turn out to be all the maximal parabolic subgroups of the group $G$ of automorphisms of $D$ (see Theorem \ref{T:clas-norm}). The structure of the normalizer subgroups of the boundary components is analyzed in detail in \S\ref{SS:norma}.

In \S\ref{SS:decom}, we show that, for every boundary component $F$ of $D$, the domain $D$ can be decomposed into the product of $F$, a real vector space $W(F)$ and a \emph{symmetric cone} $C(F)$ associated to $F$, i.e. an open homogeneous cone inside a real vector space which is self-dual with respect to a suitable scalar product.

In \S\ref{SS:symcon}, we show how symmetric cones correspond bijectively to \emph{Euclidean Jordan algebras} and we present the classification of irreducible symmetric cones via the classification of simple Euclidean Jordan algebras.

In \S\ref{SS:Siegel}, we show how bounded symmetric domains can be realized in a unique way as \emph{Siegel domains} (of the second type) associated to  a suitable symmetric cone and to a suitable representation of the associated Euclidean Jordan algebra.

In \S\ref{SS:boun-irr}, we describe explicitly the boundary components of each of the irreducible bounded symmetric domains by computing their normalizer subgroups and their associated symmetric cones.

\vspace{0.2cm}

These notes were written for a PhD course (held at the University of Roma Tre in Spring 2013) entitled ``Toroidal compactifications of locally symmetric varieties"  and a course in the Summer School  ``Combinatorial Algebraic Geometry" (held in Levico Terme in June 2013) with the same title. Thus, our original motivation was to write a survey on the construction of toroidal compactifications of locally symmetric varieties (i.e. quotients of Hermitian symmetric manifolds of non-compact type by arithmetic subgroups), by revisiting the original work of Ash-Mumford-Rapoport-Tai \cite{AMRT}. Due to limitations in space and time, we were unable to complete this project  and we ended up with an attempt to write a survey on the beautiful and rich theory of Hermitian symmetric manifolds. We plan to write a sequel to these notes on the construction of toroidal compactifications of locally symmetric varieties.

\subsection*{Notations}\label{S:notations}

\begin{convention}
 Given a Lie group $G$, we denote by $G^o$ the connected component of $G$ containing the identity and by $Z_G$ its center.
A semisimple Lie group $G$ is said to be adjoint if it has trivial center, or in symbols if $Z_G=\{e\}$.
\end{convention}

\begin{convention}
Given a Lie algebra $\g$, we denote its center by $Z(\g)$.
For a real Lie algebra $\g$, we denote by $\g_{\bbC}:=\g\otimes_{\bbR}\bbC=\g\oplus i\g$ its complexification.
\end{convention}

\begin{convention}\label{N:definite}
Given a real (resp. complex) finite-dimensional vector space $V$ and two symmetric (resp. Hermitian) linear operators $F,G\in \End(V)$, we write:
\begin{enumerate}[(i)]
\item $F>G$ (or $G<F$) if and only if $F-G$ is positive definite;
\item $F\geq G$ (or $G\leq F$) if and only if $F-G$ is positive semidefinite.
\end{enumerate}
\end{convention}

\begin{convention}
Given a matrix $M\in M_{n,n}(F)$ with entries in $F=\bbR, \bbC, \bbH$, we will denote by $M^t$ its transpose and by $\ov{M}$ its conjugate with respect to:
\begin{itemize}
\item the trivial conjugation if $F=\bbR$;
\item the conjugation $x_0+ix_1\mapsto x_0-ix_1$ if $F=\bbC$;
\item  the conjugation $x_0+ix_1+jx_2+kx_3\mapsto x_0-i x_1-jx_2-k x_3$ if $F=\bbH$.
\end{itemize}
Moreover, we set $M^*=\ov{M}^t$.
\end{convention}

\begin{convention}
We will denote by $0$ the zero matrix of any size, by $I_n$ the $n\times n$ identity matrix, by $J_n$ the $2n\times 2n$ standard symplectic matrix, i.e.
$J_n:=\begin{pmatrix} 0 & I_n \\ -I_n & 0\end{pmatrix}$, and we set $S_n:=\begin{pmatrix} 0 & I_n \\ I_n & 0\end{pmatrix}$.
\end{convention}

\begin{convention}
For the notation on simple real Lie groups, we will follow \cite[Chap. X, \S2]{Hel} (see also \cite[Chap. I, \S17]{Kna}).  
\end{convention}

\section{Hermitian symmetric manifolds}\label{S:Herm}

The aim of this section is to introduce Hermitian symmetric manifolds and to establish their basic properties.

Let us begin by recalling the definition of a complex structure and of an almost complex structure on a differentiable manifold $M$.

\begin{defi}\label{D:complex}
\noindent
\begin{enumerate}[(i)]
\item \label{D:complex1}
A \emph{complex manifold} is a pair $(M, \calO_M)$ consisting of a (connected) differentiable manifold $M$ and a sheaf $\calO_M$ of $\bbC$-valued smooth functions on $M$ such that $(M, \calO_M)$ is locally isomorphic to $(\bbC^N, \calO_{\bbC^N})$, where $\calO_{\bbC^N}$ is the sheaf of holomorphic functions on $\bbC^N$. The sheaf $\calO_M$ is said to be a complex structure on the manifold $M$.
\item \label{D:complex2}
A \emph{quasi-complex manifold} is a pair $(M, J)$ consisting of a (connected) differentiable manifold $M$ and a smooth tensor field $J$ of type $(1,1)$ such that for every $p\in M$ the induced linear map
$J_p:T_pM\to T_pM$ satisfies $J_p^2=-{\rm id}$, i.e. $J_p$ is a complex structure on the vector space $T_pM$. The tensor field $J$ is said to be a quasi-complex structure on the manifold $M$.
\end{enumerate}
\end{defi}

Given a complex manifold $(M,\calO_M)$, the local isomorphism of $(M,\calO_M)$ with $(\bbC^n,\calO_{\bbC^n})$ together with the natural complex structure on each tangent space $T_q\bbC^n\cong \bbC^n$
given by multiplication by $i$, induces a quasi-complex structure $J$ on $M$. A quasi-complex structure $J$ on $M$ induced by a complex structure is said to be \emph{integrable}.
Integrable quasi-complex structures are characterized by the following well-known theorem of Newlander-Nirenberg (see \cite[Chap. VIII, Thm. 1.2]{Hel} and the references therein).

\begin{thm}[Newlander-Nirenberg] \label{T:NN}
A quasi-complex structure $J$ on $M$ is induced by a complex structure on $M$ (i.e. it is integrable) if and only if
$$[JX, JY]=J[JX, Y]+J[X,JY]+[X,Y],$$
for any two vector fields $X$ and $Y$ on $M$. In this case, the complex structure on $M$ is uniquely determined by the almost complex structure $J$ on $M$.
\end{thm}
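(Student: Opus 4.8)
The plan is to reduce to a local statement and to split the equivalence into an elementary direction and the genuinely analytic one, the heart of the matter being the local construction of $J$-holomorphic coordinates. I would start by complexifying: extend $J$ $\bbC$-linearly to $T_{\bbC}M:=TM\otimes_{\bbR}\bbC$, and use $J^2=-\id$ to split $T_{\bbC}M=T^{1,0}M\oplus T^{0,1}M$ into the $(+i)$- and $(-i)$-eigenbundles of $J$, with $\overline{T^{1,0}M}=T^{0,1}M$. A direct computation then shows that the displayed Nijenhuis identity is equivalent to the \emph{involutivity} of $T^{0,1}M$ (equivalently of $T^{1,0}M$): writing $N(X,Y):=[JX,JY]-J[JX,Y]-J[X,JY]-[X,Y]$ and extending $N$ $\bbC$-bilinearly, one checks that $N$ vanishes on $T^{1,0}M\times T^{0,1}M$ and that for $(0,1)$-vector fields $Z,W$ the value $N(Z,W)$ is a nonzero constant times the $T^{1,0}M$-component of $[Z,W]$; thus $N\equiv 0$ says exactly that $[Z,W]$ is again of type $(0,1)$. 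With this reformulation the easy direction is immediate: if $J$ comes from a complex structure, pick local holomorphic coordinates $z_1,\dots,z_n$; then $T^{0,1}M$ is locally spanned by the commuting vector fields $\partial/\partial\bar z_1,\dots,\partial/\partial\bar z_n$, hence is involutive, hence $N\equiv 0$.

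The substance is the converse. Assuming $T^{0,1}M$ involutive, the claim to prove is that each point $p$ has a neighbourhood carrying functions $w_1,\dots,w_n$ with $dw_j$ vanishing on $T^{0,1}M$ (i.e.\ $w_j$ is $J$-holomorphic) and $dw_1,\dots,dw_n$ pointwise $\bbC$-independent; such $w=(w_1,\dots,w_n)$ is then a local chart, any two such charts are biholomorphically related (the transition maps have $\bbC$-linear differential, hence are holomorphic, and their differential is invertible), and declaring these the holomorphic charts produces a sheaf $\calO_M$ whose induced quasi-complex structure is $J$. To find the $w_j$, I would work in real coordinates centred at $p$, normalised (by a linear change and a dilation) so that $J$ agrees with the standard structure $J_0$ of $\bbC^n$ at $p$ and is uniformly close to $J_0$ on a small polydisc. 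Then $T^{0,1}M$ is the graph of a smooth bundle map, and the equation ``$w$ is $J$-holomorphic'' takes the form $\bar\partial w=A(x)\,\partial w$, where $\bar\partial,\partial$ are the standard Dolbeault operators, $A$ is a matrix of smooth functions with $A(0)=0$, and the involutivity of $T^{0,1}M$ is precisely the Frobenius-type integrability condition on $A$ that makes this overdetermined first-order system admit the expected number $n$ of independent local solutions.

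The remaining and decisive step is to solve this nonlinear system near $p$. I would do this by an iteration scheme: invert the leading term using a bounded solution operator for $\bar\partial$ on a small polydisc (built from the Cauchy--Pompeiu or Bochner--Martinelli kernel) together with H\"older or Sobolev a priori estimates, run a contraction-mapping argument exploiting the smallness of $A$, and finish with elliptic bootstrapping to upgrade the solutions to $C^{\infty}$; in complex dimension one this reduces to the classical solvability of the Beltrami equation $\partial_{\bar z}w=\mu\,\partial_z w$ with $|\mu|<1$. This analytic step is the main obstacle, and it is unavoidable in the $C^{\infty}$ category: if $J$ is real-analytic one may instead apply the holomorphic (complex) Frobenius theorem directly to the involutive bundle $T^{0,1}M$, but for merely smooth $J$ there is no shortcut around a genuine linear-plus-nonlinear PDE argument, and I would refer to the original paper of Newlander and Nirenberg, to H\"ormander's treatment via $\bar\partial$-estimates, and to the proofs of Malgrange and Webster for the details.

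Finally, for uniqueness it suffices to note that a smooth function is holomorphic for a complex structure inducing $J$ if and only if its differential satisfies $df\circ J=i\,df$, a condition depending only on $J$; hence any two complex structures inducing the same $J$ have the same local holomorphic functions, so the associated sheaves coincide and the complex structure is uniquely determined by $J$.
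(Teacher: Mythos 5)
The paper offers no proof of this theorem: it simply cites \cite[Chap.\ VIII, Thm.\ 1.2]{Hel}, consistent with its stated policy of deferring proofs to the literature. Your outline should therefore be measured against the standard proof rather than against anything in the text. The algebraic parts of your argument are correct and complete: extending $J$ to $TM\otimes_{\bbR}\bbC$ and checking that the Nijenhuis tensor $N$ vanishes on $T^{1,0}M\times T^{0,1}M$ while $N(Z,W)=-4\,\mathrm{pr}^{1,0}[Z,W]$ for $(0,1)$-fields $Z,W$ is exactly the standard reduction of the displayed identity to involutivity of $T^{0,1}M$; the easy direction via commuting coordinate frames is right; and the uniqueness argument (holomorphy of $f$ is the condition $df\circ J=i\,df$, which depends only on $J$, so the sheaf $\calO_M$ is determined by $J$) is clean and complete.

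The one point to be careful about is the analytic core. The plain scheme you sketch --- invert $\bar\partial$ on a small polydisc with H\"older estimates and run a contraction argument on the smallness of $A$ --- does not close as stated: the solution operator for $\bar\partial$ does not gain enough regularity to absorb the first-order nonlinearity, and this loss of derivatives is precisely why the theorem resisted proof and why Newlander--Nirenberg, and later Malgrange and Webster, each needed an extra device (a preliminary nonlinear change of coordinates making $A$ vanish to high order at the centre, or an auxiliary elliptic system). You correctly identify this step as the main obstacle and defer it to those references, which puts your write-up at the same level of completeness as the paper itself; just do not present the iteration as a routine fixed-point argument.
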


Therefore giving a complex manifold is equivalent to giving an almost complex manifold $(M,J)$ such that   $J$ is integrable.


\vspace{0.1cm}

There are three equivalent ways of giving an Hermitian structure on a complex manifold $(M,J)$, which we now recall.

\begin{lemmadefi}\label{D:Herm}
Let $(M,J)$ be a complex manifold. A Hermitian structure on $(M,J)$ is the assignment of one of the following equivalent structures:
\begin{enumerate}[(i)]
\item \label{D:Herm1} A smooth tensor field $h$ of type $(0,2)$ such that $h_p:T_p M\times T_pM\to \bbC$ is a positive definite Hermitian form with respect to $J_p$ for any $p\in M$ (called
a \emph{Hermitian metric}), i.e.
\begin{itemize}
\item $h_p(x,y)=\ov{h_p(y,x)}$ for any $x,y\in T_pM$;
\item $h_p(J_p x, y)=ih_p(x,y)$ for any $x,y\in T_pM$;
\item $h_p(x,x)>0$ for any $0\neq x\in T_pM$.
\end{itemize}
\item \label{D:Herm2} A Riemannian metric $g$ such that $g_p:T_pM\times T_pM\to \bbR$ is compatible with  $J_p$ for any $p\in M$, i.e.
 \begin{itemize}
\item $g_p(x,y)=g_p(y,x)$ for any $x,y\in T_pM$;
\item $g_p(J_p x, J_p y)=g_p(x,y)$ for any $x,y\in T_pM$;
\item $g_p(x,x)>0$ for any $0\neq x\in T_pM$.
\end{itemize}
\item \label{D:Herm3} A $2$-form $\omega$ such that $\omega_p:T_pM\times T_pM\to \bbR$ is compatible with $J_p$ and positive definite with respect to $J_p$ for any $p\in M$, i.e.
 \begin{itemize}
\item $\omega_p(x,y)=-\omega_p(y,x)$ for any $x,y\in T_pM$;
\item $\omega_p(J_p x, J_p y)=\omega_p(x,y)$ for any $x,y\in T_pM$;
\item $\omega_p(x,J_px)>0$ for any $0\neq x\in T_pM$.
\end{itemize}
\end{enumerate}
One can pass from one assignment to the other two by means of the following formulas:
$$\begin{aligned}
& g(X,Y)=\Re h(X,Y)= \omega(X,JY),\\
& \omega(X,Y)=- \Im h(X,Y)= g(JX,Y),\\
& h(X,Y)= g(X,Y)-ig(JX, Y)=\omega(X,J Y)-i\omega(X,Y).
\end{aligned}$$
for $X, Y$ any smooth vector fields on $M$.
\end{lemmadefi}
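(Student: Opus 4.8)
The plan is to reduce the statement to pointwise linear algebra, and then to observe that once the pointwise picture is understood the passage to tensor fields on $M$ is transparent. So fix $p\in M$, write $V:=T_pM$ and $J:=J_p$, an $\bbR$-linear automorphism of $V$ with $J^2=-\id$. It suffices to show that on the pair $(V,J)$ the three pieces of data are in natural bijection: a positive definite Hermitian form $h$ (which, by the stated convention $h(Jx,y)=ih(x,y)$ together with $h(x,y)=\ov{h(y,x)}$, is $\bbC$-linear in the first slot and conjugate-linear in the second), a $J$-invariant inner product $g$, and a $J$-invariant alternating form $\omega$ with $\omega(x,Jx)>0$ for all $x\neq 0$. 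Granting this, the equivalence of the three notions of Hermitian structure on $M$ is immediate, because each conversion formula is $\bbR$-bilinear in the input tensor and polynomial in $J$, hence carries smooth tensor fields to smooth tensor fields, and positivity is a pointwise condition.

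First I would pass from $h$ to the pair $(g,\omega)$ by setting $g:=\Re h$ and $\omega:=-\Im h$. Conjugating $h(Jx,y)=ih(x,y)$ gives $h(x,Jy)=-ih(x,y)$, whence $h(Jx,Jy)=h(x,y)$; together with $h(x,y)=\ov{h(y,x)}$ this shows that $g$ is a symmetric $J$-invariant form and $\omega$ an alternating $J$-invariant one. Since $h(x,x)$ is real and positive for $x\neq 0$, we get $g(x,x)=h(x,x)>0$ and $\omega(x,Jx)=-\Im(-ih(x,x))=h(x,x)>0$. The remaining identities of the first group (e.g. $g(X,Y)=\omega(X,JY)$ and $\omega(X,Y)=g(JX,Y)$) are obtained by separating $ih(X,Y)$ and $-ih(X,Y)$ into real and imaginary parts.

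Next I would construct the inverse maps. Starting from a $J$-invariant inner product $g$, I set $\omega(X,Y):=g(JX,Y)$ and $h(X,Y):=g(X,Y)-ig(JX,Y)$, and check: $\omega$ is alternating because $g(JX,Y)=-g(X,JY)=-g(JY,X)$ (using $J$-invariance and symmetry of $g$); $\omega$ is $J$-invariant and $\omega(X,JX)=g(JX,JX)>0$; $h$ is conjugate-symmetric (again via $g(JX,Y)=-g(JY,X)$); $h(JX,Y)=ih(X,Y)$ (using $J^2=-\id$); and $h(X,X)=g(X,X)>0$. Symmetrically, starting from $\omega$ I set $g(X,Y):=\omega(X,JY)$ and verify the analogous list. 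Finally I would check that the three assignments are mutually inverse — for instance $\Re(g(\cdot,\cdot)-ig(J\cdot,\cdot))=g$ and $-\Im(g(\cdot,\cdot)-ig(J\cdot,\cdot))=g(J\cdot,\cdot)$, and likewise for the other round trips — which simultaneously confirms all the displayed conversion formulas.

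There is no genuine obstacle here; the whole proof is a bookkeeping exercise. The only point that requires care is keeping straight the real and imaginary parts together with the signs produced by applying $J$, and in particular remembering that the chosen convention $h(Jx,y)=ih(x,y)$ makes $h$ linear in its \emph{first} variable (the opposite convention would flip the sign of $\omega$ throughout the formulas).
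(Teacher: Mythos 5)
Your argument is correct. The paper states this as a Lemma--Definition without supplying a proof (it is a standard fact), and your pointwise linear-algebra verification — reducing to a fixed $(V,J)=(T_pM,J_p)$, checking the three structures determine one another via $g=\Re h$, $\omega=-\Im h$, $\omega(X,Y)=g(JX,Y)$, and confirming the round trips — is exactly the standard way to establish it; all your sign computations check out under the paper's convention $h(Jx,y)=ih(x,y)$.
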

We say that $(M,J,h)$ (resp. $(M,J,g)$, resp. $(M,J,\omega)$) is a Hermitian manifold if $(M,J)$ is a complex structure and $h$ (resp. $g$, resp. $\omega$) defines a Hermitian structure on $(M,J)$.
Sometimes, we will say that $M$ is a Hermitian manifold if there is no need to specify the complex structure and the Hermitian structure.

Using partitions of unity, it is easy to show that every complex manifold  can be endowed with a Hermitian structure.
In what follows, we will be interested in complex manifolds that admits a special Hermitian structure in the following sense.

\begin{defi}\label{D:symHerm}
Let $(M,J,h)$ be a Hermitian manifold. Denote by $\Aut(M,J,h)$ the group of holomorphic isometries, i.e. the group of self-diffeomorphisms $\phi:M\to M$ such that $\phi^*J=J$ and $\phi^*h=h$.
We say that
\begin{enumerate}[(i)]
\item \label{D:symHerm1} $(M,J,h)$ is homogeneous if $\Aut(M,J,h)$ acts transitively on $M$;
\item \label{D:symHerm2} $(M,J,h)$ is symmetric (or \textbf{HSM} for short)
if it is homogeneous and for some $p\in M$ (or, equivalently, for any $p\in M$) there exists $s_p\in \Aut(M,J,h)$ (called a \emph{symmetry}  at $p$)
such that $s_p^2=\id$ and $p$ is an isolated fixed point of $s_p$.
\end{enumerate}
\end{defi}

\begin{remark}\label{R:symHerm}
\noindent
\begin{enumerate}[(i)]
\item If every point $p\in M$ admits a symmetry $s_p$ as above then $(M,J,h)$ is automatically homogeneous (see \cite[Prop. 1.6]{Mil}).
\item The symmetry $s_p$ at $p$ can be characterized as the unique $s_p\in \Aut(M,J,h)$ such that $s_p(p)=p$ and $d s_p=-\id_{T_pM}$.
It follows that $s_p$ is a geodesic symmetry at $p$, i.e. if $\gamma: (-a,a)\to M$ is any geodesic such that $\gamma(0)=p$ then $s_p(\gamma(t))=\gamma(-t)$ for any $-a<t<a$
(see \cite[Prop. 1.11]{Mil}).
\item If $(M,J,h=g-i\omega)$ is a Hermitian symmetric manifold then:
\begin{itemize}
\item $(M,g)$ is a (geodesically) complete, i.e. $M$ is a complete metric space or, equivalently, every geodesic of the Riemannian manifold $(M,g)$ can be defined on the entire real line
(see \cite[Prop. 1.11]{Mil});
\item $(M,J, \omega)$ is K\"ahler, i.e. $\omega$ is a closed $2$-form (see \cite[Chap. VIII, Thm. 4.1]{Hel}).
\end{itemize}
\item \label{R:symHerm4} A Riemannian manifold $(M,g)$ such that the group $\Aut(M,g)$ of isometries acts transitively on $M$ and for some $p\in M$ (or, equivalently, for any $p\in M$) there exists
$s_p\in \Aut(M,g)$
which is a geodesic symmetry at $p$ is called a \emph{Riemannian symmetric manifold}; see \cite{Hel} for an extensive study of Riemannian symmetric manifold.

Note that Hermitian symmetric manifolds are in particular Riemannian symmetric manifolds.
\end{enumerate}
\end{remark}

In dimension one, every Hermitian symmetric manifold is isomorphic to one of the following examples.

\begin{example}\label{E:dim1}
\noindent
\begin{enumerate}

\item \label{E:dim1a}
Let $\Lambda\subset \bbC$ be a discrete additive subgroup (note that $\Lambda$ is isomorphic to $(0)$, $\bbZ$ or  $\bbZ^2$).
The quotient $\bbC/\Lambda$ is a complex manifold which we endow with the Hermitian structure induced by the standard Euclidean metric $g=dxdx+dydy$ on $\bbC=\bbR^2_{(x,y)}$
(which has constant zero curvature).
The translations $z\mapsto z+a$ (with $a\in \bbC$) act transitively  via holomorphic isometries, so that $\bbC/\Lambda$ is a homogeneous Hermitian manifold.
If $o$ denotes the class of $0$ in the quotient $\bbC/\Lambda$, then the map $s_o:z\mapsto -z$ is an isometry at $o$, which shows that $\bbC/\Lambda$ is a Hermitian symmetric manifold.

\item \label{E:dim1b}
Let $\calH:=\{z=x+iy\in \bbC\: : \: \Im z=y>0\}$ be the upper half space. Then $\calH$ inherits from $\bbC$ a complex structure  and we endow it with a Hermitian structure induced by the hyperbolic metric
$g=\frac{dxdy}{y^2}$ (which has constant negative curvature). The group $\SL_2(\bbR)$ acts transitively via M\"obius transformations (which are holomorphic isometries)
$$\begin{pmatrix}
a &b  \\
c & d
\end{pmatrix}\cdot z :=\frac{az+b}{cz+d} ,$$
which shows that $\calH$ is a homogeneous Hermitian manifold. The M\"obius transformation $\displaystyle s_i: z\mapsto -\frac{1}{z}$ is a symmetry at $i\in \calH$, so that $\calH$ is a Hermitian symmetric
manifold.

\item  \label{E:dim1c}
Let $\bbP^1_{\bbC}$ be the complex projective line. Via stereographic projection from the north pole $N=(1,0,0)$, the two dimensional sphere $\bbS^2\subset \bbR^3_{(x,y,z)}$ is diffeomorphic to
$\bbP^1_{\bbC}$.
Via this diffeomorphism, the restriction of the Euclidean metric to $\bbS^2$ induces a metric $g$ on $\bbP^1_{\bbC}$ (with constant positive curvature) which is compatible with its complex structure, i.e.
it induces a Hermitian structure on $\bbP^1_{\bbC}$, which is called the Fubini-Studi metric.
The group $\SO_3(\bbR)$ acts transitively on $\bbS^2$ via rotations, which are holomorphic isometries of $\bbP^1_{\bbC}\cong \bbS^2$;
hence $\bbP^1_{\bbC}$ is a  homogeneous Hermitian manifold. The rotation $s_N:(x,y,z)\mapsto (x,-y,-z)$ is a symmetry at the north pole $N\in \bbS^2\cong \bbP^1_{\bbC}$, which show that
$\bbP^1_{\bbC}$ is a Hermitian symmetric manifold.

\end{enumerate}
\end{example}

The trichotomy of the previous Example \ref{E:dim1} extends to arbitrary dimension.

\begin{defi}\label{D:typeHSM}
Let $M$ be a Hermitian symmetric manifold (HSM).
\begin{enumerate}[(i)]
\item $M$ is said to be of \emph{Euclidean type} if $M$ is isomorphic to $\bbC^n/\Lambda$ for some discrete additive subgroup $\Lambda\subset \bbC^n$, where $\bbC^n/\Lambda$ is endowed with the
complex structure and the Hermitian metric induced by $\bbC^n$.
\item $M$ is said to be \emph{irreducible}  if it is not Euclidean and it cannot be written as the product of two non-trivial HSMs.
\item $M$ is said to be \emph{non-Euclidean}  if it is the product of irreducible HSMs.
\item $M$ is said to be of \emph{compact type} (resp. \emph{non-compact type})  if it is the product of compact (resp. non-compact) irreducible HSMs.
\end{enumerate}
\end{defi}

Hermitian symmetric manifolds of non-compact type are also called \emph{Hermitian symmetric domains}, due to the fact that they are biholomorphic to bounded symmetric domains 
(see \S\ref{SS:domain}). 

\begin{remark}\label{R:curva}
Clearly, Euclidean HSMs have identically zero Riemannian sectional curvature. On the other hand,
HSMs of compact type (resp. of non-compact type) have semipositive (resp. seminegative) Riemannian sectional curvature
(see \cite[Chap. V, Thm. 3.1]{Hel}) and therefore also semipositive (resp. seminegative) holomorphic bisectional curvature (see \cite[Chap. 2, (3.3), Prop. 1]{Mok}).
Moreover, irreducible HSMs of compact type (resp. of non-compact type) have positive (resp. negative) Ricci curvature (see \cite[Chap. 3, (1.3), Prop. 2]{Mok}).
\end{remark}

Every Hermitian symmetric manifold can be decomposed uniquely in the following way (see \cite[Chap. VIII, Prop. 4.4, Thm. 4.6, Prop. 5.5]{Hel}).

\begin{thm}[Decomposition theorem]\label{T:decom}
Every Hermitian symmetric manifold $M$ decomposes uniquely as
$$M=M_0\times M_-\times M_+,$$
where $M_0$ is a Euclidean HSM, $M_-$ is a HSM  of compact type and $M_+$ is a HSM of non-compact type.
Moreover, $M_-$ (resp. $M_+$) is simply connected and it decomposes uniquely as a product of compact (resp. non-compact) irreducible HSMs.
\end{thm}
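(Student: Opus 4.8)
The plan is to deduce the decomposition theorem from the analogous statement in the Riemannian category, which is de Rham's decomposition theorem, combined with Cartan's classification of Riemannian symmetric spaces into flat, compact, and non-compact types. First I would observe, as in Remark \ref{R:symHerm}(iv), that a Hermitian symmetric manifold $(M,J,h=g-i\omega)$ is in particular a Riemannian symmetric manifold $(M,g)$, and moreover it is geodesically complete and simply connected issues must be handled with care — $M$ itself need not be simply connected (witness $\bbC/\Lambda$ with $\Lambda \neq 0$). So the first real step is to pass to the universal cover $\wt M \to M$, which inherits a Hermitian symmetric structure, and apply de Rham's theorem there: $\wt M$ splits isometrically and uniquely (up to order) as $\wt M = \R^k \times \wt M_1 \times \cdots \times \wt M_r$, where $\R^k$ is the flat factor and the $\wt M_i$ are irreducible Riemannian symmetric spaces of compact or non-compact type.

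Next I would upgrade this Riemannian splitting to a splitting compatible with the complex structure $J$. The key point is that the de Rham decomposition is obtained from the holonomy representation acting on the tangent space at a point, and since $(M,J,\omega)$ is Kähler (Remark \ref{R:symHerm}(iii)), the holonomy commutes with $J$; hence $J$ preserves each of the de Rham factors, so each factor is itself a Hermitian (indeed Kähler) symmetric manifold. The flat factor $\R^k$ with its $J$-invariant flat metric is then a complex vector space $\bbC^n$ (with $k=2n$), and grouping the irreducible factors by curvature sign yields $\wt M = \bbC^n \times M_- \times M_+$ with $M_-$ of compact type and $M_+$ of non-compact type; both are simply connected by construction.

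Then I would descend back to $M$. The deck transformation group $\Gamma = \pi_1(M)$ acts on $\wt M$ by holomorphic isometries; by the uniqueness of the de Rham decomposition and the fact that compact-type, non-compact-type, and Euclidean factors are distinguished by curvature, $\Gamma$ must preserve the three-block grouping $\bbC^n \times M_- \times M_+$, and in fact — because any isometry of a Riemannian symmetric space permutes the de Rham factors according to their isometry type, and the factors within $M_-$ (resp. $M_+$) could in principle be permuted — one shows $\Gamma$ acts trivially on $M_-$ and $M_+$ and acts on $\bbC^n$ through a group of affine isometries; a completeness/discreteness argument (the action is free and properly discontinuous, and translations generated by holonomy-flat directions) forces the action to be by translations on $\bbC^n$. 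Hence $M = (\bbC^n/\Lambda) \times M_- \times M_+$ for a discrete subgroup $\Lambda \subset \bbC^n$. Uniqueness of this decomposition follows from uniqueness of the de Rham decomposition together with the curvature dichotomy of Remark \ref{R:curva}.

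The main obstacle I expect is the descent step: ruling out that a nontrivial deck transformation acts by a rotation (rather than a translation) on the Euclidean factor, or permutes/twists the compact or non-compact irreducible factors. This is where one genuinely uses that $M$ is a \emph{homogeneous} symmetric space and not merely a locally symmetric one — the symmetry $s_p$ at every point lifts, and the large automorphism group rigidifies the action of $\Gamma$; alternatively one invokes Borel's result that for symmetric spaces of non-Euclidean type compact with finite fundamental group the relevant factors are forced to be simply connected. I would simply cite \cite[Chap. VIII, Prop. 4.4, Thm. 4.6, Prop. 5.5]{Hel} for the precise form of these statements rather than reproduce the holonomy and deck-transformation bookkeeping in detail.
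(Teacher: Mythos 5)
Your route is genuinely different from the one the paper leans on: the proof of Theorem \ref{T:decom} in the cited reference \cite[Ch.~VIII]{Hel} works Lie-algebraically, splitting the orthogonal symmetric Lie algebra $\g$ into its Euclidean, compact and non-compact ideals, whereas you work geometrically via de Rham and holonomy. The part of your argument on the universal cover is sound: since the holonomy of a Riemannian product is the product of the factor holonomies, an endomorphism commuting with it cannot interchange two irreducible factors, so the parallel tensor $J$ does preserve the flat part and each irreducible de Rham factor, and each factor inherits a Hermitian symmetric structure. That is a perfectly good (and arguably more transparent) way to obtain the decomposition of $\wt{M}$.

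The gap is in the descent, and it is not a bookkeeping issue: the assertion that $\Gamma=\pi_1(M)$ acts trivially on $M_-\times M_+$ is exactly where the Hermitian hypothesis must enter, and the reasons you offer (homogeneity, ``the large automorphism group rigidifies the action'') cannot be the operative ones, because the purely Riemannian analogue of your claim is false: $\bbS^{n}\to \bbR\bbP^{n}$ exhibits a compact-type Riemannian symmetric space that is homogeneous, symmetric, and not simply connected. The mechanism that actually works is the following. Because every geodesic symmetry of $M$ lifts, $\Gamma$ is normalized by all the symmetries of $\wt{M}$, hence normalized by the transvection group they generate, hence (being discrete) centralized by it. On the flat factor this centralizer is the translation group, which gives $\bbC^n/\Lambda$; on a non-compact irreducible factor an isometry $\phi$ centralizing the transvections commutes with each $s_p$, so $\phi(p)$ is a fixed point of $s_p$, and $p$ is the \emph{only} fixed point of $s_p$ on a Hadamard manifold, forcing $\phi=\id$; on a compact irreducible factor $G/K$ this last step fails ($s_p$ has other fixed points) and the centralizer is $Z(G)K/K$ --- here one needs that $K$ is the centralizer of the circle $\exp(\bbR H)$ with $H\in Z(\k)$, hence contains a maximal torus, hence contains $Z(G)$. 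This is precisely the content of \cite[Ch.~VIII, Prop.~4.4]{Hel} (simple connectivity of Hermitian symmetric spaces of compact type), not a finiteness statement of Borel as you recall it; without this input your argument would ``prove'' that $\bbR\bbP^{n}$ is simply connected. Since you do cite the right propositions at the end, the proposal is rescuable, but as written the key step is asserted rather than proved, and the sketchy justifications given for it point in the wrong direction.
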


In particular, any HSM is the product of a Euclidean HSM  and of  a non-Euclidean HSM.
Since Euclidean HSMs are easy to understand (being isomorphic to $\bbC^n/\Lambda$, for some discrete additive subgroup $\Lambda\subset \bbC^n$), from now on we will  focus on non-Euclidean HSMs.

An important invariant of a non-Euclidean HSM is its rank, which we are now going to define following \cite[Chap. V, \S 6]{Hel}. Recall that a submanifold $S$ of a Riemannian manifold $(M,g)$ is called
\emph{totally geodesic} if for every $p\in S$ it holds that all the geodesics of $M$ through $p$ that are tangent to $S$ are contained in  $S$. Moreover, in the case where $(M,g)$ is a Riemannian symmetric manifold (in the sense of Remark \ref{R:symHerm}\eqref{R:symHerm4}), $N$ is totally geodesic if and only if for every $p\in N$ we have that $s_p(N)=N$ (see \cite[Chap. 5, (1.1), Lemma 1.1]{Mok}).
Furthermore, $N$ is said to be \emph{flat} if the restriction of the Riemannian metric $g$ to $N$ has  identically zero curvature tensor.

\begin{defi}\label{D:rank}
Let $M$ be a non-Euclidean HSM. The \textbf{rank}Ê of $M$ is the maximal dimension of a flat totally geodesic submanifold of $M$.
\end{defi}

\subsection{Classifying non-Euclidean HSM  via Lie groups}\label{SS:HSMLiegrp}

The aim of this subsection is to classify non-Euclidean Hermitian symmetric manifolds in terms of Lie groups.

Let $M=(M,J,h)$ be a non-Euclidean Hermitian symmetric manifold and fix a point $o\in M$.
The group $\Aut(M)=\Aut(M,J,h)$ of holomorphic isometries of $M$, endowed with the compact-open topology, becomes a (real) Lie group (see \cite[Chap. VIII, \S4]{Hel}).
We denote by $\Aut(M)^o$ the connected component of $\Aut(M)$ containing the identity and by $\Stab(o)$ the Lie subgroup of $\Aut(M)^o$ consisting of all the elements that fix
$o\in M$. The symmetry $s_o$ at $o$ induces the following involutive automorphism of $\Aut(M)^o$:
\begin{equation*}
\begin{aligned}
\sigma: \Aut(M)^o& \longrightarrow \Aut(M)^o\\
g&\mapsto s_o g s_o. \\
\end{aligned}
\end{equation*}
Denote by $\Fix(\sigma)$ the closed Lie subgroup of $\Aut(M)^o$ consisting of all the elements that are fixed by $\sigma$ and let $\Fix(\sigma)^o$ be the connected component of $\Fix(\sigma)$
containing the origin.

\begin{thm}\label{T:LieHerm}
Notations as above.
\begin{enumerate}[(i)]
\item \label{T:LieHerm1} $\Aut(M)^o$ is a semisimple adjoint (i.e. with trivial center) Lie group  and $\Stab(o)$ is a compact Lie subgroup of $\Aut(M)^o$ such that
$$\Fix(\sigma)^o\subseteq \Stab(o)\subseteq \Fix(\sigma).$$
\item \label{T:LieHerm2}
The map
$$
\begin{aligned}
\Aut(M)^o/\Stab(o) & \longrightarrow M\\
[g]& \mapsto g\cdot o
\end{aligned}
$$
is a $\Aut(M)^o$-equivariant diffeomorphism.
\item \label{T:LieHerm3}
The symmetry $s_o$ is contained in the identity component of the center of $\Stab(o)$.
\end{enumerate}
\end{thm}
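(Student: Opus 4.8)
The plan is to work at the level of Lie algebras, using the Cartan decomposition induced by the symmetry $s_o$. Write $G = \Aut(M)^o$, $K = \Stab(o)$, and let $\g = \Lie(G)$, $\k = \Lie(K)$. The involution $\sigma: g \mapsto s_o g s_o$ of $G$ differentiates to an involution $\theta = d\sigma_e: \g \to \g$, which gives an eigenspace decomposition $\g = \g^+ \oplus \g^-$ into the $+1$ and $-1$ eigenspaces of $\theta$. By part \eqref{T:LieHerm1} (which I am allowed to assume, as it is stated in the same theorem), one has $\Fix(\sigma)^o \subseteq K \subseteq \Fix(\sigma)$, so at the level of Lie algebras $\k = \g^+$, and $\g^-$ is naturally identified with the tangent space $T_oM$ via the diffeomorphism of part \eqref{T:LieHerm2}. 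The complex structure $J_o$ on $T_oM \cong \g^-$ transports to an endomorphism $J$ of $\g^-$ with $J^2 = -\id$. The heart of the matter is to realize $J$ as $\ad(Z_0)|_{\g^-}$ for a suitable central element $Z_0 \in Z(\k)$, and then to show that $s_o = \exp(\pi Z_0)$, which lies in the identity component of the centralizer of $K$ in $G$, hence in the identity component of $Z_K$ since $K$ is connected-or-not — more precisely, since $s_o$ commutes with all of $K$ (as $s_o$ fixes $o$ and $ds_o = -\id$ is central in $\GL(T_oM)$, so conjugation by $s_o$ fixes $K$ pointwise up to the identity component), it lands in $Z_K$, and being of the form $\exp$ of a $\k$-element it lands in $Z_K^o$.

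The key steps, in order, are as follows. First I would record that $s_o$ normalizes $K = \Stab(o)$: indeed $s_o \in \Aut(M)$ fixes $o$, so conjugation by $s_o$ preserves $\Stab(o)$ inside $\Aut(M)$; since $s_o^2 = \id$ this is an involution of $K$, and I want to show it is the \emph{trivial} automorphism of $K^o$. For this, observe that any $k \in K$ fixes $o$, so $s_o k s_o^{-1}$ and $k$ are two holomorphic isometries fixing $o$; by Remark \ref{R:symHerm}(ii) an element of $\Aut(M,J,h)$ fixing $o$ is determined by its differential at $o$, so it suffices to check $d(s_o k s_o^{-1})_o = dk_o$ as maps on $T_oM$. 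But $d(s_o)_o = -\id_{T_oM}$, so $d(s_o k s_o^{-1})_o = (-\id)\circ dk_o \circ (-\id) = dk_o$. Hence $s_o k s_o^{-1} = k$ for all $k \in K$, i.e. $s_o$ centralizes $K$. Second, I would deduce that $s_o \in Z_{G}(K) \cap K$? — not quite: I must argue $s_o \in K$. Since $s_o$ fixes $o$, indeed $s_o \in \Stab(o)$ provided $s_o \in G = \Aut(M)^o$, which follows because $s_o$ lies in the same path-component as $\id$: the geodesic-symmetry description in Remark \ref{R:symHerm}(ii) together with transitivity lets one deform $s_o$ to $\id$, or alternatively $s_o^2 = \id$ and $G$ being the identity component one uses that $s_o = \exp(X)$ for the $X$ below. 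Granting $s_o \in K$ and $s_o$ central in $K$, we get $s_o \in Z_K$. Third, and this is the main point, I would exhibit $s_o$ in the \emph{identity component} $Z_K^o$. The cleanest route: from the general structure theory of Hermitian symmetric spaces (Remark \ref{R:symHerm} gives that $M$ is Kähler, Remark \ref{R:curva}, and the forthcoming Lie-algebra analysis), the complex structure $J$ on $\g^- = \mathfrak{p}$ is $G$-invariant, hence $K$-invariant, so $J \in \End_K(\mathfrak{p})$; by Schur-type arguments on the (real) isotropy representation one identifies $J = \ad(Z_0)|_{\mathfrak p}$ for a unique $Z_0 \in Z(\k)$ with $(\ad Z_0)^2 = -\id$ on $\mathfrak p$ and $\ad Z_0 = 0$ on $\k$. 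Then the one-parameter subgroup $t \mapsto \exp(tZ_0)$ lies in $Z_K^o$ (as $Z_0 \in Z(\k)$), and $\exp(\pi Z_0)$ acts on $T_oM = \mathfrak p$ by $\exp(\pi \ad Z_0) = -\id$ and fixes $o$, so by the uniqueness in Remark \ref{R:symHerm}(ii) it equals $s_o$. Therefore $s_o = \exp(\pi Z_0) \in Z_K^o$.

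The main obstacle I anticipate is the third step: producing the central element $Z_0$ with $J = \ad(Z_0)$. This requires knowing that the isotropy representation of $K$ on $T_oM$ is such that the $K$-invariant complex structure is inner — a fact that in Helgason's treatment is established precisely in the course of proving that $K$ has one-dimensional center (this is the content of \S\ref{SS:HSMLiegrp} and the citation to \cite[Chap. VIII, \S4]{Hel}). I would therefore invoke \cite[Chap. VIII, \S\S4--5]{Hel}: the Hermitian (Kähler) condition forces the almost complex structure on $\mathfrak p$ to commute with $\ad(\k)$ and, combined with semisimplicity of $\g$ and the Cartan involution structure, to be of the form $\ad(Z_0)$ with $Z_0$ spanning $Z(\k)$. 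An alternative, more self-contained finish avoiding the full classification: show directly that the connected subgroup of $G$ with Lie algebra $\k$ contains $s_o$ by noting $\{g \in G : g\cdot o = o,\ dg_o = \pm\id\}$ is a subgroup, its identity component is $\exp$ of $\k$-vectors killing $\mathfrak p$... but this essentially reproduces the Schur argument, so I expect to cite Helgason. I would keep the exposition short, emphasizing the two elementary reductions (centralizing $K$; lying in $K$) and quoting the structure-theoretic input for the final placement in the identity component.
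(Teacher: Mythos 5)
The paper offers no argument for this theorem beyond a citation to \cite[Chap.\ IV, Thm.\ 3.3; Chap.\ VIII, Thm.\ 4.5]{Hel}, so there is no ``paper's proof'' to match your proposal against; what you have written is essentially a reconstruction of Helgason's argument for part \eqref{T:LieHerm3}, and that part of your sketch is correct. The chain you describe --- $s_o$ centralizes $\Stab(o)$ because $s_o k s_o^{-1}$ and $k$ are holomorphic isometries fixing $o$ with the same differential; the $K$-invariant complex structure $J$ on $\p\cong T_oM$ is realized as $\ad(Z_0)|_{\p}$ for some $Z_0\in Z(\k)$; and $\exp(\pi Z_0)$ fixes $o$ with differential $\exp(\pi\,\ad Z_0)|_{\p}=-\id$, hence equals $s_o$ by the uniqueness of the symmetry, placing $s_o$ on the one-parameter subgroup $t\mapsto\exp(tZ_0)\subseteq Z_K^o$ --- is the standard and correct route. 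Note that this last step also disposes of your hesitation in ``step two'': you never need to deform $s_o$ to the identity to see $s_o\in\Aut(M)^o$; the identity $s_o=\exp(\pi Z_0)$ delivers membership in $G$, in $K$, and in $Z_K^o$ all at once, so the vague remark about deforming $s_o$ should simply be dropped. One small point worth making explicit: to conclude $\exp(tZ_0)\in Z_K$ (and not merely that it commutes with $K^o$) you should observe that $\Ad(k)$ preserves $J=\ad(Z_0)|_{\p}$ for \emph{every} $k\in K$ and that $\k$ acts faithfully on $\p$, whence $\Ad(k)Z_0=Z_0$.

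The genuine gaps are elsewhere. First, parts \eqref{T:LieHerm1} and \eqref{T:LieHerm2} are not addressed at all: you ``assume'' part \eqref{T:LieHerm1} while proving the theorem that contains it. These are the Myers--Steenrod/Riemannian-symmetric-space facts (that $\Aut(M)$ is a Lie group acting transitively with compact isotropy, that $G/K\to M$ is a diffeomorphism, and --- the only genuinely non-formal claim --- that $\Aut(M)^o$ is semisimple and adjoint for non-Euclidean $M$), and they must be either proved or cited; since the paper itself delegates them to \cite{Hel}, a citation is acceptable, but silence is not. Second, the crucial input for \eqref{T:LieHerm3} --- that $J$, extended by zero on $\k$, is a derivation of $\g$ and hence inner with $Z_0\in Z(\k)$ --- is exactly Lemma \ref{L:J-oper} of the paper and uses the compatibility of $J$ with the metric (the K\"ahler condition); you correctly flag this as the main obstacle and defer it to \cite[Chap.\ VIII, \S\S4--5]{Hel}, which is fine, but you should be aware that this is where all the Hermitian-specific content lives, so the proposal is a correct skeleton rather than a self-contained proof.
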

\begin{proof}
See \cite[Chap. IV, Thm. 3.3; Chap. VIII, Thm. 4.5]{Hel}
\end{proof}

A pair $(G,K)$ consisting of a connected semisimple Lie group $G$ and a compact subgroup $K$ for which there exists an involutive automorphism $\sigma$ of $G$ such that
$\Fix(\sigma)^o\subseteq K \subseteq \Fix(\sigma)$ is a particular case of a \emph{Riemann symmetric pair}  (see \cite[Chap. IV, \S 3]{Hel}). In particular, Theorem \ref{T:LieHerm}\eqref{T:LieHerm1}
is saying that for any non-Euclidean Hermitian symmetric manifold $M$, the pair $(\Aut(M)^o, \Stab(o))$ is a Riemannian symmetric pair.

Conversely, starting with a Riemannian symmetric pair $(G,K)$, the quotient manifold $G/K$ can always be endowed with a Riemannian metric $g$ such that $(G/K, g)$ is
a Riemannian symmetric space (see \cite[Chap. IV, Prop. 3.4]{Hel}). However, in order to endow $G/K$ with a complex structure $J$ and a Hermitian metric $h$ such that $(G/K,J,h)$ becomes
a Hermitian symmetric manifold, the pair $(G,K)$ must satisfy some extra conditions. Theorem \ref{T:LieHerm}\eqref{T:LieHerm3} says that a necessary condition for a Riemannian symmetric pair $(G,K)$
to come from a Hermitian symmetric manifold is that the center of $K$ is not finite. Indeed, in the irreducible case, this last condition is also sufficient.

\begin{thm}\label{T:Lie-irr}
\noindent
\begin{enumerate}[(i)]
\item \label{T:Lie-irr1} Every irreducible HSM of non-compact type is diffeomorphic to $G/K$ for a unique pair $(G,K)$ such that $G$ is a connected non-compact simple adjoint Lie group
and $K$ is a maximal connected and compact Lie subgroup of $G$ with non discrete center $Z_K$ (or, equivalently, with $Z_K=\bbS^1$).
\item \label{T:Lie-irr2} Every irreducible HSM of compact type is diffeomorphic to $G/K$ for a unique pair $(G,K)$ such that $G$ is a connected compact simple adjoint Lie group and
$K$ is a maximal connected and compact proper Lie subgroup of $G$  with non discrete center $Z_K$ (or, equivalently, with $Z_K=\bbS^1$).
\end{enumerate}
\end{thm}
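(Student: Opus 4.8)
The plan is to extract the statement from the Riemannian-symmetric-pair description already obtained in Theorem~\ref{T:LieHerm}, supplemented by standard structure theory of symmetric spaces (see \cite{Hel}), treating the non-compact and the compact case in parallel. Let $M$ be an irreducible HSM of non-compact type (resp. of compact type), fix $o\in M$, and put $G:=\Aut(M)^o$, $K:=\Stab(o)$. Theorem~\ref{T:LieHerm} already provides: $G$ is semisimple adjoint; $K$ is compact with $\Fix(\sigma)^o\subseteq K\subseteq\Fix(\sigma)$; the orbit map induces a $G$-equivariant diffeomorphism $G/K\isoto M$; and $\id\ne s_o\in Z(K)^o$ (indeed $s_o\ne\id$, since $ds_o=-\id_{T_oM}$ and $\dim_{\bbR}M\ge 2$). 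First I would note that $K$ is connected: by the Decomposition Theorem~\ref{T:decom}, $M$ is simply connected, so the homotopy exact sequence of the fibration $K\to G\to G/K=M$ gives $\pi_0(K)\hookrightarrow\pi_0(G)=0$; as $\Fix(\sigma)^o\subseteq K\subseteq\Fix(\sigma)$ and $K$ is connected, in fact $K=\Fix(\sigma)^o$. Since $M$ is irreducible, the isotropy representation of $K$ on $T_oM\cong\g/\k$ is irreducible \cite[Ch.~VIII]{Hel}, so $\k$ is a maximal subalgebra of $\g$ (an intermediate subalgebra would be a proper nonzero $\k$-submodule); hence $K$ is a maximal connected subgroup of $G$, proper because $\dim M>0$, and in the non-compact case a maximal compact subgroup.

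Next I would pin down $G$. Decompose the semisimple $\g$ into simple ideals. If $\theta:=d\sigma$ preserved a proper nonzero sum of them, $M$ would be a nontrivial Riemannian --- indeed, by the structure of $J$, Hermitian --- product, contradicting irreducibility; hence either $\g$ is simple, or $\g=\mathfrak h\oplus\mathfrak h$ with $\theta$ the flip. The latter case is of group type, so $\k\cong\mathfrak h$ is simple and $Z(\k)=0$, contradicting $\id\ne s_o\in Z(K)^o$; therefore $\g$, and so $G$, is simple. To fix its type: in the non-compact case $M$ has seminegative sectional curvature by Remark~\ref{R:curva} and is complete by Remark~\ref{R:symHerm}, so by Cartan--Hadamard $M$ is diffeomorphic to $\bbR^{\dim M}$, hence non-compact, which forces $G$ non-compact; in the compact case $M$ has positive Ricci curvature by Remark~\ref{R:curva}, so by Bonnet--Myers $M$ is compact, which forces $G$ compact (a non-compact $G$ would give, via the Cartan decomposition $G\cong K\times\bbR^{\dim\p}$, a non-compact $G/K$).

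For the center, $\id\ne s_o\in Z(K)^o$ shows $\dim Z(\k)\ge 1$; conversely $\ad$ maps $Z(\k)$ injectively (as $Z(\g)=0$) into the algebra of skew-symmetric endomorphisms of $\p$ commuting with the irreducible isotropy module, which by Schur's lemma is one of $\bbR,\bbC,\bbH$ and whose abelian subalgebras of skew elements are at most one-dimensional; hence $\dim Z(\k)=1$, so $Z(K)^o=\bbS^1$, and since $G$ is adjoint one checks $Z_K=Z(K)^o=\bbS^1$, in particular non-discrete. For uniqueness, suppose $M\cong G'/K'$ for a second such pair; then $G'$ acts effectively, transitively and by holomorphic isometries on $M$, giving an injective homomorphism $G'\hookrightarrow\Aut(M)^o=G$ whose image is a connected subgroup with Lie algebra $\mathfrak h$ satisfying $\mathfrak h+\k=\g$. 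After conjugating $G'$ so that $K'$ fixes $o$, the subalgebra $\mathfrak h$ is $\theta$-stable; since $[\p,\p]=\k$ for an irreducible symmetric space (the nonzero ideal $[\p,\p]\oplus\p$ of the simple $\g$ equals $\g$), any $\theta$-stable subalgebra surjecting onto $\p$ contains $\p$, hence contains $[\p,\p]=\k$, hence equals $\g$. Thus $\mathfrak h=\g$, so $G'=G$ and $K'=K$.

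I expect the main obstacle to be the exact determination $Z_K=\bbS^1$ together with the uniqueness claim. The inequality $\dim Z(\k)\ge 1$ is immediate from Theorem~\ref{T:LieHerm}\eqref{T:LieHerm3}, but excluding a higher-dimensional center and then upgrading $Z(K)^o=\bbS^1$ to $Z_K=\bbS^1$ rests on the irreducibility of the isotropy representation and on a careful bookkeeping of connected components in the adjoint group; likewise the uniqueness step hinges on the non-obvious identity $[\p,\p]=\k$ and on arranging the competing transitive subgroup to be compatible with the Cartan decomposition at $o$. These are precisely the points at which we would invoke the detailed treatment of \cite[Ch.~IV--VIII]{Hel} rather than reproduce the arguments in full.
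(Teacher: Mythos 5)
The paper does not actually prove this theorem: its ``proof'' is the single citation to \cite[Chap.~VIII, \S6]{Hel}, so there is no in-paper argument to compare yours against. Your reconstruction follows the standard structure-theoretic route that the cited reference takes, and its main lines are sound: connectedness of $K$ from simple connectedness of $M$ and the homotopy sequence; maximality of $\k$ from irreducibility of the isotropy module (note $\mathfrak h/\k\hookrightarrow\g/\k\cong\p$ is a $\k$-submodule for any intermediate subalgebra $\mathfrak h\supseteq\k$, so this is clean); simplicity of $\g$ by ruling out the group type via $Z(\k)\neq 0$; and $\dim Z(\k)=1$ by the Schur-commutant bound. Two caveats. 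First, upgrading $Z(K)^o=\bbS^1$ to $Z_K=Z(K)^o$ via injectivity of $\Ad$ requires knowing the commutant of the isotropy module is exactly $\bbC$ and not $\bbH$; this is true (a quaternionic commutant would force the holonomy into $\Sp(m)$ and hence vanishing Ricci curvature, contradicting Remark~\ref{R:curva}), but it is an extra step you should not elide, since the $\bbH$ case would leave room for elements of $Z_K$ acting on $\p$ by non-central unit quaternions.

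Second, and more seriously, the uniqueness argument as written has a gap in the compact case. Your key step is that $\mathfrak h=\Lie(G')$ is $\theta$-stable, which amounts to the geodesic symmetry $s_o$ normalizing $G'$ inside $\Aut(M)^o$. In the non-compact case this is automatic: $K'$ maximal compact forces $(G',K')$ to be the Cartan pair, the symmetry of $G'/K'$ is implemented by the Cartan involution, and $s_og s_o^{-1}=\theta'(g)\in G'$. In the compact case, however, the hypotheses on $(G',K')$ only yield (via the central circle and maximality) that $K'$ is the centralizer of a circle, i.e.\ that $G'/K'$ is a flag manifold $G'_{\bbC}/P$ with $P$ maximal parabolic --- and such quotients need not be symmetric spaces at all (e.g.\ non-cominuscule nodes), so there is no a priori involution making $\mathfrak h$ $\theta$-stable. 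Ruling out a smaller simple group acting transitively on a compact irreducible HSM with maximal circle-centralizer isotropy (the classical examples being $\Sp(n)$ on $\bbP^{2n-1}$, $\mathrm{Spin}(7)$ on $\calQ^6$, $G_2$ on $\calQ^5$, in all of which the isotropy fails maximality) is genuinely the content of the uniqueness statement here, and it is exactly the point where one must fall back on \cite{Hel} or on Onishchik-type classification results. You do flag this honestly, but the reader should understand that this step is not a bookkeeping issue: the soft argument $\p\subseteq\mathfrak h$, $[\p,\p]=\k$ only closes once $\theta$-stability is secured.
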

\begin{proof}
See \cite[Chap. VIII, \S 6]{Hel}.

\end{proof}

Using the Lie-theoretic representation given by Theorem \ref{T:Lie-irr}, \`E. Cartan in \cite{Car2} (based upon his previous work \cite{Car1} in which he classifies Riemannian symmetric manifolds)  was able to classify the irreducible HSMs of non-compact type (rep. of compact type) into $6$ types (see also \cite{Bor} for a nice exposition of the work of Cartan). We list the six types (together with their real dimensions and their rank) in Table \ref{F:LieHerm}, referring to \S\ref{S:irrHSM} for more details on each type and to \S\ref{SS:dual} for an explanation of the duality between HSMs of non-compact type and HSMs of compact type.

\begin{table}[!ht]
\begin{tabular}{|c|c|c|c|c|c|}
\hline
Type  & Group $G$ & Group $G^c$ & Compact subgroup $K$ & $\dim_{\bbR} M$ & Rank $M$\\
\hline \hline
$I_{p,q} \:(p\geq q\geq 1)$ & $\PSU(p,q)$ & $\PSU(p+q)$ & $\ov{\GS(U_p\times U_q)}$ & $2pq$ & $q$ \\
\hline
$II_n \: (n\geq 2)$ & $\cong \PSO^*(2n)$ & $\cong \PSO(2n)$ & $\ov{\U(n)}$ & $n(n-1)$ & $\lfloor \frac{n}{2} \rfloor $\\
\hline
$III_n \: (n\geq 1)$ & $\cong \PSp(n,\bbR)$ & $\PSp(n)$ & $\ov{\U(n)}$ & $n(n+1)$& $n$ \\
\hline
$IV_n \: (2\neq n\geq 1)$ & $\cong \PSO(2,n) $ & $\PSO(2+n)$ & $\SO(2)\times \SO(n)$ & $2n$ & $\min\{2,n\} $\\
\hline
$V$ & $E_{6(-14)}$ &  $E_6^c$  & $ \SO(10)\times \SO(2)$ & $32$ & $2$ \\
\hline
$VI$ & $E_{7(-25)}$ &  $E_7^c$ & $ E_6^c\times \SO(2)$ & $54$ & $3$  \\
\hline
\end{tabular}
\caption{Irreducible HSMs: $G/K$ is of non-compact type and $G^c/K$ is its compact dual.}\label{F:LieHerm}
\end{table}

Using the above presentation of HSMs, it is possible to give a Lie theoretic description of the rank of a HSM of non-compact type (as in Definition \ref{D:rank}).

\begin{prop}\label{P:rank-Lie}
Let $M$ be a HSM of non-compact type and write $M\cong \Aut(M)^o/\Stab(o)=G/K$ as in Theorem \ref{T:LieHerm}.
Then the rank of $M$ is equal to the dimension of any maximal $\bbR$-split torus $T$ contained in $G$.
\end{prop}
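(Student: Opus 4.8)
The plan is to identify both sides with the common ``real rank'' of the pair $(\g,\theta)$ attached to the Cartan decomposition of $\g:=\Lie(G)$, where $G=\Aut(M)^o$ and $K=\Stab(o)$ as in Theorem~\ref{T:LieHerm}. Since $M$ is of non-compact type, $(G,K)$ together with the involution $\sigma$ of Theorem~\ref{T:LieHerm} is a Riemannian symmetric pair of the non-compact type; in particular $K$ is a maximal compact subgroup of $G$ and $\theta:=d\sigma$ is a Cartan involution of $\g$. Write $\g=\k\oplus\p$ for the decomposition into the $(+1)$- and $(-1)$-eigenspaces of $\theta$, so that $\k=\Lie(K)$ and, under the diffeomorphism of Theorem~\ref{T:LieHerm}\eqref{T:LieHerm2}, $\p$ is identified with $T_oM$. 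I will use the standard facts on symmetric spaces: the Killing form $B$ of $\g$ is positive definite on $\p$ and negative definite on $\k$ (because $\theta$ is Cartan); the geodesics through $o$ are the curves $t\mapsto\exp(tX)\cdot o$ with $X\in\p$, the curvature tensor at $o$ is $R(X,Y)Z=-[[X,Y],Z]$ (for the metric induced by $B$), and $\exp\colon\p\to M$, $X\mapsto\exp(X)\cdot o$, is a diffeomorphism.

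First I would recall that a connected totally geodesic submanifold $S$ of $M$ through $o$ is an open subset of $\exp(\m)\cdot o$ for $\m:=T_oS\subseteq\p$, and that $\m$ is then a \emph{Lie triple system}, i.e.\ $[[\m,\m],\m]\subseteq\m$; conversely any such $\m$ yields the complete totally geodesic submanifold $\exp(\m)\cdot o$ of dimension $\dim\m$. Since the second fundamental form of a totally geodesic submanifold vanishes, the Gauss equation shows its curvature tensor is the restriction of that of $M$; hence $S$ is flat (in the sense recalled just before Definition~\ref{D:rank}) if and only if $[[X,Y],Z]=0$ for all $X,Y,Z\in\m$. Taking $Z=Y$ and using the associativity $B([A,B],C)=B(A,[B,C])$ one gets $B([[X,Y],Y],X)=-B([X,Y],[X,Y])$; as $[X,Y]\in[\p,\p]\subseteq\k$ and $B$ is negative definite on $\k$, the vanishing of $[[X,Y],Y]$ forces $[X,Y]=0$. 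Thus $S$ is flat if and only if $\m$ is an \emph{abelian} subspace of $\p$, and conversely every abelian $\mathfrak a\subseteq\p$ gives the flat totally geodesic submanifold $\exp(\mathfrak a)\cdot o\cong\mathfrak a$. Since $G$ acts transitively by isometries, every flat totally geodesic submanifold of $M$ may be moved by an element of $G$ to pass through $o$ without changing its dimension; therefore
$$\operatorname{rank}(M)=\max\{\dim\mathfrak a:\ \mathfrak a\subseteq\p\ \text{abelian subspace}\}=:r.$$

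Second I would invoke the structure theory of $(\g,\theta)$: all maximal abelian subspaces $\mathfrak a\subseteq\p$ are conjugate under $\Ad(K)$, hence share the common dimension $r$. Fixing such an $\mathfrak a$ and putting $A:=\exp(\mathfrak a)\subseteq G$, each $\ad(X)$ with $X\in\mathfrak a$ is symmetric for the positive definite form $(Y,Z)\mapsto-B(Y,\theta Z)$, hence diagonalizable over $\bbR$, so $A$ is a connected $\bbR$-split torus of dimension $r$. Conversely, any $\bbR$-split torus of $G$ is $\Ad(G)$-conjugate into $A$: a maximal $\bbR$-split torus is stable under some Cartan involution, which by conjugacy of Cartan involutions may be taken equal to $\theta$, and then its Lie algebra is a maximal abelian subspace of $\p$. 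Hence every maximal $\bbR$-split torus $T\subseteq G$ has dimension $r=\operatorname{rank}(M)$, as asserted.

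The step I expect to be the main obstacle is the bookkeeping in the last paragraph: an arbitrary maximal $\bbR$-split torus of $G$ need not a priori lie inside the fixed $\p$, so one must use the conjugacy of Cartan involutions (and the fact that a maximal $\bbR$-split torus is preserved by one) to reduce to the chosen Cartan decomposition. The equivalence ``flat $\iff$ abelian'' in the second paragraph is also where the hypothesis that $M$ is of non-compact type is genuinely used, through the definiteness of $B$ on $\k$ and on $\p$.
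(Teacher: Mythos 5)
Your argument is correct. The paper does not actually prove this proposition; it delegates it to \cite[Sec. 8C]{Mor}, and what you have written is precisely the standard chain of identifications carried out there (and in \cite[Chap. V, \S 6]{Hel}): flat totally geodesic submanifolds through $o$ correspond to abelian subspaces of $\p$ (via the Lie-triple-system description and the curvature formula $R(X,Y)Z=-[[X,Y],Z]$, with the definiteness of $B$ on $\k$ forcing $[X,Y]=0$), all maximal abelian subspaces of $\p$ are $\Ad(K)$-conjugate (this is exactly the content of Proposition \ref{P:rank-SLA} in the paper), and their common dimension is the dimension of a maximal $\bbR$-split torus. Two small points worth tightening: first, the $G$-invariant Hermitian metric on $M$ need not be the Killing-form metric, but this is harmless because the Levi-Civita connection of any invariant metric on a symmetric space is the canonical connection, so the curvature formula and hence flatness do not depend on the choice; second, in the last step you should make explicit why the Lie algebra of a maximal $\bbR$-split torus, once conjugated into $\p$, is \emph{maximal} abelian there (if it were not, enlarging it inside $\p$ and exponentiating would produce a strictly larger split torus, using that $\ad(X)$ for $X\in\p$ is symmetric for $B_\theta$ and that the centralizer of such a subspace meets $\k$ only in compactly embedded, hence non-hyperbolic, elements). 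With these remarks your proof is complete and self-contained, which is more than the paper offers.
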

\begin{proof}
See \cite[Sec. 8C]{Mor}.
\end{proof}

\subsection{Classifying non-Euclidean HSM  via Lie algebras}\label{SS:HSMLiealg}

The aim of this subsection is to classify non-Euclidean HSM in terms of Lie algebras data.

Let $M=(M,J,h)$ be a non-Euclidean HSM with a fixed base point $o\in M$. We shorten the notation used in \S\ref{SS:HSMLiegrp} by setting $G:=\Aut(M)^o$ and $K:=\Stab(o)$.
Let $\g=\Lie(G)$ be the real Lie algebra of $G$ (which is semisimple by Theorem \ref{T:LieHerm}\eqref{T:LieHerm1}) and let $\theta=d\sigma: \g\to \g$ be the involution of $\g$ given by the differential of the involution $\sigma$ of $G$.
Denote by $\k$ (resp. $\p$) the eigenspace for $\theta$ relative to the eigenvalue $+1$ (resp. $-1$). Since $\theta^2=\id$, we have a direct sum decomposition
\begin{equation}\label{E:decompo}
\g=\k\oplus \p
\end{equation}
in such a way that the Lie bracket $[,]$ of $\g$ satisfies
\begin{equation}\label{E:bracket}
[\k,\k]\subseteq \k, \hspace{0.4cm} [\k,\p]\subseteq \p, \hspace{0.4cm} [\p,\p]\subseteq \k.
\end{equation}
From Theorem \ref{T:LieHerm}\eqref{T:LieHerm1}, it follows that the Lie subalgebra $\k\subset \g$ is equal to  the Lie subalgebra $\Lie(K)\subset \Lie(G)$ of the
Lie subgroup $K\subset G$. In particular, since $K$ is a compact Lie group, it follows that $\k$ is a compactly embedded subalgebra of $\g$ (in the sense of \cite[p. 130]{Hel}).
Therefore, the pair $(\g,\theta)$ above defined is a special case of an \emph{effective orthogonal symmetric Lie algebra} in the sense of \cite[Chap. V, \S 1]{Hel}.

Moreover, the $G$-equivariant diffeomorphism $G/K\stackrel{\cong}{\to} M$ of Theorem \ref{T:LieHerm}\eqref{T:LieHerm1} induces a canonical identification $\p\cong T_o M$, where $T_o M$ is the tangent space of $M$ at $o\in M$. The above identification, together with the complex structure $J$ on $M$, induces a complex structure $J_o$ on $\p$. We extend $J_o$ to a linear operator on $\g$ by setting:
\begin{equation}\label{E:operJ}
J=\begin{cases}
0 & \text{ on } \k, \\
J_o & \text{ on } \p.
\end{cases}
\end{equation}
Using the fact that the complex structure $J$ on $M$ is compatible with the Hermitian metric $h$ on $M$, one can prove the following (see \cite[Chap. II, \S 3]{Sat})

\begin{lemma}\label{L:J-oper}
$J$ is a derivation of $\g$.
\end{lemma}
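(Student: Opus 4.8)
The statement to prove is that the operator $J$ on $\g$ defined in \eqref{E:operJ} — zero on $\k$ and equal to the complex structure $J_o$ on $\p$ — is a derivation, i.e. $J[X,Y]=[JX,Y]+[X,JY]$ for all $X,Y\in\g$. The natural strategy is to split into cases according to the decomposition $\g=\k\oplus\p$ from \eqref{E:decompo} and use the bracket relations \eqref{E:bracket}, reducing everything to an identity purely about the $\k$-action on $\p$ together with the compatibility of $J_o$ with the Hermitian metric $h$ (equivalently the Riemannian metric $g$) at $o$.

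First I would set up the three cases. For $X,Y\in\k$: then $JX=JY=0$, and $[X,Y]\in\k$ so $J[X,Y]=0$; the identity holds trivially. For $X\in\k$, $Y\in\p$: then $JX=0$ and $[X,Y]\in\p$, so the derivation identity reduces to $J[X,Y]=[X,JY]$, i.e. the operator $\ad(X)|_\p$ commutes with $J_o$ for every $X\in\k$. For $X,Y\in\p$: then $[X,Y]\in\k$ so $J[X,Y]=0$, while $JX,JY\in\p$ so $[JX,Y],[X,JY]\in\k$; the identity becomes $[J_oX,Y]+[X,J_oY]=0$ in $\k$.

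The real content is the second case; the third will follow from it by a polarization-type argument. To handle the second case, I would recall that $\ad(X)|_\p$ for $X\in\k$ is exactly the isotropy representation of $K$ on $T_oM\cong\p$, and that $K$ acts by holomorphic isometries of $(M,J,h)$ (since $K=\Stab(o)\subseteq\Aut(M,J,h)$). Differentiating the relation $\phi^*J=J$ at $o$ for $\phi\in K$ gives that each $d\phi_o\in\GL(T_oM)$ is complex-linear, hence $\ad(X)|_\p$ is complex-linear, i.e. $[\ad(X),J_o]=0$ on $\p$. This is precisely what Satake carries out in \cite[Chap. II, \S 3]{Sat}. For the third case, I would use that $[\p,\p]\subseteq\k$ together with the $K$-invariance of the metric: the form $(X,Y)\mapsto B([X,Y],Z)$ (for $B$ the Killing form, $Z\in\k$) is, up to sign, the form $\omega_o$ or a related invariant $2$-form on $\p$, and the identity $[J_oX,Y]+[X,J_oY]=0$ in $\k$ is equivalent, after pairing with $Z\in\k$ and using $\ad$-invariance of $B$, to $B([X,Y],J_oZ)$-type relations that follow from the case-two commutation $[\ad(Z),J_o]=0$ and the antisymmetry built into $\k$ acting on $\p$. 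Concretely: for $X,Y\in\p$ and $Z\in\k$, $B([J_oX,Y]+[X,J_oY],Z)=-B(Y,[J_oX,Z])-B(X,[J_oY,Z]) = -B(Y,J_o[X,Z])-B(X,J_o[Y,Z])$ using case two, and then the fact that $J_o$ is skew-symmetric with respect to the restriction of $-B$ to $\p$ (which holds because $-B|_\p$ is, up to the positive scalar identifying it with $g_o$, a $J_o$-compatible inner product) collapses this to $B([X,Z],J_oY)+B([Y,Z],J_oX)$, and one checks the two expressions cancel; since $B$ is non-degenerate on the semisimple $\g$ and $[\p,\p]\subseteq\k$ with $B$ non-degenerate on $\k$, this forces $[J_oX,Y]+[X,J_oY]=0$.

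**Main obstacle.** The only genuinely delicate point is case two: proving that the isotropy action of $\k$ on $\p$ commutes with $J_o$. This is where one actually uses that $M$ is \emph{Hermitian} (not merely Riemannian) symmetric — it encodes that the complex structure is $\Aut(M)^o$-invariant, which at the infinitesimal level forces $J$ to be $\ad(\k)$-equivariant. Once that is in hand, cases one and three are bookkeeping with \eqref{E:bracket} and the invariance/skew-symmetry properties of the Killing form on $\p$ and $\k$. I would therefore organize the write-up as: reduce to the three cases; dispatch case one; prove case two from $K$-invariance of $J$; deduce case three by the Killing-form pairing argument above.
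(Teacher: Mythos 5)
Your proof is correct and is essentially the argument the paper delegates to Satake \cite[Chap.~II, \S 3]{Sat}: the paper itself gives no proof of this lemma, and the standard route is exactly your three-case reduction, with the $(\k,\p)$ case coming from the $\bbC$-linearity of the isotropy representation $\Ad(K)|_{\p}$ (differentiated to $[\ad(X)|_{\p},J_o]=0$ for $X\in\k$) and the $(\p,\p)$ case obtained by pairing against $\k$ with the Killing form. Your bracket manipulations in the third case check out.

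The one step you state too casually is the skew-adjointness of $J_o$ with respect to $B|_{\p}$. Your parenthetical justification --- that $-B|_{\p}$ equals $g_o$ up to a positive scalar --- is literally false in general: for a reducible $M$ the proportionality constant can differ from factor to factor, and for non-compact type the sign is the opposite of what you wrote. Neither defect is fatal, because $J_o$ preserves each factor $\p_i$ of the decomposition $\g=\bigoplus_i\g_i$ coming from Theorem \ref{T:decom}, and on each irreducible factor $B|_{\p_i}$ \emph{is} proportional to $g_o|_{\p_i}$ (the isotropy representation on $\p_i$ is $\bbR$-irreducible, so Schur applies), whence $J_o$ is skew for $B|_{\p_i}$ regardless of the sign or size of the constant; summing over $i$ gives skewness on all of $\p$. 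You should either insert this reduction to the irreducible case explicitly, or replace the proportionality claim by any other proof that $B(J_ou,J_ov)=B(u,v)$ on $\p$; as written, the parenthetical would not survive a careful referee even though the lemma does.
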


Since $\g$ is semisimple, each derivation on $\g$ is inner (see \cite[Prop. 1.98]{Kna}) and the center of $\g$ is trivial; therefore, $J=\ad H$ for a unique element $H\in \g$.
Using \eqref{E:operJ}, it follows that $H$ belongs to the center $Z(\k)$ of $\k$.

The properties of the triple $(\g, \theta,H)$ are summarized in the following definition, which is a slight adaptation of \cite[p. 54]{Sat}.

\begin{defi}\label{D:symLie}
A \emph{Hermitian symmetric Lie algebra} (or, for short, a \textbf{Hermitian SLA}) is a triple $(\g,\theta, H)$ consisting a semisimple real Lie algebra $\g$, an involution $\theta:\g \to \g$ whose associated decomposition
$\g=\k\oplus \p$ into eigenvalues for $+1$ and $-1$ is such that $\k$ is a compactly embedded subalgebra of $\g$ and an element $H\in Z(\k)$ such that $\ad(H)^2_{|\p}=-\id_{\p}$.

A Hermitian SLA $(\g,\theta, H)$ is said to be \emph{irreducible}  if the complexification $\g_{\bbC}$ of $\g$ is a simple complex Lie algebra.

A Hermitian SLA $(\g,\theta, H)$ is said to be
\begin{enumerate}[(i)]
\item \emph{of compact type}  if $\g$ is a compact Lie algebra, i.e. its Killing form $B$ is negative definite;
\item \emph{of non-compact type}  if $\g$ does not contain compact simple factors and $\theta$ is a Cartan involution of $\g$, i.e. $B$ is negative definite on $\k$ and positive definite on $\p$.
\end{enumerate}

\end{defi}

We have seen before how to associate to a non-Euclidean HSM $M$ a Hermitian SLA $(\g, \theta,H)$.
Indeed, this construction is bijective and it is compatible with the decomposition of HSMs as in Theorem \ref{T:decom}.

\begin{thm}\label{T:HSMLiealg}
There is a bijection
\begin{equation}\label{E:bijeHSM}
\left\{\text{non-Euclidean HSMs}\right\} \stackrel{\cong}{\longrightarrow} \left\{\text{Hermitian SLAs}\right\}
\end{equation}
obtained by sending a non-Euclidean Hermitian symmetric manifold $M$ to its associated Hermitian SLA $(\g, \theta,H)$.

\begin{enumerate}[(i)]
\item \label{T:HSMLiealg1} If $M=M_1\times \cdots \times M_r$ is the decomposition of $M$ into irreducible HSMs and $(\g_i,\theta, H_i)$ is the Hermitian SLA associated to $M_i$, then
\begin{equation}\label{E:dec-HerLA}
(\g, \theta, H)=(\g_1,\theta_1,H_1)\oplus \cdots \oplus (\g_r,\theta_r, H_r),
\end{equation}
meaning that $\g$ decomposes a direct sum of ideals $\g=\g_1\oplus \cdots \oplus \g_r$,  $\theta$ is the unique involution on $\g$ that preserves the above decomposition and such that
$\theta_{|\g_i}=\theta_i$ and $H=H_1+ \cdots + H_r$.

\item \label{T:HSMLiealg2} $M$ is an irreducible HSM if and only if $(\g, \theta, H)$ is irreducible.

\item \label{T:HSMLiealg3} $M$ is of non-compact type if and only if $(\g,\theta, H)$ is of non-compact type.

\item \label{T:HSMLiealg4} $M$ is of compact type if and only if $(\g,\theta, H)$ is of compact type.

\end{enumerate}
\end{thm}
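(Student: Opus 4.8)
The plan is to complement the assignment $M\mapsto(\g,\theta,H)$ constructed above in this subsection with an explicit inverse, and then to verify that the two are mutually inverse and compatible with the refinements \eqref{T:HSMLiealg1}--\eqref{T:HSMLiealg4}. First I would record that the forward assignment is well defined on isomorphism classes: Theorem~\ref{T:LieHerm}\eqref{T:LieHerm1} together with Lemma~\ref{L:J-oper} and $Z(\g)=0$ shows that $(\g,\theta,H)$ is a Hermitian SLA, and replacing the base point $o$ by $g\cdot o$ conjugates all the data by $\Ad(g)$, so it changes $(\g,\theta,H)$ only up to isomorphism. For the inverse, given a Hermitian SLA $(\g,\theta,H)$ I would set $G:=\Aut(\g)^o$, the adjoint group (connected, semisimple, centerless); let $\sigma\in\Aut(G)$ be conjugation by $\theta$, so $d\sigma=\theta$; and let $K:=\Fix(\sigma)^o$, the connected subgroup with $\Lie(K)=\k$. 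Since $\k$ is compactly embedded, $K$ is compact and closed, so $M:=G/K$ is a manifold; put $o:=eK$ and identify $T_oM\cong\p$ via \eqref{E:decompo}.

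Next I would equip $G/K$ with the required structure. Because $H\in Z(\k)$, the operator $J_o:=\ad(H)_{|\p}$ commutes with $\Ad(K)_{|\p}$ and, by the defining property $\ad(H)^2_{|\p}=-\id_{\p}$, is a complex structure on $\p$; hence it extends to a $G$-invariant almost complex structure $J$ on $M$. For integrability the point is that the $(+i)$-eigenspace $\p^+\subseteq\p_{\bbC}$ of $J_o$ is abelian: by \eqref{E:bracket} one has $[\p^+,\p^+]\subseteq\k_{\bbC}$, on which $\ad(H)$ vanishes (as $H\in Z(\k)$), whereas the Jacobi identity gives $\ad(H)[X,Y]=[iX,Y]+[X,iY]=2i[X,Y]$ for $X,Y\in\p^+$; hence $[\p^+,\p^+]=0$, and the homogeneous form of the Newlander--Nirenberg criterion (Theorem~\ref{T:NN}) shows $J$ is integrable. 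The Killing form $B$ of $\g$ restricts to an $\Ad(K)$-invariant and $J_o$-invariant bilinear form on $\p$ (invariance under $J_o$ because $\ad(H)$ is $B$-skew), which is definite on $\p$; taking $B_{|\p}$ when $\theta$ is a Cartan involution and $-B_{|\p}$ when $\g$ is compact yields a positive definite $J_o$-compatible form, hence a $G$-invariant Hermitian metric $h$ on $M$. Finally $\sigma$ descends to $s_o\colon gK\mapsto\sigma(g)K$, which fixes $o$, acts as $-\id$ on $T_oM\cong\p$ and preserves $J$ and $h$ (since $B$ is $(-\id)$-invariant and $J_o$ commutes with $-\id$); by homogeneity of the $G$-action it is a symmetry at $o$, so $(M,J,h)$ is an HSM, non-Euclidean because $\g$ is semisimple (so the abelian Euclidean factor of Theorem~\ref{T:decom} is trivial).

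Then I would check that the two constructions are mutually inverse. Starting from an HSM $M$ and passing to $(\g,\theta,H)$ and back, Theorem~\ref{T:LieHerm}\eqref{T:LieHerm2} identifies $G/K$ with $M$ equivariantly, and the complex and Hermitian structures transported back agree with $J$ and $h$ by construction, so $M$ is recovered. Conversely, starting from $(\g,\theta,H)$, building $M=G/K$ and passing to its Hermitian SLA recovers $(\g,\theta,H)$ once one knows $\Aut(M)^o=G$ and $\Stab(o)=K$: indeed $\Aut(M)^o$ is semisimple adjoint by Theorem~\ref{T:LieHerm}\eqref{T:LieHerm1} and must therefore coincide with the identity component of the holomorphic isometry group of the symmetric space $G/K$, which is the adjoint group $G$; then $\Stab(o)$ contains $\Fix(\sigma)^o=K$ with the same Lie algebra and is compact, hence equals $K$, and $\theta=d\sigma$ and $H$ (the unique element of $Z(\k)$ with $\ad H=J$) are recovered formally. \textbf{I expect the main obstacle to be precisely this last identification together with the integrability step:} both rest on structural input --- on the description of the isometry group of a (Hermitian) symmetric space of an adjoint group, and on the bracket relations \eqref{E:bracket} of an effective orthogonal symmetric Lie algebra --- rather than on formal manipulation.

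It remains to treat the refinements. For \eqref{T:HSMLiealg1}, both constructions are visibly natural under products, so it suffices to see that the canonical decomposition of $\g$ into simple ideals is $\theta$-stable with $H$ respecting it: a priori $\theta$ permutes the simple ideals, but it must fix each, for if it swapped two of them, say $\g'$ and $\g''$, then $\k\cap(\g'\oplus\g'')$ would be the diagonal copy of $\g'$, with trivial center, so $H$ would vanish there and $\ad(H)_{|\p}$ would fail to be invertible; hence $\g=\bigoplus_i\g_i$ with $\theta(\g_i)=\g_i$ and $H=\sum_iH_i$, $H_i\in Z(\k_i)$ where $\k_i:=\k\cap\g_i$, giving $G=\prod_iG_i$, $K=\prod_iK_i$, $M=\prod_iM_i$ and \eqref{E:dec-HerLA}. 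For \eqref{T:HSMLiealg2}, by \eqref{T:HSMLiealg1} and Theorem~\ref{T:decom}, $M$ is irreducible iff there is a single factor with $\g$ simple; and a simple real Lie algebra underlying a Hermitian SLA is absolutely simple (the only alternative, a complex simple Lie algebra regarded over $\bbR$, is excluded because its maximal compact subalgebra is a compact real form with trivial center, leaving no room for $H$), i.e. $\g_{\bbC}$ is simple, which is the definition of irreducibility of $(\g,\theta,H)$. For \eqref{T:HSMLiealg3}--\eqref{T:HSMLiealg4}, reduce to the irreducible case by \eqref{T:HSMLiealg1} and combine Remark~\ref{R:curva} with the symmetric-space curvature formula --- the sectional curvature along $X,Y\in\p$ equals, up to sign, $\|[X,Y]\|^2$ --- to match ``$M$ of non-compact type'' with ``seminegative curvature'' with ``$B_{|\p}>0$, $B_{|\k}<0$'' with ``$\theta$ a Cartan involution and $\g$ without compact factors'', and dually ``$M$ of compact type'' with ``$\g$ compact''; alternatively one may simply invoke Theorems~\ref{T:Lie-irr} and~\ref{T:LieHerm}.
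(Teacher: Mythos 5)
Your proposal is correct and follows essentially the same route as the paper: both prove the theorem by constructing the inverse of \eqref{E:bijeHSM}, building $G/K$ from the Lie-algebra data after reducing to irreducible Hermitian SLAs. The main difference is one of self-containedness. Where the paper invokes Helgason's classification of irreducible orthogonal symmetric Lie algebras into four types and rules out Types II and IV by the existence of $H$, you obtain the same exclusions by direct arguments (if $\theta$ swapped two simple ideals, or if $\g$ were a complex simple algebra viewed over $\bbR$, the relevant fixed subalgebra would be simple with trivial center, leaving no room for $H$); and where the paper cites Helgason for the fact that $(G/K,J,g)$ is a HSM, you verify integrability via $[\p_+,\p_+]=0$ together with Newlander--Nirenberg and build the metric from $\pm B_{|\p}$ by hand. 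One presentational caveat: the sign choice $\pm B_{|\p}$ only makes sense after decomposing into irreducible (hence purely compact or purely non-compact) factors, since $B_{|\p}$ is indefinite on a mixed Hermitian SLA; the paper sidesteps this by establishing the decomposition Claim \emph{before} constructing the inverse, and your argument should be reordered accordingly, using the decomposition you prove in part \eqref{T:HSMLiealg1}.
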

\begin{proof}
Part \eqref{T:HSMLiealg1} follows from \cite[Prop. 4.4, Prop. 5.5]{Hel}.

According to \cite[Prop. 5.5, Thm. 5.3, Thm. 5.4]{Hel}, $M$ is irreducible if and only if $(\g, \theta)$ belongs to one of the following four types:
\begin{enumerate}
\item \un{Type I}: $\g$ is a simple compact Lie algebra;
\item \un{Type II}: $\g$ is a compact Lie algebra which is the sum of two simple ideals $\g=\g_1\oplus \g_2$ which are interchanged by $\theta$;
\item \un{Type III}: $\g$ is a simple non-compact Lie algebras such that $\g_{\bbC}$ is simple;
\item \un{Type IV}: $\g$ is a simple complex Lie algebra (regarded as a real Lie algebra).
\end{enumerate}
Moreover, the existence of a non-trivial element $H\in Z(\k)$ rules out Type II and Type IV (see \cite[p. 518]{Hel}).
The remaining cases (Type I and Type III) are exactly those cases for which $\g_{\bbC}$ is a simple complex Lie algebra. Part \eqref{T:HSMLiealg2} follows.

Parts \eqref{T:HSMLiealg3} and \eqref{T:HSMLiealg4} follow easily from \eqref{T:HSMLiealg2} and the Definitions \ref{D:typeHSM} and  \ref{D:symLie}.

It remains to prove that the map from non-Euclidean HSM to Hermitian SLAs is bijective. In order to prove that, we will construct the inverse map of \eqref{E:bijeHSM}.

First of all, we have the following

\un{Claim:}  Any Hermitian SLA admits a decomposition (as in \eqref{E:dec-HerLA}) into the direct sum of irreducible Hermitian SLAs.

Indeed, given a Hermitian SLA $(\g,\theta,H)$, using \cite[Prop. 5.2, Thm. 5.3, Thm. 5.4]{Hel}, we can write $\g$ as the direct sum of ideals
$$ \g=\g_1\oplus \cdots \oplus \g_r,$$
in such a way that $\theta$ preserves this decomposition and each factor $(\g_i,\theta_i:=\theta_{|\g_i})$ belongs to one of the four Types mentioned before.
The element $H$ can be written as a sum $H=H_1+\cdots + H_r$  in such a way that $(\g_i, \theta_i, H_i)$ is a Hermitian SLA.
As observed before, the existence of such an element $H_i\in Z(\k_i)$ forces $(\g_i,\theta_i)$ to be of Type I or Type III, so that each $(\g_i,\theta_i,H_i)$ is irreducible, q.e.d.

\vspace{0.1cm}

Using the Claim and part \eqref{T:HSMLiealg2}, it is now enough to construct an inverse of \eqref{E:bijeHSM} for irreducible Hermitian SLAs.

Let $(\g,\theta,H)$ be an irreducible Hermitian SLA and assume first that it is of non-compact type. Let $G$ the unique connected adjoint Lie group
with $\Lie(G)=\g$ and let $K$ be the unique Lie subgroup of $G$ corresponding to the Lie subalgebra $\k\subset \g$. Since $\theta$ is a Cartan involution of $\g$ and $\g$ is simple, we deduce that
$G$ is a simple non-compact Lie group and $K$ is a maximal compact Lie subgroup of $G$. On the quotient manifold $G/K$ (with base point $o=[e]$) , consider the unique $G$-invariant almost complex
structure $J$  such that $J_o$ is equal to $\ad(H)_{|\p}$ via the identification $T_oM\cong \p$ and the unique $G$-invariant Riemannian metric $g$ such that $g_o$ is equal to the Killing form of $\g$ restricted to
$\p$. It follows from \cite[Chap. VIII, Prop. 4.2]{Hel} that $(G/K, J, g)$ is a HSM, which is irreducible of non-compact type by Theorem \ref{T:Lie-irr}. Moreover, it is easy to check that the Hermitian SLA associated to  $(G/K, J, g)$ is the Hermitian SLA $(\g, \theta, H)$ we started with, and we are done.

The case where $(\g, \theta, H)$ is irreducible of compact type can be dealt with similarly and therefore it is left to the reader.

\end{proof}

\begin{remark}\label{R:funHSM-SLA}
The bijection \eqref{E:bijeHSM} becomes an equivalence of categories if the two sets are endowed with the following morphisms:
\begin{enumerate}[(i)]
\item A \emph{symmetric} (or equivariant) morphism between two pointed non-Euclidean HSMs $(M,o)$ and $(M',o')$ is a pointed holomorphic map $\phi:(M,o)\to (M',o')$ such that
$$\phi \circ s_p=s'_{\phi(p)}\circ \phi  \: \text{ for any } p\in M,$$
where $s_p$ is the symmetry of $M$ at $p\in M$ and $s'_{\phi(p)}$ is the symmetry of $M'$ at $\phi(p)\in M'$.
\item A morphism of Hermitian SLAs (also called a $H_1$-morphism) between two Hermitian SLAs $(\g,\theta,H)$ and $(\g',\theta',H')$ is a morphism of Lie algebras $\rho:\g\to \g'$ such that
$$
\begin{aligned}
& \rho\circ \theta = \theta'\circ \rho, \\
& \rho\circ \ad(H)=\ad(H')\circ \rho.\\
\end{aligned}
$$
\end{enumerate}
See \cite[Chap. II, \S8]{Sat} and \cite[Chap. III, \S2.2]{AMRT}.

\end{remark}

The correspondence in Theorem \ref{T:HSMLiealg} together with the classification of irreducible HSMs recalled in \S\ref{SS:HSMLiegrp} gives a classification of irreducible Hermitian SLAs.
We record the list of irreducible classical Hermitian SLAs of non-compact type (resp. of compact type) into Table \ref{F:SLA1} (resp. Table \ref{F:SLA2}). Moreover, to each irreducible Hermitian SLA
$(\g,\theta,H)$ of non-compact type in Table \ref{F:SLA1}, its dual Hermitian SLA of compact type (as defined in \S\ref{SS:dual}) is denoted by $(\g^*,\theta^*,H^*)$ in Table \ref{F:SLA2}.
We refer to \S\ref{S:irrHSM} for more details.

\begin{table}[!ht]
\begin{tabular}{|c|c|c|c|c|c|}
\hline
Type  & Lie algebra $\g$ & Involution $\theta$ & Central element $H\in Z(\k)$ \\

\hline \hline
$I_{p,q} $ & $\mathfrak{su}(p,q)$
& $\theta \begin{pmatrix}  Z_1 & Z_2 \\  \ov{Z_2}^t & Z_3  \end{pmatrix}=\begin{pmatrix}Z_1 & -Z_2 \\ -\ov{Z_2}^t & Z_3\end{pmatrix} $
& $i \begin{pmatrix} \frac{q}{p+q} I_p & 0\\ 0 & \frac{-p}{p+q} I_q\end{pmatrix}$  \\
\hline

$II_n$ & $\cong \mathfrak{so}^*(2n)$ &
 $ \theta \begin{pmatrix}  Z_1 &Z_2 \\ \ov{Z}_2^t & - Z_1^t  \end{pmatrix}= \begin{pmatrix}  Z_1 &-Z_2 \\ -\ov{Z}_2^t & - Z_1^t  \end{pmatrix} $
 & $\frac{i}{2} \begin{pmatrix} I_n & 0\\ 0 & - I_n\end{pmatrix} $  \\
\hline

$III_n $ & $\cong \mathfrak{sp}(n,\bbR)$
& $ \theta \begin{pmatrix}  Z_1 &Z_2 \\ \ov{Z}_2^t & - Z_1^t  \end{pmatrix}= \begin{pmatrix}  Z_1 &-Z_2 \\ -\ov{Z}_2^t & - Z_1^t  \end{pmatrix}$
 & $\frac{i}{2} \begin{pmatrix} I_n & 0\\ 0 & - I_n \end{pmatrix}$  \\
\hline

$IV_n $ & $\cong \mathfrak{so}(2,n) $
& $\theta  \begin{pmatrix} X_1 & iX_2 \\ -iX_2^t & X_3 \end{pmatrix}=\begin{pmatrix} X_1 & - iX_2 \\  i X_2^t & X_3\end{pmatrix} $
 & $ \begin{pmatrix} 0 & 0 \\ 0 & J_1 \end{pmatrix}$  \\
\hline

\end{tabular}
\caption{Irreducible classical Hermitian SLAs of non-compact type.}\label{F:SLA1}
\end{table}

\begin{table}[!ht]
\begin{tabular}{|c|c|c|c|c|c|}
\hline
Type  & Lie algebra $\g^*$ & Involution $\theta^*$ & Central element $H^*\in Z(\k)$ \\
\hline \hline
$I_{p,q} $ & $\mathfrak{su}(p+q)$
& $\theta^* \begin{pmatrix}  Z_1 & Z_2 \\  -\ov{Z_2}^t & Z_3  \end{pmatrix}=\begin{pmatrix}Z_1 & -Z_2 \\ \ov{Z_2}^t & Z_3\end{pmatrix} $
& $i  \begin{pmatrix}\frac{q}{p+q} I_p & 0\\  0 & \frac{-p}{p+q} I_q \end{pmatrix}$  \\
\hline

$II_n$ & $\cong \mathfrak{so}(2n)$
& $ \theta^* \begin{pmatrix}  Z_1 & Z_2 \\ - \ov{Z_2}^t & -Z_1^t  \end{pmatrix}=\begin{pmatrix}Z_1 & -Z_2 \\ \ov{Z_2}^t & - Z_1^t\end{pmatrix} $
 &$ \frac{i}{2} \begin{pmatrix} I_n & 0\\ 0 & - I_n\end{pmatrix} $  \\
\hline

$III_n $ & $\mathfrak{sp}(n)$
& $\theta^* \begin{pmatrix}  Z_1 & Z_2 \\  -\ov{Z_2}^t & -Z_1^t  \end{pmatrix} =\begin{pmatrix}  Z_1 & -Z_2 \\  \ov{Z_2}^t & -Z_1^t  \end{pmatrix} $
 & $\frac{i}{2} \begin{pmatrix} I_n & 0\\ 0 & -I_n \end{pmatrix}$  \\
\hline

$IV_n $ & $\mathfrak{so}(2+n) $
& $\theta^*  \begin{pmatrix} X_1 & X_2 \\- X_2^t & X_3 \end{pmatrix}=\begin{pmatrix} X_1 & -  X_2 \\  X_2^t & X_3\end{pmatrix} $
 & $ \begin{pmatrix} 0 & 0 \\ 0 & J_1 \end{pmatrix}$  \\
\hline

\end{tabular}
\caption{Irreducible classical Hermitian SLAs of compact type.}\label{F:SLA2}
\end{table}

The rank of a non-Euclidean HSM as defined in Definition \ref{D:rank} can be read off from the associated Hermitian SLA as it follows.

\begin{prop}\label{P:rank-SLA}
Let $M$ be a non-Euclidean HSM with base point $o\in M$ and let $(\g, \theta, H)$ its associated  Hermitian SLA. Consider the decomposition $\g=\k\oplus \p$ as in \eqref{E:decompo}.
Then any two maximal abelian Lie subalgebras of $\g$ contained in $\p$ are  conjugate by an element of $\Stab(o)\subset \Aut(M)^o$, acting on $\g$ via the adjoint representation, and their
common dimension is equal to the  the rank of $M$.
\end{prop}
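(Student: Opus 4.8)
The strategy is to split the statement of Proposition~\ref{P:rank-SLA} into two assertions: (A) any two maximal abelian subspaces of $\p$ are conjugate under $\Ad(\Stab(o))$; and (B) their common dimension equals the rank of $M$ in the sense of Definition~\ref{D:rank}. Write $K=\Stab(o)$, $G=\Aut(M)^o$, and for $H\in\p$ put $Z_\p(H):=\{X\in\p:[X,H]=0\}$. Two preliminary facts will be used throughout. First, $\Ad(k)$ preserves the decomposition~\eqref{E:decompo} for every $k\in K$: by Theorem~\ref{T:LieHerm}\eqref{T:LieHerm3} the symmetry $s_o$ lies in the centre of $K$, so $s_o$ commutes with $k$ and $k^{-1}$, whence $\sigma(kgk^{-1})=s_okgk^{-1}s_o=k\sigma(g)k^{-1}$; differentiating at $e$ gives $\theta\circ\Ad(k)=\Ad(k)\circ\theta$, so $\Ad(k)$ stabilises the eigenspaces $\k$ and $\p$, and $K$ acts on $\p$ by the isotropy representation. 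Second, the Killing form $B$ of $\g$ is negative definite on $\k$: since $\k$ is compactly embedded in the semisimple Lie algebra $\g$, each $\ad_\g(X)$ with $X\in\k$ is skew-symmetric for some $\Ad(K)$-invariant inner product on $\g$, hence normal with purely imaginary eigenvalues, so $B(X,X)=\Tr(\ad_\g(X)^2)\le 0$, with equality only if $\ad_\g(X)=0$, i.e. $X\in Z(\g)=0$.

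For (A) I would carry out the classical compactness argument of Cartan (see \cite[Chap.~V, \S6]{Hel}). Given maximal abelian subspaces $\mathfrak{a},\mathfrak{a}'\subseteq\p$, first choose regular elements $H\in\mathfrak{a}$ and $H'\in\mathfrak{a}'$, i.e.\ with $Z_\p(H)=\mathfrak{a}$ and $Z_\p(H')=\mathfrak{a}'$; such elements exist because maximality forces $Z_\p(\mathfrak{a})=\mathfrak{a}$, while by the finiteness of the restricted root system of $\g$ relative to $\ad(\mathfrak{a})$ the function $H\mapsto\dim Z_\p(H)$ attains its minimal value $\dim\mathfrak{a}$ on a dense open subset of $\mathfrak{a}$. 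Since $K$ is compact, the smooth function $k\mapsto B(\Ad(k)H,H')$ has a maximum at some $k_0\in K$; differentiating along $t\mapsto\exp(tX)k_0$ for $X\in\k$ and invoking the invariance of $B$ yields $B(X,[\Ad(k_0)H,H'])=0$ for all $X\in\k$. By~\eqref{E:bracket} the element $[\Ad(k_0)H,H']$ lies in $[\p,\p]\subseteq\k$, where $B$ is nondegenerate, so $[\Ad(k_0)H,H']=0$; thus $\Ad(k_0)H\in Z_\p(H')=\mathfrak{a}'$, hence $\mathfrak{a}'\subseteq Z_\p(\Ad(k_0)H)=\Ad(k_0)Z_\p(H)=\Ad(k_0)\mathfrak{a}$, and since $\Ad(k_0)\mathfrak{a}$ is again an abelian subspace of $\p$ and $\mathfrak{a}'$ is maximal, $\mathfrak{a}'=\Ad(k_0)\mathfrak{a}$. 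This proves (A) with conjugating element $k_0\in\Stab(o)$, and it shows in particular that all maximal abelian subspaces of $\p$ share a common dimension $\rho$.

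For (B), the point is that $G$ acts transitively on $M$ by Theorem~\ref{T:LieHerm}\eqref{T:LieHerm2}, so any flat totally geodesic submanifold of $M$ may be translated by $G$ to one through $o$; hence the rank of $M$ is the largest dimension of a flat totally geodesic submanifold through $o$. A submanifold $S\ni o$ is totally geodesic exactly when $\mathfrak{m}:=T_oS\subseteq\p$ is a Lie triple system, and, using the symmetric-space curvature formula $R_o(X,Y)Z=-[[X,Y],Z]$ on $\p$, relation~\eqref{E:bracket}, and the definiteness of $B$ on $\k$ (so that the sectional curvature of $S$ vanishes iff $[\mathfrak{m},\mathfrak{m}]=0$), such an $S$ is flat exactly when $\mathfrak{m}$ is abelian; conversely $\exp_o$ carries an abelian $\mathfrak{a}\subseteq\p$ to a flat totally geodesic submanifold of dimension $\dim\mathfrak{a}$. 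Taking $\mathfrak{a}$ maximal abelian produces such a submanifold of dimension $\rho$, and no abelian subspace of $\p$ has dimension exceeding $\rho$; therefore the rank equals $\rho$, which together with (A) gives the assertion. For the underlying differential geometry of totally geodesic submanifolds of symmetric spaces I would refer to \cite[Chap.~IV, \S7]{Hel} and \cite[Chap.~5]{Mok}.

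The step I expect to be the main obstacle, beyond the bookkeeping, is supplying full rigour for the geometric input to (B) — that a flat totally geodesic submanifold through $o$ has abelian tangent space and that $\exp_o$ of a maximal abelian subspace is a maximal flat — together with the existence of regular elements in (A), which rests on the simultaneous diagonalisability of $\ad(\mathfrak{a})|_\p$ and the finiteness of the restricted root system. All of this is routine in the theory of Riemannian symmetric spaces but requires a little structure theory, so in keeping with the survey nature of these notes I would state the identifications precisely and refer to \cite{Hel} for the verifications.
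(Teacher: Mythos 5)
Your argument is correct and is essentially the classical proof of Cartan that the paper itself invokes by citing \cite[Chap. V, Prop. 6.1 and Lemma 6.3]{Hel}: conjugacy via maximising $k\mapsto B(\Ad(k)H,H')$ over the compact group $\Stab(o)$, plus the identification of maximal flat totally geodesic submanifolds with maximal abelian subspaces of $\p$. The points you flag as needing extra care (existence of regular elements, the correspondence between abelian Lie triple systems and flats) are exactly those handled in the cited reference, so nothing essential is missing.
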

\begin{proof}
See \cite[Chap. V, Prop. 6.1 and Lemma 6.3]{Hel}.
\end{proof}

\subsection{Duality between compact and non-compact HSMs.}\label{SS:dual}

We are now going to define an involution on non-Euclidean HSMs that exchanges compact HSMs with non-compact HSMs.
The involution is defined most easily in terms of Hermitian SLAs, using the bijection of Theorem \ref{T:HSMLiealg}.

Given a Hermitian SLA $(\g,\theta,H)$, define a new Hermitian SLA $(\g,\theta,H)^*=(\g^*,\theta^*,H^*)$ as it follows.
The Lie algebra $\g^*$ is the subalgebra of the complexification $\g_{\bbC}$ given by
$$\g^*:=\k\oplus i\p\subseteq \g_{\bbC}=(\k\oplus \p)\oplus i(\k\oplus \p),$$
where as usual $\k$ and $\p$ are the eigenspaces for $\theta$ relative to the eigenvalues $+1$ and $-1$. Since $\g_{\bbC}=(\g^*)_{\bbC}$ and the property of being semisimple is
preserved by the complexification functor, it follows that $\g^*$ is a semisimple real Lie algebra.  The involution $\theta^*$ on $\g^*$ is defined by
$$\theta^*=
\begin{cases}
+1 & \text{ on } \k, \\
-1 & \text{ on } i\p.
\end{cases}
$$
In other words,  $\g^*=\k\oplus i\p$ is the decomposition of $\g^*$ into eigenspaces for $\theta^*$ relative to the eigenvalues $+1$ and $-1$.
Finally, we set
$$H^*:=H.$$
Note that $H^*=H\in Z(\k)$ and that $\ad(H^*)_{|i\p}=-\id_{i\p}$. Therefore, $(\g,\theta, H)^*$ is a Hermitian SLA, which is called the \emph{dual Hermitian SLA}
of $(\g,\theta,H)$.

\begin{thm}\label{T:dualLA}
The map (called the \emph{duality map})
\begin{equation*}\label{E:dualLA}
\begin{aligned}
\left\{\text{Hermitian SLAs}\right\} & \longrightarrow \left\{\text{Hermitian SLAs}\right\} \\
(\g,\theta,H) & \mapsto (\g, \theta, H)^*:=(\g^*,\theta^*,H^*)
\end{aligned}
\end{equation*}
is an involution which satisfies the following properties:
\begin{enumerate}[(i)]
\item \label{T:dualLA1} If
$$(\g,\theta,H)=(\g_1,\theta_1,H_1)\oplus \cdots \oplus (\g_r,\theta_r, H_r)$$
is the decomposition of $(\g,\theta, H)$ into irreducible Hermitian SLAs, then the dual Hermitian SLA $(\g,\theta,H)^*$
admits the following decomposition into irreducible Hermitian SLAs
$$(\g,\theta,H)^*=(\g_1,\theta_1,H_1)^*\oplus \cdots \oplus (\g_r,\theta_r, H_r)^*.$$
\item \label{T:dualLA2} $(\g,\theta,H)$ is of compact type (resp. of non-compact type) if and only if $(\g,\theta,H)^*$ is of non-compact type (resp. of compact type).
\end{enumerate}
\end{thm}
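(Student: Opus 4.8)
Since it has already been checked above that $(\g,\theta,H)^*$ is again a Hermitian SLA, the plan is to establish the three remaining assertions — that the duality map is an involution, property \eqref{T:dualLA1}, and property \eqref{T:dualLA2} — in that order, treating the first two by direct bookkeeping with the $\theta$- and $\theta^*$-eigenspace decompositions, and reducing the last, via \eqref{T:dualLA1}, to the irreducible case, where it comes down to a sign computation with the Killing form. For involutivity I would observe that $\g^*=\k\oplus i\p$ is a real form of $\g_\bbC$, so that $(\g^*)_\bbC=\g_\bbC$ canonically, and that the $\theta^*$-eigenspaces of $\g^*$ for $+1$ and $-1$ are $\k$ and $i\p$. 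Applying the construction once more then gives
\[
(\g^*)^* = \k\oplus i(i\p) = \k\oplus(-\p) = \k\oplus\p = \g \subseteq \g_\bbC = (\g^*)_\bbC,
\]
while $(\theta^*)^*$ is $+1$ on $\k$ and $-1$ on $i(i\p)=\p$, i.e. $(\theta^*)^*=\theta$, and $(H^*)^*=H^*=H$; hence $\big((\g,\theta,H)^*\big)^*=(\g,\theta,H)$.

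For \eqref{T:dualLA1} I would complexify the ideal decomposition $\g=\g_1\oplus\cdots\oplus\g_r$ to get $\g_\bbC=\bigoplus_i(\g_i)_\bbC$ and, using that $\theta$ preserves each $\g_i$, obtain $\k=\bigoplus_i\k_i$ and $\p=\bigoplus_i\p_i$ with $\k_i,\p_i$ the $\theta_i$-eigenspaces. Then $\g^*=\k\oplus i\p=\bigoplus_i(\k_i\oplus i\p_i)=\bigoplus_i\g_i^*$ is a decomposition of $\g^*$ into ideals (the $\g_i^*$ lie in distinct ideals of $\g_\bbC$, hence commute pairwise), it is preserved by $\theta^*$ with $\theta^*_{|\g_i^*}=\theta_i^*$, and $H^*=H=\sum_iH_i=\sum_iH_i^*$; moreover each $(\g_i,\theta_i,H_i)^*$ is irreducible since $(\g_i^*)_\bbC=(\g_i)_\bbC$ is simple. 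This is exactly the asserted decomposition of $(\g,\theta,H)^*$ into irreducible Hermitian SLAs.

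For \eqref{T:dualLA2} the key point is the behaviour of the Killing form: the Killing form of any real form of $\g_\bbC$ is the restriction of the $\bbC$-bilinear extension $B_\bbC$ of the Killing form $B$ of $\g$, and since $\theta$ is an automorphism $\k$ and $\p$ are $B$-orthogonal; consequently the Killing form $B^*$ of $\g^*=\k\oplus i\p$ satisfies $B^*_{|\k}=B_{|\k}$, $B^*_{|i\p}=-B_{|\p}$ (under the identification $v\mapsto iv$ of $\p$ with $i\p$), and $B^*(\k,i\p)=0$. Using \eqref{T:dualLA1} I would then reduce to an irreducible Hermitian SLA — legitimate because $\g$ is a compact Lie algebra (resp. has no compact simple factor and $\theta$ is Cartan) if and only if each irreducible summand does — and recall that for such a one $\g$, being a real form of the simple Lie algebra $\g_\bbC$, is simple, and hence so is $\g^*$. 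Now if $(\g,\theta,H)$ is of non-compact type, $B$ is negative definite on $\k$ and positive definite on $\p$, so $B^*$ is negative definite on all of $\g^*$: thus $\g^*$ is compact, i.e. $(\g,\theta,H)^*$ is of compact type. If instead $(\g,\theta,H)$ is of compact type, $B$ is negative definite on $\g$, so $B^*$ is negative definite on $\k$ and positive definite on $i\p$: thus $\theta^*$ is a Cartan involution of $\g^*$, and $\g^*$ cannot be compact ($B^*$ being positive definite on the nonzero subspace $i\p$; here $\p\neq0$, equivalently $H\neq0$), i.e. $(\g,\theta,H)^*$ is of non-compact type. Nothing here is deep; the one step requiring real care is the sign in $B^*_{|i\p}=-B_{|\p}$, obtained by tracking $i^2=-1$ through the complexification and the restriction to the real form $\g^*$, together with keeping the abstract Lie-algebra data consistent with its embedding in $\g_\bbC$ — and that sign is precisely what makes duality interchange the compact and non-compact types.
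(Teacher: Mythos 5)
Your proof is correct and follows essentially the same route as the paper's: the involution property and part \eqref{T:dualLA1} by direct bookkeeping with the $\theta$- and $\theta^*$-eigenspace decompositions, and part \eqref{T:dualLA2} via the duality between Cartan involutions and compact real forms. The only difference is that where the paper simply cites Helgason (Chap.\ III, Prop.\ 7.4) for the fact that $\theta$ is a Cartan involution of $\g$ if and only if $\g^*=\k\oplus i\p$ is compact, you prove that fact directly through the sign computation $B^*_{|i\p}=-B_{|\p}$ — which is precisely the content of the cited result.
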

\begin{proof}
The fact that the duality map is an involution follows immediately from the definition.

Part \eqref{T:dualLA1}  follows from easily from the definitions of the dual and of the direct sum of Hermitian SLAs, together with the observation that $(\g,\theta,H)$ is irreducible if and only if
$(\g,\theta,H)^*$ is irreducible since  $\g_{\bbC}=(\g^*)_{\bbC}$.

Part \eqref{T:dualLA2} follows from the well-know fact  that an involution $\theta$ on a semisimple real Lie algebra $\g$, with associated decomposition $\g=\k\oplus \p$  into $+1$ and $-1$ eigenvalues,
is a Cartan involution of $\g$ if and only if $\g^*=\k\oplus i\p$ is a compact (semisimple) Lie algebra (see e.g. \cite[Chap. III, Prop. 7.4]{Hel}).

\end{proof}

We can now define the dual of a non-Euclidean HSM, using the bijection of Theorem \ref{T:HSMLiealg}.

\begin{defi}\label{D:dualHSM}
Let $M$ be a non-Euclidean HSM whose associated Hermitian SLA is $(\g,\theta,s)$. The \emph{dual HSM}  of $M$, denoted by $M^*$, is the unique
non-Euclidean HSM whose associated Hermitian SLA is $(\g,\theta,H)^*$.
\end{defi}

From Theorem \ref{T:dualLA} and Theorem \ref{T:HSMLiealg}, we can deduce the following

\begin{cor}\label{C:dual-HSM}
The map (called the \emph{duality map})
\begin{equation*}
\begin{aligned}
\left\{\text{non-Euclidean HSMs}\right\} & \longrightarrow \left\{\text{non-Euclidean HSMs}\right\} \\
M & \mapsto M^*
\end{aligned}
\end{equation*}
is an involution which satisfies the following properties:
\begin{enumerate}[(i)]
\item \label{C:dual-HSM1} If
$$M =M_1\times \cdots \times M_r  $$
is the decomposition of $M$ into irreducible HSMs, then  $M^*$
admits the following decomposition into irreducible HSMs
$$M^*=M_1^*\times \cdots \times M_r^*.$$
\item \label{C:dual-HSM2} $M$ is of compact type (resp. of non-compact type) if and only if $M^*$ is of non-compact type (resp. of compact type).
\end{enumerate}
\end{cor}

\subsection{Harish-Chandra and Borel embeddings.}\label{SS:HC-B}

The aim of this subsection is to realize a given HSM of non-compact type as an open subset of a complex vector space (Harish-Chandra embedding) and as an open subset
of its dual HSM (Borel embedding), which  is also called its \emph{compact dual}.

Fix a HSM of non-compact type $M=(M,J,h)$ together with a base point $o\in M$. By Theorem \ref{T:LieHerm}\eqref{T:LieHerm2}, we have a diffeomorphism $M\cong G/K$ where $G=\Aut(M)^o$
and $K=\Stab(o)$.  Since $G$ is an adjoint semisimple Lie group (by Theorem \ref{T:LieHerm}\eqref{T:LieHerm1}), the adjoint representation $\Ad:G\to \GL(\g)$ is faithful.
Therefore $G$ admits a natural complexification $G_{\bbC}$ (in the sense of \cite[p. 437]{Kna}),
namely the complex connected (semisimple) Lie subgroup of $\GL(\g,\bbC)$ whose Lie algebra is the Lie subalgebra $\g_{\bbC}\stackrel{\ad}{\hookrightarrow}\gl(\g,\bbC)$.
Denote by $K_{\bbC}$ the unique connected Lie subgroup of $G_{\bbC}$ whose Lie algebra is
$\k_{\bbC}\subset \g_{\bbC}$. Note that $K_{\bbC}$ is a complex reductive Lie group which is a complexification of $K$.

Denote by $(\g,\theta,H)$ the Hermitian SLA associated to $M$, as in \S\ref{SS:HSMLiealg}.
Since $\ad(H)_{|\p}=-\id_{\p}$, the complexification $\p\otimes_{\bbR}\bbC$ admits a decomposition
$$\p_{\otimes \bbR}\bbC= \p_{+}\oplus \p_{-},$$
where $\p_{+}$ (resp. $\p_{-}$) is the eigenspace for $\ad(H)_{|\p}$ relative to the eigenvalue $+i$ (resp. $-i$). Therefore,  $\g_{\bbC}$ admits the following decomposition
\begin{equation}\label{E:dec-g}
\g_{\bbC}=\k_{\bbC}\oplus \p_+\oplus \p_{-}
\end{equation}
which satisfies the relations (see \cite[p. 53]{Sat}):
\begin{equation*}
[\k_{\bbC},\p_{\pm}]\subset \p_{\pm} \hspace{0.5cm} [\p_+,\p_-]\subset \k_{\bbC} \hspace{0.5cm} [\p_+,\p_+]=0 \hspace{0.5cm} [\p_-,\p_-]=0.
\end{equation*}
In particular,  $\p_+$ and $\p_-$ are abelian subalgebras of $\g_{\bbC}$ which are normalized by $\k_{\bbC}$. Denote by $P_+$ (resp. $P_-$) the connected Lie subgroups of $G_{\bbC}$ whose Lie algebra
is $\p_+\subset \g_{\bbC}$ (resp. $\p_-\subset \g_{\bbC}$). Then $P_+$ and $P_-$ are abelian unipotent Lie groups that are stabilized by $K_{\bbC}$. It turns out that $P_+$ and $P_-$ are simply connected so that  the exponential map
$$\exp: \p_{\pm}\to P_{\pm}$$
is a diffeomorphism (see \cite[Chap. VIII, Lemma 7.8]{Hel}). Moreover the multiplication map $(P_+\times P_-)\rtimes K_{\bbC}\to G_{\bbC}$ is injective and the image contains $G$.

Finally, let $G_c$ be the Lie subgroup of $G_{\bbC}$ corresponding to the Lie subalgebra $\g^*\subset (\g^*)_{\bbC}=\g_{\bbC}$. By Theorem \ref{T:dualLA} and Corollary \ref{C:dual-HSM}, $G_c$ is a compact Lie group containing $K$ such that
\begin{equation*}
M^*\cong G_c/K.
\end{equation*}
We can summarize the above discussion into the following commutative diagram
\begin{equation}\label{E:diag-emb}
\xymatrix{
G_{} \ar@{^{(}->}[r]& (P_+\times P_-)\rtimes K_{\bbC} \ar@{^{(}->}[r] & G_{\bbC} & G_c \ar@{_{(}->}[l]\\
K_{} \ar@{^{(}->}[r] \ar@{^{(}->}[u]&  P_-\rtimes K_{\bbC} \ar@{=}[r]  \ar@{^{(}->}[u]& P_-\rtimes K_{\bbC}   \ar@{^{(}->}[u]& K_{} \ar@{_{(}->}[l]  \ar@{_{(}->}[u]\\
}
\end{equation}

\begin{thm}\label{T:embed}
 By taking quotients  in \eqref{E:diag-emb}, we get a  diagram of complex manifolds
\begin{equation*}
\xymatrix{
M\cong G/K \ar@{^{(}->}[r] \ar@{^{(}->}[ddr]_{i_{HC}}& \left[(P_+\times P_-)\rtimes K_{\bbC}\right]/ \left(P_-\rtimes K_{\bbC}\right)    \ar@{^{(}->}[r]  & G_{\bbC}/\left(P_-\rtimes K_{\bbC}\right) & G_c/K\cong M^* \ar[l]^{\cong}_{\phi}\\
& P_+ \ar@{=}[u]& & \\
& \p_+ \ar[u]^{\cong}_{\exp} \ar@{^{(}->}_j[uur]& & \\
}
\end{equation*}
in which  $\phi$ is a biholomorphism, $i_{HC}$ is a holomorphic open embeddings and $j$ is a Zariski open embedding onto the homogeneous projective variety $G_{\bbC}/\left(P_-\rtimes K_{\bbC}\right)$
\end{thm}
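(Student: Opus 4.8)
The plan is to produce everything by passing to quotients in the group-level diagram \eqref{E:diag-emb} and then verifying the three claimed properties of the resulting maps. First I would explain why each of the quotients appearing in the target diagram is a complex manifold: $G_{\bbC}/(P_-\rtimes K_{\bbC})$ is a quotient of a complex Lie group by a closed complex (parabolic) subgroup, hence a complex homogeneous space; since $P_-\rtimes K_{\bbC}$ is parabolic (its Lie algebra $\p_-\oplus\k_{\bbC}$ contains a Borel because $\g_{\bbC}$ is simple and $\p_+$ is an abelian ideal complementary to it), the quotient is in fact a smooth projective variety $X$. The subgroup $(P_+\times P_-)\rtimes K_{\bbC}$ is the image of the injective multiplication map mentioned just before the theorem, so its quotient by $P_-\rtimes K_{\bbC}$ is naturally identified with $P_+\cong\p_+$ via $\exp$; thus $j\colon \p_+\hookrightarrow X$ is the inclusion of the big cell of a homogeneous projective variety, which is a Zariski-open (and in particular holomorphically open, dense) embedding. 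This handles the bottom row and the map $j$.

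Next I would treat the two ``side'' maps. For $i_{HC}$: the diffeomorphism $M\cong G/K$ from Theorem \ref{T:LieHerm}\eqref{T:LieHerm2} is a biholomorphism once $G/K$ carries the $G$-invariant complex structure determined by $J_o=\ad(H)_{|\p}$; composing $G/K\to G_{\bbC}/(P_-\rtimes K_{\bbC})=X$ with the identification of the image inside $P_+\cong\p_+$ gives $i_{HC}$. Injectivity and openness of $i_{HC}$ come from the facts quoted before the theorem: the multiplication map $(P_+\times P_-)\rtimes K_{\bbC}\to G_{\bbC}$ is injective with image containing $G$, so $G\cap(P_-\rtimes K_{\bbC})=K$ (the equality, not just containment, uses that $K\subseteq\Fix(\sigma)$ together with the fact that $P_\pm$ meet $G$ trivially because $\p_\pm\cap\g=0$), whence $G/K\hookrightarrow X$ is injective; it is an open map because $G/K$ and $X$ have the same real dimension and the differential at the base point is the inclusion $\p\hookrightarrow\p_+$ composed with the real-linear isomorphism $\p\xrightarrow{\sim}\p_+$, $v\mapsto \tfrac12(v-iJ_ov)$, which is bijective. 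Holomorphicity of $i_{HC}$ follows because it is $G$-equivariant and its differential at $o$ is complex-linear for $J_o$ by construction of the $\p_\pm$ decomposition. For $\phi$: the inclusion $G_c\hookrightarrow G_{\bbC}$ induces $G_c/K\to G_{\bbC}/(P_-\rtimes K_{\bbC})=X$; this is a holomorphic map from the compact complex manifold $M^*\cong G_c/K$ (Theorem \ref{T:dualLA}, Corollary \ref{C:dual-HSM}) to $X$, it is injective because $G_c\cap(P_-\rtimes K_{\bbC})=K$ by the same triviality-of-intersection argument (now inside the compact real form), and a dimension count shows $\dim_{\bbC} M^*=\dim_{\bbC}\p_+=\dim_{\bbC}X$; an injective holomorphic map between compact complex manifolds of the same dimension onto a connected manifold is a biholomorphism, so $\phi$ is an isomorphism, and $M^*$ inherits the structure of the projective variety $X$.

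Finally I would check commutativity of the whole diagram, which is immediate on representatives at the level of \eqref{E:diag-emb} — every square there commutes by construction — so passing to cosets preserves commutativity; in particular $i_{HC}=j\circ\exp^{-1}\circ(\text{top horizontal})$ realizes $M$ inside the affine cell $\p_+$, and composing with $\phi^{-1}$ realizes $M$ as an open subset of its compact dual $M^*$ (the Borel embedding). The main obstacle I anticipate is not any single hard computation but the careful bookkeeping needed to establish the two intersection identities $G\cap(P_-\rtimes K_{\bbC})=K$ and $G_c\cap(P_-\rtimes K_{\bbC})=K$ and the openness of $i_{HC}$ — i.e.\ verifying that the image of $G$ actually lands in, and surjects onto an open piece of, the big cell; this is exactly the content of \cite[Chap. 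VIII, \S7]{Hel} (the Harish-Chandra decomposition $G\subseteq P_+K_{\bbC}P_-$), and I would invoke that reference for the delicate part while spelling out the diagram-chase and the complex-structure compatibility myself.
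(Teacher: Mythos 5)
Your argument is correct and is essentially the standard proof from Helgason and Satake, which is exactly what the paper cites for this theorem (it gives no proof beyond the reference), down to invoking the same Harish--Chandra decomposition $G\subseteq P_+K_{\bbC}P_-$ for the delicate step. The one imprecision is calling $\p_+$ an ``abelian ideal'' — it is an ideal only of the opposite parabolic $\k_{\bbC}\oplus\p_+$, not of the simple algebra $\g_{\bbC}$ — but the conclusion that $P_-\rtimes K_{\bbC}$ is parabolic is right, and the remaining bookkeeping (the intersection identities $G\cap(P_-\rtimes K_{\bbC})=K$ and $G_c\cap(P_-\rtimes K_{\bbC})=K$, the complex-linearity of the differential at the base point, and the open-plus-closed compactness argument showing $\phi$ is a biholomorphism) is sound.
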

\begin{proof}
See \cite[Chap. VIII, \S 7]{Hel} or \cite[Chap. II, \S4]{Sat}.
\end{proof}
The embedding $i_{HC}$ is called the \emph{Harish-Chandra embedding} while the composition $\phi^{-1}\circ j \circ i_{HC}$ is called the \emph{Borel embedding}.
Harish-Chandra embeddings and Borel embeddings for each irreducible HSM will be studied in detail in \S\ref{S:irrHSM}.

\subsection{HSMs of non-compact type as bounded symmetric domains}\label{SS:domain}

The Harish-Chandra embedding $i_{HC}:M\hookrightarrow \p_+$ defined in \S\ref{SS:HC-B} allows to realize canonically a given HSM of non-compact type $M$ as a bounded symmetric domain.

\begin{defi}\label{D:BSD}
A \textbf{domain}  $D\subseteq \bbC^N$ (i.e. an open connected subset of $\bbC^N$) is said to be
\begin{enumerate}[(i)]
\item  \emph{bounded}  it is bounded as a subset of $\bbC^N$;
\item  \emph{homogeneous} if the group $\Hol(D)$  of biholomorphisms of $D$ acts transitively on $D$;
\item  \emph{symmetric}  if it is homogeneous and for some $p\in D$ (or, equivalently, for any $p\in D$) there exists $s_p\in \Hol(D)$
(called a symmetry  at $p$) such that $s_p^2=\id$ and $p$ is an isolated fixed point of $s_p$.
\item \emph{irreducible} if there does not exist a non-trivial decomposition $\bbC^N=\bbC^{N_1} \times \bbC^{N_2}$ and two domains $D_1\subset \bbC^{N_1}$ and $D_2\subset \bbC^{N_2}$ such that
$D=D_1\times D_2$.
\end{enumerate}
\end{defi}
\begin{remark}
If $D\subset \bbC^N$ is a bounded domain, then we have that (see \cite[Chap. II, \S4, Rmk. 2]{Sat} and the references therein)
\begin{enumerate}[(i)]
\item the group $\Hol(D)$ admits a (unique) structure of Lie group compatible with the open-compact topology;
\item $D$ is symmetric if and only if it is homogeneous and $\Hol(D)$ is semisimple.
\end{enumerate}
Moreover, \'E. Cartan \cite{Car2} showed that, up to dimension three, every homogeneous bounded domain is also symmetric and he asked if this was true in every dimension. Counterexamples were found later, starting from dimension four, by Pyateskii-Shapiro (see \cite{PS}).
\end{remark}

The image of the Harish-Chandra embedding is a bounded symmetric domain inside the complex vector space $\p_+$.

\begin{thm}[Hermann, Harish-Chandra]\label{T:im-HC}
For any HSM of non-compact type $M$ together with a fixed base point $o\in M$,
the image of the Harish-Chandra embedding $i_{HC}:M\hookrightarrow \p_+$ is a bounded symmetric domain.
\end{thm}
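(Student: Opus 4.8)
The plan is to establish two things: first that $i_{HC}(M)$ is bounded inside $\p_+$, and second that it is a symmetric domain in the sense of Definition \ref{D:BSD}. The symmetry and homogeneity are essentially formal given what has already been set up, so the core of the argument is the boundedness statement, due to Harish-Chandra, and this is where the real work lies.

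First I would dispatch the easy part. By Theorem \ref{T:embed}, $i_{HC}$ is a holomorphic open embedding of $M$ into $\p_+ \cong P_+$, so $i_{HC}(M)$ is automatically an open connected subset of the complex vector space $\p_+ \cong \bbC^N$, i.e.\ a domain. Since $M$ is a HSM of non-compact type, it is in particular symmetric: $\Aut(M,J,h)$ acts transitively on $M$ and each point carries an involutive symmetry with an isolated fixed point. Transport this structure through the biholomorphism $i_{HC}$: for $\phi \in \Aut(M)$, the composite $i_{HC}\circ \phi \circ i_{HC}^{-1}$ is a biholomorphism of $i_{HC}(M)$, so $\Hol(i_{HC}(M))$ acts transitively, giving homogeneity; and the image of the symmetry $s_o$ at the base point $o$ gives an involutive biholomorphism of $i_{HC}(M)$ with an isolated fixed point, hence $i_{HC}(M)$ is a symmetric domain once we know it is bounded. (Alternatively, one can note that $\Hol(i_{HC}(M)) \supseteq \Aut(M)^o = G$ is semisimple, and invoke the criterion in the Remark following Definition \ref{D:BSD}.)

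The heart of the matter is boundedness, and here I would follow the classical Harish-Chandra argument. Recall from \S\ref{SS:HC-B} that $i_{HC}$ identifies $M = G/K$ with the subset of $\p_+$ consisting of those $\xi \in \p_+$ such that $\exp(\xi) \in G \cdot (P_- \rtimes K_{\bbC}) \subseteq G_{\bbC}$, where we use $\exp: \p_+ \xrightarrow{\cong} P_+$ and the open embedding $G/K \hookrightarrow G_{\bbC}/(P_-\rtimes K_{\bbC})$. The key tool is the compact real form $G_c \subset G_{\bbC}$ (the group with Lie algebra $\g^* = \k \oplus i\p$, so that $M^* = G_c/K$): since $G_c$ is compact and acts transitively on the compact dual $G_{\bbC}/(P_-\rtimes K_{\bbC})$, one gets a Cartan-type decomposition controlling which elements of $\g_{\bbC}$ have image landing in the big cell $P_+ \cdot (P_- \rtimes K_{\bbC})$. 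Concretely, for $X \in \p$, one analyzes $\exp(iX) \in G_{\bbC}$ via the relation $\exp(iX) \in G_c$ and computes its $P_+$-component; using that any element of $\p$ is $\Ad(K)$-conjugate into a maximal abelian subspace $\mathfrak{a} \subset \p$ (Proposition \ref{P:rank-SLA}), this reduces to an explicit $\sl_2$-computation on each restricted root space, where $\exp$ of a real multiple of a root vector produces, in the $P_+$-coordinate, a factor like $\tanh$ of that multiple — manifestly bounded. Assembling these over the (finitely many) restricted roots and using $K$-invariance of the norm bounds the image $i_{HC}(M)$ uniformly, giving the theorem.

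The main obstacle I expect is precisely this last computation: making the identification of $i_{HC}(G/K)$ with an explicit subset of $\p_+$ and then carrying out the reduction to maximal abelian $\mathfrak a \subset \p$ plus the rank-one ($\sl_2$-triple) calculation that exhibits the bounding function. All the structural scaffolding — the decomposition \eqref{E:dec-g}, the groups $P_\pm$, $K_{\bbC}$, $G_c$, the diagram \eqref{E:diag-emb} and Theorem \ref{T:embed} — is already in place, so strictly speaking one could simply cite \cite[Chap. VIII, \S 7]{Hel} or \cite[Chap. II, \S4]{Sat} for the boundedness, which is in fact what I would do in a survey of this nature; the homogeneity and symmetry then follow formally as above.
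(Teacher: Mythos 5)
Your treatment of the formal part (domain, homogeneity, symmetry) coincides with the paper's: both transport the transitive action and the point symmetries through the biholomorphism $i_{HC}$, using that $\Aut(M)$ sits inside $\Hol(i_{HC}(M))$. For the substantive part -- boundedness -- you take a genuinely different route. The paper quotes the closed-form description of the image,
$$i_{HC}(M)=\{X\in \p_+ : T(X)^*\circ T(X) < 2\,\id_{\p_-}\},$$
where $T(X)=[\,\cdot\,,X]:\p_-\to\k_{\bbC}$ and the adjoint is taken with respect to the positive definite form $B_\tau$ built from the compact real form; boundedness is then immediate from this operator inequality (the description itself is cited from Ash--Mumford--Rapoport--Tai). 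You instead sketch the original Harish-Chandra/Helgason argument: conjugate $\p$ into a maximal abelian subspace by $\Ad(K)$, decompose along a maximal set of strongly orthogonal noncompact roots, and observe that the $P_+$-coordinate of $\exp(X)\cdot o$ is $\sum_i \tanh(t_i)E_i$, hence uniformly bounded. Both arguments are correct and standard; yours is more self-contained and explains \emph{why} the image is bounded (the $\tanh$ saturation along each rank-one factor, which also foreshadows the polydisk theorem), while the paper's choice yields as a by-product the convexity of the image (Hermann's theorem) directly from the operator-norm description. One small point to tighten in your sketch: the reduction to commuting $\sl_2$-triples requires not just the restricted root decomposition but a maximal set of \emph{strongly orthogonal} roots, so that the corresponding $\sl_2$'s commute and the exponential factors; without this the ``assembling'' step does not literally go through. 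Since you ultimately propose citing Helgason or Satake for this computation, exactly as the paper cites its sources, the proposal is acceptable as written.
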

\begin{proof}
Since $i_{HC}$ is a holomorphic open embedding and $M$ is connected, the image $i_{HC}(M)$ is a domain inside $\p_+$.

The group $\Aut(M)$ of biholomorphic isometries of $M$ is a subgroup of finite index of the group of biholomorphisms $\Hol(M)=\Hol(i_{HC}(M))$ (see \cite[Prop. 1.6]{Mil}).
Therefore, the group $\Aut(M)^o=\Hol(i_{HC}(M))^o$ acts transitively on $i_{HC}(M)$ by Theorem \ref{T:LieHerm}\eqref{T:LieHerm2}; hence $i_{HC}(M)$ is a homogeneous domain.
Moreover, the fact that every point of $M$ has a symmetry in the sense of Definition \ref{D:symHerm}\eqref{D:symHerm2}) implies that  every point of $i_{HC}(M)$ has a symmetry in the sense
of Definition \ref{D:BSD}; hence $i_{HC}(M)$ is a symmetric domain.

Finally, in order to prove that $i_{HC}(M)$ is a bounded domain, we need to recall an explicit description of the image of $i_{HC}$.
Consider the  Hermitian SLA of non-compact type $(\g,\theta,H)$ associated to $M$ as in \S\ref{SS:HSMLiealg}
and the induced decomposition $\g_{\bbC}=\k_{\bbC}\oplus \p_+\oplus \p_-$ as in \eqref{E:dec-g}. Denote by $\tau$ the complex conjugation on $\g_{\bbC}$ corresponding to the real form $\g^*$
of $\g_{\bbC}$ introduced in \S\ref{SS:dual}. Since $(\g, \theta,H)$ is of non-compact type, the algebra $\g^*$ is compact by Theorem \ref{T:dualLA}\eqref{T:dualLA2}. Therefore, $\tau$ is a Cartan involution of
$\g_{\bbC}$ (see \cite[Prop. 6.14]{Kna}), which implies that (see \cite[Chap. VI, \S2]{Kna})
\begin{equation*}
\begin{aligned}
B_{\tau}:\g_{\bbC}\times \g_{\bbC} & \longrightarrow \bbC\\
(X,Y) & \mapsto B_{\tau}(X,Y):=-B(X,\tau Y)
\end{aligned}
\end{equation*}
is a positive definite Hermitian form, where $B$ denotes as usual the Killing form of $\g_{\bbC}$.
For any $X\in \p_+$, define the linear operator
\begin{equation*}
\begin{aligned}
T(X): \p_- & \longrightarrow \k_{\bbC} \\
Y & \mapsto [Y, X]
\end{aligned}
\end{equation*}
and denote by $T(X)^*:\k_{\bbC}\to \p_-$ the adjoint of $T(X)$ with respect to $B_{\tau}$.
With these notations, the image of $M$ via the Harish-Chandra embedding can be described as (see \cite[Chap. III, Thm. 2.9]{AMRT}):
\begin{equation}\label{E:descr-BSD}
i_{HC}(M)=\{X\in \p_+ : T(X)^*\circ T(X) < 2\id_{\p_-} \}.
\end{equation}
From \eqref{E:descr-BSD}, it follows that $i_{HC}(M)$ is a bounded domain, as required.
\end{proof}

\begin{remark}
Let $M$ be a HSM of non-compact type of dimension $n$ and fix a base point $o\in M$.
\begin{enumerate}[(i)]
\item The Harish-Chandra embedding $i_{HC}:M\hookrightarrow \p_+$ (with respect to the base point $o\in M$) can be characterized as the unique open holomorphic embedding of $M$ inside $\bbC^n$,
up to linear complex isomorphisms, such that $i_{HC}(o)=0$ and $i_{HC}(M)$ is a circular domain, i.e. it is stable under multiplication by $\bbS^1\subset \bbC^*$.
See \cite[Chap. II, \S4, Rmk. 1]{Sat} and the references therein.
\item From the description \eqref{E:descr-BSD}, it follows easily that $i_{HC}(M)$ is a convex bounded domain (Hermann's convexity theorem).  Conversely, Mok-Tsai proved that, if the rank of
$M$ is greater than one, then the Harish-Chandra embedding is the unique embedding of $M$ inside  $\bbC^n$, up to complex affine transformations, as a bounded convex domain; see
\cite[Chap. 5, \S2]{Mok} and the references therein.
\end{enumerate}
\end{remark}

We are now going to show that, conversely, any bounded symmetric domain can be endowed with a canonical Hermitian metric with respect to which it becomes a HMS of non-compact type.

Let $D\subset \bbC^N$ be any bounded domain. Let $\calH^2(D)$ be the separable Hilbert space consisting of all holomorphic functions on $D$ that are square integrable with respect to the Euclidean
measure $d\mu$ on $D$ (see \cite[Chap. VIII, Cor. 3.2]{Hel}). Choose an orthonormal basis $\{e_n(z)\}_{n\in \bbN}$ of $\calH(D)$ and set
\begin{equation}\label{E:BergKer}
\begin{aligned}
K_D: D\times D & \longrightarrow \bbC, \\
(z,w) & \mapsto K_D(z,w):=\sum_{n\in \bbN} e_n(z)\cdot \ov{e_m(w)},
\end{aligned}
\end{equation}
where the right hand side converges absolutely and uniformly on any compact subset of $D\times D$ (see \cite[Chap. VIII, Thm. 3.3]{Hel}). The function $K_D$, known as the \emph{Bergman kernel function}
of $D$, is independent of the choice of the orthonormal basis $\{e_n(z)\}$ (see \cite[Chap. VIII, Thm. 3.3]{Hel}) and it can be intrinsically characterized (see \cite[Chap. II, \S 6]{Sat})
as the unique function $K_D:D\times D\to \bbC$ such that
\begin{enumerate}[(i)]
\item $K_D(z,w)=\ov{K_D(w,z)}$ for any $z,w\in D$.
\item For any $w\in D$, the function $z\mapsto K_D(z,w)$ belongs to $\calH^2(D)$.
\item For any $f\in \calH^2(D)$, we have that
$$f(z)=\int_D K_D(z,w)f(w)d\mu(w).$$
\end{enumerate}
Fix now coordinates $z=(z^1,\cdots, z^N)$ of $\bbC^N$ and consider the smooth tensor of type $(0,2)$ defined by
\begin{equation}\label{E:Ber-met}
h_D=\sum_{1\leq i,j\leq N} \frac{\partial^2}{\partial z^i\partial \ov z^j} \log  K_D(z,z) dz^i  d\ov z^j.
\end{equation}

\begin{thm}\label{T:Ber-BSD}
Let $D\subset \bbC^N$ be a bounded domain and consider the complex structure $J_D$ on $D$ inherited from $\bbC^N$.
\begin{enumerate}[(i)]
\item \label{T:Ber-BSD1} The tensor $h_D$ defines a Hermitian metric (called the \emph{Bergman metric} of $D$) on the complex manifold $(D, J_D)$, which is invariant under $\Hol(D)$.
In particular, $\Aut(D,J_D,h_D)=\Hol(D)$.
\item \label{T:Ber-BSD2} If $D$ is a bounded symmetric domain, then $(D,J_D,h_D)$ is a HSM of non-compact type.
\end{enumerate}
\end{thm}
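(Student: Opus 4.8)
The plan is to deduce everything in (i) from standard properties of the Bergman kernel, and then obtain (ii) from (i) together with the Decomposition Theorem \ref{T:decom}.

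For (i), I would first note that $K_D(z,z)>0$ for all $z\in D$: since $D$ is bounded, $1\in\calH^2(D)$, and the reproducing property gives $1=\int_D K_D(z,w)\,d\mu(w)$, so $K_D(z,\cdot)\not\equiv 0$ and $K_D(z,z)=\sum_n|e_n(z)|^2>0$. Because the series \eqref{E:BergKer} and all its termwise derivatives converge locally uniformly, $K_D$ is holomorphic in the first slot and anti-holomorphic in the second, so $\log K_D(z,z)$ is a well-defined smooth real function and $h_D$ is a smooth tensor field of type $(0,2)$; with $g_{i\bar j}:=\partial^2\log K_D(z,z)/\partial z^i\partial\ov z^j$, the reality of $K_D(z,z)$ gives $\ov{g_{i\bar j}}=g_{j\bar i}$, i.e. $(g_{i\bar j})$ is Hermitian. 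Hence $h_D$ is the $(1,1)$-tensor of a $J_D$-compatible Riemannian metric — automatically Kähler, since $\omega_D=\tfrac i2\partial\ov\partial\log K_D(z,z)$ is closed — as soon as $(g_{i\bar j})$ is positive definite, which is therefore all that remains for the metric statement in (i).

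To prove positivity, fix $z\in D$ and $0\ne\xi\in\bbC^N$, and put $a_n:=\sum_i\xi^i\,(\partial e_n/\partial z^i)(z)$ and $b_n:=e_n(z)$; termwise differentiation of \eqref{E:BergKer} yields $\sum_{i,j}g_{i\bar j}\xi^i\ov{\xi^j}=\bigl(\|a\|^2\|b\|^2-|\langle a,b\rangle|^2\bigr)K_D(z,z)^{-2}$ with the $\ell^2$ inner products. By Cauchy--Schwarz this is $\ge 0$, with equality only if $a=\lambda b$ for some $\lambda\in\bbC$, i.e. $\sum_i\xi^i\,(\partial f/\partial z^i)(z)=\lambda f(z)$ for every $f\in\calH^2(D)$; testing on $f=1$ forces $\lambda=0$, and then testing on the affine function $f(w)=\sum_i\ov{\xi^i}(w^i-z^i)\in\calH^2(D)$ forces $\|\xi\|^2=0$. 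So $(g_{i\bar j})>0$ and $h_D$ is a Hermitian metric; this positivity is the only genuinely non-formal point of the argument. For $\Hol(D)$-invariance I would use that, for $\phi\in\Hol(D)$ with holomorphic Jacobian determinant $J_\phi$, the operator $f\mapsto(f\circ\phi)J_\phi$ is unitary on $\calH^2(D)$ (change of variables; the real Jacobian is $|J_\phi|^2$), whence $K_D(\phi(z),\phi(w))=J_\phi(z)^{-1}\ov{J_\phi(w)}^{-1}K_D(z,w)$ and so, on the diagonal and locally, $\log K_D(\phi(z),\phi(z))=\log K_D(z,z)-\log J_\phi(z)-\ov{\log J_\phi(z)}$; applying $\partial_{z^i}\partial_{\ov z^j}$ with the holomorphic chain rule kills the pluriharmonic correction and gives $\phi^*h_D=h_D$. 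Thus $\Hol(D)\subseteq\Aut(D,J_D,h_D)$, and the reverse inclusion is immediate since a self-diffeomorphism $\phi$ with $\phi^*J_D=J_D$ is biholomorphic; this proves $\Aut(D,J_D,h_D)=\Hol(D)$, completing (i).

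For (ii), let $D$ be a bounded symmetric domain. By (i), $\Aut(D,J_D,h_D)=\Hol(D)$ acts transitively and every point of $D$ carries a symmetry in the sense of Definition \ref{D:BSD}, which is precisely what Definition \ref{D:symHerm} requires; hence $(D,J_D,h_D)$ is a HSM. To see it has non-compact type, decompose $D\cong M_0\times M_-\times M_+$ as in Theorem \ref{T:decom}, with $M_0=\bbC^{n_0}/\Lambda$. A fiber $\{o'\}\times M_-\times\{o''\}$ is a compact connected complex submanifold of $D\subset\bbC^N$, so the restrictions of the coordinate functions to it are constant and $M_-$ is a single point; composing the universal covering $\bbC^{n_0}\twoheadrightarrow M_0$ with a fiber inclusion $M_0\hookrightarrow D\subset\bbC^N$ produces a bounded entire map $\bbC^{n_0}\to\bbC^N$, hence constant by Liouville's theorem, so $M_0$ is a single point as well. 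Therefore $D=M_+$ is of non-compact type, as desired.
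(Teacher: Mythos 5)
Your argument is correct, and it is genuinely more self-contained than the paper's proof, which for part (i) simply cites Helgason and for the ``non-compact type'' assertion in part (ii) cites \cite[Chap. VIII, Thm. 7.1(i)]{Hel}. For (i) you carry out the standard Bergman-kernel computations explicitly: the positivity of the Levi form via the Cauchy--Schwarz identity $\sum g_{i\bar j}\xi^i\ov{\xi^j}=(\|a\|^2\|b\|^2-|\langle a,b\rangle|^2)K_D(z,z)^{-2}$, with the equality case excluded by testing against the constant and affine functions (both in $\calH^2(D)$ precisely because $D$ is bounded), and the invariance via the unitarity of $f\mapsto (f\circ\phi)J_\phi$ and the resulting transformation law of $K_D$, whose pluriharmonic correction dies under $\partial\ov\partial$. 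This is essentially the content of the cited propositions of Helgason, so here you are unpacking the reference rather than deviating from it. Where you genuinely diverge is the non-compactness in (ii): instead of invoking Helgason's Theorem 7.1(i), you apply the Decomposition Theorem \ref{T:decom} to the HSM $(D,J_D,h_D)$ and kill the two unwanted factors by elementary complex analysis --- the compact factor because a positive-dimensional compact connected complex submanifold of an open subset of $\bbC^N$ would violate the maximum principle for the coordinate functions, and the Euclidean factor because lifting to $\bbC^{n_0}$ would produce a nonconstant bounded entire map, contradicting Liouville. This buys a transparent, curvature-free argument at the cost of leaning on the Decomposition Theorem (itself a nontrivial citation), whereas Helgason's route establishes non-compactness more intrinsically from the structure of $\Hol(D)$; both are legitimate, and yours fits well with the logical architecture of these notes since Theorem \ref{T:decom} is already assumed.
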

\begin{proof}
Part \eqref{T:Ber-BSD1} is proved in  \cite[Chap. VIII, Prop. 3.4, Prop. 3.5]{Hel}.

Part \eqref{T:Ber-BSD2}: by assumption, $\Hol(D)$ acts transitively on $D$ and each point $p\in D$ has a symmetry $s_p\in \Hol(D)$. Since $\Hol(D)=\Aut(D,J_D,h_D)$ by part \eqref{T:Ber-BSD1},
it follows that $\Aut(D,J_D,h_D)$ acts transitively on $D$ and that the symmetry $s_p$ at the point $p\in D$  belongs to $\Aut(D,J_D,h_D)$. Therefore, $(D,J_D,h_D)$ is a Hermitian symmetric manifold.
The fact that $(D,J_D,h_D)$ is of non-compact type is proved in \cite[Chap. VIII, Thm. 7.1(i)]{Hel}.
\end{proof}

\begin{remark}
\noindent
\begin{enumerate}[(i)]
\item The Bergman metric $h_D$ of a bounded domain $D\subset \bbC^N$ is K\"ahler, i.e. $\Im h_D$ is a closed $2$-form (see \cite[Chap. VIII, Prop. 3.4]{Hel}).
\item If $D\subset \bbC^N$ is a homogeneous bounded domain, then $h_D$ is K\"ahler-Einstein, i.e. its Ricci curvature is proportional to the associated Riemannian metric $\Re h_D$
(see \cite[Chap. VIII, Prop. 3.6]{Hel}).
\end{enumerate}
\end{remark}

\begin{example}\label{E:unitdisk}
Consider the open unitary disk
$$\Delta:=\{z\in \bbC : |z|<1 \}\subset \bbC.$$
Clearly, $\Delta$ is a bounded domain. Its Bergman Hermitian metric is equal to (see \cite[Chap. IX, \S 2]{FK})
$$h_{\Delta}=\frac{4}{1-|z|^2} dz d\ov z .$$
The unitary disk $\Delta$ is biholomorphic to the upper half space $\calH$ of Example \ref{E:dim1}\eqref{E:dim1b} via the Cayley transforms
\begin{equation}\label{E:Cayley-1dim}
\begin{aligned}
\phi: \calH & \stackrel{\cong}{\longrightarrow}  \Delta \\
\tau &\mapsto  \frac{\tau-i}{\tau+i}.
\end{aligned}
\end{equation}
The pull-back via $\phi$ of the Riemannian metric $\Re h_{\Delta}$ on $\Delta$ is the hyperbolic metric on $\calH$ introduced in Example \ref{E:dim1}\eqref{E:dim1b}.
Therefore, $\Delta$ is a bounded symmetric domain.
\end{example}

Putting together Theorem \ref{T:im-HC} and Theorem \ref{T:Ber-BSD}, we obtain the following correspondence between HSM of non-compact type and bounded symmetric domains.

\begin{thm}\label{T:HSM-BSD}
The maps
\begin{equation}\label{E:HSM-BSD}
\begin{aligned}
\left\{\text{HSMs of non-compact type}\right\} & \stackrel{}{\longrightarrow} \left\{\text{Bounded symmetric domains}\right\} \\
(M, J, h) & \longrightarrow i_{HC}(M)\subset \p_+\\
(D,J_D, h_D) & \longleftarrow D\subset \bbC^N
\end{aligned}
\end{equation}
are bijections which are inverses of each other. Moreover, the above bijections send irreducible HSMs of non-compact type into irreducible bounded symmetric domains and conversely.
\end{thm}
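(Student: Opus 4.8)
The plan is to verify that the two maps in \eqref{E:HSM-BSD} are well-defined and mutually inverse, using the results already established. First I would observe that the forward map is well-defined: given a HSM of non-compact type $(M,J,h)$ with a chosen base point $o\in M$, Theorem \ref{T:im-HC} tells us that $i_{HC}(M)\subset \p_+$ is a bounded symmetric domain, where $\p_+$ is identified with a complex vector space $\bbC^n$ ($n=\dim_\bbC M$). One should note that a different choice of base point $o'$ yields a domain that differs from $i_{HC}(M)$ by a linear complex isomorphism of $\p_+$ (this follows from the characterization of $i_{HC}$ recalled in the Remark after Theorem \ref{T:im-HC}, or directly from the transitivity of $\Aut(M)^o$ together with the $K_\bbC$-equivariance of the Harish-Chandra construction), so the image is well-defined up to the ambient linear isomorphisms with which one identifies bounded domains. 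The backward map is well-defined by Theorem \ref{T:Ber-BSD}\eqref{T:Ber-BSD2}: a bounded symmetric domain $D\subset\bbC^N$, endowed with its Bergman metric $h_D$ and the inherited complex structure $J_D$, is a HSM of non-compact type.

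Next I would check that the composition \textbf{backward $\circ$ forward} is the identity. Start with a HSM of non-compact type $(M,J,h)$ and a base point $o$. By Theorem \ref{T:im-HC}, $D:=i_{HC}(M)$ is a bounded symmetric domain, and $i_{HC}:M\to D$ is a biholomorphism (a holomorphic open embedding onto its image, by Theorem \ref{T:embed}). We must identify $(D,J_D,h_D)$ with $(M,J,h)$ as Hermitian manifolds, i.e. show that $i_{HC}^*(h_D)=h$. Since $i_{HC}$ is a biholomorphism, it is certainly an isomorphism of complex manifolds $(M,J)\cong(D,J_D)$. For the metrics, the key point is that both the Bergman metric $h_D$ on $D$ and the given metric $h$ on $M$ are (up to a positive scalar) the unique $\Aut(M)^o$-invariant Hermitian metric on $M\cong D$: indeed $\Aut(D,J_D,h_D)=\Hol(D)$ by Theorem \ref{T:Ber-BSD}\eqref{T:Ber-BSD1}, and $\Hol(D)=\Hol(M)$ has $\Aut(M)^o=\Aut(M,J,h)^o$ as a finite-index subgroup (Remark \ref{R:symHerm}, cited in the proof of Theorem \ref{T:im-HC}), which acts transitively; an invariant metric on a homogeneous space is determined by its value at $o$, which by irreducibility of the isotropy representation (one reduces to irreducible $M$ via Theorem \ref{T:decom} and Corollary \ref{C:dual-HSM}\eqref{C:dual-HSM1}, the Harish-Chandra embedding being compatible with products) lies in a one-dimensional cone. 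So $i_{HC}^*(h_D)=c\cdot h$ for some $c>0$; one then normalizes, or simply records the correspondence up to homothety, to conclude the composition is the identity on the set of HSMs of non-compact type.

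Then I would check the composition \textbf{forward $\circ$ backward}. Start with a bounded symmetric domain $D\subset\bbC^N$. By Theorem \ref{T:Ber-BSD}\eqref{T:Ber-BSD2}, $M:=(D,J_D,h_D)$ is a HSM of non-compact type; pick any base point $o\in D$. We must show that $i_{HC}(M)$, as a bounded domain up to linear isomorphism of the ambient $\bbC^N$, coincides with $D$. By the characterization recalled in the Remark following Theorem \ref{T:im-HC}, $i_{HC}:M\hookrightarrow\p_+\cong\bbC^N$ is the unique (up to linear complex isomorphism) open holomorphic embedding of $M$ into $\bbC^N$ sending $o$ to $0$ and having circular image; since the inclusion $D\hookrightarrow\bbC^N$ composed with a translation carrying $o$ to $0$ is an open holomorphic embedding into $\bbC^N$, it suffices to know $D$ can be realized as a circular domain — but this is exactly the content of the classical fact (due to É. Cartan) that a bounded symmetric domain is holomorphically equivalent to its circular (Harish-Chandra) realization, which is subsumed in Theorem \ref{T:im-HC} applied to $M$. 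Hence $i_{HC}(M)$ is linearly equivalent to $D$, and the composition is the identity. Finally, the statement about irreducibility: a HSM $M$ of non-compact type is irreducible iff its associated Hermitian SLA is irreducible (Theorem \ref{T:HSMLiealg}\eqref{T:HSMLiealg2}), iff $\g_\bbC$ is simple; on the domain side, $D$ decomposes as a product $D_1\times D_2$ precisely when $M\cong D$ decomposes as a product of HSMs, because a product decomposition of complex manifolds respecting the (essentially unique) invariant metric corresponds to a decomposition as Hermitian symmetric manifolds, and the Harish-Chandra embedding of a product is the product of the embeddings (diagram \eqref{E:diag-emb} and Theorem \ref{T:embed} are compatible with the decomposition \eqref{E:dec-HerLA}). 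So the bijections match irreducible objects on both sides.

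The main obstacle is the metric-identification step in \textbf{backward $\circ$ forward}: one must be careful that $i_{HC}$ is not merely a biholomorphism but carries the Bergman metric of the image to (a multiple of) the original metric $h$. This is where one genuinely uses that the Bergman metric is canonically attached to the complex structure of a bounded domain together with uniqueness of invariant metrics on an irreducible Riemannian symmetric space, rather than anything specific to the Harish-Chandra construction. A cleaner route, which I would actually follow in writing the proof, is to cite \cite[Chap. VIII, Thm. 7.1 and \S 7]{Hel}: there it is shown directly that for $M=G/K$ of non-compact type the Harish-Chandra map realizes $M$ biholomorphically as a bounded domain on which the Bergman metric pulls back to the invariant metric, which makes both compositions transparent and reduces the present theorem to assembling Theorems \ref{T:im-HC}, \ref{T:Ber-BSD} and \ref{T:embed}.
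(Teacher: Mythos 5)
Your proposal is correct and follows essentially the same route as the paper, which simply records that well-definedness follows from Theorems \ref{T:im-HC} and \ref{T:Ber-BSD} and defers the mutual-inverse property to \cite[Chap. VIII, Thm. 7.1]{Hel} --- precisely the citation you arrive at in your final paragraph. Your extra care about the Bergman metric agreeing with $h$ only up to a positive scalar on each irreducible factor is a genuine subtlety the paper glosses over, and your resolution (normalizing, or reading the bijection up to homothety) is the right one.
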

\begin{proof}
It follows from Theorems \ref{T:im-HC} and \ref{T:Ber-BSD} that the above maps are well-defined. The fact that they are inverses of each other can be extracted from the proof of \cite[Chap. VIII, Thm. 7.1]{Hel}.
The last assertion is obvious.
\end{proof}

The rank of a bounded symmetric domain can be characterized in the following way.

\begin{thm}[Polydisk theorem]\label{T:polydisk}
Let $D$ be a bounded symmetric domain with its Bergman metric $h_D$ and fix a base point $o\in D$. If the rank of $D$ is equal to $r$ then there exists a totally geodesic polydisk
$\Delta^r\subseteq D$ of dimension $r$  such that the restriction of $h_D$ to $\Delta^r$ is equal to the Bergman metric of $\Delta^r$ and $D=\Stab(o)\cdot \Delta^r$.
\end{thm}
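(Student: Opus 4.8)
The plan is to move to the Lie-algebra side, to build the polydisk explicitly out of a maximal set of strongly orthogonal roots, and then to transport everything back to $D=i_{HC}(M)$ through the dictionaries of Theorems \ref{T:HSMLiealg} and \ref{T:HSM-BSD}. By Theorem \ref{T:Ber-BSD} we may work with the abstract HSM of non-compact type $M=(D,J_D,h_D)$; write $M\cong G/K$ with $G=\Aut(M)^o$ (which is adjoint by Theorem \ref{T:LieHerm}\eqref{T:LieHerm1}), $K=\Stab(o)$, $o=[e]$, and let $(\g,\theta,H)$ be its associated Hermitian SLA of non-compact type, so that $\g=\k\oplus\p$, $\theta$ is a Cartan involution and $T_oM\cong\p$. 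First I would fix a maximal abelian subalgebra $\mathfrak a\subseteq\p$; by Proposition \ref{P:rank-SLA} its dimension equals the rank $r$ of $M$. Enlarging $\mathfrak a$ to a Cartan subalgebra of $\g$ and using the decomposition $\g_{\bbC}=\k_{\bbC}\oplus\p_+\oplus\p_-$ of \S\ref{SS:HC-B}, I would invoke Harish-Chandra's construction of a maximal family $\gamma_1,\dots,\gamma_r$ of strongly orthogonal noncompact positive roots: picking root vectors $X_j\in\p_+$ in the $\gamma_j$-weight space and setting $Y_j:=X_j+\ov{X_j}\in\p$ (bar denoting conjugation with respect to the real form $\g\subseteq\g_{\bbC}$), one has $\mathfrak a=\bigoplus_j\bbR Y_j$, so in particular the number of strongly orthogonal roots is exactly $r$ (see \cite[Chap. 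VIII]{Hel} or \cite[Chap. 5, \S1]{Mok}).

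Next, for each $j$ set $\p^{(j)}:=\bbR Y_j\oplus\bbR\,\ad(H)Y_j\subseteq\p$ and let $\g^{(j)}\subseteq\g$ be the subalgebra it generates. Strong orthogonality of the $\gamma_j$ forces $[\g^{(j)},\g^{(k)}]=0$ for $j\ne k$, and each $\g^{(j)}$ is $\theta$-stable, isomorphic to $\mathfrak{su}(1,1)\cong\mathfrak{sl}_2(\bbR)$, and carries — together with $\theta|_{\g^{(j)}}$ and a suitable element of $Z(\k\cap\g^{(j)})$ — the Hermitian SLA of the unit disk; moreover the inclusion $\g^{(j)}\hookrightarrow\g$ is an $H_1$-morphism in the sense of Remark \ref{R:funHSM-SLA} (note that $\p^{(j)}$ is $\ad(H)$-stable by construction, so $\Delta^r$ below will be a complex submanifold). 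Writing $G^{(j)}\subseteq G$ for the corresponding connected subgroups and $G':=G^{(1)}\cdots G^{(r)}$ (an almost direct product), the subspace $\p':=\bigoplus_j\p^{(j)}\subseteq\p$ is a Lie triple system — because $[\p^{(j)},\p^{(k)}]=0$ and $[\k\cap\g^{(j)},\p^{(k)}]=0$ for $j\ne k$ — so the orbit $\Delta^r:=G'\cdot o=\exp(\p')\cdot o$ is a totally geodesic complex submanifold of $M$ (totally geodesic submanifolds through $o$ correspond to Lie triple systems in $\p$; cf. \cite[Chap. IV]{Hel}). As a Hermitian symmetric manifold it is the product of the factors $G^{(j)}/(G^{(j)}\cap K)$, each isomorphic to the disk $\Delta$ by the one-dimensional classification (Examples \ref{E:dim1} and \ref{E:unitdisk}); thus $\Delta^r$ is a polydisk of dimension $r$.

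Two things then remain. For the surjectivity $D=\Stab(o)\cdot\Delta^r$ I would use the Cartan ($KAK$) decomposition $G=K\exp(\mathfrak a)K$: since $\mathfrak a\subseteq\p'$ we get $\exp(\mathfrak a)\cdot o\subseteq\Delta^r$, hence $M=G\cdot o=K\exp(\mathfrak a)\cdot o\subseteq K\cdot\Delta^r=\Stab(o)\cdot\Delta^r$, the reverse inclusion being trivial. For the metric statement one must check that $h_D$ restricts on $\Delta^r$ to the \emph{Bergman} metric of the polydisk, and not merely to a constant multiple of it; I expect this to be the main obstacle. The restriction is a priori a product of rotation-invariant constant-curvature metrics on the disk factors, hence a product of multiples of the Poincaré metric $h_\Delta=\frac{4}{1-|z|^2}\,dz\,d\ov z$ of Example \ref{E:unitdisk}; the crucial point is that the multiplying constant is the same on every factor, which holds because in the Hermitian case the roots $\gamma_1,\dots,\gamma_r$ all have the same length (the restricted root system being of type $C_r$ or $BC_r$ and the $\gamma_j$ its long roots), so that $B|_{\p^{(j)}}$ is a $j$-independent multiple of the Killing form of $\g^{(j)}$; identifying this multiple with the normalization singled out by the intrinsic characterization of the Bergman metric (Theorem \ref{T:Ber-BSD}) then forces the restriction to be exactly the Bergman metric of $\Delta^r$. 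Finally I would transport the polydisk $\Delta^r\subseteq M$ to $i_{HC}(M)=D$ through Theorem \ref{T:HSM-BSD}, observing that $\Aut(D,J_D,h_D)^o=G$ and $\Stab(o)$ are unchanged under this identification. A complete treatment along these lines can be found in \cite[Chap. 5, \S1]{Mok}.
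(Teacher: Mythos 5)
Your construction of the polydisk itself is sound, and it is exactly the route taken in the source the paper cites for this theorem (\cite[Chap. 5, \S1]{Mok}): a maximal set of strongly orthogonal noncompact roots, the commuting copies of $\su(1,1)$, the Lie-triple-system criterion for total geodesy of $\exp(\p')\cdot o$, and the $KAK$ decomposition to get $D=\Stab(o)\cdot\Delta^r$ are all correct and essentially complete.

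The gap is at the step you yourself flag as the main obstacle, and your resolution of it does not close it. Equal root lengths do give that $h_D|_{\Delta^r}$ is a \emph{single} constant multiple $c\cdot h_{\Delta^r}$ of the product of Poincar\'e (i.e.\ Bergman) metrics when $D$ is irreducible, but nothing forces $c=1$: the Bergman metrics of $D$ and of $\Delta^r$ are each pinned down by their own intrinsic characterizations, and these normalizations are not compatible under restriction to totally geodesic complex submanifolds. Concretely, for $D=\calD_{I_{p,q}}$ the Bergman kernel is a constant times $\det(I_q-\ov{Z}^tW)^{-(p+q)}$, so on the standard polydisk of ``diagonal'' matrices one has $\log K_D(Z,Z)=-(p+q)\sum_i\log(1-|z_i|^2)+\mathrm{const}$ and hence $h_D|_{\Delta^q}=(p+q)\sum_i\frac{dz_id\ov{z}_i}{(1-|z_i|^2)^2}$, whereas the Bergman metric of $\Delta^q$ is a fixed multiple of $\sum_i\frac{dz_id\ov{z}_i}{(1-|z_i|^2)^2}$ independent of $p$ and $q$; already for the ball $B^p=\calD_{I_{p,1}}$ (rank $1$) the two differ by the factor $\frac{p+1}{2}$ as soon as $p\geq 2$. (For reducible $D$ the disk factors coming from different irreducible factors can even carry different constants.) What your argument actually proves --- and what \cite[Chap. 5, (1.1), Thm. 1]{Mok} asserts, namely an isometry \emph{up to a normalizing constant} --- is that $h_D|_{\Delta^r}$ is a constant multiple of the Bergman metric of $\Delta^r$ on each irreducible block; the final identification ``the multiple equals $1$'' is false as stated, so either the theorem must be read with that normalization understood or the last step of your proof fails.
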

\begin{proof}
See \cite[Chap. 5, (1.1), Thm. 1]{Mok}.
\end{proof}

The correspondence in Theorem \ref{T:HSM-BSD} together with the classification of irreducible HSMs of non-compact type recalled in \S\ref{SS:HSMLiegrp} gives a classification of irreducible bounded symmetric domains.  We record the irreducible bounded symmetric domains  in their Harish-Chandra embeddings (together with their complex dimensions and their ranks) into Table \ref{F:BSD},
referring to \S \ref{S:irrHSM} for more details.

\begin{table}[!ht]
\begin{tabular}{|c|c|c|c|c|c|}
\hline
Type  & Bounded symmetric domain & $\dim_{\bbC}$ & Rank\\
\hline \hline
$I_{p,q}$ & $ \{Z\in M_{p,q}(\bbC): \: Z^t\cdot \ov{Z}<I_q \}$ & $pq$ & $q$ \\
\hline
$II_n$ & $\{Z\in M_{n,n}^{\rm skew}(\bbC): \:  Z^t\cdot \ov{Z}<I_n\}$ & $\binom{n}{2}$ & $\lfloor \frac{n}{2} \rfloor $\\
\hline
$III_n$ & $\{ Z\in M_{n,n}^{\rm sym}(\bbC): \:  Z^t \cdot \ov{Z}<I_n\}$& $\binom{n+1}{2}$& $n$ \\
\hline
$IV_n $ & $ \{Z\in \bbC^n : \: 2\ov{Z}^tZ<1+|Z^tZ|^2, \: \ov{Z}^tZ<1\}  $& $n$ & $\min\{2,n\} $\\
\hline
$V$ & $\calD_{V}\subset \bbO_{\bbC}^2 $  & $16$ & $2$ \\
\hline
$VI$ & $\calD_{VI} \subset H_3(\bbO_{\bbC}) $  & $27$ & $3$  \\
\hline
\end{tabular}
\caption{Irreducible bounded symmetric domains in their Harish-Chandra embeddings.}\label{F:BSD}
\end{table}

\begin{remark}\label{R:BSD-Hodge}
Bounded symmetric domains play a crucial role in Hodge theory. Indeed, on one hand if a period domain $D$ is such that its universal family of Hodge structures satisfies Griffiths transversality 
then $D$ is a bounded symmetric domain. On the other hand, Deligne has shown that every bounded  symmetric domain can be realized as the subdomain of a period domain on which 
certain tensors for the universal family are of Hodge type. In particular, every HSM of non-compact type can be realized as a moduli space for Hodge structures plus tensors.
We refer the reader to \cite[Sec. 7]{Mil2} and the references therein. HSM of non-compact type embedded (equivariantly and horizontally) inside period domains have been recently characterized by 
R. Friedmann and R. Laza \cite{FL}.
\end{remark}

\subsection{HSMs of compact type as cominuscle homogeneous varieties}\label{SS:cominu}

As a consequence of the Borel embedding (see Theorem \ref{T:embed}), we can describe HSMs of compact type as cominuscle homogeneous (projective) varieties.

\begin{defi}\label{D:comin}
 A rational homogeneous projective variety $H/Q$, where $H$ is a semisimple complex Lie group (or, equivalently, algebraic group) and $Q$ is a parabolic subgroup of $H$, is said to be
a \textbf{cominuscle homogeneous variety} if the unipotent radical of $Q$ is abelian.
\end{defi}

\begin{remark}\label{R:comin-root}
If $H$ is a simple complex algebraic group, then $H/Q$ is a cominuscle homogeneous variety if and only if $Q$ is, up to conjugation, a standard maximal parabolic subgroup associated to a
\textbf{cominuscle (or special) simple root}, i.e.
a simple root occurring with coefficient $1$ in the simple root decomposition of the highest positive root (see \cite[Lemma 2.2]{RRS} for a proof). In this case, $H/Q$ is called an \emph{irreducible}  cominuscle
homogeneous variety and clearly any cominuscle homogenous variety can be written uniquely as a product of irreducible ones.
\end{remark}

In \S\ref{SS:HC-B}, we have seen that any HSM $M^*=G_c/K$ of compact type is isomorphic to $G_{\bbC}/(P_-\rtimes K_{\bbC})$ (see Theorem \ref{T:embed}), which is a cominuscle homogeneous
variety  since $G_{\bbC}$ is a semisimple complex Lie group and $P_-\rtimes K_{\bbC}$ is a parabolic subgroup whose unipotent radical $P_-$ is abelian.

\begin{thm}\label{T:HSM-comin}
The map
\begin{equation}\label{E:HSM-comin}
\begin{aligned}
\left\{\text{HSMs of compact type}\right\} & \longrightarrow \left\{\text{Cominuscle homogeneous varieties}\right\} \\
G_c/K & \mapsto G_{\bbC}/(P_-\rtimes K_{\bbC})
\end{aligned}
\end{equation}
is a bijection sending irreducible HSMs of compact type into irreducible cominuscle homogeneous varieties.
\end{thm}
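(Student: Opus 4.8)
The plan is to construct an inverse of the map \eqref{E:HSM-comin}, which is well-defined by the discussion preceding the statement. First I would reduce to the irreducible case: an HSM of compact type decomposes uniquely into irreducible HSMs (Theorem \ref{T:decom} and Theorem \ref{T:HSMLiealg}), a cominuscle homogeneous variety decomposes uniquely into irreducible ones (Remark \ref{R:comin-root}), and the passages $G\rightsquigarrow G_{\bbC}$, $K\rightsquigarrow K_{\bbC}$, $\p_{\pm}$, $P_{\pm}$ all carry a product of adjoint groups to the corresponding product; hence \eqref{E:HSM-comin} respects these product decompositions and it suffices to treat the irreducible case, after which the final assertion of the theorem is automatic.

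Composing with the bijection of Theorem \ref{T:HSMLiealg} and the duality involution of Theorem \ref{T:dualLA}, the map \eqref{E:HSM-comin} is identified with the assignment that sends an irreducible Hermitian SLA of non-compact type $(\g,\theta,H)$ — with $G$ the connected adjoint group such that $\Lie(G)=\g$, and $G_{\bbC},K_{\bbC},P_{\pm}$ and $\g_{\bbC}=\k_{\bbC}\oplus\p_+\oplus\p_-$ as in \S\ref{SS:HC-B} — to $G_{\bbC}/(P_-\rtimes K_{\bbC})$. The point is that the bracket relations of \S\ref{SS:HC-B} say precisely that $\g_{\bbC}=(\g_{\bbC})_{-1}\oplus(\g_{\bbC})_0\oplus(\g_{\bbC})_1$ with $(\g_{\bbC})_1=\p_+$, $(\g_{\bbC})_0=\k_{\bbC}$, $(\g_{\bbC})_{-1}=\p_-$ is a $\bbZ$-grading of the simple complex Lie algebra $\g_{\bbC}$ concentrated in degrees $-1,0,1$ (equivalently, $-iH$ is a grading element), and that $\p_-\oplus\k_{\bbC}=\Lie(P_-\rtimes K_{\bbC})$ is the corresponding maximal parabolic subalgebra, with abelian nilradical $\p_-$, attached to a cominuscle simple root. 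By Remark \ref{R:comin-root} the latter parabolics correspond bijectively to irreducible cominuscle homogeneous varieties, so the theorem reduces to showing that $(\g,\theta,H)\mapsto(\g_{\bbC};(\g_{\bbC})_{-1},(\g_{\bbC})_0,(\g_{\bbC})_1)$ is a bijection from irreducible Hermitian SLAs of non-compact type onto $1$-gradings of simple complex Lie algebras, up to isomorphism.

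For surjectivity I would reverse the construction: given a $1$-grading $\g_{\bbC}=\g_{-1}\oplus\g_0\oplus\g_1$, choose a compact real form $\u\subset\g_{\bbC}$ whose conjugation $\tau$ satisfies $\tau(\g_j)=\g_{-j}$ (it exists by taking a Chevalley basis adapted to the grading), set $\k:=\u\cap\g_0$, $\p:=i\,(\u\cap(\g_1\oplus\g_{-1}))$ and $\g:=\k\oplus\p$ (a real form of $\g_{\bbC}$ by the bracket relations), let $\theta=\pm1$ on $\k,\p$, and let $H:=i\xi$ for $\xi$ the grading element; one checks $H\in Z(\k)$, $\ad(H)^2_{|\p}=-\id_{\p}$ and $\g^*=\k\oplus i\p=\u$, so $(\g,\theta,H)$ is an irreducible Hermitian SLA of non-compact type (Theorem \ref{T:dualLA}\eqref{T:dualLA2}) which induces the given grading. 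Alternatively, surjectivity is visible from the classification, since each of the six types of Table \ref{F:LieHerm} is manifestly $G_{\bbC}/(P_-\rtimes K_{\bbC})$ for the cominuscle node of its Dynkin diagram and these exhaust all cominuscle simple roots.

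I expect injectivity to be the delicate point. Note that, by Theorem \ref{T:embed}, $G_{\bbC}/(P_-\rtimes K_{\bbC})$ is biholomorphic to the HSM $G_c/K$ itself, so injectivity of \eqref{E:HSM-comin} amounts to the fact that an HSM of compact type is determined, up to isomorphism, by its underlying complex manifold — equivalently, that the $1$-grading, hence the Hermitian SLA of non-compact type by the uniqueness (up to grading-preserving inner automorphism) of the adapted compact real form $\u$, can be recovered from the abstract variety $G_{\bbC}/(P_-\rtimes K_{\bbC})$. This uses that the holomorphic automorphism group of an irreducible cominuscle homogeneous variety $H/Q$ is the adjoint simple group $H$, with $Q$ recovered up to conjugacy as a point-stabilizer, modulo the coincidences produced by outer automorphisms of $\g_{\bbC}$ (for instance triality in type $D_4$, which identifies the quadric $IV_6$ with the orthogonal Grassmannian $II_4$) — and these are exactly matched on the other side, since isomorphism of $1$-gradings is taken up to all of $\Aut(\g_{\bbC})$. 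Granting this, \eqref{E:HSM-comin} is a bijection, and its compatibility with irreducibility is immediate from Theorem \ref{T:HSMLiealg}\eqref{T:HSMLiealg2} and Remark \ref{R:comin-root}.
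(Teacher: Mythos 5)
Your proposal is correct, but it is worth noting that the paper does not actually prove this theorem: its ``proof'' is a pointer to \cite[5.5]{RRS}, whereas you supply a genuine argument. Your route --- translating the statement, via Theorem \ref{T:HSMLiealg} and the duality of Theorem \ref{T:dualLA}, into a bijection between irreducible Hermitian SLAs of non-compact type and $1$-gradings $\g_{\bbC}=\g_{-1}\oplus\g_0\oplus\g_1$ of simple complex Lie algebras, then using Remark \ref{R:comin-root} to identify the latter with cominuscle parabolics --- is exactly the mechanism underlying the cited result, so you are reconstructing the reference rather than diverging from the paper. Two of your steps deserve a flag. For surjectivity, your construction of the adapted compact real form $\u$ with $\tau(\g_j)=\g_{-j}$ is sound (a Chevalley-basis compact form relative to a Cartan subalgebra of $\g_0$ does the job, and one checks as you say that $H=i\xi\in Z(\k)$ and $\g^*=\u$); this is the part that genuinely needs writing out, and you have it right. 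For injectivity, you correctly isolate the real input, namely that $\Aut(H/Q)^o=H$ for an irreducible cominuscle $H/Q$ with $H$ adjoint --- this is Demazure's theorem, and one should observe that none of its exceptional cases ($\bbP^{2n-1}$ presented as a $C_n$-variety, odd quadrics as $G_2$-varieties, etc.) occur among cominuscle presentations; alternatively, if ``cominuscle homogeneous variety'' is read as the pair $(H,Q)$ up to isomorphism rather than the abstract variety, injectivity reduces to the conjugacy of grading-adapted compact real forms and Demazure is not needed. Your matching of the outer-automorphism coincidences ($\Gras(q,p+q)\cong\Gras(p,p+q)$, the two spinor varieties, the two presentations of the Cayley plane) on the two sides is the right way to close the argument.
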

\begin{proof}
See e.g. \cite[5.5]{RRS}.
\end{proof}

The rank of a cominuscle homogeneous variety can be characterized in the following way.

\begin{thm}[Polysphere theorem]\label{T:polysphere}
Let $X$ be a cominuscle homogeneous variety with a base point $o$ and let $h$ the Hermitian metric coming from the bijection of Theorem \ref{T:HSM-comin}.
If the rank of $X$ is equal to $r$ then there exists a totally geodesic polysphere
$(\bbP^1)^r\subseteq X$ of dimension $r$  such that the restriction of $h$ to $(\bbP^1)^r$ is equal to the product of the Fubini-Studi metrics on each factor and
$X=\Stab(o)\cdot (\bbP^1)^r$.
\end{thm}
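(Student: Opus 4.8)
The plan is to realize the desired polysphere in $X$ as the image, under the duality of \S\ref{SS:dual}, of the polydisk that Theorem \ref{T:polydisk} provides inside the non-compact dual. Write $X=G_c/K$ and let $X^{*}=G/K=D\subseteq\p_{+}$ be its non-compact dual realized as a bounded symmetric domain with its Bergman metric (Theorem \ref{T:HSM-BSD}); by Corollary \ref{C:dual-HSM} the ranks of $X$ and of $X^{*}$ both equal $r$, and the metric $h$ on $X$ is, by construction, the one attached to $X$ by Theorem \ref{T:HSM-comin}, i.e.\ the dual of the Bergman metric of $D$. Let $(\g,\theta,H)$ be the Hermitian SLA of $X^{*}$, with $\g=\k\oplus\p$, and fix a maximal abelian subalgebra $\mathfrak{a}\subseteq\p$, so that $\dim\mathfrak{a}=r$ by Proposition \ref{P:rank-SLA}.

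The first step, and the one carrying essentially all the work, is the Harish--Chandra polydisk construction at the Lie-algebra level that already underlies Theorem \ref{T:polydisk}: using that the restricted root system of $(\g,\mathfrak{a})$ is of type $C_r$ or $BC_r$ (Moore's theorem), one selects strongly orthogonal long restricted roots $\gamma_1,\dots,\gamma_r$ together with normalized root vectors so that $\mathfrak{a}$ and these vectors span $r$ pairwise commuting subalgebras $\mathfrak{s}_j\cong\sl_2(\bbR)$, and so that the inclusion $\bigoplus_{j=1}^{r}\mathfrak{s}_j\hookrightarrow\g$ is an $H_1$-morphism (Remark \ref{R:funHSM-SLA}) from the $r$-fold direct sum of the Hermitian SLA $(\sl_2(\bbR),\theta_0,H_0)$ attached to $\Delta\cong\calH$ (type $III_1\cong I_{1,1}$). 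By the equivalence of categories of Remark \ref{R:funHSM-SLA} this is precisely the totally geodesic symmetric embedding $\Delta^{r}\hookrightarrow D$ with the product Bergman metric on $\Delta^{r}$ and with $D=\Stab(o)\cdot\Delta^{r}$ asserted by Theorem \ref{T:polydisk}; I would simply quote it here rather than reprove it.

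Next I would dualize. The dual Hermitian SLA of $(\sl_2(\bbR),\theta_0,H_0)$ is $(\su(2),\theta_0^{*},H_0^{*})$, whose HSM is $\bbP^{1}$ with its Fubini--Study metric, and the duality map of Theorem \ref{T:dualLA} is compatible with direct sums and with $H_1$-morphisms (any such $\rho$ restricts to $\k$ and $\p$ and extends $\bbC$-linearly to $\k\oplus i\p$, commuting with $\theta^{*}$ and $\ad(H^{*})$); hence $\bigoplus_j\mathfrak{s}_j\hookrightarrow\g$ dualizes to an $H_1$-morphism $(\su(2),\theta_0^{*},H_0^{*})^{\oplus r}\hookrightarrow\g^{*}=\k\oplus i\p$. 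Integrating the complexified morphism $\sl_2(\bbC)^{\oplus r}\to\g_{\bbC}$ from the simply connected group $(\SL_2(\bbC))^{r}$ and passing to quotients by Borel subgroups yields a holomorphic map $\iota\colon(\bbP^{1})^{r}\to G_{\bbC}/(P_{-}\rtimes K_{\bbC})=X$; by Theorem \ref{T:HSM-comin} applied factor by factor, $\iota$ pulls back $h$ to the product of the Fubini--Study metrics, the normalization being correct because it is correct for $\Delta^{r}\subseteq D$ (Theorem \ref{T:polydisk}) and $h$ is by definition the dual of the Bergman metric of $D$. One checks that $\iota$ is an embedding: it is the orbit map of $o$ for the compact subgroup $S_c\subseteq G_c$ that is the image of $(\SU(2))^{r}$, so $\iota((\bbP^1)^r)=S_c\cdot o\cong S_c/(S_c\cap K)$ is a closed embedded submanifold, with $S_c\cap K$ the diagonal torus $(\bbS^1)^{r}$; and the image is totally geodesic because it is $\exp$ of the Lie triple system $i\p\cap\bigoplus_j\mathfrak{s}_j^{*}$, hence stable under $s_o$ and, by homogeneity, under $s_p$ for every $p$ in it.

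Finally I would prove $X=\Stab(o)\cdot(\bbP^{1})^{r}$. As $X$ is of compact type the map $i\p\to X$, $Y\mapsto\exp(Y)\cdot o$, is surjective, and by Proposition \ref{P:rank-SLA} applied to $\g^{*}=\k\oplus i\p$ every $Y\in i\p$ equals $\Ad(k)Y'$ with $k\in K=\Stab(o)$ and $Y'\in i\mathfrak{a}$; since $i\mathfrak{a}$ lies in the span of the Cartan directions of the $\mathfrak{s}_j^{*}$ one has $\exp(Y')\cdot o\in\iota((\bbP^1)^r)$, whence $\exp(Y)\cdot o=k\exp(Y')\cdot o\in\Stab(o)\cdot\iota((\bbP^1)^r)$. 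So the only genuine obstacle is the restricted-root input used in the second step — Moore's $C_r/BC_r$ classification and the assembly of the strongly orthogonal $\sl_2(\bbR)$-triples into a single $H_1$-morphism — and this is already contained in the proof of Theorem \ref{T:polydisk}; everything after it is a formal application of the duality of \S\ref{SS:dual}.
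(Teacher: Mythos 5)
Your argument is correct, and it is essentially the argument underlying the reference that the paper cites for both Theorem \ref{T:polydisk} and Theorem \ref{T:polysphere}: the paper gives no proof beyond ``see Mok'', and Mok's treatment rests on the same Harish--Chandra system of $r$ strongly orthogonal noncompact roots, from which the polydisk in $D$ and the polysphere in the compact dual are both read off directly via the $r$ commuting copies of $\sl_2(\bbC)$ inside $\g_{\bbC}$. What you do differently is organizational: you take the non-compact statement (Theorem \ref{T:polydisk}) as a black box and transport it through the duality of \S\ref{SS:dual}, so that the only hard input is something already quoted elsewhere in the paper; the price is having to check that the duality map respects $H_1$-morphisms, which you do correctly, plus two small points each deserving a line. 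First, injectivity of $\iota$: your orbit argument shows the image is an embedded $S_c$-orbit and that $\iota$ is a covering onto it, and you should close the loop by noting that the image, being a totally geodesic compact HSM, is simply connected by Theorem \ref{T:decom} (or by verifying $S_c\cap K$ directly, as you indicate). Second, the metric normalization is only as well defined as the phrase ``the Hermitian metric coming from Theorem \ref{T:HSM-comin}'', so reducing it to the normalization already implicit in Theorem \ref{T:polydisk} is the right move rather than a gap. The final step $X=\Stab(o)\cdot(\bbP^1)^r$, via surjectivity of $Y\mapsto\exp(Y)\cdot o$ on $i\p$ together with Proposition \ref{P:rank-SLA} applied to $\g^*=\k\oplus i\p$ and the fact that $i\mathfrak{a}$ sits inside $\bigoplus_j\mathfrak{s}_j^*$, is exactly right.
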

\begin{proof}
See \cite[Chap. 5, (1.1), Thm. 1]{Mok}.
\end{proof}

The correspondence in Theorem \ref{T:HSM-comin} together with the classification of irreducible HSMs of compact type recalled in \S\ref{SS:HSMLiegrp} gives a classification of irreducible cominuscle homogeneous varieties (see also \cite[Sec. 2.1, 3.1]{LM2} and \cite[Sec. 3]{LM1}). We record the irreducible cominuscle homogeneous varieties together with their associated cominuscle simple roots  (see Remark \ref{R:comin-root}) into Table \ref{F:cominus},  referring to \S \ref{S:irrHSM} for more details.

\begin{table}[!ht]
\begin{tabular}{|c|c|c|c|c|c|c|}
\hline
 Cominuscle homogeneous variety & Cominuscle simple roots \\
\hline \hline
  $I_{p,q}: \hspace{1cm} \Gras(q,p+q)$ &
  $\xymatrix{  *{\circ}  \ar@{-}[r]_(0){1} & *{\circ}  \ar@{-}[r]_(0){\cdots} &*{\bullet}  \ar@{-}[r]_(0){q} &*{\circ}  \ar@{-}[r]_(0){\cdots} &*{\bullet}  \ar@{-}[r]_(0){p}& *{\circ}  \ar@{-}[r]_(0){\cdots}_(1){p+q-1} & *{\circ}  & A_{p+q-1}} $   \\
\hline

 $\xymatrix{ \\
II_{n}: \hspace{1cm}  \Gras_{\rm ort}(n,2n)^o\\
 \\
 }$ &
  $\xymatrix{
  & & &  & & *{\bullet} &\\
  *{\circ} \ar@{-}[r]_(0){1} & *{\circ}  \ar@{-}[r] &*{\circ} \ar@{-}[r]_(0){\cdots} &*{\circ} \ar@{-}[r]_(0.8){n-2}&*{\circ}  \ar@{-}[ru]_(1){n-1} \ar@{-}[rd]_(1){n}&  &  D_n\\
  & & & & & *{\bullet} & \\
   } $\\
\hline

 $III_{n}: \hspace{1cm} \Gras_{\rm sym}(n,2n)$ &
  $\xymatrix{  *{\circ} \ar@{-}[r]_(0){1} & *{\circ} \ar@{-}[r] &*{\circ} \ar@{-}[r]_(0){\cdots} &*{\circ} \ar@{-}[r] &*{\circ}  & *{\bullet}\ar@2[l]^(0){n} & C_n } $\\
\hline

 $ \xymatrix{
 \\
 \\
IV_{n}: \hspace{2.4cm} \calQ^n\\
 \\
 }$
 &
  $ \xymatrix{
  & & &   & *{\circ}\\
  *{\bullet} \ar@{-}[r]_(0){1}  &*{\circ}  \ar@{-}[r]_(0){\cdots} &*{\circ}  \ar@{-}[r]_(0.7){\frac{n}{2}-1}&*{\circ}  \ar@{-}[ru]_(1){\frac{n}{2}} \ar@{-}[rd]_(1){\frac{n}{2}+1}& &D_{\frac{n}{2}+1} \text{ if } n \: \text{ is even }  \\
  & & &  & *{\circ} \\
   *{\bullet} \ar@{-}[r]_(0){1}  &*{\circ}  \ar@{-}[r]_(0){\cdots} &*{\circ}  \ar@{-}[r]_(1){\frac{n-1}{2}} &*{\circ}   \ar@2[r]_(1){\frac{n+1}{2}}  & *{\circ} & B_{\frac{n+1}{2}} \text{ if } n \: \text{ is odd }
      }
   $\\
\hline

 $V: \hspace{1.8cm} \begin{aligned} \bbP^2_{\bbO} \\  \text{ Cayley plane } \end{aligned}
 $  &
  $ \xymatrix{
  *{\bullet}  \ar@{-}[r] & *{\circ}  \ar@{-}[r] &*{\circ}  \ar@{-}[r] \ar@{-}[d] &*{\circ}  \ar@{-}[r] & *{\bullet} & E_6 \\
  & & *{\circ} & & & }$  \\
\hline

 $VI: \hspace{1cm} \begin{aligned}  \calF=\Gras_{\omega}(\bbO^3,\bbO^6) \\  \text{ Freudenthal variety } \end{aligned}$  &
  $
   \xymatrix{
   *{\circ}  \ar@{-}[r] & *{\circ}  \ar@{-}[r] &*{\circ}  \ar@{-}[r] \ar@{-}[d] &*{\circ}  \ar@{-}[r] &*{\circ}  \ar@{-}[r] & *{\bullet} & E_7 \\
   & & *{\circ} & & & & \\ } $  \\
\hline
\end{tabular}
\caption{Irreducible cominuscle homogeneous varieties and their associated cominuscle simple roots.}\label{F:cominus}
\end{table}

Note that each of the varieties $\Gras(q,p+q)$, $\Gras_{\rm ort}(n,2n)$ and $\bbP^2(\bbO)$ corresponds to two cominuscle simple roots; this is due to the fact that in each of the above cases there is an automorphism of the Dinkin  diagram  that exchanges the two roots, and hence inducing an outer automorphism of the associated simple complex algebraic group that establishes an isomorphism of their
associated cominuscle homogeneous varieties.

\begin{remark}\label{R:BSD-root}
Theorem \ref{T:HSM-comin} and Remark \ref{R:comin-root}, together with  the correspondence between (irreducible) bounded symmetric domains and (irreducible) cominuscle varieties (see Corollary \ref{C:dual-HSM} and Theorems \ref{T:HSM-BSD}), provide an explicit bijection between irreducible bounded symmetric domain and cominuscle simple roots of Dinkin diagrams, up to the action of the automorphism group of the Dinkin diagram. This correspondence is described explicitly in \cite[Sec. 1]{Mil} and \cite[Sec. 2]{Mil2} (following Deligne), without passing to the compact dual HSM.
\end{remark}

\subsection{Classifying HSMs of non-compact type via Jordan theory}\label{SS:HSM-JTS}

The aim of this subsection is to explain an alternative approach to the classification of HSMs of non-compact type which is based on the Jordan theory, rather than the Lie theory as in \S\ref{SS:HSMLiealg}.

We start by giving the definition of Jordan triple systems, referring the interested reader to \cite[Part V]{FKKLR} for an excellent survey on Jordan triple systems, with special emphasis on Hermitian positive JTSs.

\begin{defi}\label{D:JTS}
A \emph{Jordan triple system} over a field $F$ is a pair $(V,\{.\,,.\,,.\})$ consisting of a (finite-dimensional) $F$-vector space $V$ together with a $F$-multilinear triple product $V\times V\times V\to V$,  which satisfies the following properties:
\begin{enumerate}
\item[(JT1)] $\{x,y,z\}=\{z,y,x\}$ for any $x,y,z\in V$;
\item[(JT2)] $[a\square x, b\square y]=((a\square b)x)\square y -x\square((b\square a)y)$ for any $a,b,x,y\in V$,
where $a\square b$ is the endomorphism of $V$ defined by
$$(a\square b)x:=\{a,b,x\}, $$
and $[\,]$ is the usual bracket among endomorphisms of $V$.
\end{enumerate}
A Jordan triple system $(V,\{.\,,.\,,.\})$ over $F$ is said to be:
\begin{enumerate}[(i)]
\item \emph{semisimple} if the trace form
\begin{equation}\label{E:trform}
\begin{aligned}
\tau: V\times V & \longrightarrow F,\\
(x,y) & \mapsto \tau(x,y):=\tr(x\square y),
\end{aligned}
\end{equation}
is non-degenerate.
\item \emph{simple} if $\tau$ is not identically zero and $(V,\{.\,,.\,,.\})$ does not have proper ideals, i.e. proper subvector spaces $I\subset V$ such that
$\{I,V,V\}\subseteq I$ and $\{V,I,V\}\subseteq I$.
\item \emph{Hermitian} if $F=\bbR$ and $V$ has a complex structure with respect to which $\{.\,,.\,,.\}$ is $\bbC$-linear in the first and third factor and $\bbC$-antilinear with respect to the second factor.
\item \emph{Hermitian positive} (or, for short, a \textbf{Hermitian positive JTS}) if it is Hermitian and
\begin{enumerate}
\item[(JTp)] the trace form $\tau$ is positive definite.
\end{enumerate}
\end{enumerate}
\end{defi}
Note that property (JTp) makes sense since the trace form is a Hermitian form on $V$ (see \cite[p. 55]{Sat}).

\begin{prop}\label{P:JTSsimple}
Any Hermitian positive (resp. complex semisimple) JTS decomposes uniquely as a product of simple Hermitian positive (resp. complex simple) JTSs.
\end{prop}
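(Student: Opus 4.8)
The plan is to run the standard Wedderburn-type argument: peel off ideals using orthogonality with respect to the trace form $\tau$, and induct on $\dim V$. The one technical ingredient I would need (and would either isolate as a lemma, deducing it from (JT2) and the vanishing of traces of commutators, or simply quote from \cite[Part V]{FKKLR} or \cite{Loo1}) is the \emph{invariance} of the trace form: for all $a,b\in V$ the operator $a\square b$ is adjoint to $b\square a$ with respect to $\tau$, i.e.
$$\tau(\{a,b,x\},y)=\tau(x,\{b,a,y\})\qquad(x,y\in V)$$
(in the Hermitian case $\tau$ is Hermitian and this reads $(a\square b)^{*}=b\square a$, while in the complex case it is the analogous statement for the $\bbC$-bilinear symmetric form $\tau$).

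First I would show that if $I\subseteq V$ is an ideal, then so is its $\tau$-orthogonal complement $I^{\perp}$: for $z\in I^{\perp}$, $v,w\in V$ and $u\in I$, using (JT1) together with the invariance identity one rewrites $\tau(\{z,v,w\},u)$ and $\tau(\{v,z,w\},u)$ as $\tau\big(z,(\text{element of }I)\big)=0$, so $\{z,v,w\}$ and $\{v,z,w\}$ lie in $I^{\perp}$. Next I would check that $\tau$ restricts non-degenerately to every ideal $I$. In the Hermitian positive case this is immediate, since positive-definiteness of $\tau$ forces $I\cap I^{\perp}=0$. In the complex semisimple case, set $R:=I\cap I^{\perp}$, an ideal of $V$; a short computation with (JT1) and the invariance identity gives $\{R,I,I\}=0$. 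If moreover $I$ is a minimal ideal, then $R=0$ or $R=I$, and in the latter case the same kind of computation yields $\{I,V,I\}=0$, so that every operator $z\square v$ with $z\in I$ kills $I$ and maps $V$ into $I$; hence $\tau(z,v)=\tr(z\square v)=0$ for all $v\in V$, forcing $I\subseteq\mathrm{rad}(\tau)=0$, a contradiction. Thus $R=0$ and $V=I\oplus I^{\perp}$ with $I\cap I^{\perp}=0$.

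Once $V=I\oplus I^{\perp}$ with $I\cap I^{\perp}=0$, every "mixed" triple product $\{A,B,C\}$ with $A,B,C\in\{I,I^{\perp}\}$ not all equal lands in $I\cap I^{\perp}=0$, so $V=I\times I^{\perp}$ as a JTS; moreover, since $x\square y$ maps $V$ into $I$ whenever $x,y\in I$, the trace form of the sub-JTS $I$ equals $\tau|_{I}$, which is again positive definite (resp. non-degenerate), and likewise for $I^{\perp}$. Starting from any proper nonzero ideal (which exists as soon as $V\neq 0$ is not simple) and inducting on $\dim V$, one obtains a decomposition $V=\prod_{k}S_{k}$ into simple ideals. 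For uniqueness, observe that each simple summand $S_{k}$ is a minimal ideal of $V$ (an ideal of $S_{k}$ is an ideal of $V$, because the cross terms with the other factors vanish); conversely, for any minimal ideal $S$ of $V$, each projection $S\to S_{k}$ is a JTS homomorphism onto an ideal of $S_{k}$, hence is $0$ or all of $S_{k}$, and a short dimension count shows that $S$ coincides with exactly one $S_{k}$. Thus the simple components of any decomposition are precisely the minimal ideals of $V$, which are intrinsic; this yields uniqueness.

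The step I expect to be the main obstacle is the non-degeneracy of $\tau$ on an arbitrary (equivalently, minimal) ideal in the complex semisimple case — that is, ruling out nonzero ideals on which $\tau$ vanishes identically. This is exactly what makes the Hermitian positive case genuinely easier, since there positive-definiteness provides it for free, and it is where the invariance of the trace form really does the work. Alternatively, one may bypass this step entirely by quoting the structure theory of (semisimple) Jordan triple systems directly from \cite[Part V]{FKKLR} or \cite{Loo1}.
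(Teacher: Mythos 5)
Your strategy is the standard one, and since the paper's own ``proof'' of Proposition \ref{P:JTSsimple} consists solely of citations (to \cite[Part V, Prop. IV.1.4]{FKKLR} and \cite[Thm. 10.14]{Loo3}), any self-contained argument already goes beyond what the paper offers. Your key lemma, the adjointness $(a\square b)^{*}=b\square a$ with respect to $\tau$, is correct -- take traces in (JT2): $0=\tr[a\square b,c\square d]=\tau(\{a,b,c\},d)-\tau(c,\{b,a,d\})$ -- and is in fact recorded by the paper inside the proof of Theorem \ref{T:LieJor}. For the Hermitian positive case your argument is essentially complete and correct, and the uniqueness-via-minimal-ideals step is fine.

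There is, however, one concrete inaccuracy which becomes a genuine gap in the complex semisimple case. The invariance identity rewrites $\tau(\{z,v,w\},u)$ as $\tau(z,\{v,w,u\})$ with $\{v,w,u\}=\{u,w,v\}\in I$, so the \emph{outer} condition $\{I^{\perp},V,V\}\subseteq I^{\perp}$ does come for free; but the \emph{middle} condition does not rewrite as $\tau\bigl(z,(\text{element of }I)\bigr)$ as you claim. The best one gets is $\tau(\{v,z,w\},u)=\tau(v,\{z,w,u\})$, where $\{z,w,u\}$ lies in $I\cap I^{\perp}$ (in $I^{\perp}$ by the outer condition just proved, in $I$ because $u$ sits in an outer slot) but is paired against an arbitrary $v\in V$. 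So one needs $I\cap I^{\perp}=0$ \emph{before} one can conclude that $I^{\perp}$, or $R=I\cap I^{\perp}$, satisfies the middle ideal axiom. In the Hermitian positive case positive definiteness supplies this at once and everything closes up. In the complex semisimple case your argument is circular as written: the assertion ``$R:=I\cap I^{\perp}$, an ideal of $V$'' (which you need in order to invoke minimality of $I$) requires exactly the middle-slot containment that you can only verify once you already know $R=0$. Your auxiliary computations ($\{I^{\perp},I,I\}=0$ always, and $\{I,V,I\}=0$ when $\tau|_{I}\equiv 0$, both using non-degeneracy of $\tau$ on all of $V$) are correct and do most of the work, but they do not by themselves exclude $0\neq R\subsetneq I$ without knowing $R$ is an ideal. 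You honestly flag this as the main obstacle and offer to quote the literature for it; that is legitimate (it is what the paper does for the entire proposition), but then the complex half of your proof is not self-contained, and the reference for that half should be \cite{Loo3} (Jordan pairs), not \cite{Loo1}.
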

\begin{proof}
See \cite[Part V, Prop. IV.1.4]{FKKLR} for the case of Hermitian positive JTSs and \cite[Thm. 10.14]{Loo3} for the case of complex semisimple JTSs.
\end{proof}

\begin{remark}\label{R:comJTS}
There is a natural bijection
\begin{equation*}\label{E:bijJTS}
\begin{aligned}
\left\{\text{Hermitian positive JTSs}\right\} & \stackrel{\cong}{\longrightarrow} \left\{\text{Complex semisimple JTSs} \right\} \\
(V,\{.\,,.\,,.\}) & \mapsto (V,\{x,y,x\}':=\{x,\ov{y},z\})
\end{aligned}
\end{equation*}
which preserves the decomposition into the product of simple Jordan algebras.
\end{remark}

Simple Hermitian positive JTSs (or equivalently simple complex JTSs by Remark \ref{R:comJTS}) can be classified (see \cite[Sec. 17.4]{Loo3}).

\begin{thm}\label{T:simpleJTS}
Every simple Hermitian positive JTS is isomorphic to one of the following:
\begin{enumerate}[(i)]
\item \label{T:simpleJTS1}  $ M_{p,q}(\bbC)$ with Jordan triple product $\{M_1,M_2,M_3\}=\frac{1}{2}(M_1\ov{M_2}^tM_3+M_3\ov{M_2}^t M_1) $.
\item  $ M_{n,n}^{\rm skew}(\bbC):=\{M\in M_{n,n}(\bbC): M^t=-M\}$ with the same Jordan triple product of \eqref{T:simpleJTS1}.
\item    $ M_{n,n}^{\rm sym}(\bbC):=\{M\in M_{n,n}(\bbC): M^t=M\}$ with the same Jordan triple product of \eqref{T:simpleJTS1}.
\item  $\bbC^n$ with Jordan triple product  $\{X,Y,Z\}=(X^t\cdot Z)\ov{Y}-(Z^t\cdot \ov{Y})X-(X^t\cdot \ov{Y})Z $.
\item  $ \bbO_{\bbC}^2 $  with Jordan triple product
$\left\{\begin{pmatrix} a_1 \\ a_2 \end{pmatrix},\begin{pmatrix} b_1 \\ b_2 \end{pmatrix}, \begin{pmatrix} c_1 \\ c_2 \end{pmatrix}\right\}=
\begin{pmatrix} (a_1\wt{\ov{b_1}})c_1+(c_1\wt{\ov{b_1}})a_1+(a_1\ov{b_2})\wt{c_2}+(c_1\ov{b_2})\wt{a_2} \\
\wt{a_1}(\ov{b_1}c_2)+\wt{c_1}(\ov{b_1}a_2)+\wt{a_2}(\ov{b_2}c_2)+\wt{c_2}(\ov{b_2}a_2) \end{pmatrix}$.
\item   $ H_3(\bbO_{\bbC}):=\{M\in M_{3,3}(\bbO_{\bbC}): \wt{M}^t=M\} $  with Jordan triple product $\{a,b,c\}=(a|b)c+(c|b)a-(a\times c)\times \ov{b}$.
\end{enumerate}

\end{thm}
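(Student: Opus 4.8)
The plan is to reduce the theorem, via Remark~\ref{R:comJTS} and Proposition~\ref{P:JTSsimple}, to the classification of \emph{simple complex} JTSs. Indeed, the bijection of Remark~\ref{R:comJTS} sends a Hermitian positive JTS to the complex JTS with product $\{x,y,z\}':=\{x,\ov{y},z\}$ and preserves the decomposition into simple factors, while each of the six triple products in the statement is obtained from a $\bbC$-trilinear one by conjugating the middle argument. So it is enough to exhibit the six corresponding complex models and to show that every simple complex JTS is isomorphic to exactly one of them, which is \cite[Sec. 17.4]{Loo3}. I would reconstruct this classification through the Kantor--Koecher--Tits (KKT) construction.

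To a complex JTS $V$ the KKT construction attaches a $3$-graded complex Lie algebra $\g(V)=\g_{-1}\oplus\g_0\oplus\g_1$ with $\g_1\cong V$, with $\g_0$ the inner structure algebra, and with bracket $\g_1\times\g_{-1}\to\g_0$ encoding the triple product; axioms (JT1) and (JT2) are exactly what is needed for the Jacobi identity, and $V$ is simple if and only if $\g(V)$ is simple. Since the grading has length $3$, the parabolic $\g_{\geq 0}$ has abelian nilradical $\g_1$, i.e. it is cominuscle; conversely any simple complex Lie algebra with a $3$-grading recovers a simple complex JTS on $\g_1$ via $\{x,y,z\}:=\frac{1}{2}[[x,y],z]$, with $y\in\g_{-1}$. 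This gives a bijection between isomorphism classes of simple complex JTSs and pairs consisting of a simple complex Lie algebra together with a $3$-grading of it, equivalently a cominuscle node of its Dynkin diagram taken up to diagram automorphisms. By Cartan's classification these are precisely the nodes recorded in Table~\ref{F:cominus}: every node in type $A$, the two spin nodes and the end node $\alpha_1$ in type $D$, the end node $\alpha_1$ in type $B$, the long end node in type $C$, the two exceptional end nodes in $E_6$ and the exceptional end node in $E_7$ (types $E_8$, $F_4$, $G_2$ contribute none).

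It then remains to compute $\g_1$ with its induced triple product in each case: the nodes of type $A$ give the rectangular matrices $M_{p,q}(\bbC)$, the spin node of $D_n$ gives the skew matrices $M_{n,n}^{\rm skew}(\bbC)$, the long node of $C_n$ the symmetric matrices $M_{n,n}^{\rm sym}(\bbC)$, the node $\alpha_1$ in $B_n$ and $D_n$ the spin factor $\bbC^n$, and the exceptional nodes in $E_6$ and $E_7$ the models $\bbO_{\bbC}^2$ and $H_3(\bbO_{\bbC})$. In the classical cases this is a direct block-matrix computation; in the two exceptional cases one uses the Freudenthal--Tits realizations of the complex Lie algebras of type $E_6$ and $E_7$ in terms of the complexified octonions. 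Finally one checks that the six models are genuine Hermitian positive JTSs --- the trace form $\tau$ is positive definite and can be written down explicitly, e.g. $\tau(M,M')$ is a positive multiple of $\tr(M\ov{M'}^t)$ for the matrix models --- and that they are pairwise non-isomorphic, e.g. by comparing their ranks $\min(p,q)$, $\lfloor n/2\rfloor$, $n$, $\min(2,n)$, $2$, $3$ and complex dimensions, which are the entries of Table~\ref{F:BSD}.

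The main obstacle is the explicit treatment of the exceptional nodes: realizing $\g_1$ concretely as $\bbO_{\bbC}^2$ (resp. $H_3(\bbO_{\bbC})$) and verifying that the Lie bracket induces precisely the octonionic triple products in the statement requires careful manipulation of non-associative products and of the Freudenthal constructions. A secondary, bookkeeping-type issue is proving that the KKT assignment is an equivalence on isomorphism classes and that every $3$-grading of a simple complex Lie algebra is, up to conjugacy, defined by a cominuscle fundamental coweight, so that no further models appear. For all of this I would refer to \cite[Sec. 17.4]{Loo3} and \cite[Chap. II, \S 6]{Sat}; an alternative that bypasses Jordan theory altogether is to invoke the correspondence between simple Hermitian positive JTSs and irreducible bounded symmetric domains, and then quote Cartan's classification as recorded in Table~\ref{F:BSD}.
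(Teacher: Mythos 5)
The paper itself offers no proof of Theorem \ref{T:simpleJTS}: the classification is simply quoted from \cite[Sec. 17.4]{Loo3}. Your outline is an accurate reconstruction of the argument behind that citation, and it dovetails with machinery the paper does develop: the graded Lie algebra $\gS=V\oplus(V\square V)\oplus\ov V$ built in the proof of Theorem \ref{T:LieJor} is exactly your Kantor--Koecher--Tits algebra, and the list of cominuscle nodes you invoke is Table \ref{F:cominus} together with Remark \ref{R:comin-root}. Your list of cominuscle simple roots is correct (all nodes in type $A$; $\alpha_1$ in type $B$; the long end node in type $C$; $\alpha_1$ and the two spin nodes in type $D$; the two end nodes of $E_6$; one end node of $E_7$; none for $E_8$, $F_4$, $G_2$), as are the resulting models for $\g_1$ and the rank/dimension invariants used to separate them.

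One caveat deserves more than the label ``bookkeeping''. A $3$-graded simple complex Lie algebra determines a simple complex Jordan \emph{pair} $(\g_1,\g_{-1})$, not yet a triple system: to get a JTS one must also choose a grade-reversing (anti)automorphism identifying $\g_{-1}$ with $\g_1$, and in general a given simple Jordan pair carries several inequivalent such involutions, hence several non-isomorphic JTS structures. It is the Hermitian positivity hypothesis (positive definiteness of $\tau$) that singles out, up to isomorphism, the unique ``conjugate-transpose'' involution and hence the six models in the statement; this is where \cite[Chap. I, \S 9]{Sat} or \cite{Loo3} must actually be used rather than waved at. With that point made explicit, your argument is complete in outline; the alternative you mention at the end --- passing through the bijection with irreducible bounded symmetric domains and quoting Cartan --- is closer in spirit to how the paper organizes the material, but is not logically cheaper, since the paper's Theorem \ref{T:LieJor} is itself proved by the same Jordan-theoretic construction.
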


We record the simple Hermitian positive JTSs into Table \ref{F:Jor},   referring to \S \ref{S:irrHSM} for more details.
Among the different types of simple Hermitian positive JTSs, there are the same isomorphisms specified in Table \ref{F:small}.

\begin{table}[!ht]
\begin{tabular}{|c|c|c|c|c|c|}
\hline
Type  & Complex vector space $\p_+$ & Jordan triple product $\{.\,,.\,,.\}$ \\
\hline \hline
$I_{p,q}$ & $ M_{p,q}(\bbC)$ &  $\{M_1,M_2,M_3\}=\frac{1}{2}(M_1\ov{M_2}^tM_3+M_3\ov{M_2}^t M_1) $\\
\hline
$II_n$ & $ M_{n,n}^{\rm skew}(\bbC)$ & $\{M_1,M_2,M_3\}=\frac{1}{2}(M_1\ov{M_2}^tM_3+M_3\ov{M_2}^t M_1) $ \\
\hline
$III_n$ & $ M_{n,n}^{\rm sym}(\bbC)$ & $\{M_1,M_2,M_3\}=\frac{1}{2}(M_1\ov{M_2}^tM_3+M_3\ov{M_2}^t M_1) $ \\
\hline
$IV_n $ & $\bbC^n$ &  $\{X,Y,Z\}=(X^t\cdot Z)\ov{Y}-(Z^t\cdot \ov{Y})X-(X^t\cdot \ov{Y})Z $\\
\hline
$V$ & $ \bbO_{\bbC}^2 $  &
$\left\{\begin{pmatrix} a_1 \\ a_2 \end{pmatrix},\begin{pmatrix} b_1 \\ b_2 \end{pmatrix}, \begin{pmatrix} c_1 \\ c_2 \end{pmatrix}\right\}=
\begin{pmatrix} (a_1\wt{\ov{b_1}})c_1+(c_1\wt{\ov{b_1}})a_1+(a_1\ov{b_2})\wt{c_2}+(c_1\ov{b_2})\wt{a_2} \\
\wt{a_1}(\ov{b_1}c_2)+\wt{c_1}(\ov{b_1}a_2)+\wt{a_2}(\ov{b_2}c_2)+\wt{c_2}(\ov{b_2}a_2) \end{pmatrix}$
  \\
\hline
$VI$ & $ H_3(\bbO_{\bbC}) $  &  $\{a,b,c\}=(a|b)c+(c|b)a-(a\times c)\times \ov{b}$  \\
\hline
\end{tabular}
\caption{Simple Hermitian positive JTSs.}\label{F:Jor}
\end{table}

\vspace{0.2cm}

There is a way to construct  a Hermitian positive JTS starting from a Hermitian SLA of non-compact type.
Indeed, given a Hermitian SLA of non-compact type $(\g,\theta,H)$, consider the decomposition
\begin{equation*}
\g_{\bbC}=\k_{\bbC}\oplus \p_+\oplus \p_{-}
\end{equation*}
given in \eqref{E:dec-g} and denote by $x\to \ov{x}$ the complex conjugation on $\g_{\bbC}$ with respect to the real form $\g$ of $\g_{\bbC}$.
Then the complex vector space $\p_+$ endowed with the triple product
\begin{equation}\label{E:Jortriple}
\begin{aligned}
\{.\,,.\,,.\}: \p_+\times \p_+\times \p_+ &\longrightarrow  \p_+, \\
(x,y,z) & \mapsto \{x,y,z\}:=\frac{1}{2}[[x,\ov{y}],z],
\end{aligned}
\end{equation}
is a Hermitian positive JTS (see \cite[p. 55]{Sat}).

\begin{thm}\label{T:LieJor}
The map
\begin{equation}\label{E:bijeJL}
\begin{aligned}
\left\{\text{Hermitian SLAs of non-compact type}\right\} & \stackrel{}{\longrightarrow} \left\{\text{Hermitian positive JTSs}\right\} \\
(\g,\theta,H) & \mapsto (\p_+,\{.\,,.\,,.\})\\
\end{aligned}
\end{equation}
is a bijection sending irreducible Hermitian SLAs of non-compact type into simple Hermitian positive JTSs.
\end{thm}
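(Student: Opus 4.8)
The plan is to exhibit an explicit two-sided inverse of the map \eqref{E:bijeJL} by means of the Tits--Kantor--Koecher (TKK) construction, and then to read off the statement about irreducible/simple pieces from the way this construction interacts with ideals; the decomposition results (Proposition \ref{P:JTSsimple} and the decomposition of Hermitian SLAs used in the proof of Theorem \ref{T:HSMLiealg}) are available as a fallback for reducing everything to the simple case.

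Well-definedness of \eqref{E:bijeJL} is already recorded: the triple product \eqref{E:Jortriple} makes $\p_+$ into a Hermitian positive JTS (the computation of \cite[Chap. II, \S3]{Sat}). For the inverse, start from an arbitrary Hermitian positive JTS $(V,\{.\,,.\,,.\})$ and form the complex vector space
\[
\g_{\bbC} := V_+ \oplus \mathfrak{l} \oplus V_-,
\]
where $V_+$ and $V_-$ are two copies of the complex vector space $V$ and $\mathfrak{l}\subseteq\End_{\bbC}(V)$ is the Lie subalgebra generated by the operators $a\square b:x\mapsto\{a,b,x\}$. One equips $\g_{\bbC}$ with the Lie bracket in which $\mathfrak{l}$ acts naturally on $V_+$ and on $V_-$, $[V_+,V_+]=[V_-,V_-]=0$, and $[x_+,y_-]$ is a fixed scalar multiple of the operator $x\square y\in\mathfrak{l}$; the Jordan identities (JT1)--(JT2) are precisely what is needed for the Jacobi identity to hold (this is the TKK construction). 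Since $V$ is semisimple, $\g_{\bbC}$ is a semisimple complex Lie algebra. Set $\k_{\bbC}:=\mathfrak{l}$, $\p_+:=V_+$, $\p_-:=V_-$; let $\theta$ be the involution equal to $+1$ on $\mathfrak{l}$ and $-1$ on $V_+\oplus V_-$; let $H$ be the element of the center of $\mathfrak{l}$ normalized so that $\ad(H)$ is multiplication by $\pm i$ on $V_{\pm}$; and let $\g\subseteq\g_{\bbC}$ be the real form cut out by the conjugation interchanging $V_+$ with $V_-$ (via $x\mapsto\overline{x}$) and restricting on $\mathfrak{l}$ to a suitable compact conjugation. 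One checks that $(\g,\theta,H)$ is a Hermitian SLA and that it is of \emph{non-compact type}: this is where positivity (JTp) of the trace form enters, since it is equivalent to the Killing form $B$ being negative definite on $\k$ and positive definite on $\p$, i.e.\ to $\theta$ being a Cartan involution of $\g$.

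It remains to check that the two assignments are mutually inverse. Going JTS $\to$ SLA $\to$ JTS, one recovers the original triple product, because by construction (with the bracket on $V_{\pm}$ normalized appropriately) formula \eqref{E:Jortriple} applied to the TKK bracket returns $\{x,y,z\}$. Going SLA $\to$ JTS $\to$ SLA, one recovers $\g_{\bbC}=\k_{\bbC}\oplus\p_+\oplus\p_-$ together with its bracket: the relations among $\k_{\bbC},\p_+,\p_-$ recalled just after \eqref{E:dec-g} show that $[\p_+,\p_-]$ generates $\k_{\bbC}$ (here one uses that $\k_{\bbC}$ acts faithfully, by adjointness/semisimplicity as in \S\ref{SS:HC-B}), so the TKK relations reproduce every bracket, while $\theta$ and $H$ are pinned down by $\p=(\p_+\oplus\p_-)\cap\g$ and $\ad(H)_{|\p_{\pm}}=\pm i\,\id_{\p_{\pm}}$. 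Finally, an ideal of the JTS $\p_+$ generates (via the TKK relations) an ideal of $\g_{\bbC}$, and intersecting an ideal of $\g_{\bbC}$ with $\p_+$ yields an ideal of the JTS (using again that $[\p_+,\p_-]$ generates $\k_{\bbC}$); hence $\g_{\bbC}$ is simple if and only if $\p_+$ is a simple JTS, which gives the last assertion.

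The step I expect to be the main obstacle is the verification that the TKK construction applied to a Hermitian \emph{positive} JTS lands precisely in the class of Hermitian SLAs of \emph{non-compact} type — equivalently, that positivity of the trace form $\tau$ is equivalent to $\theta$ being a Cartan involution — together with the bookkeeping needed to see that the two constructions are inverse as triples $(\g,\theta,H)$, and not merely up to isomorphism; all of this is carried out in \cite[Chap. II]{Sat}. As an alternative one can avoid the general machinery and simply match the two classifications type by type: the irreducible Hermitian SLAs of non-compact type (Table \ref{F:SLA1}, together with the exceptional types $V$ and $VI$ from \S\ref{SS:HSMLiegrp}) correspond to the simple Hermitian positive JTSs of Theorem \ref{T:simpleJTS} and Table \ref{F:Jor}, after which Proposition \ref{P:JTSsimple} and the decomposition of Hermitian SLAs extend the bijection to the general case — at the cost of replacing a conceptual argument with a case check.
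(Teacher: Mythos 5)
Your proposal is correct and follows essentially the same route as the paper: the inverse map is the Tits--Kantor--Koecher construction on the graded Lie algebra $V\oplus (V\square V)\oplus \ov V$ (your $V_+\oplus\mathfrak l\oplus V_-$ is exactly the paper's $\gS_{-1}\oplus\gS_0\oplus\gS_1$, with $V\square V$ already a Lie subalgebra by (JT2)), the real form is cut out by composing the grading involution $\theta$ with the conjugation swapping the degree $\pm 1$ pieces, and positivity of the trace form is what makes that conjugation a Cartan involution, hence the resulting Hermitian SLA of non-compact type. The paper likewise defers the verification that the two constructions are mutually inverse to Satake, so your additional sketches of that step and of the ideal correspondence are a modest bonus rather than a divergence.
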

\begin{proof}
We limit ourself to defining a map in the other direction, referring to \cite[Chap. II, Prop. 3.3]{Sat} for the verification that it is the inverse of the map \eqref{E:bijeJL}.

Let $(V,\{.\,,.\,,.\})$ be a Hermitian positive JTS. Consider the graded complex vector space
$$\gS=\gS_{-1}\oplus \gS_0\oplus \gS_1:=V\oplus (V\square V) \oplus \ov V,$$
where $V\square V:=\{x\square y :  x,y \in V\}\subseteq \End(V)$ and $\ov V$ is the complex conjugate vector space of $V$, i.e. the complex vector space whose underlying real vector space is equal to
the one of $V$ and such that $i\cdot \ov{v}=-i\cdot v$ for any $v\in V$.

Observe that (JT2) of Definition \ref{D:JTS} implies that $V\square V$ is a complex Lie subalgebra of $\End(V)$ with respect to the usual
Lie bracket $[\,]$ on $\End(V)$. Moreover, if we denote by $T^*$ the adjoint of an endomorphism $T\in \End(V)$ with respect to the Hermitian positive definite trace form $\tau$ (see (JTp) of Definition \ref{D:JTS}),
then (JT2) implies that $(x\square y)^*=y\square x$. In particular, $V\square V\subseteq \End(V)$ is closed under the adjoint operator.
Using these observations, we can define a Lie bracket on $\gS=V\oplus (V\square V) \oplus \ov V$ as it follows:
\begin{equation*}\label{E:bracketS}
[(a,T,\ov b), (a', T', \ov{b'})]:=(Ta'-T'a,2a'\square \ov b+[T,T']-2a\square \ov{b'},(T')^*\ov b-T^*\ov{b'}).
\end{equation*}
It turns out that $(\gS,[\,])$ is a graded semisimple complex Lie algebra (see \cite[Chap. I, Prop. 7.1]{Sat})

Consider now the map
\begin{equation*}
\begin{aligned}
\sigma: \gS & \longrightarrow \gS, \\
(a,T,b) & \mapsto (b, -T^*, a).
\end{aligned}
\end{equation*}
It is easily checked that $\sigma$ is a complex conjugation on the graded complex Lie algebra $\gS$, i.e. it is a $\bbC$-antilinear involution  such that $[\sigma(X),\sigma(Y)]=[X,Y]$ for any $X,Y\in \gS$
and $\sigma(\gS_{i})=\gS_{-i}$ for $i=-1,0,1$.
Moreover, the fact that the trace form $\tau$  is positive definite (by (JTp) of Definition \ref{D:JTS}) implies that $\sigma$ is a Cartan involution of the complex semisimple Lie algebra $\gS$
(see \cite[Chap. I, \S 9]{Sat}), or, equivalently, that the real form $\gS_{\sigma}:=\{X\in \gS : \sigma(X)=X\}$ of the complex Lie algebra $\gS$ defined by $\sigma$ is a compact real form of $\gS$
 (see \cite[Prop. 6.14]{Kna}).

Finally, consider the $\bbC$-linear involution on $\gS$ which preserves the grading on $\gS$ and such that
\begin{equation*}
\theta_{|\gS_i}=
\begin{cases}
\id & \text{ if } i=0, \\
-\id & \text{ if } i=-1, 1.
\end{cases}
\end{equation*}
Then the complex conjugation $X\mapsto \theta(\sigma(X))$ on $\gS$ defines another real form $\g:=\{X\in \gS : \theta(\sigma(X))=X\}$ of $\gS$, on which $\theta$ induces a Cartan involution
(see e.g. \cite[Chap. I, \S 4]{Sat}). If $\g=\k\oplus \p$ is the Cartan decomposition relative to $\theta$ (as in \eqref{E:decompo}), then by construction it follows that $\k_{\bbC}=\gS_0$ and
$\p\otimes_{\bbR}\bbC=\gS_{-1}\oplus \gS_1$. Therefore, arguing as in \S\ref{SS:HSMLiealg}, there exists a unique element $H\in Z(\k)$ such that $\ad(H)^2_{|\p}=-\id_{\p}$ and such that
$\p\otimes_{\bbR}\bbC=\gS_{-1}\oplus \gS_1$ is the decomposition into eigenspaces for $\ad(H)_{|\p}$ relative to the eigenvalues $+1$ and $-1$ (as in \S \ref{SS:HC-B}).

Summing up, starting with a Hermitian positive JTS $(V,\{.\,,.\,,.\})$, we have constructed a Hermitian SLA of non-compact type $(\g,\theta,H)$ and this defines the inverse of the map \eqref{E:bijeJL}
(see \cite[Chap. II, Prop. 3.3]{Sat}).

\end{proof}

Note that the correspondence in Theorem \ref{T:LieJor} together with Theorem \ref{T:HSMLiealg} and the classification of simple Hermitian positive JTSs in Theorem \ref{T:simpleJTS} gives a new
approach to the classification of HSMs of non-compact type, as recalled in \S\ref{SS:HSMLiegrp}.

\begin{remark}\label{R:funSLA-JTS}
The bijection \eqref{E:bijeJL} becomes an equivalence of categories if the left hand set is endowed with morphisms of Hermitian SLAs as in Remark \ref{R:funHSM-SLA} and the right hand side
is endowed with morphisms of Hermitian positive JTSs defined as it follows:
a morphism of Hermitian positive JTSs between two Hermitian positive JTSs $(V,\{.\,,.\,,.\})$ and $(V',\{.\,,.\,,.\}')$ is a $\bbC$-linear map $\psi:V\to V'$ such that
$$\psi(\{x,y,z\})=\{\psi(x),\psi(y),\psi(z)\}' \: \text{ for any } x,y,z\in V.$$
See \cite[Chap. II, \S8]{Sat}.
\end{remark}

\begin{remark}
Let $M$ be a HSM of non-compact type and consider its associated Hermitian positive JTS $(\p_+,\{.\,,.\,,.\})$ (via the bijections of  Theorems \ref{T:HSMLiealg} and \ref{T:LieJor}).
Then the image of the Harish-Chandra embedding $i_{HC}:M\hookrightarrow \p_+$ (as in Theorem \ref{T:embed}) is equal to
\begin{equation}\label{E:HC-JTS}
i_{HC}(M)=\{z\in \p_+ : z\square z <\id_{\p_+}\}.
\end{equation}
See  \cite[Chap. II, Thm. 5.9]{Sat} and the references therein.
\end{remark}

Using the above presentation of HSMs, it is possible to give a Jordan theoretic description of the rank of a HSM of non-compact type (as in Definition \ref{D:rank}). Recall that a \emph{tripotent}
(or idempotent) of a Jordan triple system $(V,\{.\,,.\,,.\})$ is an element $e\in V$ such that $\{e,e,e\}=e$.  Two tripotents $e_1$ and $e_2$ are said to be orthogonal if $\{e_1,e_2,x\}=0$ for any $x\in V$.
A \emph{Jordan frame} of $(V,\{.\,,.\,,.\})$ is a maximal collection $\{e_1,\ldots, e_n\}$ of pairwise orthogonal distinct tripotents.

\begin{prop}\label{P:rank-Jor}
Let $M$ be a HSM of non-compact type and let $(\p_+,\{.\,,.\,,.\})$ the corresponding Hermitian positive JTS according to Theorem \ref{T:HSMLiealg} and Theorem \ref{T:LieJor}.
Then the rank of $M$ is equal to the cardinality of every Jordan frame of $(\p_+,\{.\,,.\,,.\})$.
\end{prop}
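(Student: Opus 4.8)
The plan is to reduce to the irreducible case and then match Jordan frames of $\p_+$ with maximal abelian subalgebras of $\p$, whose common dimension is the rank of $M$ by Proposition \ref{P:rank-SLA}. For the reduction, write $M = M_1 \times \cdots \times M_s$ as a product of irreducibles; by Theorem \ref{T:HSMLiealg}\eqref{T:HSMLiealg1} and Theorem \ref{T:LieJor} the associated JTS decomposes as $\p_+ = \bigoplus_i (\p_+)_i$ into simple Hermitian positive JTSs (Proposition \ref{P:JTSsimple}), the decomposition $\g = \bigoplus_i \g_i$ into ideals makes the rank additive (Proposition \ref{P:rank-SLA}, since a maximal abelian subalgebra of $\p = \bigoplus_i \p_i$ is the direct sum of maximal abelian subalgebras of the $\p_i$), and every Jordan frame of $\bigoplus_i (\p_+)_i$ is a disjoint union of Jordan frames of the individual $(\p_+)_i$, since pairwise orthogonal tripotents respect the decomposition into ideals. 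So both sides of the asserted equality are additive, and we may assume $M$ irreducible and $V := \p_+$ simple.

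Recall from \eqref{E:dec-g} the decomposition $\g_{\bbC} = \k_{\bbC} \oplus \p_+ \oplus \p_-$, in which $\p_\pm$ are abelian and the conjugation $x \mapsto \ov x$ of $\g_{\bbC}$ with respect to $\g$ interchanges $\p_+$ and $\p_-$, so that $\p = \{x + \ov x : x \in \p_+\}$. To a tripotent $e \in \p_+$ — i.e. an element with $\{e,e,e\} = \tfrac12 [[e,\ov e],e] = e$ — I attach the real element $X_e := e + \ov e \in \p$; a short bracket computation shows that $(e,\,[e,\ov e],\,\ov e)$ is an $\sl_2$-triple in $\g_{\bbC}$ and that $[e_1,\ov{e_2}] = 0$ whenever $e_1 \perp e_2$, so that $[X_{e_1},X_{e_2}] = 0$ for orthogonal tripotents. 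Hence a Jordan frame $\{e_1,\dots,e_r\}$ produces linearly independent, pairwise commuting elements $X_{e_1},\dots,X_{e_r} \in \p$ spanning an abelian subalgebra $\mathfrak a \subseteq \p$ of dimension $\ge r$, giving $\mathrm{rank}(M) \ge r$ by Proposition \ref{P:rank-SLA}. To upgrade this to an equality I would show that $\mathfrak a$ is maximal abelian: using the Peirce decomposition $V = \bigoplus_{1 \le i \le j \le r} V_{ij}$ relative to the frame, together with the fact that for a simple JTS and a maximal frame the diagonal Peirce spaces are the lines $V_{ii} = \bbC e_i$, one checks that any element of $\p$ commuting with all the $X_{e_i}$ has its $\p_+$-component in $\bigoplus_i V_{ii}$ and therefore lies in $\mathfrak a$.

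It remains to see that $r$ does not depend on the chosen frame, i.e. that every Jordan frame has cardinality $\mathrm{rank}(M)$. One way is the spectral theory of Hermitian positive JTSs: any two Jordan frames of a simple such system are conjugate under its automorphism group (see \cite[Chap. II, \S 3]{Sat} and \cite{Loo3}), hence equicardinal. Alternatively one runs the correspondence backwards: for a maximal abelian $\mathfrak a \subseteq \p$ the restricted root system of $\g$ relative to $\mathfrak a$ is of type $C_{\dim\mathfrak a}$ or $BC_{\dim\mathfrak a}$ — this is exactly where the Hermitian hypothesis (the central element $H \in Z(\k)$) enters — and the normalized $\p_+$-root vectors attached to a maximal set of strongly orthogonal restricted roots form a Jordan frame of cardinality $\dim\mathfrak a = \mathrm{rank}(M)$. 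A more self-contained variant replaces the root-system input by the Polydisk Theorem \ref{T:polydisk}, using that via the Harish-Chandra description $i_{HC}(M) = \{z \in \p_+ : z\square z < \id_{\p_+}\}$ of \eqref{E:HC-JTS} the linear span of a Jordan frame meets $i_{HC}(M)$ in a totally geodesic polydisk $\Delta^r$ (hence $\mathrm{rank}(M) \ge r$), while the polydisk produced by Theorem \ref{T:polydisk} has dimension exactly $\mathrm{rank}(M)$. In all approaches the main obstacle is the same: establishing the precise dictionary between the Peirce decomposition of the JTS and the restricted-root (equivalently polydisk) structure of $M$, especially the maximality statements, which rest on the restricted root system of a Hermitian symmetric domain being of type $C$ or $BC$.
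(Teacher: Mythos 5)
The paper offers no argument for this proposition beyond the citation to \cite{Loo4}, so there is no in-text proof to compare against; what you have written is, in outline, precisely the standard argument that the cited reference carries out, namely the Harish-Chandra--Moore dictionary under which a Jordan frame $\{e_1,\dots,e_r\}$ of $\p_+$ corresponds to a maximal set of strongly orthogonal noncompact roots and the real span of the elements $X_{e_i}=e_i+\ov{e_i}$ is a maximal abelian subspace of $\p$. Your reduction to the irreducible case is sound (both sides of the asserted equality are additive over the decomposition into ideals), and the bracket computations are correct; note only that passing from ``$\ad([e_1,\ov{e_2}])$ kills $\p_+$'' to ``$[e_1,\ov{e_2}]=0$'' uses the faithfulness of the adjoint action of $\k_{\bbC}$ on $\p_+$, which holds because $\g$ has no compact ideals (an element of $\k_{\bbC}$ annihilating $\p_+$ is Killing-orthogonal to $[\p_+,\p_-]=\k_{\bbC}$ and to $\p_\pm$, hence zero). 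The two mechanisms you propose for the reverse inequality --- maximality of $\mathfrak a$ via the Peirce decomposition, or conjugacy of frames together with the $C_r/BC_r$ restricted root system --- are both the standard ones, and the polydisk variant is arguably the cleanest route given that the paper already records Theorem \ref{T:polydisk} and the description \eqref{E:HC-JTS}: on the span of the frame one computes $(\sum_i t_ie_i)\square(\sum_i t_ie_i)=\sum_i|t_i|^2\,e_i\square e_i$, whence the intersection with $i_{HC}(M)$ is exactly the polydisk $\{|t_i|<1\}$.

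One caveat worth recording: your key maximality step uses $V_{ii}=\bbC e_i$, i.e.\ that the frame elements are \emph{primitive} (minimal) tripotents. The definition of Jordan frame given just before Proposition \ref{P:rank-Jor} (``maximal collection of pairwise orthogonal distinct tripotents'') does not literally force primitivity: in type $III_2$, for instance, the maximal tripotent $I_2$ has trivial Peirce-$0$ space, so the singleton $\{I_2\}$ cannot be enlarged by an orthogonal tripotent, yet the rank is $2$. The statement must therefore be read, as in \cite{Loo4}, with ``primitive'' inserted; with that reading your argument goes through, and the remaining deferred steps (the Peirce-theoretic maximality of $\mathfrak a$ and the conjugacy of frames) are exactly the technical content of the reference the paper points to.
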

\begin{proof}
See \cite{Loo4}.
\end{proof}

\section{Irreducible Hermitian symmetric manifolds}\label{S:irrHSM}

Irreducible HSMs of non-compact type (resp. of compact type) have been classified by Cartan in \cite{Car2} (based upon \cite{Car1})  and they are divided in $6$ \textbf{types} which according to the nowadays standard Siegel's notation\footnote{Cartan's original notation permutes Type III and Type IV.}
are called: $I_{p,q}$ (with $p\geq q\geq 1$), $II_n$ (with $n\geq 2$),  $III_n$ (with $n\geq 1$), $IV_n$ (with $2\neq n\geq 1$), $V$ and $VI$. The first four families are called \emph{classical HSMs} while the last two are called \emph{exceptional HSMs}.
For small values of the parameters there are some isomorphisms between  the above types as shown in Table \ref{F:small}.

\begin{table}[!ht]
\begin{tabular}{|c|c|c|}
\hline
Isomorphic Types   & $\bbR$-Dimension & Rank\\
\hline \hline
$I_{1,1}=II_2=III_1=IV_1$ & $2$ & $1$\\
\hline
$I_{3,1}=II_3$ & $6$ & $1$ \\
\hline
$III_2=IV_3$ & $6$ & $2$ \\
\hline
$I_{2,2}=IV_4$ & $8$ & $2$ \\
\hline
$II_4=IV_6$ & $12$ & $2$ \\
\hline
\end{tabular}
\caption{Isomorphic Types.}\label{F:small}
\end{table}

For a modern proof of the above classification, the reader is referred to \cite[Chap. X, \S6]{Hel}, where such a result is deduced as a corollary of the more general classification of irreducible Riemannian symmetric spaces. The notation used by Helgason differs from the one used by Siegel and the translation between the two different notations is given as it follows: $I_{p,q}=A_{III}$,  $II_n=D_{III}$,
$III_n=CI$, $IV_n=BDI(q=2)$, $V=EVI$ and $VI=EVII$. A direct proof which avoids the classification of Riemannian symmetric manifolds appears in \cite{Wol1}.

In the subsections below, we describe in detail each of the above types following mainly \cite{Wol3} and \cite[Chap. 4, \S2]{Mok} for classical HSMs and \cite{Roos} for exceptional HSMs.

\subsection{Type $I_{p,q}$}

The \textbf{bounded symmetric domain} of type $I_{p,q}$ (with $p\geq q\geq 1$) in its \emph{Harish-Chandra embedding} is given by
 \begin{equation}\label{E:BSDI}
 \calD_{I_{p,q}}:= \{Z\in M_{p,q}(\bbC): \: Z^t\cdot \ov{Z}<I_q \}\subset M_{p,q}(\bbC).
  \end{equation}
  Note that, in the special case $q=1$, $\calD_{I_{p,1}}$ is the unitary ball $B^p:=\{(z_1,\cdots,z_p)\in \bbC^p: \sum_i |z_i|^2<1\}\subset \bbC^p$.

Let $\SU(p,q)$ be the connected simple non-compact Lie subgroup of $\SL(p+q,\bbC)$ that leaves invariant the bilinear Hermitian form on $\bbC^{p+q}\times \bbC^{p+q}$ given by
$-x_1\ov{y}_1-\cdots -x_p \ov{y}_p+x_{p+1}\ov{y}_{p+1}+\cdots+x_{p+q}\ov{y}_{p+q}$. More explicitly
\begin{equation*}
\SU(p,q)=\left\{g\in \SL(p+q,\bbC)\: : \: \ov{g}^t \begin{pmatrix} -I_p & 0 \\ 0 & I_q \end{pmatrix} g= \begin{pmatrix} -I_p & 0 \\ 0 & I_q \end{pmatrix} \right\}=
\end{equation*}
\begin{equation*}
=\left\{\begin{pmatrix} A & B \\ C & D \end{pmatrix} \in \SL(p+q,\bbC) \: : \:
\begin{aligned}
& \ov{A}^t A- \ov{C}^t C=I_p \\
& \ov{D}^t D- \ov{B}^t B=I_q \\
& \ov{A}^tB= \ov{C}^t D
\end{aligned}
\right\}.
\end{equation*}
The Lie group $\SU(p,q)$ acts transitively on $\calD_{I_{p,q}}$ via generalized M\"obius transformations
\begin{equation}\label{E:Moebius}
\begin{aligned}
\SU(p,q)\times \calD_{I_{p,q}} & \longrightarrow \calD_{I_{p,q}} \\
\left( \begin{pmatrix} A & B \\ C & D \end{pmatrix} , Z\right) & \mapsto (AZ+B)(CZ+D)^{-1}.
\end{aligned}
\end{equation}
Notice that the center $Z(\SU(p,q))=\left\{\lambda I_{p+q}\: : \: \lambda^{p+q}=1 \right\}$ of $\SU(p,q)$
acts trivially on $\calD_{I_{p,q}}$; indeed, it turns out that the connected component of the group of biholomorphisms of $\calD_{I_{p,q}}$ is given by
\begin{equation*}
\Hol(\calD_{I_{p,q}})^o=\SU(p,q)/Z(\SU(p,q)):=\PSU(p,q),
\end{equation*}
which is the connected non-compact adjoint simple  Lie group of type $A_{p+q-1}$.

The symmetry of $\calD_{I_{p,q}}$ at the base point $0$ is given by the element
\begin{equation}\label{E:symI}
s_0=\left[\begin{pmatrix} -I_p & 0 \\ 0 & I_q \end{pmatrix} \right]\in \PSU(p,q),
\end{equation}
which acts on $\calD_{I_{p,q}}$ by sending $Z$ into $-Z$. The symmetry $s_0$ induces an involution on $\SU(p,q)$
\begin{equation}\label{E:involu1}
\begin{aligned}
\sigma: \SU(p,q) & \longrightarrow \SU(p,q), \\
\begin{pmatrix} A & B \\ C & D \end{pmatrix} & \mapsto
\begin{pmatrix} -I_p & 0 \\ 0 & I_q \end{pmatrix}\begin{pmatrix} A & B \\ C & D \end{pmatrix}\begin{pmatrix} -I_p & 0 \\ 0 & I_q \end{pmatrix}^{-1}=
\begin{pmatrix} A & -B \\ -C & D \end{pmatrix},
\end{aligned}
\end{equation}
whose fixed Lie subgroup is equal to the maximal compact Lie subgroup
\begin{equation*}
\left\{\begin{pmatrix} A & 0 \\ 0 & D \end{pmatrix}\in \SU(p,q) \right\}=\left\{\begin{pmatrix} A & 0 \\ 0 & D \end{pmatrix}\: : \:
\begin{aligned}
& \ov{A}^tA=I_p, \: \ov{D}^t D=I_q \\
&  \det(A) \det(D)=1
\end{aligned}
 \right\}
=:\GS(U_p\times U_q),
\end{equation*}
which is also equal to the stabilizer of $0\in \calD_{I_{p,q}}$. In particular, the pair $(\SU(p,q), $ $ \GS(U_p\times U_q))$ is a Riemannian symmetric pair.
Notice that the involution $\sigma$ descends to an involution of $\PSU(p,q)$ whose fixed locus is the maximal compact Lie subgroup
 $\ov{\GS(U_p\times U_q)}:=\GS(U_p\times U_q)/Z(\SU(p,q))$ of $\PSU(p,q)$. Therefore also the pair $(\PSU(p,q),\ov{\GS(U_p\times U_q)})$ is a Riemannian symmetric pair.

By the above discussion, we get the following presentation of $\calD_{I_{p,q}}$ as an irreducible \emph{HSM of non-compact type}
\begin{equation}\label{E:nocomI}
\calD_{I_{p,q}}\cong \SU(p,q)/\GS(U_p\times U_q)=\PSU(p,q)/\ov{\GS(U_p\times U_q)},
\end{equation}
associated to the Riemannian symmetric pair $(\SU(p,q), \GS(U_p\times U_q))$ (resp. to $(\PSU(p,q),\ov{\GS(U_p\times U_q)}$).
Notice that the last description of $\calD_{I_{p,q}}$ is the one appearing in Theorem \ref{T:Lie-irr}\eqref{T:Lie-irr1}.

The  irreducible \emph{Hermitian SLA of non-compact type} associated to the Riemannian symmetric pair  $(\SU(p,q), \GS(U_p\times U_q))$ (or equivalently to $(\PSU(p,q),\ov{\GS(U_p\times U_q)}$) is given
by the Lie algebra
\begin{equation}\label{E:Lie-nocompI}
\Lie \SU(p,q)=\mathfrak{su}(p,q)=\left\{ M \in \sl(p+q,\bbC) : \:
\ov{ M}^t \begin{pmatrix} -I_p & 0 \\ 0 & I_q \end{pmatrix} =-  \begin{pmatrix} -I_p & 0 \\ 0 & I_q \end{pmatrix} M\right\}=
\end{equation}
\begin{equation*}
=\left\{ \begin{pmatrix}  Z_1 & Z_2 \\  \ov{Z_2}^t & Z_3  \end{pmatrix}\in \gl(p+q,\bbC) \: : \:
\begin{aligned}
\ov{Z_1}^t=-Z_1, \: \: \: &  \ov{Z_3}^t=-Z_3 \\
Z_2  \in M_{p,q}(\bbC), &  \Tr(Z_1)+\Tr(Z_3)=0 \\
\end{aligned}
\right\}
\end{equation*}
endowed with the Cartan involution $\theta= d\sigma$ given by
\begin{equation*}
\theta \begin{pmatrix}  Z_1 & Z_2 \\  \ov{Z_2}^t & Z_3  \end{pmatrix}=\begin{pmatrix}Z_1 & -Z_2 \\ -\ov{Z_2}^t & Z_3\end{pmatrix}
\end{equation*}
and with the element
\begin{equation*}
H=i \begin{pmatrix} \frac{q}{p+q} I_p & 0\\ 0 & \frac{-p}{p+q} I_q\end{pmatrix}\in \Fix(\theta)=\Lie \GS(U_p\times U_q)=\left\{ \begin{pmatrix}  Z_1 & 0 \\  0 & Z_3  \end{pmatrix}\in \mathfrak{su}(p,q)\right\}.
\end{equation*}

The \textbf{cominuscle homogeneous variety } of type $I_{p,q}$ is the Grassmannian $\Gras(q,p+q)$ parametrizing $q$-dimensional subspaces of $\bbC^{p+q}$:
\begin{equation}\label{E:ComI}
\Gras(q,p+q):=\{[W\subset \bbC^{p+q}]\: : \: \dim W=q\}.
\end{equation}
The \emph{Borel embedding} of $\calD_{I_{p,q}}$ into $\Gras(q,p+q)$ is given by
\begin{equation}\label{E:BorI}
\begin{aligned}
\calD_{I_{p,q}}\subset M_{p,q}(\bbC) & \hookrightarrow \Gras(q,p+q), \\
Z & \mapsto \left\{ \langle v_1,\cdots v_q \rangle \: : \: \{v_1,\cdots, v_q\} \text{ are the column vectors of } \: \begin{pmatrix} Z \\ I_q \end{pmatrix} \right\}.
\end{aligned}
\end{equation}

The complex algebraic simple group $\SL(p+q,\bbC)$ of type $A_{p+q-1}$ acts transitively on $\Gras(q,p+q)$ via
\begin{equation*}
\begin{aligned}
\SL(p+q,\bbC)\times \Gras(q,p+q) & \longrightarrow \Gras(q,p+q) \\
(g, [W\subset \bbC^{p+q}]) & \mapsto [g(W)\subset \bbC^{p+q}].
\end{aligned}
\end{equation*}
Note that the center $Z(\SL(p+q,\bbC))=\{\lambda I_{p+q}\: : \: \lambda^{p+q}=1\} $ of $\SL(p+q,\bbC)$
acts trivially on $\Gras(q,p+q)$; indeed, it turns out that the group of automorphisms of the algebraic variety $\Gras(q,p+q)$ is equal to
$$\PSL(p+q,\bbC):=\SL(p+q,\bbC)/Z(\SL(p+q,\bbC)),$$
which is the connected simple adjoint complex algebraic group of type $A_{p+q-1}$ and it is the complexification of the Lie group $\PSU(p,q)$.

Consider now the base point $W_o:=\langle e_{p+1},\cdots, e_{p+q}\rangle\in \Gras(q,p+q)$ with respect to the standard basis $\{e_1,\cdots,e_{p+q}\}$ of $\bbC^{p+q}$.
The stabilizer of $W_o$ is the maximal parabolic subgroup associated to the $q$-th simple root of the Dinkin diagram $A_{p+q-1}$  (which is cominuscle, see Table \ref{F:cominus})
\begin{equation*}
Q_q:=\left\{ \begin{pmatrix} A & 0 \\ C & D \end{pmatrix}\in \SL(p+q,\bbC)\right\}\subset \SL(p+q,\bbC),
\end{equation*}
where $A\in M_{p,p}(\bbC)$, $C\in M_{q,p}(\bbC)$ and $D\in M_{q,q}(\bbC)$. The parabolic group $Q_q$ admits the following Levi decomposition
\begin{equation*}
Q_q=R_u(Q_q)\rtimes L(Q_q):=\left\{ \begin{pmatrix} I_p & 0 \\ C & I_q \end{pmatrix} \right\} \rtimes \left\{ \begin{pmatrix} A & 0 \\ 0 & D \end{pmatrix}\in \SL(p+q,\bbC)\right\},
\end{equation*}
which coincides with the Levi decomposition appearing in Theorem \ref{T:HSM-comin}.

From the above discussion, we obtain the following explicit presentation of $\Gras(q,p+q)$ as a cominuscle homogeneous variety (as in Definition \ref{D:comin})
\begin{equation}\label{E:G/PI}
\Gras(q,p+q)\cong \SL(p+q,\bbC)/Q_q=\PSL(p+q,\bbC)/\ov{Q_q},
\end{equation}
where $\ov{Q_q}:=Q_q/\left\{\lambda I_{p+q}\: : \: \lambda^{p+q}=1 \right\}$.

Consider now the compact real form of $\SL(p+q,\bbC)$, which  is the Lie subgroup $\SU(p+q)\subset \SL(p+q,\bbC)$ that leaves invariant the positive definite Hermitian form
$x_1\ov{y}_1+\cdots+x_{p+q}\ov{y}_{p+q}$ on $\bbC^{p+q}\times \bbC^{p+q}$. More explicitly
\begin{equation*}
\SU(p+q)=\{g\in \SL(p+q,\bbC)\: : \: \ov{g}^tg=I_{p+q}\}=
\end{equation*}
\begin{equation*}
=\left\{\begin{pmatrix} A & B \\ C & D \end{pmatrix} \in \SL(p+q,\bbC) : \:
\begin{aligned}
& \ov{A}^t A+ \ov{C}^t C=I_p \\
& \ov{D}^t D+ \ov{B}^t B=I_q \\
& \ov{A}^tB= -\ov{C}^t D
\end{aligned}
\right\}.
\end{equation*}
Similarly, the quotient of $\SU(p+q)$ by its center
\begin{equation*}
\PSU(p+q):=\SU(p+q)/Z(\SU(p+q))=\SU(p+q)/\{\lambda I_{p+q}\: : \: \lambda^{p+q}=1\}
\end{equation*}
is the compact real form of $\PSL(p+q,\bbC)$.

The restriction of the action of  $\SL(p+q,\bbC)$  on $\Gras(q,p+q)$ to the subgroup $\SU(p+q)\subset \SL(p+q,\bbC)$ is still transitive and the stabilizer of $W_o$ is the maximal
proper connected and compact subgroup
\begin{equation*}
\SU(p+q)\cap Q_q= \left\{\begin{pmatrix} A & 0 \\ 0 & D \end{pmatrix}\in \SU(p+q) \right\}\cong \GS(U_p\times U_q).
\end{equation*}
The action of $\SU(p+q)$ on $\Gras(q,p+q)$ factors through a transitive action of $\PSU(p+q)$ in such a way that the stabilizer of $W_o$ is equal to the maximal
proper connected and compact subgroup
$$\PSU(p+q)\cap \ov{Q_q}= \left\{\left[\begin{pmatrix} A & 0 \\ 0 & D \end{pmatrix}\right]\in \PSU(p,q) \right\}\cong \ov{\GS(U_p\times U_q)}.
$$

The pair $(\SU(p+q),\GS(U_p\times U_q))$ is a Riemannian symmetric pair since $\GS(U_p\times U_q)$ is the fixed subgroup of the involution
\begin{equation*}
\begin{aligned}
\sigma^*: \SU(p+q) & \longrightarrow \SU(p+q) \\
\begin{pmatrix} A & B \\ C & D \end{pmatrix} & \mapsto \begin{pmatrix} A & -B \\ -C & D \end{pmatrix},
\end{aligned}
\end{equation*}
and similarly for the pair $(\PSU(p+q),\ov{\GS(U_p\times U_q)})$.

By the above discussion, we get the following presentation of $\Gras(q,p+q)$ as the irreducible \emph{HSM of compact type}
\begin{equation}\label{E:comI}
\Gras(q,p+q)\cong \SU(p+q)/\GS(U_p\times U_q)=\PSU(p+q)/\ov{\GS(U_p\times U_q)},
\end{equation}
associated to the Riemannian symmetric pair $(\SU(p+q), \GS(U_p\times U_q))$ (resp. to $(\PSU(p+q),\ov{\GS(U_p\times U_q)}$).
In particular, the last description of $\Gras(q,p+q)$ is the one appearing in Theorem \ref{T:Lie-irr}\eqref{T:Lie-irr2}.
Notice that the symmetry at the base point $W_o$ of $ \Gras(q,p+q)$, seen as a Hermitian symmetric manifold, is given by the element
\begin{equation*}
s_{W_o}=\left[\begin{pmatrix} -I_p & 0 \\ 0 & I_q \end{pmatrix} \right]\in \PSU(p+q).
\end{equation*}

The irreducible \emph{Hermitian SLA of compact type} associated to the Riemann symmetric pair $(\SU(p+q),\GS(U_p\times U_q))$ is given by the Lie algebra
\begin{equation}\label{E:Lie-compI}
\Lie \SU(p+q)=\mathfrak{su}(p+q)=\left\{ \begin{pmatrix}  Z_1 & Z_2 \\  -\ov{Z_2}^t & Z_3  \end{pmatrix}  : \:
\begin{aligned}
\ov{Z_1}^t=-Z_1, \:\:\:  & \ov{Z_3}^t=-Z_3 \\
Z_2 \in M_{p,q}(\bbC), \: & \Tr(Z_1)+\Tr(Z_3)=0 \\
\end{aligned}
\right\}
\end{equation}
endowed with the involution $\theta^*=d\sigma^*$
\begin{equation*}
\theta^* \begin{pmatrix}  Z_1 & Z_2 \\ - \ov{Z_2}^t & Z_3  \end{pmatrix}=\begin{pmatrix}Z_1 & -Z_2 \\ \ov{Z_2}^t & Z_3\end{pmatrix}
\end{equation*}
and with the element
\begin{equation*}
H^*=i \begin{pmatrix} \frac{q}{p+q} I_p & 0\\ 0 & \frac{-p}{p+q} I_q\end{pmatrix}\in \Fix(\theta^*)=\left\{ \begin{pmatrix}  Z_1 & 0 \\  0 & Z_3  \end{pmatrix}\in \mathfrak{su}(p+q)\right\}\cong
  \Lie \GS(U_p\times U_q).
\end{equation*}
Notice that the Hermitian SLA $(\su(p+q),\theta^*,H^*)$ is the dual of the Hermitian SLA $(\su(p,q),\theta,H)$ in the sense of  \S\ref{SS:dual}.

The complexification of the Lie algebras $\su(p,q)$ and $\su(p+q)$ is the complex simple Lie algebra of type $A_{p+q-1}$
\begin{equation*}
\Lie \SL(p+q,\bbC)=\sl(p+q,\bbC)=\left\{ \begin{pmatrix}  Z_1 & Z_2 \\  Z_4 & Z_3  \end{pmatrix}\in \gl(p+q,\bbC) \: : \:  \Tr(Z_1)+\Tr(Z_3)=0 \right\}.
\end{equation*}
The decomposition \eqref{E:dec-g} of $\sl(p+q,\bbC)$ is given by
\begin{equation*}
\sl(p+q,\bbC)=\left\{ \begin{pmatrix}  Z_1 & 0 \\  0 & Z_3  \end{pmatrix}\: : \:  \Tr(Z_1)+\Tr(Z_3)=0  \right\} \oplus \left\{ \begin{pmatrix}  0 & Z_2 \\  0 & 0  \end{pmatrix} \right\} \oplus
 \left\{ \begin{pmatrix}  0 & 0 \\  Z_4 & 0  \end{pmatrix} \right\}.
\end{equation*}
In particular, we have the identification
\begin{equation}\label{E:p+I}
\begin{aligned}
M_{p,q}(\bbC)& \stackrel{\cong}{\longrightarrow} \p_+\\
M & \mapsto \begin{pmatrix}  0 & M \\  0 & 0  \end{pmatrix}.
\end{aligned}
\end{equation}
Using the above identification and the formula \eqref{E:Jortriple}, $M_{p,q}(\bbC)$ becomes a \emph{Hermitian positive JTS} with respect to the triple product
\begin{equation}\label{E:JorI}
\{M_1,M_2,M_3\}=\frac{1}{2}\left(M_1\ov{M_2}^tM_3+M_3\ov{M_2}^t M_1\right).
\end{equation}

\subsection{Type $II_{n}$}

The \textbf{bounded symmetric domain} of type $II_{n}$ ($n\geq 2$) in its \emph{Harish-Chandra embedding} is given by
 \begin{equation}\label{E:BSDII}
 \calD_{II_{n}}:= \{Z\in M_{n,n}^{\rm skew}(\bbC): \: Z^t\cdot \ov{Z}<I_n \}\subset M_{n,n}^{\rm skew}(\bbC):=\{Z\in M_{n,n}(\bbC)\: : \: Z^t=-Z\}.
  \end{equation}
Let $\SO(2n,\bbC)$ be the connected complex Lie subgroup of $\SL(2n,\bbC)$ that leaves invariant the bilinear symmetric form on $\bbC^{2n}\times \bbC^{2n}$ given by
$S(\un{x},\un{y})=x_1y_{n+1}+\cdots+x_{n}y_{2n}+\cdots + x_{n+1}y_1+\cdots+x_{2n}y_n$\footnote{Usually, one defines $\SO(2n,\bbC)$ with respect to the standard bilinear symmetric form on $\bbC^{2n}\times \bbC^{2n}$ given by $x_1y_1+\cdots+x_{2n}y_{2n}$. However, for our purposes it will be more convenient to use this alternative presentation.}. The group $\SO(2n.\bbC)$ is simple of type $D_n$
and it is explicitly given in $n\times n$ block notation as
\begin{equation*}
\SO(2n,\bbC)=\left\{g\in \SL(2n,\bbC) : \: g^t S_n  g= S_n  \right\}=
\end{equation*}
\begin{equation*}
=\left\{\begin{pmatrix} A & B \\ C & D \end{pmatrix} \in \SL(2n,\bbC) : \:
\begin{aligned}
& A^t C= - C^t A \\
& B^t D=- D^t B \\
& A^tD+C^tB= I_n
\end{aligned}
\right\}.
\end{equation*}
Consider the non-compact real form $\SOnc(2n)$ of $\SO(2n, \bbC)$ consisting of all the elements of $\SO(2n,\bbC)$ that leave invariant the bilinear Hermitian form on
$\bbC^{2n}\times \bbC^{2n}$ given by $-x_1\ov{y}_1-\cdots -x_n \ov{y}_n+x_{n+1}\ov{y}_{n+1}+\cdots+x_{2n}\ov{y}_{2n}$. Explicitly,
\begin{equation*}
\SOnc(2n)=\SO(2n,\bbC)\cap \SU(n,n)=\left\{g\in \SO(2n,\bbC) : \: \ov{g}^t \begin{pmatrix} I_n & 0 \\ 0 & -I_n\end{pmatrix}  g= \begin{pmatrix} I_n & 0 \\ 0 & -I_n\end{pmatrix} \right\}=
\end{equation*}
\begin{equation*}
=\left\{\begin{pmatrix} A & B \\ -\ov{B} & \ov{A} \end{pmatrix} \in \SL(2n,\bbC) : \:
\begin{aligned}
& \ov{A}^t A - B^t \ov{B}=I_n \\
& \ov{A}^tB+B^t\ov{A}= 0
\end{aligned}
\right\}.
\end{equation*}
Note that the group $\SOnc(2n)$ is isomorphic to the classical real Lie group $\SO^*(2n)$ via the conjugation inside $\SO(2n,\bbC)$ given by (see \cite[p. 74]{Mok})
\begin{equation}\label{E:iso-IInc}
\begin{aligned}
\SOnc(2n) & \stackrel{\cong}{\longrightarrow} \SO^*(2n):=\{g \in \SL(2n,\bbC) : \: g^t g=I_{2n} \text{ and }Ê\ov{g}^tJ_ng=J_n\}\\
h & \mapsto \begin{pmatrix}I_n & i I_n \\ iI_n & I_n  \end{pmatrix} h \begin{pmatrix}I_n & i I_n \\ iI_n & I_n  \end{pmatrix}^{-1}.
\end{aligned}
\end{equation}

The Lie group $\SOnc(2n)$ acts transitively on $\calD_{II_{n}}$ via generalized M\"obius transformations, as in \eqref{E:Moebius}.
Notice that the center $Z(\SOnc(2n))=\left\{\pm I_{2n} \right\}$ of $\SOnc(2n)$
acts trivially on $\calD_{II_{n}}$; indeed, it turns out that the connected component of the group of biholomorphisms of $\calD_{II_{n}}$ is given by
\begin{equation*}
\Hol(\calD_{II_{n}})^o=\SOnc(2n)/Z(\SOnc(2n)):=\PSOnc(2n),
\end{equation*}
which is the connected non-compact adjoint simple  Lie group of type $D_n$.

The symmetry of $\calD_{II_{n}}$ at the base point $0$ is given by the element
\begin{equation}\label{E:symII}
s_0=\left[\begin{pmatrix} iI_n & 0 \\ 0 & -iI_n \end{pmatrix} \right]\in \PSOnc(2n),
\end{equation}
which acts on $\calD_{II_{n}}$ by sending $Z$ into $-Z$. The symmetry $s_0$ induces an involution on $\SOnc(2n)$
\begin{equation*}
\begin{aligned}
\sigma: \SOnc(2n) & \longrightarrow \SOnc(2n) \\
\begin{pmatrix} A & B \\ C & D \end{pmatrix} & \mapsto
\begin{pmatrix} i I_n & 0 \\ 0 & -i I_n \end{pmatrix}\begin{pmatrix} A & B \\ C & D \end{pmatrix}\begin{pmatrix}  i I_n & 0 \\ 0 & - i I_n \end{pmatrix}^{-1}=
\begin{pmatrix} A & -B \\ -C & D \end{pmatrix},
\end{aligned}
\end{equation*}
whose fixed Lie subgroup is equal to the maximal compact Lie subgroup
\begin{equation*}
\left\{\begin{pmatrix} A & 0 \\ 0 & D \end{pmatrix}\in \SOnc(2n) \right\}=\left\{\begin{pmatrix} A & 0 \\ 0 & \ov{A} \end{pmatrix}\: : \: \ov{A}^t A=I_n \right\}
=:\U(n),
\end{equation*}
which is also equal to the stabilizer of $0\in \calD_{II_{n}}$. In particular, the pair $(\SOnc(2n), $ $ \U(n))$ is a Riemannian symmetric pair.
Notice that the involution $\sigma$ descends to an involution of $\PSOnc(2n)$ whose fixed locus is the maximal compact  Lie subgroup
 $\ov{\U(n)}:=\U(n)/\{\pm I_n\}$ of $\PSOnc(2n)$. Therefore, also the pair $(\PSOnc(2n),\ov{\U(n)})$ is a Riemannian symmetric pair.

By the above discussion, we get the following presentation of $\calD_{II_{n}}$ as an irreducible \emph{HSM of non-compact type}
\begin{equation}\label{E:nocomII}
\calD_{II_{n}}\cong \SOnc(2n)/\U(n)=\PSOnc(2n)/\ov{\U(n)},
\end{equation}
associated to the Riemannian symmetric pair $(\SOnc(2n), \U(n))$ (resp. to $(\PSOnc(2n),$ $\ov{\U(n)}$).
Notice that the last description of $\calD_{II_{n}}$ is the one appearing in Theorem \ref{T:Lie-irr}\eqref{T:Lie-irr1}.

The  irreducible \emph{Hermitian SLA of non-compact type} associated to the Riemannian symmetric pair  $(\SOnc(2n), \U(n))$ (or equivalently to $(\PSOnc(2n),$
$ \ov{U(n)}$) is given  by the Lie algebra
\begin{equation}\label{E:Lie-nocompII}
\sonc(2n):=\Lie \SOnc(2n)=\left\{ M \in \sl(2n,\bbC) : \:
\begin{aligned}
& M^t S_n=-S_n M \\
& \ov{ M}^t \begin{pmatrix} -I_n & 0 \\ 0 & I_n \end{pmatrix} =-  \begin{pmatrix} -I_n & 0 \\ 0 & I_n \end{pmatrix} M\\
\end{aligned}
\right\}=
\end{equation}
\begin{equation*}
=\left\{ \begin{pmatrix}  Z_1 &Z_2 \\ \ov{Z}_2^t & - Z_1^t  \end{pmatrix}\in \gl(2n,\bbC)  : \:
 \ov{Z_1}^t=-Z_1, \: Z_2^t=-Z_2\right\}\footnote{Note that $\sonc(2n)$ is isomorphic to the classical real Lie algebra $\so^*(2n)=\Lie \SO^*(2n)$ via the same conjugation map as in \eqref{E:iso-IInc}.}
\end{equation*}
endowed with the Cartan involution $\theta= d\sigma$ given by
\begin{equation*}
\theta \begin{pmatrix}  Z_1 &Z_2 \\ \ov{Z}_2^t & - Z_1^t  \end{pmatrix}= \begin{pmatrix}  Z_1 &-Z_2 \\ -\ov{Z}_2^t & - Z_1^t  \end{pmatrix}
\end{equation*}
and with the element
\begin{equation*}
H=\frac{i}{2} \begin{pmatrix} I_n & 0\\ 0 & - I_n\end{pmatrix}\in \Fix(\theta)=\left\{ \begin{pmatrix}  Z_1 & 0 \\  0 & -Z_1^t  \end{pmatrix}: \ov{Z}_1^t=-Z_1\right\}\cong \Lie U(n).
\end{equation*}

The \textbf{cominuscle homogeneous variety } of type $II_{n}$ is the connected component $\Gras_{\rm ort}(n,2n)^o$ containing $[W_o:=\langle e_{n+1}, \cdots, e_{2n}\rangle \subset \bbC^{2n}]$ of the orthogonal Grassmannian $\Gras_{\rm ort}(n,2n)$ parametrizing Lagrangian $n$-dimensional subspaces of $\bbC^{2n}$:
\begin{equation}\label{E:ComII}
\Gras_{\rm ort}(n,2n):=\{[W\subset \bbC^{2n}]\: : \: \dim W=p, S_{|W\times W}\equiv 0\},
\end{equation}
where $S$ is the bilinear symmetric non-degenerate form on $\bbC^{2n}$ which is represented by the matrix $S_n=\begin{pmatrix} 0 & I_n \\ I_n & 0 \end{pmatrix} $ in the standard basis of $\bbC^{2n}$.

The \emph{Borel embedding} of $\calD_{II_n}$ into $\Gras_{\rm ort}(n,2n)^o$ is given by
\begin{equation}\label{E:BorII}
\begin{aligned}
\calD_{II_n}\subset M_{n,n}^{\rm skew}(\bbC) & \hookrightarrow \Gras_{\rm ort}(n,2n)^o\\
Z & \mapsto \left\{ \langle v_1,\cdots v_n \rangle : \: \{v_1,\cdots, v_n\} \text{ are the column vectors of } \begin{pmatrix} Z \\ I_n \end{pmatrix} \right\}.
\end{aligned}
\end{equation}
The complex algebraic simple group $\SO(2n,\bbC)$ acts transitively on $\Gras_{\rm ort}(n,2n)^o$ via
\begin{equation*}
\begin{aligned}
\SO(2n,\bbC)\times \Gras_{\rm ort}(n,2n)^o & \longrightarrow \Gras_{\rm ort}(n,2n)^o \\
(g, [W\subset \bbC^{2n}]) & \mapsto [g(W)\subset \bbC^{2n}].
\end{aligned}
\end{equation*}
Note that the center $Z(\SO(2n,\bbC))=\{\pm I_{2n} \} $ of $\SO(2n,\bbC)$
acts trivially on $\Gras_{\rm ort}(n,2n)^o$; indeed, it turns out that the group of automorphisms of the algebraic variety $\Gras_{\rm ort}(n,2n)^o$ is equal to
$$\PSO(2n,\bbC):=\PSO(2n,\bbC)/\{\pm I_{2n}\},$$
which is the connected simple adjoint complex algebraic group of type $D_n$ and it is the complexification of the Lie group $\PSOnc(2n)$.

The stabilizer of $[W_o=\langle e_{n+1}, \cdots, e_{2n}\rangle\subset \bbC^{2n}] \in \Gras_{\rm ort}(n,2n)^o$ is the maximal parabolic subgroup associated to the $n$-th simple root of the Dinkin diagram $D_n$  (which is a cominuscle simple root of $D_n$,
see Table \ref{F:cominus})\footnote{As it is seen from Table \ref{F:cominus}, we could have chosen the $(n-1)$-th simple root  and we would have gotten an isomorphic (although non conjugate) parabolic subgroup.}
\begin{equation*}
Q_n:=\left\{ \begin{pmatrix} A & 0 \\ C & D \end{pmatrix}\in \SO(2n,\bbC)\right\}\subset \SO(2n,\bbC),
\end{equation*}
where $A\in M_{n,n}(\bbC)$, $C\in M_{n,n}(\bbC)$ and $D\in M_{n,n}(\bbC)$. The parabolic group $Q_n$ admits the following Levi decomposition
\begin{equation*}
Q_n=R_u(Q_n)\rtimes L(Q_n):=\left\{ \begin{pmatrix} I_p & 0 \\ C & I_q \end{pmatrix} : \: C^t=-C \right\} \rtimes \left\{ \begin{pmatrix} A & 0 \\ 0 & D \end{pmatrix}\in \SO(2n,\bbC)\right\},
\end{equation*}
which coincides with the Levi decomposition appearing in Theorem \ref{T:HSM-comin}.

From the above discussion, we obtain the following explicit presentation of $\Gras_{\rm ort}(n,2n)^o$ as a cominuscle homogeneous variety (as in Definition \ref{D:comin})
\begin{equation}\label{E:G/PII}
\Gras_{\rm ort}(n,2n)^o\cong \SO(2n,\bbC)/Q_n=\PSO(n,\bbC)/\ov{Q_n},
\end{equation}
where $\ov{Q_n}:=Q_n/\{\pm I_{2n}\}$.

Consider now the compact real form $\SOc(2n,\bbC)$ of $\SO(2n,\bbC)$ consisting of all the elements of $\SO(2n,\bbC)$ that leaves invariant the positive definite Hermitian form
$x_1\ov{y}_1+\cdots+x_{2n}\ov{y}_{2n}$ on $\bbC^{2n}$. More explicitly
\begin{equation*}
\SOc(2n):=\SO(2n,\bbC)\cap \SU(2n)=\left\{g\in \SO(2n,\bbC) : \: \ov{g}^t   g= I_{2n}  \right\}=
\end{equation*}
\begin{equation*}
=\left\{\begin{pmatrix} A & B \\ \ov{B} & \ov{A} \end{pmatrix} \in \SL(2n,\bbC) : \:
\begin{aligned}
 &  A^t \ov{A} + \ov{B}^t B=I_n\\
& A^t\ov{B}+\ov{B}^t A = 0
\end{aligned}
\right\}.
\end{equation*}
Note that the group $\SOc(2n)$ is isomorphic to the real orthogonal group $\SO(2n)$ via the same conjugation map as in \eqref{E:iso-IInc}.
Similarly, the quotient of $\SOc(2n)$ by its center
\begin{equation*}
\PSOc(2n):=\SOc(2n)/Z(\SOc(2n))=\SOc(2n)/\{\pm I_{2n} \}
\end{equation*}
is a compact real form of $\PSO(2n,\bbC)$.

The restriction of the action of  $\SO(2n,\bbC)$  on $\Gras_{\rm ort}(n,2n)^o$ to the subgroup $\SOc(2n)\subset \SO(2n,\bbC)$ is still transitive and the stabilizer of $W_o$ is the maximal
proper connected and compact subgroup
\begin{equation*}
\SOc(2n)\cap Q_n= \left\{\begin{pmatrix} A & 0 \\ 0 & D \end{pmatrix}\in \SOc(2n) \right\}= \left\{\begin{pmatrix} A & 0 \\ 0 & \ov{A} \end{pmatrix}: \ov{A}^t A=I_{2n}\right\}\cong U(n).
\end{equation*}
The action of $\SOc(2n)$ on $\Gras_{\rm ort}(n,2n)^o$ factors through a transitive action of $\PSOc(2n)$ in such a way that the stabilizer of $W_o$ is equal to the maximal
proper connected and compact subgroup
$$\PSOc(2n)\cap \ov{Q_n}= \left\{\left[\begin{pmatrix} A & 0 \\ 0 & D \end{pmatrix}\right]\in \PSOc(2n) \right\} = \left\{\left[\begin{pmatrix} A & 0 \\ 0 & \ov{A} \end{pmatrix}\right]: \ov{A}^t A=I_{2n}\right\}\cong
\ov{U(n)}.
$$

The pair $(\SOc(2n),U(n))$ is a Riemannian symmetric pair since $U(n)$ is the fixed subgroup of the involution
\begin{equation*}
\begin{aligned}
\sigma^*: \SOc(2n) & \longrightarrow \SOc(2n) \\
\begin{pmatrix} A & B \\ C & D \end{pmatrix} & \mapsto \begin{pmatrix} A & -B \\ -C & D \end{pmatrix},
\end{aligned}
\end{equation*}
and similarly for the pair $(\PSOc(2n),\ov{U(2n)})$.

By the above discussion, we get the following presentation of $\Gras_{\rm ort}(n,2n)^o$ as the irreducible \emph{HSM of compact type}
\begin{equation}\label{E:comII}
\Gras_{\rm ort}(n,2n)^o\cong \SOc(2n)/U(n)=\PSOc(2n)/\ov{U(n)},
\end{equation}
associated to the Riemannian symmetric pair $(\SOc(2n), U(n))$ (resp. to $(\PSOc(2n),$ $ \ov{U(n)}$).
In particular, the last description of $\Gras_{\rm ort}(n,2n)^o$ is the one appearing in Theorem \ref{T:Lie-irr}\eqref{T:Lie-irr2}.
Notice that the symmetry at the base point $W_o$ of $ \Gras_{\rm ort}(n,2n)^o$, seen as a Hermitian symmetric manifold, is given by the element
\begin{equation*}
s_{W_o}=\left[\begin{pmatrix} -I_p & 0 \\ 0 & I_q \end{pmatrix} \right]\in \PSOc(2n).
\end{equation*}

The irreducible \emph{Hermitian SLA of compact type} associated to the Riemann symmetric pair $(\SOc(2n),U(2n))$ is given by the Lie algebra
\begin{equation}\label{E:Lie-compII}
\soc(2n):=\Lie \SOc(2n)=\left\{ M \in \sl(p+q,\bbC) : \:
\begin{aligned}
& M^t S_n=-S_n M \\
& \ov{ M}^t  =- M\\
\end{aligned}
\right\}=
\end{equation}
\begin{equation*}
=\left\{ \begin{pmatrix}  Z_1 &Z_2 \\ -\ov{Z}_2^t & - Z_1^t  \end{pmatrix}\in \gl(p+q,\bbC)  : \:
 \ov{Z_1}^t=-Z_1, \: Z_2^t=-Z_2\right\}\footnote{Note that $\soc(2n)$ is isomorphic to the classical real Lie algebra $\so(2n)=\Lie \SO(2n)$ via the same conjugation map as in \eqref{E:iso-IInc}}
\end{equation*}
endowed with the involution $\theta^*=d\sigma^*$
\begin{equation*}
\theta^* \begin{pmatrix}  Z_1 & Z_2 \\ - \ov{Z_2}^t & -Z_1^t  \end{pmatrix}=\begin{pmatrix}Z_1 & -Z_2 \\ \ov{Z_2}^t & - Z_1^t\end{pmatrix}
\end{equation*}
and with the element
\begin{equation*}
H^*=\frac{i}{2} \begin{pmatrix} I_n & 0\\ 0 &-I_n\end{pmatrix}\in \Fix(\theta^*)=\left\{ \begin{pmatrix}  Z_1 & 0 \\  0 & -Z_1^t  \end{pmatrix}: \ov{Z}_1^t=-Z_1\right\}\cong \Lie U(n).
\end{equation*}
Notice that the Hermitian SLA $(\soc(2n),\theta^*,H^*)$ is the dual of the Hermitian SLA $(\sonc(2n),\theta,H)$ in the sense of  \S\ref{SS:dual}.

The complexification of the Lie algebras $\sonc(2n)$ and $\soc(2n)$ is the complex simple Lie algebra of type $D_n$
\begin{equation*}
\Lie \SO(2n,\bbC)=\so(2n,\bbC)=\left\{ \begin{pmatrix}  Z_1 & Z_2 \\  Z_3 & -Z_1^t  \end{pmatrix}\in \gl(p+q,\bbC) \: : \:  Z_2^t=-Z_2, \: Z_3^t=-Z_3 \right\}.
\end{equation*}
The decomposition \eqref{E:dec-g} of $\so(p+q,\bbC)$ is given by
\begin{equation*}
\sl(p+q,\bbC)=\left\{ \begin{pmatrix}  Z_1 & 0 \\  0 & - Z_1^t  \end{pmatrix} \right\} \oplus \left\{ \begin{pmatrix}  0 & Z_2 \\  0 & 0  \end{pmatrix} : \: Z_2^t=-Z_2\right\} \oplus
 \left\{ \begin{pmatrix}  0 & 0 \\  Z_3 & 0  \end{pmatrix}: \: Z_3^t=-Z_3 \right\}.
\end{equation*}
In particular, we have the identification
\begin{equation}\label{E:p+II}
\begin{aligned}
M_{n,n}^{\rm skew}(\bbC)& \stackrel{\cong}{\longrightarrow} \p_+\\
M & \mapsto \begin{pmatrix}  0 & M \\  0 & 0  \end{pmatrix}.
\end{aligned}
\end{equation}
Using the above identification and the formula \eqref{E:Jortriple}, $M_{n,n}^{\rm skew}(\bbC)$ becomes a \emph{Hermitian positive JTS} with respect to the triple product
\begin{equation}\label{E:JorII}
\{M_1,M_2,M_3\}=\frac{1}{2}\left(M_1\ov{M_2}^tM_3+M_3\ov{M_2}^t M_1\right).
\end{equation}

\subsection{Type $III_n$}

The \textbf{bounded symmetric domain} of type $III_{n}$ ($n\geq 1$) in its \emph{Harish-Chandra embedding} is given by
 \begin{equation}\label{E:BSDIII}
 \calD_{III_{n}}:= \{ Z\in M_{n,n}^{\rm sym}(\bbC): \: Z^t \cdot \ov{Z}<I_n\}\subset M_{n,n}^{\rm sym}(\bbC):= \{ Z\in M_{n,n}(\bbC): \: Z^t=Z\}.
  \end{equation}
Let $\Sp(n,\bbC)$ be the connected complex Lie subgroup of $\SL(2n,\bbC)$ that leaves invariant the bilinear alternating form on $\bbC^{2n}\times \bbC^{2n}$ given by
$S(\un{x},\un{y})=x_1y_{n+1}+\cdots+x_{n}y_{2n}+\cdots - x_{n+1}y_1-\cdots-x_{2n}y_n$. The group $\Sp(n.\bbC)$ is simple of type $C_n$
and it is explicitly given in $n\times n$ block notation as
\begin{equation*}
\Sp(n,\bbC)=\left\{g\in \SL(2n,\bbC)\: : \: g^t J_n  g= J_n  \right\}=
\end{equation*}
\begin{equation*}
=\left\{\begin{pmatrix} A & B \\ C & D \end{pmatrix} \in \SL(2n,\bbC) \: : \:
\begin{aligned}
& A^t C= C^t A \\
& B^t D=D^t B \\
& A^tD-C^tB= I_n
\end{aligned}
\right\}.
\end{equation*}
Consider the non-compact real form $\Spnc(n)$ of $\Sp(n, \bbC)$ consisting of all the elements of $\Sp(n,\bbC)$ that leave invariant the bilinear Hermitian form on
$\bbC^{2n}\times \bbC^{2n}$ given by $-x_1\ov{y}_1-\cdots -x_n \ov{y}_n+x_{n+1}\ov{y}_{n+1}+\cdots+x_{2n}\ov{y}_{2n}$. Explicitly,
\begin{equation*}
\Spnc(n)=\Sp(n,\bbC)\cap \SU(n,n)=\left\{g\in \Sp(n,\bbC) : \: \ov{g}^t \begin{pmatrix} I_n & 0 \\ 0 & -I_n\end{pmatrix}  g= \begin{pmatrix} I_n & 0 \\ 0 & -I_n\end{pmatrix} \right\}=
\end{equation*}
\begin{equation*}
=\left\{\begin{pmatrix} A & B \\ \ov{B} & \ov{A} \end{pmatrix} \in \SL(2n,\bbC) : \:
\begin{aligned}
& \ov{A}^t A-B^t \ov{B}=I_n  \\
& \ov{A}^tB-B^t\ov{A}=0
\end{aligned}
\right\}.
\end{equation*}
Note that the Lie group $\Spnc(n)$ is isomorphic to the real symplectic group $\Sp(n, \bbR)$ via the conjugation inside $\Sp(n,\bbC)$ given by (see \cite[p. 71]{Mok})
\begin{equation}\label{E:isoIII}
\begin{aligned}
\Spnc(n)& \stackrel{\cong}{\longrightarrow}  \Sp(n,\bbR):=\left\{g\in \GL(2n,\bbR): \: g^t J_n g=J_n \right\}\\
h & \mapsto \begin{pmatrix}I_n & i I_n \\ iI_n & I_n  \end{pmatrix} h \begin{pmatrix}I_n & i I_n \\ iI_n & I_n  \end{pmatrix}^{-1}.
\end{aligned}
\end{equation}
The Lie group $\Spnc(n)$ acts transitively on $\calD_{III_{n}}$ via generalized M\"obius transformations, as in \eqref{E:Moebius}.
Notice that the center $Z(\Spnc(n))=\left\{\pm I_{2n} \right\}$ of $\Spnc(n)$
acts trivially on $\calD_{III_{n}}$; indeed, it turns out that the connected component of the group of biholomorphisms of $\calD_{III_{n}}$ is given by
\begin{equation*}
\Hol(\calD_{III_{n}})^o=\Spnc(n)/Z(\Spnc(n)):=\PSpnc(n),
\end{equation*}
which is the connected non-compact adjoint simple  Lie group of type $C_n$.

The symmetry of $\calD_{III_{n}}$ at the base point $0$ is given by the element
\begin{equation}\label{E:symIII}
s_0=\left[\begin{pmatrix} iI_n & 0 \\ 0 & -iI_n \end{pmatrix} \right]\in \PSpnc(n),
\end{equation}
which acts on $\calD_{III_{n}}$ by sending $Z$ into $-Z$. The symmetry $s_0$ induces an involution on $\Spnc(n)$
\begin{equation*}
\begin{aligned}
\sigma: \Spnc(n) & \longrightarrow \Spnc(n) \\
\begin{pmatrix} A & B \\ C & D \end{pmatrix} & \mapsto
\begin{pmatrix} i I_n & 0 \\ 0 & -i I_n \end{pmatrix}\begin{pmatrix} A & B \\ C & D \end{pmatrix}\begin{pmatrix}  i I_n & 0 \\ 0 & - i I_n \end{pmatrix}^{-1}=
\begin{pmatrix} A & -B \\ -C & D \end{pmatrix},
\end{aligned}
\end{equation*}
whose fixed Lie subgroup is equal to the maximal compact Lie subgroup
\begin{equation*}
\left\{\begin{pmatrix} A & 0 \\ 0 & D \end{pmatrix}\in \Spnc(2n) \right\}=\left\{\begin{pmatrix} A & 0 \\ 0 & \ov{A} \end{pmatrix}\: : \: \ov{A}^t A=I_n \right\}
=:\U(n),
\end{equation*}
which is also equal to the stabilizer of $0\in \calD_{III_{n}}$. In particular, the pair $(\Spnc(n), $ $ \U(n))$ is a Riemannian symmetric pair.
Notice that the involution $\sigma$ descends to an involution of $\PSpnc(n)$ whose fixed locus is the maximal compact  Lie subgroup
 $\ov{\U(n)}:=\U(n)/\{\pm I_n\}$ of $\PSpnc(n)$. Therefore, also the pair $(\PSpnc(n),\ov{\U(n)})$ is a Riemannian symmetric pair.

By the above discussion, we get the following presentation of $\calD_{III_{n}}$ as an irreducible \emph{HSM of non-compact type}
\begin{equation}\label{E:nocomIII}
\calD_{III_{n}}\cong \Spnc(n)/\U(n)=\PSpnc(n)/\ov{\U(n)},
\end{equation}
associated to the Riemannian symmetric pair $(\Spnc(n), \U(n))$ (resp. to $(\PSpnc(n),$ $\ov{\U(n)}$).
Notice that the last description of $\calD_{III_{n}}$ is the one appearing in Theorem \ref{T:Lie-irr}\eqref{T:Lie-irr1}.

The  irreducible \emph{Hermitian SLA of non-compact type} associated to the Riemannian symmetric pair  $(\Spnc(n), \U(n))$ (or equivalently to $(\PSpnc(n),$
$ \ov{U(n)}$) is given  by the Lie algebra
\begin{equation}\label{E:Lie-nocompIII}
\spnc(n):=\Lie \Spnc(n)=\left\{ M \in \sl(2n,\bbC) : \:
\begin{aligned}
& M^t S_n=-S_n M \\
& \ov{ M}^t \begin{pmatrix} -I_n & 0 \\ 0 & I_n \end{pmatrix} =-  \begin{pmatrix} -I_n & 0 \\ 0 & I_n \end{pmatrix} M\\
\end{aligned}
\right\}=
\end{equation}
\begin{equation*}
=\left\{ \begin{pmatrix}  Z_1 &Z_2 \\ \ov{Z}_2^t & - Z_1^t  \end{pmatrix}\in \gl(2n,\bbC)  : \:
 \ov{Z_1}^t=-Z_1, \: Z_2^t=Z_2\right\}\footnote{Note that $\spnc(n)$ is isomorphic to the classical real Lie algebra $\sp(n,\bbR)$ via the same conjugation given in formula \eqref{E:isoIII}}.
\end{equation*}
endowed with the Cartan involution $\theta= d\sigma$ given by
\begin{equation*}
\theta \begin{pmatrix}  Z_1 &Z_2 \\ \ov{Z}_2^t & - Z_1^t  \end{pmatrix}= \begin{pmatrix}  Z_1 &-Z_2 \\ -\ov{Z}_2^t & - Z_1^t  \end{pmatrix}
\end{equation*}
and with the element
\begin{equation*}
H=\frac{i}{2} \begin{pmatrix} I_n & 0\\ 0 & - I_n\end{pmatrix}\in \Fix(\theta)=\left\{ \begin{pmatrix}  Z_1 & 0 \\  0 & -Z_1^t  \end{pmatrix}: \ov{Z}_1^t=-Z_1\right\}\cong \Lie U(n).
\end{equation*}

The \textbf{cominuscle homogeneous variety } of type $III_{n}$ is the symplectic Grassmannian $\Gras_{\rm sym}(n,2n)$ parametrizing Lagrangian $n$-dimensional subspaces of $\bbC^{2n}$:
\begin{equation}\label{E:ComIII}
\Gras_{\rm sym}(n,2n):=\{[W\subset \bbC^{2n}]\: : \: \dim W=p, J_{|W\times W}\equiv 0\},
\end{equation}
where $J$ is the standard symplectic form on $\bbC^{2n}$ which is represented by the matrix $J_n=\begin{pmatrix} 0 & I_n \\ -I_n & 0 \end{pmatrix} $ in the standard basis of $\bbC^{2n}$.

The \emph{Borel embedding} of $\calD_{III_n}$ into $\Gras_{\rm sym}(n,2n)$ is given by
\begin{equation}\label{E:BorIII}
\begin{aligned}
\calD_{III_n}\subset M_{n,n}^{\rm sym}(\bbC) & \hookrightarrow \Gras_{\rm sym}(n,2n), \\
Z & \mapsto \left\{ \langle v_1,\cdots v_n \rangle : \: \{v_1,\cdots, v_n\} \text{ are the column vectors of } \: \begin{pmatrix} Z \\ I_n \end{pmatrix} \right\}.
\end{aligned}
\end{equation}
The complex simple algebraic group $\Sp(n,\bbC)$ acts transitively on $\Gras_{\rm sym}(n,2n)$ via
\begin{equation*}
\begin{aligned}
\Sp(n,\bbC)\times \Gras_{\rm sym}(n,2n) & \longrightarrow \Gras_{\rm sym}(n,2n) \\
(g, [W\subset \bbC^{2n}]) & \mapsto [g(W)\subset \bbC^{2n}].
\end{aligned}
\end{equation*}
Note that the center $Z(\Sp(n,\bbC))=\{\pm I_{2n} \} $ of $\Sp(n,\bbC)$
acts trivially on $\Gras_{\rm sym}(n,2n)$; indeed, it turns out that the group of automorphisms of the algebraic variety $\Gras_{\rm sym}(n,2n)$ is equal to
$$\PSp(n,\bbC):=\Sp(n,\bbC)/\{\pm I_{2n}\},$$
which is a connected semisimple complex algebraic group of adjoint type and it is the complexification of the Lie group $\PSp(n,\bbR)$.

Consider now the base point $W_o:=\langle e_{n+1}, \cdots, e_{2n}\rangle \in \Gras_{\rm sym}(n,2n)$ with respect to the standard basis $\{e_1,\cdots,e_{2n}\}$ of $\bbC^{2n}$
(recall that we have normalized $J$ so that it is represented by the standard symplectic matrix $J_n$ with respect to this basis).
The stabilizer of $W_o$ is the maximal parabolic subgroup associated to the $n$-th simple root of the Dinkin diagram $C_n$  (which is the unique cominuscle simple root of $C_n$,
see Table \ref{F:cominus})
\begin{equation*}
Q_n:=\left\{ \begin{pmatrix} A & 0 \\ C & D \end{pmatrix}\in \Sp(n,\bbC)\right\}\subset \Sp(n,\bbC),
\end{equation*}
where $A\in M_{n,n}(\bbC)$, $C\in M_{n,n}(\bbC)$ and $D\in M_{n,n}(\bbC)$. The parabolic group $Q_n$ admits the following Levi decomposition
\begin{equation*}
Q_n=R_u(Q_n)\rtimes L(Q_n):=\left\{ \begin{pmatrix} I_n & 0 \\ C & I_n \end{pmatrix} : \: C^t=C \right\} \rtimes \left\{ \begin{pmatrix} A & 0 \\ 0 & D \end{pmatrix}\in \Sp(n,\bbC)\right\},
\end{equation*}
which coincides with the Levi decomposition appearing in Theorem \ref{T:HSM-comin}.

From the above discussion, we obtain the following explicit presentation of $\Gras_{\rm sym}(n,2n)$ as a cominuscle homogeneous variety (as in Definition \ref{D:comin})
\begin{equation}\label{E:G/PIII}
\Gras_{\rm sym}(n,2n)\cong \Sp(n,\bbC)/Q_n=\PSp(n,\bbC)/\ov{Q_n},
\end{equation}
where $\ov{Q_n}:=Q_n/\{\pm I_{2n}\}$.

Consider now the compact real form of $\Sp(n,\bbC)$, which  is the Lie subgroup  $\Sp(n):=\Sp(n,\bbC)\cap \SU(2n) \subset \Sp(n,\bbC)$ that leaves invariant the positive definite Hermitian form
$x_1\ov{y}_1+\cdots+x_{2n}\ov{y}_{2n}$ on $\bbC^{2n}$. More explicitly\footnote{The Lie group $\Sp(n)$ admits another natural description in terms of matrices with coefficients in $\bbH$.
Namely, there an isomorphism of Lie group
\begin{equation*}
\begin{aligned}
\Sp(n) & \stackrel{\cong}{\longrightarrow} \U(n,\bbH):=\{g\in \GL(n,\bbH)\: : \: \ov{g}^tg=I_n\}\\
\begin{pmatrix} A & B \\ -\ov{B} & \ov{A} \end{pmatrix} & \mapsto A-j\ov{B}.
\end{aligned}
\end{equation*}
}

\begin{equation*}
\Sp(n)=\{g\in \Sp(n, \bbC)\: : \: \ov{g}^tg=I_{2n}\}=
\end{equation*}
\begin{equation*}
=\left\{\begin{pmatrix} A & B \\ -\ov{B} & \ov{A} \end{pmatrix} \in \SL(2n,\bbC) : \:
\begin{aligned}
& \ov{A}^t A+ \ov{B}^t B=I_n \\
& \ov{A}^tB= -B^t \ov{A}
\end{aligned}
\right\}.
\end{equation*}
Similarly, the quotient of $\Sp(n)$ by its center
\begin{equation*}
\PSp(n):=\Sp(n)/Z(\Sp(n))=\Sp(n)/\{\pm I_{2n} \}
\end{equation*}
is the compact real form of $\PSp(n,\bbC)$.

The restriction of the action of  $\Sp(n, \bbC)$  on $\Gras(q,p+q)$ to the subgroup $\Sp(n)\subset \Sp(n, \bbC)$ is still transitive and the stabilizer of $W_o$ is the maximal
proper connected and compact subgroup
\begin{equation*}
\Sp(n)\cap Q_n= \left\{\begin{pmatrix} A & 0 \\ 0 & A \end{pmatrix} : \: \ov{A}^tA=I_n  \right\}\cong \U(n).
\end{equation*}
The action of $\Sp(n)$ on $\Gras(q,p+q)$ factors through a transitive action of $\PSp(n)$ in such a way that the stabilizer of $W_o$ is equal to the maximal
proper connected and compact subgroup
$$\PSp(n)\cap \ov{Q_n}=\left\{\left[\begin{pmatrix} A & 0 \\ 0 & A \end{pmatrix}\right] : \: \ov{A}^tA=I_n  \right\} \cong \ov{\U(n)}=\U(n)/\{\pm I_n\}.
$$

The pair $(\Sp(n),\U(n))$ is a Riemannian symmetric pair since $\U(n)$ is the fixed subgroup of the involution
\begin{equation*}
\begin{aligned}
\sigma^*: \Sp(n) & \longrightarrow \Sp(n) \\
\begin{pmatrix} A & B \\ -\ov{B} & \ov{A} \end{pmatrix} & \mapsto \begin{pmatrix} A & -B \\ \ov{B} & \ov{A} \end{pmatrix},
\end{aligned}
\end{equation*}
and similarly for the pair $(\PSp(n),\ov{\U(n)})$.

By the above discussion, we get the following presentation of $\Gras_{\rm sym}(n,2n)$ as the irreducible \emph{HSM of compact type}
\begin{equation}\label{E:comIII}
\Gras_{\rm sym}(n,2n)\cong \Sp(n)/\U(n)=\PSp(n)/\ov{\U(n)},
\end{equation}
associated to the Riemannian symmetric pair $(\Sp(n), \U(n))$ (resp. to $(\PSp(n),\ov{\U(n)}$).
In particular, the last description of $\calD_{III_n}$ is the one appearing in Theorem \ref{T:Lie-irr}\eqref{T:Lie-irr2}.
Notice that the symmetry at the base point $W_o$ of $ \Gras_{\rm sym}(n,2n)$, seen as a Hermitian symmetric manifold, is given by the element
\begin{equation*}
s_{W_o}=\left[\begin{pmatrix}  iI_n & 0 \\ 0 & -iI_n \end{pmatrix} \right]\in \PSp(n).
\end{equation*}

The irreducible \emph{Hermitian SLA of compact type} associated to the Riemann symmetric pair $(\Sp(n),\U(n))$ is given by the Lie algebra
\begin{equation}\label{E:Lie-compIII}
\Lie \Sp(n)=\sp(n)=\left\{ \begin{pmatrix}  Z_1 & Z_2 \\  -\ov{Z_2}^t & -Z_1^t  \end{pmatrix}\: : \: \ov{Z_1}^t=-Z_1, \:\: Z_2^t=Z_2 \right\}
\end{equation}
endowed with the involution $\theta^*=d\sigma^*$
\begin{equation*}
\theta^* \begin{pmatrix}  Z_1 & Z_2 \\  -\ov{Z_2}^t & -Z_1^t  \end{pmatrix} =\begin{pmatrix}  Z_1 & -Z_2 \\  \ov{Z_2}^t & -Z_1^t  \end{pmatrix}
\end{equation*}
and with the element
\begin{equation*}
H^*=\frac{1}{2} \begin{pmatrix} I_n & 0\\ 0 & -I_n \end{pmatrix}\in \Fix(\theta^*)=\left\{ \begin{pmatrix}  Z_1 & 0 \\  0 & -Z_1^t  \end{pmatrix}: \: \ov{Z_1}^t=-Z_1 \right\}\cong \u(n)=\Lie \U(n)
\end{equation*}
Notice that the Hermitian SLA $(\sp(n),\theta^*,H^*)$ is the dual of the Hermitian SLA $(\spnc(n),\theta,H)$ in the sense of  \S\ref{SS:dual}.

The complexification of the Lie algebras $\spnc(n)$ and $\sp(n)$ is the complex simple Lie algebra of type $C_n$
\begin{equation*}
\Lie \Sp(n, \bbC)=\sp(n,\bbC)=\left\{ M \in \gl(2n,\bbC) \: : \:
M^t J_n =- J_n  M\right\}=
\end{equation*}
\begin{equation*}
=\left\{ \begin{pmatrix}  Z_1 & Z_2 \\  Z_3 & -Z_1^t  \end{pmatrix}\in \gl(2n,\bbC)\: : \: Z_2^t =Z_2 \text{ and } Z_3^t=Z_3  \right\}.
\end{equation*}
The decomposition \eqref{E:dec-g} of $\sp(n,\bbC)$ is given by
\begin{equation*}
\sp(n,\bbC)=\left\{ \begin{pmatrix}  Z_1 & 0 \\  0 & -Z_1^t  \end{pmatrix}\right\} \oplus \left\{ \begin{pmatrix}  0 & Z_2 \\  0 & 0  \end{pmatrix} : \: Z_2^t=Z_2 \right\} \oplus
 \left\{ \begin{pmatrix}  0 & 0 \\  Z_3 & 0  \end{pmatrix} : \: Z_3^t=Z_3 \right\}.
\end{equation*}
In particular, we have the identification
\begin{equation}\label{E:p+III}
\begin{aligned}
M_{n,n}^{\rm sym}(\bbC)& \stackrel{\cong}{\longrightarrow} \p_+\\
M & \mapsto \begin{pmatrix}  0 & M \\  0 & 0  \end{pmatrix}.
\end{aligned}
\end{equation}
Using the above identification and the formula \eqref{E:Jortriple}, $M_{n,n}^{\rm sym}(\bbC)$ becomes a \emph{Hermitian positive JTS} with respect to the triple product
\begin{equation}\label{E:JorIII}
 \{M_1,M_2,M_3\}=\frac{1}{2}\left(M_1\ov{M_2}^tM_3+M_3\ov{M_2}^t M_1\right).
\end{equation}

\subsection{Type $IV_n$}

The \textbf{bounded symmetric domain} of type $IV_{n}$ ($1\leq n\neq 2$) in its \emph{Harish-Chandra embedding} is given by
 \begin{equation}\label{E:BSDIV}
 \calD_{IV_{n}}:= \{Z\in \bbC^n : \: 2\ov{Z}^tZ<1+|Z^tZ|^2, \: \ov{Z}^tZ<1\}  \subset \bbC^n.
  \end{equation}
Note that the first inequality, together with the fact that  $ |Z^tZ|^2\leq (\ov{Z}^tZ)^2$,  implies that
$$2\ov{Z}^tZ<1+|Z^tZ|^2 \Longrightarrow 2\ov{Z}^tZ<1+(\ov{Z}^tZ)^2 \Longleftrightarrow 0<(1-\ov{Z}^tZ)^2.$$
Therefore the open subset $\{ Z\in \bbC^n : \: 2\ov{Z}^tZ<1+|Z^tZ|^2\}\subset \bbC^n$ is the disjoint union of two connected components defined by, respectively, $\ov{Z}^tZ<1$ and
by $\ov{Z}^tZ>1$. The first connected component is the one that contains the origin $0\in \bbC^n$ and it coincides with the domain $\calD_{IV_n}$.

The domain $\calD_{IV_n}$ (which is also called the Lie ball) admits another real analytic incarnation in terms of $2\times n$ real matrices, namely we have a real analytic
diffeomorphism (see \cite[Sec. 12 and 13]{Hua})
\begin{equation}\label{E:BSDIVreal}
\begin{aligned}
\calD_{IV_n} & \stackrel{\cong}{\longrightarrow} \{M\in M_{2,n}(\bbR):M\cdot M^t<I_2\}\\
Z & \mapsto 2 \begin{pmatrix}Z^tZ+1 & i(Z^tZ-1) \\\ov{Z^tZ}+1 & -i(\ov{Z^tZ}-1) \end{pmatrix}^{-1}\cdot \begin{pmatrix}Z \\ \ov{Z} \end{pmatrix}.
\end{aligned}
\end{equation}


Consider the subgroup $\SOnc(n,2)$ of $\SO(n+2, \bbC)$ consisting of all the elements of $\SO(2+n,\bbC)$ that leave invariant the bilinear Hermitian form on
$\bbC^{n+2}\times \bbC^{2+n}$ given by $-x_1\ov{y}_1-\cdots -x_n \ov{y}_n+x_{n+1}\ov{y}_{n+1}+x_{n+2}\ov{y}_{n+2}$. Explicitly,
\begin{equation*}
\SOnc(n,2)=\SO(n+2,\bbC)\cap \U(n,2)=
\end{equation*}
\begin{equation*}
=\left\{g\in \SL(n+2,\bbC) : \: g^tg=I_{n+2}, \: \ov{g}^t \begin{pmatrix} I_n & 0 \\ 0 & -I_2\end{pmatrix}  g= \begin{pmatrix} I_n & 0 \\ 0 & -I_2\end{pmatrix} \right\}=
\end{equation*}
\begin{equation*}
=\left\{\begin{pmatrix} A & B \\ C & D \end{pmatrix} \in \SL(n+2,\bbC) : \:
\begin{aligned}
& A^t A+C^t C=I_n , & \ov{A}^t A- \ov{C}^t C=I_n \\
&  D^t D+B^t B=I_2 ,  &  \ov{D}^t D- \ov{B}^t B=I_2\\
& A^tB= -C^t D , & \ov{A}^tB= \ov{C}^t D
\end{aligned}
\right\}
\end{equation*}

The Lie group $\SOnc(n,2)$ acts transitively on $\calD_{IV_{n}}$ via
\begin{equation*}
\begin{aligned}
\SOnc(n,2)\times \calD_{IV_{n}} & \longrightarrow \calD_{IV_{n}} \\
\left( \begin{pmatrix} A & B \\ C & D \end{pmatrix} , Z\right) & \mapsto
\frac{2iAZ+B\begin{pmatrix}1+Z^t\cdot Z \\ i-iZ^t\cdot ZÊ\end{pmatrix}}{(1, i)\cdot \left(2i CZ+D\begin{pmatrix}1+Z^t\cdot Z \\ i-iZ^t\cdot ZÊ\end{pmatrix} \right) }.
\end{aligned}
\end{equation*}

Notice that the center $Z(\SOnc(n,2))=\left\{\pm I_{n+2} \right\}$ of $\SOnc(n,2)$
acts trivially on $\calD_{IV_{n}}$; indeed, it turns out that the connected component of the group of biholomorphisms of $\calD_{IV_{n}}$ is given by
\begin{equation*}
\Hol(\calD_{IV_{n}})^o=\SOnc(n,2)/ Z(\SOnc(n,2)):=\PSOnc(n,2),
\end{equation*}
which is the connected non-compact adjoint simple  Lie group of type $D_{n/2+1}$ if $n$ is even and $B_{(n+1)/2}$ if $n$ is odd.

The symmetry of $\calD_{IV_{n}}$ at the base point $0$ is given by the element
\begin{equation}\label{E:symIV}
s_0=\left[\begin{pmatrix} I_n & 0 \\ 0 & -I_2 \end{pmatrix} \right]\in \PSOnc(n,2),
\end{equation}
which acts on $\calD_{IV_{n}}$ by sending $Z$ into $-Z$. The symmetry $s_0$ induces an involution on $\SOnc(n,2)$
\begin{equation*}
\begin{aligned}
\sigma: \SOnc(n,2) & \longrightarrow \SOnc(n,2) \\
\begin{pmatrix} A & B \\ C & D \end{pmatrix} & \mapsto
\begin{pmatrix} I_n & 0 \\ 0 & - I_2 \end{pmatrix}\begin{pmatrix} A & B \\ C & D \end{pmatrix}\begin{pmatrix}  I_n & 0 \\ 0 & -  I_2 \end{pmatrix}^{-1}=
\begin{pmatrix} A & -B \\ -C & D \end{pmatrix},
\end{aligned}
\end{equation*}
whose fixed Lie subgroup is equal to the maximal compact Lie subgroup
\begin{equation*}
\left\{\begin{pmatrix} A & 0 \\ 0 & D \end{pmatrix}\in \SOnc(n,2) \right\}=\left\{\begin{pmatrix} A & 0 \\ 0 & D \end{pmatrix}\: : \:
\begin{aligned} \ov{A}^t A=A^t A=I_n \\  \ov{D}^t D=D^t D=I_2 \end{aligned}Ê\right\}
=\SO(n)\times \SO(2),
\end{equation*}
which is also equal to the stabilizer of $0\in \calD_{IV_{n}}$. In particular, the pair $(\SOnc(n,2), $ $ \SO(n)\times \SO(2))$ is a Riemannian symmetric pair.
Notice that the involution $\sigma$ descends to an involution of $\PSOnc(n,2)$ whose fixed locus is the maximal compact  Lie subgroup
 $\ov{\SO(n)\times \SO(2)}:=\SO(n)\times \SO(2)/Z(\SOnc(n,2))$ of $\PSOnc(n,2)$. Therefore, also the pair $(\PSOnc(n,2),\ov{\SO(n)\times \SO(2)})$ is a Riemannian symmetric pair.

By the above discussion, we get the following presentation of $\calD_{IV_{n}}$ as an irreducible \emph{HSM of non-compact type}
\begin{equation}\label{E:nocomIV}
\calD_{IV_{n}}\cong \SOnc(n,2)/(\SO(n)\times \SO(2))=\PSOnc(n,2)/\ov{\SO(n)\times \SO(2)},
\end{equation}
associated to the Riemannian symmetric pair $(\SOnc(n,2), \SO(n)\times \SO(2))$ (resp. to $(\PSOnc(n,2),$ $\ov{\SO(n)\times \SO(2)}$).
Notice that the last description of $\calD_{IV_{n}}$ is the one appearing in Theorem \ref{T:Lie-irr}\eqref{T:Lie-irr1}.

The  irreducible \emph{Hermitian SLA of non-compact type} associated to the Riemannian symmetric pair  $(\SOnc(n,2), \SO(n)\times \SO(2))$ (or equivalently to $(\PSOnc(n,2),$
$ \ov{\SO(n)\times \SO(2)}$) is given  by the Lie algebra
\begin{equation}\label{E:Lie-nocompIV}
\sonc(n,2):=\Lie \SOnc(n,2)=\left\{ M \in \sl(n+2,\bbC) : \:
\begin{aligned}
& M^t=-M \\
& \ov{ M}^t \begin{pmatrix} I_n & 0 \\ 0 & -I_2 \end{pmatrix} = - \begin{pmatrix} I_n & 0 \\ 0 & -I_2 \end{pmatrix} M\\
\end{aligned}
\right\}=
\end{equation}
\begin{equation*}
=\left\{ \begin{pmatrix}  X_1 &iX_2 \\ -iX_2^t & X_3 \end{pmatrix}\in \gl(n+2,\bbC)  : \:
\begin{aligned}
& \ov{X}_1=X_1,\: \ov{X}_2=X_2,\: \ov{X}_3=X_3\\
& X_1^t=-X_1, \: X_3^t=-X_3
 \end{aligned}
 \right\}
\end{equation*}
endowed with the involution $\theta=d\sigma$
\begin{equation*}
\theta \begin{pmatrix}  X_1 &iX_2 \\ -iX_2^t & X_3 \end{pmatrix}=\begin{pmatrix}  X_1 &-iX_2 \\ iX_2^t & X_3 \end{pmatrix}
\end{equation*}
and with the element
\begin{equation*}
H= \begin{pmatrix} 0 & 0\\ 0 & J_1 \end{pmatrix}\in \Fix(\theta)=\left\{ \begin{pmatrix}  X_1 & 0 \\  0 & X_3  \end{pmatrix}\in \gl(n+2,\bbR): X_1^t=-X_1, \: X_3^t=-X_3\right\}\cong \Lie (\SO(n)\times \SO(2)).
\end{equation*}

The \textbf{cominuscle homogeneous variety } of type $IV_{n}$ is the complex quadric hypersurface of dimension $n$:
\begin{equation}\label{E:ComIV}
\calQ^n:=\{[v]\in \bbP^{n+1}: \: Q(v,v)=0\} \subset \bbP^{n+1}
\end{equation}
where $Q$ is the bilinear symmetric non-degenerate form on $\bbC^{n+2}$ given by $Q(v)=v_1^2+\ldots+v_{n+2}^2$.

Observe that the complex quadric hypersurface $\calQ^n\subset \bbP^{n+1}$ admits another real analytic incarnation. Namely, $\calQ^n$ is real analytic diffeomorphic to the oriented real Grassmannian $\Gras_{\bbR}^+(2,n+2)$ parametrizing $2$-dimensional oriented subspaces of $\bbR^{n+2}$ via the map
(see \cite[Appendix \S6]{Sat})
\begin{equation}\label{E:ComIVreal}
\begin{aligned}
\Gras_{\bbR}^+(2,n+2) & \stackrel{\cong}{\longrightarrow} \calQ^n \\
\langle v_1,v_2\rangle & \mapsto [v_1+iv_2].
\end{aligned}
\end{equation}

The \emph{Borel embedding} of $\calD_{IV_n}$ into $\calQ^n$ is given by
\begin{equation}\label{E:BorIV}
\begin{aligned}
\calD_{IV_n}\subset \bbC^n & \hookrightarrow \calQ^n\\
Z & \mapsto \left[ \begin{pmatrix}  2iZ \\ 1+Z^t\cdot Z \\ i-i Z^t\cdot Z \end{pmatrix}  \right].
\end{aligned}
\end{equation}
The complex algebraic simple group $\SO(n+2,\bbC)$ acts transitively on $\calQ^n$ via
\begin{equation*}
\begin{aligned}
\SO(n+2,\bbC)\times \calQ^n & \longrightarrow \calQ^n \\
(g, [v]) & \mapsto [g(v)].
\end{aligned}
\end{equation*}
Note that the center
\begin{equation}\label{E:centerIV}
Z(\SO(n+2,\bbC))=
\begin{cases}
\{\pm I_{n+2} \} & \text{ if } n \: \text{ is even,} \\
\{I_{n+2}\} & \text{ if } n \: \text{ is odd,}
\end{cases}
\end{equation}
acts trivially on $\calQ^n$; indeed, it turns out that the group of automorphisms of the algebraic variety $\calQ^n$ is equal to
$$\PSO(2n,\bbC):=\PSO(2n,\bbC)/Z(\SO(n+2,\bbC)),$$
which is the connected simple adjoint complex algebraic group of type $D_{n/2+1}$ if $n$ is even and $B_{(n+1)/2}$ if $n$ is odd.

The stabilizer of $v_o=[(0,\cdots, 0, 1, i)] \in \calQ^n$ is the maximal parabolic subgroup associated to the first simple root of the Dinkin diagram $D_{n/2+1}$ if $n$ is even and  of the Dinkin
diagram $B_{(n+1)/2}$ if $n$ is odd (which are cominuscle simple roots, see Table \ref{F:cominus})
\begin{equation*}
Q_1:=\left\{ \begin{pmatrix} A & B \\ C & D \end{pmatrix}\in \SO(n+2,\bbC)\: :
\begin{aligned}
& B=(B',iB') \text{ for some } B'\in M_{n,1}(\bbC) \\
& D=\begin{pmatrix} a & b \\ c & d \end{pmatrix}\text{ such that } i a-b=c+i d
\end{aligned}
\right\},
\end{equation*}
where $A\in M_{n,n}(\bbC)$, $B\in M_{n,2}(\bbC)$, $C\in M_{2,n}(\bbC)$ and $D\in M_{2,2}(\bbC)$.

From the above discussion, we obtain the following explicit presentation of $\calQ^n$ as a cominuscle homogeneous variety (as in Definition \ref{D:comin})
\begin{equation}\label{E:G/PIV}
\calQ^n\cong \SO(n+2,\bbC)/Q_1=\PSO(n,\bbC)/\ov{P}_1,
\end{equation}
where $\ov{P}_1:=Q_1/Z(\SO(n+2,\bbC))$.

Consider now the compact real form $\SO(n+2)$ of $\SO(n+2,\bbC)$ consisting of all the real matrices in $\SO(n+2,\bbC)$ or, equivalently, of all the elements in $\SO(n+2,\bbC)$
that leave invariant the positive definite Hermitian form
$x_1\ov{y}_1+\cdots+x_{n+2}\ov{y}_{n+2}$ on $\bbC^{n+2}$. More explicitly
\begin{equation*}
\SO(n+2):=\SO(n+2,\bbC)\cap \SU(n+2)=\left\{g\in \SO(n+2,\bbC) : \: \ov{g}=   g  \right\}=
\end{equation*}
\begin{equation*}
=\left\{\begin{pmatrix} A & B \\ C & D \end{pmatrix} \in \SL(n+2,\bbR) : \:
\begin{aligned}
& A^t A+C^t C=I_n  \\
&  D^t D+B^t B=I_2 \\
& A^tB= -C^t D
\end{aligned}
\right\}.
\end{equation*}
Similarly, the quotient of $\SO(n+2)$ by its center (which is given by \eqref{E:centerIV})
\begin{equation*}
\PSO(n+2):=\SO(n+2)/Z(\SO(n+2))
\end{equation*}
is a compact real form of $\PSO(2n,\bbC)$.

The restriction of the action of  $\SO(n+2,\bbC)$  on $\calQ^n$ to the subgroup $\SO(n+2)\subset \SO(n+2,\bbC)$ is still transitive and the stabilizer of $v_o$ is the maximal
proper connected and compact subgroup
\begin{equation*}
\SO(n+2)\cap Q_1= \left\{\begin{pmatrix} A & 0 \\ 0 & D \end{pmatrix}:
\begin{aligned}
A^tA=I_n, &\: \det(A)=1 \\
D^t D=I_2, &\: \det(D)=1
\end{aligned}
\right\}\cong \SO(n)\times \SO(2).
\end{equation*}

The pair $(\SO(n+2),\SO(n)\times \SO(2))$ is a Riemannian symmetric pair since $\SO(n)\times \SO(2)$ is the connected component of  the fixed subgroup of the involution
\begin{equation*}
\begin{aligned}
\sigma^*: \SO(n+2) & \longrightarrow \SO(n+2) \\
\begin{pmatrix} A & B \\ C & D \end{pmatrix} & \mapsto \begin{pmatrix} A & -B \\ -C & D \end{pmatrix},
\end{aligned}
\end{equation*}
and similarly for the pair $(\PSO(n+2),\ov{\SO(n)\times \SO(2)})$, where $\ov{\SO(n)\times \SO(2)}$ is the image of $\SO(n)\times SO(2)$ in $\PSO(n+2)$.

By the above discussion, we get the following presentation of $\calQ^n$ as the irreducible \emph{HSM of compact type}
\begin{equation}\label{E:comIV}
\calQ^n\cong \SO(n+2)/\SO(n)\times \SO(2)=\PSO(n+2)/\ov{\SO(n)\times \SO(2)},
\end{equation}
associated to the Riemannian symmetric pair $(\SO(n+2), \SO(n)\times \SO(2))$ (resp. to $(\PSO(n+2),$ $ \ov{\SO(n)\times \SO(2)}$).
In particular, the last description of $\calQ^n$ is the one appearing in Theorem \ref{T:Lie-irr}\eqref{T:Lie-irr2}.
Notice that the symmetry at the base point $v_o$ of $ \calQ^n$, seen as a Hermitian symmetric manifold, is given by the element
\begin{equation*}
s_{v_o}=\left[\begin{pmatrix} I_n & 0 \\ 0 & -I_2 \end{pmatrix} \right]\in \PSO(n+2).
\end{equation*}

The irreducible \emph{Hermitian SLA of compact type} associated to the Riemann symmetric pair $(\SO(n+2),\SO(n)\times \SO(2))$ is given by the Lie algebra
\begin{equation}\label{E:Lie-compIV}
\so(n+2):=\Lie \SO(n+2)=\left\{ M \in \sl(p+q,\bbR) : \: M^t=-M\right\}=
\end{equation}
\begin{equation*}
=\left\{ \begin{pmatrix}  X_1 &X_2 \\ -X_2^t & X_3 \end{pmatrix}\in \gl(p+q,\bbR)  : \:
 X_1^t=-X_1, \: X_3^t=-X_3\right\}
\end{equation*}
endowed with the involution $\theta^*=d\sigma^*$
\begin{equation*}
\theta^* \begin{pmatrix}  X_1 &X_2 \\ -X_2^t & X_3 \end{pmatrix}=\begin{pmatrix}  X_1 &-X_2 \\ X_2^t & X_3 \end{pmatrix}
\end{equation*}
and with the element
\begin{equation*}
H^*= \begin{pmatrix} 0 & 0\\ 0 & J_1 \end{pmatrix}\in \Fix(\theta^*)=\left\{ \begin{pmatrix}  X_1 & 0 \\  0 & X_3  \end{pmatrix}: X_1^t=-X_1, \: X_3^t=-X_3\right\}\cong \Lie (\SO(n)\times \SO(2)).
\end{equation*}
Notice that the Hermitian SLA $(\so(n+2),\theta^*,H^*)$ is the dual of the Hermitian SLA $(\sonc(n,2),\theta,H)$ in the sense of  \S\ref{SS:dual}.

The complexification of the Lie algebras $\sonc(n,2)$ and $\so(n+2)$ is the complex simple Lie algebra of type $D_{n/2+1}$ if $n$ is even and
 $B_{(n+1)/2}$ if $n$ is odd:
\begin{equation*}
\Lie \SO(n+2,\bbC)=\so(n+2,\bbC)=\left\{ \begin{pmatrix}  Z_1 & Z_2 \\  -Z_2^t & Z_3  \end{pmatrix}\in \gl(n+2,\bbC) \: : \:  Z_1^t=-Z_1, \: Z_3^t=-Z_3 \right\}.
\end{equation*}
The decomposition \eqref{E:dec-g} of $\so(n+2,\bbC)$ is given by
\begin{equation*}
\so(n+2,\bbC)=\left\{ \begin{pmatrix}  Z_1 & 0 \\  0 & Z_3  \end{pmatrix}:  \begin{aligned}Ê & Z_1^t=-Z_1 \\  & Z_3^t=-Z_3 \end{aligned}Ê\right\} \oplus
\left\{ \begin{pmatrix}  0 & (iZ',Z') \\  -(iZ',Z')^t & 0  \end{pmatrix} \right\} \oplus  \left\{ \begin{pmatrix}  0 & (Z'',iZ'') \\  -(Z'',iZ'')^t & 0  \end{pmatrix} \right\} .
\end{equation*}
In particular, we have the identification
\begin{equation}\label{E:p+IV}
\begin{aligned}
\bbC^n & \stackrel{\cong}{\longrightarrow} \p_+\\
Z & \mapsto \begin{pmatrix}  0 & (iZ,Z) \\  -(iZ,Z)^t & 0  \end{pmatrix}.
\end{aligned}
\end{equation}
Using the above identification and the formula \eqref{E:Jortriple}, $\bbC^{n}$ becomes a \emph{Hermitian positive JTS} with respect to the triple product
\begin{equation}\label{E:JorIV}
\{X,Y,Z\}=(X^t\cdot Z)\ov{Y}-(Z^t\cdot \ov{Y})X-(X^t\cdot \ov{Y})Z.
\end{equation}

\subsection{Type $VI $}\label{S:TypeVI}

Let $\bbO$ be the $\bbR$-algebra of octonions or Cayley algebra (we refer the reader to \cite{Bae} for a beautiful introduction to the octonions).
 Recall that $\bbO$ is the alternative $\bbR$-algebra (neither associative nor commutative)  of dimension $8$ whose underlying vector space
is equal to $\bbH\times \bbH$ and whose multiplication is equal to
\begin{equation*}
(a_1,b_1)\cdot (a_2,b_2):=(a_1a_2-b_2\wt{b_1}, \wt{a_1}b_2+a_2b_1),
\end{equation*}
where $\bbH$ is the division $\bbR$-algebra of quaternions and $\,\wt{}\,$ denotes its involution
$$
\begin{aligned}
\,\wt{}\,: \bbH & \longrightarrow \bbH \\
x_0+ix_1+jx_2+kx_3 & \mapsto x_0-ix_1-jx_2-kx_3.
\end{aligned}
$$
The algebra $\bbO$ is endowed with the unity element $e=(1,0)$ and with an involutive anti-automorphism
\begin{equation*}
(a,b)  \mapsto \wt{(a,b)}:=(\wt{a},-b).
\end{equation*}
The above involution gives rise to a norm
\begin{equation*}
\begin{aligned}
|.|^2: \bbO & \longrightarrow \bbR \\
(a,b) & \mapsto |(a,b)|^2:=(a,b)\cdot \wt{(a,b)}=a\wt{a}+b\wt{b}
\end{aligned}
\end{equation*}
which is a positive define quadratic form and it is multiplicative (i.e. $|(a_1,b_1)\cdot (a_2, b_2)|^2=|(a_1,b_1)|^2|(a_2,b_2)|^2$).
Therefore the pair $(\bbO, |.|^2)$ is a Euclidean composition algebra of dimension $8$ and indeed it is the unique such algebra.
We will denote by $\langle ,\rangle$ the bilinear form associated to the quadratic form $|.|^2$, i.e.
$$\langle x,y\rangle:=|x+y|^2-|x|^2-|y|^2,$$
for any $x,y\in \bbO$.

Let $\bbO_{\bbC}:=\bbO\otimes_{\bbR} \bbC$ be the  complexification of $\bbO$ (it is called the complex Cayley algebra). The involution $\wt{}$ and the quadratic form $|.|^2$ on $\bbO$ extend naturally
on  $\bbO_{\bbC}$ (by a slight abuse of notation, we will continue to denote them by the same symbols). Moreover, $\bbO_{\bbC}$ is endowed with a complex conjugation with respect to its real form $\bbO$:
\begin{equation*}
\lambda\otimes x  \mapsto \ov{\lambda\otimes x}:=\ov{\lambda}\otimes x,
\end{equation*}
where $\lambda\in \bbC$ and $x\in \bbO$.

Consider the complex vector space $H_3(\bbO_{\bbC})$ consisting of Hermitian $3\times 3$-matrices with entries in $\bbO_{\bbC}$
\begin{equation}\label{E:H3}
H_3(\bbO_{\bbC}):=\left\{ a\in M_{3,3}(\bbO_{\bbC})\,:\, \wt{a}^t=a\right\}=
\end{equation}
\begin{equation*}
=\left\{\begin{pmatrix}\alpha_1 & a_3 & \wt{a_2} \\ \wt{a_3} & \alpha_2 & a_1\\ a_2 & \wt{a_1} & \alpha_3 \end{pmatrix}: \alpha_1,\alpha_2,\alpha_3\in \bbC;\, a_1,a_2,a_3\in \bbO_{\bbC} \right\}.
\end{equation*}
The complex vector space $H_3(\bbO_{\bbC})$ is endowed with a product (called the Freudenthal product) defined by
\begin{equation}\label{E:Freud}
a\times b:=\begin{pmatrix}\alpha_1 & a_3 & \wt{a_2} \\ \wt{a_3} & \alpha_2 & a_1\\ a_2 & \wt{a_1} & \alpha_3 \end{pmatrix}\times
\begin{pmatrix}\beta_1 & b_3 & \wt{b_2} \\ \wt{b_3} & \beta_2 & b_1\\ b_2 & \wt{b_1} & \beta_3 \end{pmatrix}:=
\end{equation}
\begin{equation*}
=\begin{pmatrix}
\alpha_2\beta_3+\alpha_3\beta_2-\langle a_1,b_1\rangle & a_1b_2+b_1a_2-\alpha_3\wt{b_3}-\beta_3\wt{a_3} &  \wt{b_1}\wt{a_3}+\wt{a_1}\wt{b_3}-\alpha_2b_2-\beta_2a_2 \\
\wt{b_2}\wt{a_1} +\wt{a_2} \wt{b_1} -\alpha_3 b_3-\beta_3 a_3 & \alpha_3\beta_1+\alpha_1\beta_3-\langle a_2,b_2\rangle & a_2b_3+b_2a_3-\alpha_1\wt{b_1}-\beta_1\wt{a_1} \\
a_3b_1+b_3a_1-\alpha_2\wt{b_2}-\beta_2\wt{a_2} &\wt{b_3} \wt{a_2} +\wt{a_3} \wt{b_2} -\alpha_1 b_1-\beta_1a_1  & \alpha_1\beta_2+\alpha_2\beta_1-\langle a_3,b_3\rangle \end{pmatrix}.
\end{equation*}
Moreover, $H_3(\bbO_{\bbC})$ is endowed with a positive definite Hermitian form defined by
\begin{equation}\label{E:posHerm}
\left(a | b \right):=\sum_{i=1}^3\alpha_i \ov{\beta_i} +\sum_{j=1}^3 \langle a_j, \ov{b_j}\rangle,
\end{equation}
where $a, b\in H_3(\bbO_{\bbC})$ are written as in \eqref{E:Freud}.
Using the Freudenthal product and the above positive define Hermitian form, we can define a Jordan triple product on $H_3(\bbO_{\bbC})$ via
\begin{equation}\label{E:JorVI}
\{a,b,c\}:=(a|b)c+(c|b)a-(a\times c)\times \ov{b},
\end{equation}
where $\ov{b}$ is the element of $H_3(\bbO_{\bbC})$ obtained by conjugating all the entries of $b$ with respect to the complex conjugation of $\bbO_{\bbC}$.
The pair $(H_3(\bbO_{\bbC}),\{.\,,.\,,.\})$ is an irreducible  Hermitian positive JTS of dimension $27$ (see \cite[Sec. 2.2]{Roos}), called sometimes the exceptional Hermitian positive JTS of dimension $27$ or the
\textbf{Hermitian positive JTS of type VI}.

From the above explicit description of the Hermitian positive JTS $(H_3(\bbO_{\bbC}),\{.\,,.\,,.\})$ and formula  \eqref{E:HC-JTS}, we can deduce an explicit  expression of the associated bounded symmetric domain in its
Harish-Chandra embedding. In order to do that, we need to introduce the determinant and the adjoint of an element of $H_3(\bbO_{\bbC})$.
The determinant is defined by
\begin{equation}\label{E:detVI}
\begin{aligned}
\det: H_3(\bbO_{\bbC}) & \longrightarrow \bbO_{\bbC}, \\
a & \mapsto \frac{1}{3!}(a\times a|\ov{a})=\alpha_1\alpha_2\alpha_3-\sum_{i=1}^3\alpha_i |a_i|^2+a_1(a_2a_3)+(\wt{a_3}\wt{a_2})a_1,
\end{aligned}
\end{equation}
where $a\in H_3(\bbO_{\bbC})$ is written as in \eqref{E:Freud}. The adjoint of $a\in H_3(\bbO_{\bbC})$ is defined by
\begin{equation}\label{E:adjVI}
(a)^{\sharp}:=\frac{a\times a}{2}.
\end{equation}
The relation between the determinant and the adjoint is given by the following formulas (see \cite[Sec. 2.1]{Roos})
\begin{equation}\label{E:detadj}
\begin{sis}
& (x^{\sharp}|\ov{x})=3 \det(x), \\
& (x^{\sharp})^{\sharp}=\det(x) x.
\end{sis}
\end{equation}

The \textbf{bounded symmetric domain} of type $VI$ in its \emph{Harish-Chandra embedding} is given by (see \cite[Sec. 3.1]{Roos})
 \begin{equation}\label{E:BSDVI}
 \calD_{VI}:= \left\{a\in H_3(\bbO_{\bbC}): \:
 \begin{aligned}
& 1-(a|a)+(a^{\sharp}|a^{\sharp})-|\det(a)|^2>0\\
& 3- 2(a|a) +(a^{\sharp}|a^{\sharp})>0\\
& 3-(a|a)>0\\
\end{aligned}\right\}
\subset H_3(\bbO_{\bbC}).
   \end{equation}

The \textbf{cominuscle homogeneous variety } of type $VI$ is the Freudenthal variety
\begin{equation}\label{E:ComVI}
\calF:=\left\{[\lambda,x,y,\mu]\in \bbP(\bbC\oplus H_3(\bbO_{\bbC})\oplus H_3(\bbO_{\bbC})\oplus \bbC): y^{\sharp}=\mu x, \, x^{\sharp}=\lambda y, \, (x|\ov{y})=3\lambda\mu \right\}.
\end{equation}

The \emph{Borel embedding} of $\calD_{VI}$ into $\calF$ is given by
\begin{equation}\label{E:BorVI}
\begin{aligned}
\calD_{VI}\subset H_3(\bbO_{\bbC}) & \stackrel{j}{\hookrightarrow} \calF\\
x & \mapsto \left[ (1,x, x^{\sharp}, \det(x)) \right].
\end{aligned}
\end{equation}
Using the relations \eqref{E:detadj}, it is easy to see that $j$ is an open embedding and that  $j( H_3(\bbO_{\bbC}))$ is the Zariski open subset of $\calF$ defined by $\{\lambda\neq 0\}$.

\subsection{Type $V$}\label{S:TypeV}

In this subsection, we are going to use the notation introduced in \S\ref{S:TypeVI}.

The \textbf{Hermitian positive JTS of type V} (sometimes also called the exceptional Hermitian positive JTS of dimension $16$) is the simple Hermitian positive JTS $(\bbO_{\bbC}^2,\{.\,,.\,,.\})$ where the Jordan triple product $\{.\,,.\,,.\}$ is defined by

\begin{equation}\label{E:JorV}
\left\{\begin{pmatrix} a_1 \\ a_2 \end{pmatrix},\begin{pmatrix} b_1 \\ b_2 \end{pmatrix}, \begin{pmatrix} c_1 \\ c_2 \end{pmatrix}\right\}:=
\begin{pmatrix} (a_1\wt{\ov{b_1}})c_1+(c_1\wt{\ov{b_1}})a_1+(a_1\ov{b_2})\wt{c_2}+(c_1\ov{b_2})\wt{a_2} \\
\wt{a_1}(\ov{b_1}c_2)+\wt{c_1}(\ov{b_1}a_2)+\wt{a_2}(\ov{b_2}c_2)+\wt{c_2}(\ov{b_2}a_2) \end{pmatrix}.
\end{equation}

The \textbf{bounded symmetric domain} of type $V$ in its \emph{Harish-Chandra embedding} is given by (see \cite[Sec. 3.1]{Roos})
 \begin{equation}\label{E:BSDV}
 \calD_{V}:= \left\{x=\begin{pmatrix}x_1 \\ x_2\end{pmatrix} \in \bbO_{\bbC}^2: \:
 \begin{aligned}
& 1-\sum_{i=1}^2 \langle x_i, \ov{x_i} \rangle +\sum_{i=1}^2 (|x_i|^2)^2+\langle x_2x_3,\ov{x_2x_3}\rangle >0\\
& 2-\sum_{i=1}^2 \langle x_i, \ov{x_i} \rangle  >0\\
\end{aligned}\right\}
\subset \bbO_{\bbC}^2.
   \end{equation}


The \textbf{cominuscle homogeneous variety } of type $V$ is the Cayley plane
\begin{equation}\label{E:ComV}
\bbP^2_{\bbO}:=\left\{[a]\in \bbP(H_3(\bbO_{\bbC})):  a^{\sharp}=0  \right\}=
\end{equation}
\begin{equation*}
\left\{[a]\in \bbP(H_3(\bbO_{\bbC})):  \:
\begin{aligned}
 \alpha_2\alpha_3=|a_1|^2, \: \: & \alpha_3\alpha_1=|a_2|^2,  &  \alpha_1\alpha_2=|a_3|^3\\
a_1a_2=\alpha_3\wt{a_3}, \: \: & a_2a_3=\alpha_1\wt{a_1}, & a_3a_1=\alpha_2\wt{a_2}
\end{aligned}
  \right\},
\end{equation*}
where $a\in H_3(\bbO_{\bbC})$ is written as in \eqref{E:H3}.

The Cayley plane $\bbP^2_{\bbO}$ is homogeneous with respect to the natural action of
the subgroup $\SL_3(\bbO_{\bbC})\subset \GL(H_3(\bbO_{\bbC}))=\GL_{27}(\bbC)$ consisting of the elements preserving the determinant \eqref{E:detVI}
(see \cite[Sec. 6.2]{LM1}). The group $\SL_3(\bbO_{\bbC})$ is a complex simple Lie group of type $E_6$.
Moreover, the stabilizer of any element is isomorphic to the maximal parabolic subgroup $P_6$ corresponding to the $6$-th simple root of the diagram $E_6$
(which is a cominuscle simple root, see Table \ref{F:cominus}). Therefore, we obtain the following explicit presentation of $\bbP^2_{\bbO}$ as a cominuscle homogeneous variety (as in Definition \ref{D:comin})
\begin{equation}\label{E:G/PV}
\bbP^2_{\bbO}\cong \SL_3(\bbO_{\bbC})/P_6.
\end{equation}

The \emph{Borel embedding} of $\calD_{V}$ into $\bbP^2_{\bbO}$ is given by
\begin{equation}\label{E:BorV}
\begin{aligned}
\calD_{V}\subset \bbO_{\bbC}^2 & \stackrel{j}{\hookrightarrow} \bbP^2_{\bbO}\\
\begin{pmatrix} x_1 \\ x_2 \end{pmatrix} & \mapsto \left[ \begin{pmatrix}1 & x_2 & \wt{x_1} \\ \wt{x_2} & |x_2|^2 & \wt{x_2}\wt{x_1} \\
x_1 & x_1x_2 & |x_1|^2 \end{pmatrix} \right].
\end{aligned}
\end{equation}
Indeed, it is easily checked that $j(\bbO_{\bbC}^2)$ is the Zariski open subset of $\bbP^2_{\bbO}$ consisting of all the matrices $[a]\in \bbP^2_{\bbO}$ whose $(1,1)$-entry is non-zero.

\section{Boundary components}\label{S:bound}

The aim of this section is to define and study the boundary components of a Hermitian symmetric manifold of non-compact type, or, equivalently, of a bounded symmetric domain, see \S\ref{SS:domain}.

Let $D\stackrel{i_{HC}}{\hookrightarrow} \bbC^N$ be a bounded symmetric domain in its Harish-Chandra embedding and let $D\stackrel{i_{HC}}{\hookrightarrow} \bbC^N\stackrel{j}{\hookrightarrow} D^c$ be the Borel embedding into the compact dual $D^c$ of $D$ (see Theorem \ref{T:embed}). Denote by $\ov D$ the closure of $D$ inside $\bbC^N$ with respect to the  Euclidean topology.

\begin{defi}\label{D:bound}
Consider the following equivalence relation $\sim$ on $\ov D$: $p\sim q$ if and only if there exist
holomorphic maps $\lambda_1,\cdots,\lambda_m:\Delta=\{z\in \bbC: |z|<1\}\to \ov D$ (for some $m\in \bbN$) such that
\begin{itemize}
\item $\lambda_1(0)=p$ and $\lambda_m(0)=q$;
\item $\Im \lambda_i \cup \Im \lambda_{i+1}\neq \emptyset$ for any $1\leq i\leq m-1$.
\end{itemize}
A \emph{boundary component }Ê$F$ of $D$ is an equivalence class for the above equivalence relation $\sim$ on $\ov D$.

Given two boundary components $F_1$ and $F_2$ of $D$, we say that $F_1$ dominates $F_2$ (and we write $F_2\leq F_1$) if $F_2\subseteq \ov{F_1}$.
\end{defi}
In other words, two points $p$ and $q$ of $\ov D$ belong to the same boundary component if they can be connected by a finite chain of holomorphic disks contained in $\ov D$.
Note that $D$ is always a boundary component of itself and that for every boundary component $F$ of $D$ it holds that $F\leq D$.

\begin{thm}\label{T:bound}
Let $D\subset \bbC^N$ be a bounded symmetric domain in its Harish-Chandra embedding. Then
\begin{enumerate}[(i)]
\item \label{T:bound1} $\ov D=\coprod_{F\leq D} F$ and $G=\Hol(D)^o$ preserves this decomposition.
\item \label{T:bound2} Let $F\leq D$ and denote by  $\langle F\rangle$ be the smallest linear subspace of $\bbC^N$ containing $F$, by $\ov F$ be the Euclidean closure of $F$ inside $\bbC^N$ (or equivalently inside $\langle F \rangle$)  and by  $F^c$ the Zariski closure of $F$ inside $D^c$.
Then $F$ is a Hermitian symmetric manifold of non-compact type such that
\begin{itemize}
\item $F\subset \langle F\rangle$ is the Harish-Chandra embedding of $F$;
\item  $F\subset \langle F\rangle\subset \ov F$ is the Borel embedding of $F$.
\end{itemize}
Moreover the following diagram of inclusions is Cartesian
\begin{equation}
\xymatrix{
F\ar@{^{(}->}[r] & \ov F \ar@{^{(}->}[r] \ar@{^{(}->}[d] \ar@{}[dr]|{\square} & \langle F\rangle \ar@{^{(}->}[r]  \ar@{^{(}->}[d] \ar@{}[dr]|{\square}& F^c \ar@{^{(}->}[d]\\
D\ar@{^{(}->}[r] & \ov D\ar@{^{(}->}[r] & \bbC^N \ar@{^{(}->}[r]  & D^c \\
}
\end{equation}
\item \label{T:bound3} If $F\leq D$ and $F'\leq F$ then $F'\leq D$.
\item \label{T:bound4} If $D=D_1\times \cdots \times D_r$ is the decomposition of $D$ into irreducible bounded symmetric domains, then the boundary components of $D$ are the product of the
boundary components of the $D_i$'s.
\end{enumerate}
\end{thm}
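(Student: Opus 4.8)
The strategy is to reduce everything to the structure theory of bounded symmetric domains, using the Harish-Chandra and Borel embeddings (Theorem \ref{T:embed}), together with the description of boundary components via chains of holomorphic disks (Definition \ref{D:bound}). I would prove the four assertions roughly in the stated order, since each feeds into the next.

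For \eqref{T:bound1}, the decomposition $\ov D=\coprod_{F\leq D}F$ is just the statement that $\sim$ is an equivalence relation on $\ov D$ (which is immediate from the definition: reflexivity uses constant disks, symmetry reverses a chain, transitivity concatenates chains), together with the observation that $D$ itself, being connected and open and hence a union of holomorphic disks through any of its points, is a single equivalence class. The $G$-invariance comes from the fact that $G=\Hol(D)^o=\Aut(D,J_D,h_D)^o$ (Theorem \ref{T:Ber-BSD}\eqref{T:Ber-BSD1}) extends to a group of biholomorphisms of the compact dual $D^c$ acting on $\ov D$ (Theorem \ref{T:embed}): a biholomorphism carries a chain of holomorphic disks in $\ov D$ to another such chain, hence permutes the equivalence classes. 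For \eqref{T:bound2}, which is the heart of the matter, the plan is to invoke the Harish-Chandra realization: one first shows that each boundary component $F$ is an affine-linear slice of $\ov D$, namely $F$ is the intersection of $\ov D$ with an affine subspace of the form $e+\langle F\rangle_0$ where $e$ is a tripotent in the associated Hermitian positive JTS $\p_+$ and $\langle F\rangle$ the corresponding Peirce-type subspace (using the Jordan-theoretic description $i_{HC}(D)=\{z:z\square z<\id\}$ of \eqref{E:HC-JTS}); then one identifies $F$, after translating by $-e$, with the open unit ball of a sub-JTS, so that $F\subset\langle F\rangle$ is literally the Harish-Chandra embedding of a lower-dimensional bounded symmetric domain. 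The Borel-embedding and Cartesian-square claims then follow from the functoriality of the Borel embedding for sub-JTS's and from the fact that $F^c$, the Zariski closure inside $D^c$, is the compact dual of $F$; compatibility of the inclusions $F\subset\ov F\subset\langle F\rangle\subset F^c$ with $F\subset\ov D\subset\bbC^N\subset D^c$ is then a diagram chase. I would cite Theorem \ref{T:embed} and the JTS description \eqref{E:HC-JTS}, and refer the verification of the Jordan-theoretic facts about tripotents and Peirce decompositions to the literature (e.g. \cite{Loo3}, \cite{Sat}, \cite{AMRT}), since the paper is a survey.

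For \eqref{T:bound3}, transitivity of domination: if $F'\leq F$ then $F'\subseteq\ov F$, and by \eqref{T:bound2} the closure $\ov F$ sits inside $\ov D$ (it is the Euclidean closure of $F\subset D\subset\ov D$), and a chain of holomorphic disks realizing $F'$ as a boundary component of $F$ inside $\ov F$ is in particular a chain of disks inside $\ov D$; hence $F'$ is contained in a single boundary component of $D$, and that component dominates $F$ and is dominated by $D$, so by \eqref{T:bound2} applied twice one gets $F'\leq D$. (One should check $F'$ is itself a full boundary component of $D$ and not just contained in one; this follows because both are JTS unit balls of the same dimension sitting inside $\ov D$.) For \eqref{T:bound4}, if $D=D_1\times\cdots\times D_r$ then $\ov D=\ov{D_1}\times\cdots\times\ov{D_r}$, and since a holomorphic disk $\Delta\to\ov D$ is the same as an $r$-tuple of holomorphic disks $\Delta\to\ov{D_i}$, the equivalence relation $\sim$ on $\ov D$ is the product of the equivalence relations $\sim_i$ on $\ov{D_i}$; hence the boundary components of $D$ are exactly the products $F_1\times\cdots\times F_r$ with $F_i\leq D_i$.

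The main obstacle is \eqref{T:bound2}: establishing that each boundary component is a linearly embedded bounded symmetric domain whose own Harish-Chandra and Borel embeddings are induced by those of $D$. This is genuinely where the Jordan-theoretic structure (tripotents, Peirce decomposition, the fact that faces of the symmetric convex body $\ov D$ are exactly the $e+D_e$ for tripotents $e$) does the real work; the other three parts are formal once \eqref{T:bound2} is in hand. In keeping with the survey nature of the text, I would state \eqref{T:bound2} carefully and attribute the structural input to \cite[Chap. III, \S3]{AMRT} and \cite[Part V]{FKKLR}, proving in detail only the elementary parts \eqref{T:bound1}, \eqref{T:bound3}, \eqref{T:bound4}.
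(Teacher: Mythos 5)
The paper does not actually prove this theorem: its ``proof'' is the single citation \cite[Chap. III, Thm. 3.3]{AMRT}, so there is no in-text argument to compare yours with step by step. Your plan is sound and, for the elementary parts, essentially complete: (i) is indeed just the observation that $\sim$ is an equivalence relation whose classes are permuted by $G$ (every $g\in G$ extends holomorphically to $D^c$ and maps the compact set $\ov D$ onto $\ov D$, hence carries chains of disks in $\ov D$ to chains of disks in $\ov D$), and (iv) follows from the fact that a holomorphic disk into $\ov{D_1}\times\cdots\times\ov{D_r}$ is the same as an $r$-tuple of disks, changing one coordinate at a time. For the crucial part (ii) you take the Jordan-theoretic route (tripotents and Peirce decompositions, $F=e+D_0(e)$), whereas \cite{AMRT} argues Lie-theoretically via the pairs $(f_F,\phi_F)$ and the normalizer structure as in Lemma \ref{L:pair} and Theorem \ref{T:5deco}; both routes are standard, and yours meshes well with \S\ref{SS:HSM-JTS} and Remark \ref{R:idem}. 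One caveat: since $F=e+D_0(e)$ is an affine translate by the tripotent $e$, the subspace for which $F\subset\langle F\rangle$ is literally a Harish-Chandra embedding is the affine span $e+V_0(e)$, not the linear span; this is really a slip in the statement rather than in your argument, but you should flag it.

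There are two soft spots in your (iii). First, the clause ``that component dominates $F$'' is false in general (take $F'$ a point of $\partial F$: the boundary component of $D$ containing it need not contain $F$ in its closure) and is not needed. Second, the genuine issue is to show that the $D$-boundary component $F''$ containing $F'$ equals $F'$, and ``both are JTS unit balls of the same dimension'' does not obviously settle it: a priori you only know $F'\subseteq F''$, not that their dimensions agree. The clean argument, which your framework already supports, is that orthogonal tripotents add: if $F=e+D_0(e)$ and $F'$ corresponds to a tripotent $e'$ of the Peirce space $V_0(e)$, then $e+e'$ is again a tripotent of $\p_+$ with $V_0(e+e')=\bigl(V_0(e)\bigr)_0(e')$, so $F'=(e+e')+D_0(e+e')$ is on the nose a boundary component of $D$. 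With that substitution, part (iii) closes and the proof is complete.
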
Ê
\begin{proof}
See \cite[Chap. III, Thm. 3.3]{AMRT}.
\end{proof}

\begin{remark}\label{R:BergSil}
It has been proved by Bott-Kor\'anyi (see \cite[\S 3]{KW}) that the union of the $0$-dimensional boundary components of a bounded symmetric domain $D\subset \bbC^N$ is the Bergman-Silov
boundary of $D$, i.e. the smallest closed subset of the boundary $\partial D:=\ov{D}\setminus D$ on which the absolute value of any function continuous on $\ov{D}$ and holomorphic on $D$
achieves its maximum.
\end{remark}

\subsection{The normalizer subgroup of a boundary component}\label{SS:norma}

The aim of this subsection is to study the normalizer subgroup of a boundary component $F$ of $D$.

\begin{defi}\label{D:norma}
The \emph{normalizer} subgroup of a boundary component $F\leq D$ is the subgroup
$$N(F):=\{g\in G=\Hol(D)^o: \: g F=F\}\subseteq G.$$
\end{defi}

We can classify the boundary components of $D$ in terms of their normalizer subgroups.

\begin{thm}\label{T:clas-norm}
Let $D=D_1\times \cdots \times D_s$ the decomposition of $D$ into its irreducible bounded symmetric domains and let $\Hol(D)^o=G=G_1\times \cdots \times G_s=
\Hol(D_1)^o\times \cdots \times \Hol(D_s)^o$ the associated decomposition of $G$ into its simple factors. Then there is a bijection
$$\begin{aligned}
\left\{\text{ÊBoundary components } F\leq D \right\}Ê& \stackrel{\cong}{\longrightarrow}
\left\{\begin{aligned}Ê
\text{ Subgroups }ÊP_1\times \cdots \times P_s\subseteq G_1\times \cdots \times G_s  \text{ such that}Ê \\
P_i=G_i \text{ or } P_i \text{ is a maximal parabolic subgroup of }ÊG_i
 \end{aligned} \right\}ÊÊ\\
 F & \mapsto N(F).
\end{aligned} $$
\end{thm}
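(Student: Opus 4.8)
The plan is to reduce to the irreducible case via Theorem \ref{T:bound}\eqref{T:bound4}, and then to classify the boundary components of an irreducible bounded symmetric domain by combining the transitivity of the $G$-action on ``rank strata'' with an explicit computation of the normalizers of a distinguished chain of boundary components.

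\emph{Reduction to the irreducible case.} By Theorem \ref{T:bound}\eqref{T:bound4}, every boundary component $F\leq D$ is uniquely a product $F=F_1\times\cdots\times F_s$ with $F_i\leq D_i$, and conversely every such product is a boundary component of $D$. Since $G=G_1\times\cdots\times G_s$ acts factorwise on $D=D_1\times\cdots\times D_s$, one has $N(F)=N(F_1)\times\cdots\times N(F_s)$; and a subgroup of $G$ of the form $P_1\times\cdots\times P_s$ is of the type occurring in the statement exactly when each $P_i$ is $G_i$ or a maximal parabolic of $G_i$. Hence it suffices to prove: \emph{for $D$ irreducible of rank $r$, the map $F\mapsto N(F)$ is a bijection from the set of boundary components of $D$ onto $\{G\}\cup\{\text{maximal parabolic subgroups of }G\}$.}

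\emph{The irreducible case.} Here I would use the following three facts about $D\cong G/K$, for which the reference is \cite[Chap.~III]{AMRT}: (a) by iterating partial Cayley transforms one obtains, for each $k=0,\dots,r-1$, a standard boundary component $F_k$ of rank $k$; (b) every boundary component has a well-defined rank in $\{0,\dots,r\}$, with $D$ the unique one of rank $r$, and for $k<r$ the group $G$ acts transitively on the set $\mathcal B_k$ of rank-$k$ boundary components, which has more than one element; (c) $\mathcal B_k$ is compact --- for instance because $F\mapsto\langle F\rangle$ embeds it $G$-equivariantly as a closed orbit in a Grassmannian of subspaces of $\bbC^N$ --- and $N(F_k)$ is one of the $r$ standard maximal parabolic subgroups of $G$, the assignment $k\mapsto N(F_k)$ being a bijection with this set. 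Granting (a)--(c): $N(D)=G$ by Theorem \ref{T:bound}\eqref{T:bound1}; if $F$ is proper of rank $k<r$, then by (b) it is $G$-conjugate to $F_k$, so $N(F)$ is $G$-conjugate to the maximal parabolic $N(F_k)$, and in particular $N(F)\neq G$ (so $D$ is the only boundary component with normalizer $G$) and $N(F)$ is maximal parabolic.

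\emph{Bijectivity, and the main obstacle.} For surjectivity, every maximal parabolic of $G$ is $G$-conjugate to exactly one of the $N(F_k)$, so the corresponding $G$-translate of $F_k$ realizes it as a normalizer; together with $N(D)=G$ and the product reduction this covers all admissible products. For injectivity, suppose $N(F)=N(F')=:P$. If $P=G$ then $F=F'=D$. Otherwise $P$ is a maximal parabolic; its $G$-conjugacy class determines the common rank $k$ of $F$ and $F'$ (distinct standard maximal parabolics are non-conjugate), so writing $F'=gF$ with $g\in G$ we get $gPg^{-1}=N(gF)=N(F')=P$, hence $g$ lies in the self-normalizing subgroup $P=N(F)$ and therefore $F'=gF=F$. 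I expect the serious difficulty to lie entirely in part (c), namely the identification of $N(F_k)$ with the expected standard maximal parabolic: the inclusion ``$\supseteq$'' requires checking how the Levi factor and the abelian unipotent radical of that parabolic move $F_k$ within $\ov D$, while the inclusion ``$\subseteq$'' --- that no strictly larger parabolic stabilizes $F_k$ --- is where the special structure of Hermitian symmetric domains really enters, through the restricted root system being of type $C_r$ or $BC_r$ and the Siegel-domain realization of $D$ in a neighbourhood of $F_k$.
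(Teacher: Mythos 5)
Your proposal is correct and follows essentially the same route as the proof in \cite[Chap.~III, Prop.~3.9]{AMRT}, to which the paper itself simply defers: reduction to the irreducible case via Theorem \ref{T:bound}\eqref{T:bound4}, transitivity of $G$ on the boundary components of a fixed rank, identification of the normalizers of a standard chain of boundary components with the $r$ standard maximal parabolic subgroups, and self-normalization of parabolics for injectivity. You have also correctly isolated the substantive content in your step (c), which is exactly what the cited reference establishes by means of the one-parameter subgroups $w_F$ appearing in Theorem \ref{T:5deco}\eqref{T:5deco1}.
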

\begin{proof}Ê
See \cite[Chap. III, Prop. 3.9]{AMRT}.
\end{proof}Ê

We want now to take a closer look at the structure of the normalizer subgroup associated to a boundary component of $D$.
We will need the following technical result.

\begin{lemma}\label{L:pair}
Let $F$ be a boundary component of $D$ and fix a base point $o\in D$. Then there exists a unique pair
\begin{equation}\label{E:pair}
\begin{aligned}
& f_F: \Delta\to D,\\
& \phi_F: \bbS^1\times \SL_2(\bbR)\to G=\Hol(D)^o,
\end{aligned}
\end{equation}
such that
\begin{enumerate}[(i)]
\item \label{E:pair1} $f_F$ is a symmetric morphism (in the sense of Remark \ref{R:funHSM-SLA}) such that $f_F(0)=o\in D$ and $o_F:=f_F(1):=\lim_{z\to 1}Êf_F(z)\in F$;
\item \label{E:pair2} $\phi_F$ is a morphism of Lie groups such that
$$h_o(e^{i\theta}):=\phi_F\left(e^{i\theta}, \begin{pmatrix} \cos \theta & \sin \theta \\ -\sin \theta & \cos \theta \end{pmatrix} \right) $$
belongs to $K=\Stab(o)\subset G$ and it acts on $T_o D$ as multiplication by $e^{2i\theta}$.
\item \label{E:pair3} $f_F$ is equivariant with respect to the morphism $\phi_F$ and the natural actions of $G$ on $D$ and of $\bbS^1\times \SL_2(\bbR)$ on $\Delta$ via
$$
\begin{aligned}Ê
\left[\bbS^1\times SL_2(\bbR)\right] \times \Delta & \longrightarrow \Delta \\
\left(\left[e^{i\theta},\begin{pmatrix} a & b \\ c& d\end{pmatrix} \right], z\right) & \mapsto  \frac{[i(a+d)-(b-c)]z+[i(a-d)+(b+c)]}{[i(a-d)-(b+c)]z+[i(a+d)+(b-c)]}.
\end{aligned}Ê
$$
\end{enumerate}
\end{lemma}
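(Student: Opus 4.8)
The plan is to construct the pair $(f_F,\phi_F)$ explicitly from the Harish--Chandra structure of $D$, after two reductions. First, by Theorem~\ref{T:bound}\eqref{T:bound4} a boundary component of a product $D=D_1\times\cdots\times D_s$ is a product $F=F_1\times\cdots\times F_s$ of boundary components, and the diagonal pair $\bigl(z\mapsto(f_{F_1}(z),\dots,f_{F_s}(z)),\ (t,g)\mapsto(\phi_{F_1}(t,g),\dots,\phi_{F_s}(t,g))\bigr)$ satisfies \eqref{E:pair1}--\eqref{E:pair3} for $F$ as soon as the factors do; so we may assume $D$ irreducible. Second, the boundary components of $D$ of a fixed type form a single $G$-orbit, identified $G$-equivariantly by Theorem~\ref{T:clas-norm} with a partial flag variety $G/N(F)$; since $N(F)$ contains a minimal parabolic $P_0$ and $G=K\cdot P_0$ by the Iwasawa decomposition, $K=\Stab(o)$ acts transitively on this orbit. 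Replacing $F$ by $kF$ with $k\in K$ replaces $(f_F,\phi_F)$ by $(k\circ f_F,\,k\,\phi_F(\cdot)\,k^{-1})$ and preserves \eqref{E:pair1}--\eqref{E:pair3}, so we may assume $F$ is one of the finitely many standard boundary components.

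For existence, I would use the decomposition $\g_\bbC=\k_\bbC\oplus\p_+\oplus\p_-$ of \eqref{E:dec-g} and pick a maximal family $\gamma_1,\dots,\gamma_r$ of strongly orthogonal noncompact positive roots, so that $r$ is the rank of $D$ (Proposition~\ref{P:rank-SLA}, Theorem~\ref{T:polydisk}), with $\sl_2$-triples $(e_j,f_j,h_j)$, $e_j\in\p_+$. The standard boundary component $F=F_k$ is the one whose normalizer parabolic is built from $\gamma_1,\dots,\gamma_k$; by strong orthogonality $e:=e_1+\cdots+e_k$, $f:=f_1+\cdots+f_k$ and $h:=[e,f]$ again form an $\sl_2$-triple, yielding a homomorphism $\psi\colon\SL_2(\bbR)\to G$ integrating the inclusion $\sl_2(\bbR)\to\g$. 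I would then put $f_F(z):=\psi(g_z)\cdot o$, where $g_z\in\SL_2(\bbR)$ moves $0$ to $z$: this is a holomorphic embedding of $\Delta$ with $f_F(0)=o$ whose image is totally geodesic (it is the orbit of an $\sl_2(\bbR)$-subgroup; equivalently it is preserved by the symmetry of $D$ at each of its points, cf.\ the criterion before Definition~\ref{D:rank}), and it is a symmetric morphism in the sense of Remark~\ref{R:funHSM-SLA} because it intertwines the geodesic symmetries, those of $\Delta$ being realized inside the $\SL_2(\bbR)$-action appearing in \eqref{E:pair3}. For $\phi_F$ I would set $\phi_F(t,g):=\mu(t)\,\psi(g)$, where $\mu\colon\bbS^1\to K$ is the circle subgroup commuting with $\psi(\SL_2(\bbR))$ and normalized so that $\mu(e^{i\theta})\,\psi(R_\theta)$ (with $R_\theta$ the rotation by $\theta$) acts on $T_oM\cong\p_+$ as the scalar $e^{2i\theta}$; the existence and uniqueness of $\mu$ come from decomposing $\p_+$ into weight spaces for $\psi(\SO(2))$ and isotypic pieces for the centralizer of $\psi(\SL_2(\bbR))$ in $K$, i.e.\ the Peirce-type decomposition of $\p_+$ relative to $e$. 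With these definitions \eqref{E:pair1} and \eqref{E:pair2} hold by construction, and \eqref{E:pair3} amounts to matching the M\"obius action on $\Delta$ written in the statement with the $\psi$-orbit action. The one point needing real work is that $o_F:=\lim_{z\to1}f_F(z)$ exists and lies in $F$: carrying everything into the polydisk $\Delta^r$ of Theorem~\ref{T:polydisk}, $f_F$ becomes the diagonal disk into the first $k$ factors, whose limit at $1$ is the corner $(1,\dots,1,0,\dots,0)$, and one identifies the boundary component through this corner with $F$ using the classification of normalizers.

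For uniqueness, suppose $(f_F,\phi_F)$ and $(f_F',\phi_F')$ both satisfy \eqref{E:pair1}--\eqref{E:pair3}. A symmetric morphism $\Delta\to D$ sending $0$ to $o$ intertwines the geodesic symmetries (Remark~\ref{R:symHerm}), so its image is a totally geodesic holomorphic disk through $o$ and is therefore determined by its tangent line $\ell\subseteq T_oM$ at $o$ via $\exp_o$. Conditions \eqref{E:pair2} and \eqref{E:pair3} together force $\ell$ to be adapted to $\psi(\SO(2))$ and the endpoint $f_F(1)$ to lie in $F$; combined with the shape of $N(F)$ this leaves exactly one choice of $\ell$, so $f_F=f_F'$. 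Then $\phi_F|_{\SL_2(\bbR)}$ is the unique homomorphism whose $o$-orbit recovers $f_F$ (as $\SL_2(\bbR)$ is connected and $\sl_2$ centerless, this is read off from the disk), and $\phi_F(e^{i\theta},R_\theta)$ must be the unique element of $K$ acting on $T_oM$ as the scalar $e^{2i\theta}$ -- unique because $K$ acts faithfully on $\p$ (as $\g$ is simple, $\k=[\p,\p]$, and $G$ is adjoint) -- so $\phi_F(e^{i\theta},1)=\phi_F(e^{i\theta},R_\theta)\,\psi(R_\theta)^{-1}$ is forced and $\phi_F=\phi_F'$.

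I expect the main obstacle to be the verification, in the uniqueness step, that the tangent line $\ell$ is genuinely pinned down -- equivalently, that the standard disk above is the \emph{only} totally geodesic holomorphic disk through $o$ compatible with $F$ in the sense of \eqref{E:pair1}--\eqref{E:pair3} -- since this forces one to tie $f_F$ precisely to the maximal parabolic $N(F)$ and to the fine structure of the $\gamma_j$'s. A close second is the bookkeeping in the existence step: assembling a \emph{single} homomorphism on $\bbS^1\times\SL_2(\bbR)$, rather than separately on the two factors, which is simultaneously equivariant for the explicit M\"obius action of \eqref{E:pair3} and compatible with the scalar normalization \eqref{E:pair2}.
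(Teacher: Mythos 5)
The paper does not actually prove this lemma; it is quoted verbatim from Ash--Mumford--Rapoport--Tai (Chap.~III, Thm.~3.3(v) and Thm.~3.7), and your sketch reconstructs essentially the argument of that source (and of Kor\'anyi--Wolf and Satake): reduction to an irreducible $D$ and a standard boundary component via $K$-transitivity, then the $\sl_2$-triple attached to a sum of strongly orthogonal noncompact roots, the commuting circle $\mu$ in $K$ fixed by the Peirce decomposition of $\p_+$, and the polydisk to control $\lim_{z\to 1}f_F(z)$. Two remarks. First, a bookkeeping slip: if $D$ has rank $r$ and $F$ has rank $k$, the relevant tripotent $o_F$ has rank $r-k$, so the $\sl_2$-triple and the diagonal disk must be built from the \emph{complementary} $r-k$ strongly orthogonal roots, not from $\gamma_1,\dots,\gamma_k$ (compare the explicit formulas \eqref{E:1pair-f}--\eqref{E:1pair-phi} in \S\ref{SS:bounI}, where the disk sits in the last $q-k$ coordinates); your statement that the limit corner is $(1,\dots,1,0,\dots,0)$ with $k$ ones would put that corner in a boundary component of rank $r-k$. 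Second, for the uniqueness step you flag as the main obstacle, the cleaner route is through Remark \ref{R:idem}: $F$ determines a unique tripotent $o_F\in\ov D\subset\p_+$, condition \eqref{E:pair1} forces $f_F(1)=o_F$, and the equivariance \eqref{E:pair3} then forces $d\psi(e)=o_F$ under $\p_+\cong T_oD$, so the whole $\sl_2$-triple (hence $\psi$, hence $f_F$) is pinned down by $o_F$ together with the Cartan involution; your observation that $K$ acts faithfully on $\p$ (valid since $G$ is adjoint and $\k=[\p,\p]$) then recovers the $\bbS^1$-factor as you say. With those adjustments the outline is sound and matches the cited proof in structure.
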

\begin{proof}
See \cite[Chap. III, Thm. 3.3(v) and Thm. 3.7]{AMRT}.
\end{proof}

\begin{remark}\label{R:pairs}
\noindent
\begin{enumerate}[(i)]
\item Since $f_F:\Delta \to D$ is a symmetric morphism, it extends uniquely to a morphism between  their Harish-Chandra and Borel embeddings (see \cite[Chap. III, Sec. 2.2]{AMRT})
$$\xymatrix{
\Delta \ar[r]^{f_F} \ar@{^{(}->}[d] & D \ar@{^{(}->}[d] \\
 \bbC \ar@{^{(}->}[d]  \ar[r]^{\ov{f_F}} & \bbC^N \ar@{^{(}->}[d] \\
 \Delta^c=\bbP^1 \ar[r]^{f_F^c} & D^c
} $$
In particular $f_F(1)=\ov{f_F}(1)\in \ov D$.
\item  For any non-Euclidean Hermitian symmetric domain $M$ with a fixed base point $o$ there exists a unique morphism $u_o:\bbS^1\to G=\Aut(M)^o$ such that
$\Im u_o \subset K=\Stab(o)\subset G$ and $u_o(e^{i\theta})$ induces the multiplication by $e^{i\theta}$ on $T_oM$ (see \cite[Chap. III, Sec. 2.1]{AMRT}).
Therefore, part \eqref{E:pair2} is equivalent to saying that $h_o^2=u_o$.
\item The action of $\SL_2(\bbR)$ on $\Delta$ in part \eqref{E:pair3} is equivalent to the action of $\SL_2(\bbR)$ on the upper half space $\calH$ via Mo\"ebius transformations (see
Example \ref{E:dim1}\eqref{E:dim1b}) using the Cayley isomorphism $\calH\cong \Delta$ of \eqref{E:Cayley-1dim}.Ê
\end{enumerate}
\end{remark}

The connected component of the normalizer subgroup of a boundary component of $D$ admits the following decomposition, know as the $5$-\emph{term decomposition}.

\begin{thm}\label{T:5deco}
Let $F$ be a boundary component of $D$ and let $N(F)$ its associated normalizer subgroup. Consider the one-parameter subgroup of $G=\Hol(D)^o$
\begin{equation}\label{E:wF}
\begin{aligned}Ê
w_F: \Gm & \longrightarrow G\\
t & \mapsto \phi_F\left(1,\begin{pmatrix} t & 0 \\ 0 & t^{-1}\end{pmatrix} \right).
\end{aligned}
\end{equation}
\begin{enumerate}[(i)]
\item \label{T:5deco1} The  normalizer subgroup $N(F)$ of $F$ is equal to
$$N(F)=\{g\in G: Ê\lim_{t\to 0}w_F(t)\cdot g\cdot  w_F(t)^{-1} \text{ exists}Ê\}.$$
\item \label{T:5deco2} The connected component $N(F)^o$ of $N(F)$ is equal to the semidirect product
$$N(F)^o=Z(w_F)^o\ltimes W(F),$$
where:

$\bullet$ $W(F)$ is the unipotent radical of $N(F)^o$ and it is equal to
$$W(F):=\{g\in G:  \lim_{t\to 0}w_F(t)\cdot g\cdot  w_F(t)^{-1}=1\in G\};$$

$\bullet$ $Z(w_F)^o$ is a Levi subgroup of $N(F)^o$ and it is the connected component of the centralizer $Z(w_F)$ of $w_F$, i.e.
$$ Z(w_F):=\{g\in G:  w_F(t)\cdot g\cdot  w_F(t)^{-1}=g \text{ for any } t\in \Gm\}.$$

\item \label{T:5deco3} $W(F)$ is a $2$-step unipotent group which is given as an extension of two abelian groups
$$0\to U(F)\to W(F) \to V(F) \to 0,$$
where $U(F)$ is the center of $W(F)$ and $V(F):=W(F)/U(F)$.

\item \label{T:5deco4} $Z(w_F)^o$ is a reductive group which is equal to the product modulo finite subgroups
$$Z(w_F)^o=G_h(F)\cdot G_l(F) \cdot M(F),$$
where

$\bullet$ $M(F)$ is compact and semisimple;

$\bullet$ $G_h(F)$ is semisimple  and it satisfies $G_h(F)/Z_{G_h(F)}=\Aut(F)^o$;

$\bullet$ $G_l(F)$ is reductive without compact factors.

\end{enumerate}Ê
\end{thm}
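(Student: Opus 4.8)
The plan is to identify $N(F)$ with the parabolic subgroup of $G=\Hol(D)^o$ attached to the cocharacter $w_F$ of \eqref{E:wF} by the standard dynamical construction, and then to read off all four assertions from the $\ad$-grading of $\g=\Lie(G)$ determined by $w_F$. Write $X_F\in\g$ for the infinitesimal generator of the one-parameter group $w_F$. Since $w_F$ is the image under $\phi_F$ of a maximal split torus of $\SL_2(\bbR)$ (Lemma \ref{L:pair}\eqref{E:pair2}--\eqref{E:pair3}), the element $X_F$ is the semisimple member of an $\sl_2$-triple in $\g$, so $\ad(X_F)$ is diagonalizable with integer eigenvalues. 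The key structural input, special to the Hermitian situation, is that these eigenvalues lie in $\{-2,-1,0,1,2\}$, yielding a five-term grading
$$\g=\g_{-2}\oplus\g_{-1}\oplus\g_0\oplus\g_1\oplus\g_2,\qquad [\g_i,\g_j]\subseteq\g_{i+j}$$
(some pieces possibly zero). I would establish this by transporting the $\phi_F$-action through the Harish-Chandra decomposition $\g_{\bbC}=\k_{\bbC}\oplus\p_+\oplus\p_-$ of \eqref{E:dec-g}: the circle $h_o$ of Lemma \ref{L:pair}\eqref{E:pair2} lies in $K$, acts on $T_oD\cong\p_+$ with weight $2$ and on $\p_-$ with weight $-2$ (see Remark \ref{R:pairs} and the description of the invariant complex structure in \S\ref{SS:HC-B}), which confines the $\ad(X_F)$-weights on $\k_{\bbC}\oplus\p_+\oplus\p_-$ to the stated range.

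Granting the grading, parts \eqref{T:5deco1} and \eqref{T:5deco2} are the general theory of parabolic subgroups attached to cocharacters (Borel--Tits; the dynamic description of parabolics). The set $P(w_F):=\{g\in G:\lim_{t\to0}w_F(t)gw_F(t)^{-1}\text{ exists}\}$ is a parabolic subgroup with $\Lie P(w_F)=\g_0\oplus\g_1\oplus\g_2$, it has Levi decomposition $P(w_F)=Z(w_F)\ltimes R_u(P(w_F))$ with $\Lie Z(w_F)=\g_0$ and $\Lie R_u(P(w_F))=\g_1\oplus\g_2$, and $R_u(P(w_F))=\{g:\lim_{t\to0}w_F(t)gw_F(t)^{-1}=1\}$. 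It then remains to prove $N(F)=P(w_F)$: by Theorem \ref{T:clas-norm} the group $N(F)$ is parabolic, and by construction $f_F$ produces a point $o_F=f_F(1)\in F$; using the equivariance of $f_F$ in Lemma \ref{L:pair}\eqref{E:pair3} (matched with the $\SL_2(\bbR)$-action on $\Delta$ via the Cayley isomorphism $\calH\cong\Delta$, Remark \ref{R:pairs}), one checks directly that $gF=F$ if and only if $w_F(t)gw_F(t)^{-1}$ stays in a compact set as $t\to0$, i.e. $N(F)=P(w_F)$. Setting $W(F):=R_u(N(F)^o)$ and taking $Z(w_F)^o$ as the Levi then gives \eqref{T:5deco2}.

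For \eqref{T:5deco3}: because the grading terminates in degree $2$, the Jacobi relations force $[\g_1,\g_1]\subseteq\g_2$, $[\g_2,\g_1]\subseteq\g_3=0$ and $[\g_2,\g_2]\subseteq\g_4=0$, so $\g_2$ is central in $\Lie W(F)=\g_1\oplus\g_2$ and $W(F)$ is $2$-step nilpotent. As $W(F)$ is unipotent it is simply connected and the exponential is an isomorphism of varieties; hence $U(F):=\exp(\g_2)$ is the centre of $W(F)$, it is abelian, $V(F):=W(F)/U(F)\cong\exp(\g_1)$ is abelian since $[\g_1,\g_1]\subseteq\g_2$ dies in the quotient, and one obtains the exact sequence $0\to U(F)\to W(F)\to V(F)\to0$. (The symmetric cone $C(F)$ of \S\ref{SS:decom} is then an open cone in the real points of $U(F)$.)

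Part \eqref{T:5deco4} is where I expect the real work to lie. The Levi $Z(w_F)^o$ is reductive, hence — modulo finite central subgroups — the almost-direct product of its central torus with the simple factors of its derived group; I would organise these factors according to how they act on $F$ and on $\g_{\pm1},\g_{\pm2}$: let $G_h(F)$ be generated by the factors acting non-trivially on $F$, let $M(F)$ be the compact factors acting trivially on $\g_2$, and let $G_l(F)$ collect the rest, which act on $\g_2$ preserving the cone $C(F)$. Then $M(F)$ is compact and semisimple by construction, and $G_l(F)$ is reductive without compact factors because its simple factors act faithfully on the non-compact symmetric cone. The crucial identification $G_h(F)/Z_{G_h(F)}=\Aut(F)^o$ is the heart of the matter, and it is proved via Theorem \ref{T:bound}\eqref{T:bound2}: $F$ is itself a bounded symmetric domain in its Harish-Chandra embedding $F\subset\langle F\rangle$, so $\Aut(F)^o=\Hol(F)^o$ is adjoint; the restriction to $N(F)^o$ of the $G$-action on $D$, together with the equivariance of $f_F$, yields a homomorphism $Z(w_F)^o\to\Aut(F)^o$ which kills $G_l(F)\cdot M(F)$ and restricts to an isogeny $G_h(F)\to\Aut(F)^o$. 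To make the threefold splitting and all of these properties fully explicit, I would reduce (via Theorem \ref{T:bound}\eqref{T:bound4}) to an irreducible $D$ and verify the statements case by case on the six types of \S\ref{S:irrHSM}, using the explicit maximal parabolics $N(F)$ — precisely the maximal parabolics of $G$ occurring in Theorem \ref{T:clas-norm} — as computed in \S\ref{SS:boun-irr}, where $Z(w_F)^o=G_h(F)\cdot G_l(F)\cdot M(F)$ and the cone $C(F)$ can simply be read off. The whole argument follows \cite[Chap. III]{AMRT}.
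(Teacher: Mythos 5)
Your proposal is correct and follows essentially the same route as the paper, whose ``proof'' is simply a citation of \cite[Chap.~III, Thm.~3.7, Thm.~3.10, \S4.1]{AMRT}: the five-term $\ad(X_F)$-grading with weights in $\{-2,\dots,2\}$, the dynamical description of the parabolic $N(F)$ attached to the cocharacter $w_F$, the two-step nilpotency of $W(F)$ from the truncation of the grading, and the sorting of the Levi factors by their action on $F$ and on $\g_{\pm 2}$ are exactly the ingredients of the argument in \cite{AMRT}. No gap to report; only the confinement of the $\ad(X_F)$-weights to $\{-2,\dots,2\}$ is stated a little quickly, but your reduction via $h_o$ and the Harish-Chandra decomposition \eqref{E:dec-g} is the standard way to establish it.
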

\begin{proof}
See \cite[Chap. III. Thm. 3.7, Thm. 3.10, \S 4.1]{AMRT}.
\end{proof}Ê

\subsection{The decomposition of $D$ along a boundary component}\label{SS:decom}

The aim of this subsection is to deduce from the $5$-term decomposition of the normalizer $N(F)$ of a boundary component $F$ of $D$ (see Theorem \ref{T:5deco})
a decomposition of $D$ along $F$.

\begin{prop}\label{P:transi}
Let $D$ be a bounded symmetric domain and fix a boundary component $F\leq D$.
Then $N(F)^o$ acts transitively on $D$.
\end{prop}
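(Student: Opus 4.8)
The plan is to reduce the statement to the interplay between the Harish-Chandra realization of $D$, the one-parameter group $w_F:\Gm\to G$ of Theorem \ref{T:5deco}, and the equivariant disk $f_F:\Delta\to D$ of Lemma \ref{L:pair}. First I would record the structural input we are allowed to use: by Theorem \ref{T:5deco}(ii) we have $N(F)^o = Z(w_F)^o\ltimes W(F)$ with $W(F)$ unipotent, and by Theorem \ref{T:5deco}(iv) the Levi part decomposes (modulo finite groups) as $Z(w_F)^o = G_h(F)\cdot G_l(F)\cdot M(F)$, where $G_h(F)$ surjects onto $\Aut(F)^o$. Since $G=\Hol(D)^o$ acts transitively on $D$ (Theorem \ref{T:LieHerm}\eqref{T:LieHerm2} via $\Aut(D,J_D,h_D)=\Hol(D)$, Theorem \ref{T:Ber-BSD}), the orbit $N(F)^o\cdot o$ is an open submanifold of $D$ as soon as we show its tangent space at $o$ is all of $T_oD$, because then the orbit is also closed (being an orbit of a Lie group action whose complement is a union of lower-dimensional orbits — or, more cleanly, because the $N(F)^o$-orbits partition $D$ into submanifolds and $D$ is connected, so a single open orbit must be everything).

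The core computation is therefore to show $\Lie(N(F)^o)\cdot o = T_oD$, i.e. that the infinitesimal action of $\n:=\Lie(N(F)^o)$ at $o$ spans $T_oD\cong\p$ in the notation of \S\ref{SS:HSMLiealg}. Here I would use the $\Gm$-grading of $\g$ induced by $\ad$ of the generator of $w_F$: by Theorem \ref{T:5deco}(i), $\n$ is exactly the sum of the non-negative weight spaces, $\n = \g_{\geq 0} = \g_0\oplus\g_{>0}$, while $W(F)$ corresponds to $\g_{>0}$ (which further splits as the weight-$1$ and weight-$2$ pieces, matching the $0\to U(F)\to W(F)\to V(F)\to0$ extension of Theorem \ref{T:5deco}(iii)). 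The complementary space is $\g_{<0}$. So the claim $\n\cdot o = T_oD$ is equivalent to saying that the negative-weight part $\g_{<0}$, projected to $\p\cong T_oD$ along $\k$, is already contained in the image of $\g_{\geq0}$; equivalently, that $\g_{<0}\cap\p$ maps into $\n\cdot o + 0$. The cleanest way to see this is: the element $w_F(-1)$ (or the symmetry associated to $f_F$) normalizes $D$ and conjugates $\g_{<0}$ to $\g_{>0}\subseteq\n$; tracking this through the identification $T_oD\cong\p$ and using that the Cartan involution $\theta$ interchanges the two "halves" of the grading in the appropriate way shows that $\n$ and $\theta(\n)$ together span $\g$, hence $\n\cdot o = \p = T_oD$. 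An alternative, perhaps more transparent, route is to invoke the Harish-Chandra picture directly: $D\subset\p_+$, and $w_F$ acts on $\p_+ = \p_+^0\oplus\p_+^1\oplus\p_+^2$ (weights under $\ad w_F$), where $W(F)$ acts by affine translations covering all of $\p_+^1\oplus\p_+^2$ while $Z(w_F)^o$ — through $G_h(F)$ acting on the "$F$-directions" $\p_+^0\supseteq T_{o_F}F$ and $G_l(F)$ acting linearly along the cone/vector-space directions — fills in the remaining directions; the $5$-term decomposition is precisely designed so that these pieces exhaust $T_oD$.

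The main obstacle will be the bookkeeping in this tangent-space count: one must be careful that $G_h(F)$ acts transitively on $F$ itself (which is where the surjection $G_h(F)\to\Aut(F)^o$ of Theorem \ref{T:5deco}(iv) together with homogeneity of the boundary component $F$ as a HSM of non-compact type, Theorem \ref{T:bound}\eqref{T:bound2}, is used), and that the "transverse" directions to $F$ inside $D$ are accounted for by $W(F)$ and the $G_l(F)$-factor (the symmetric cone and vector space directions that will appear in \S\ref{SS:decom}). Once the infinitesimal transitivity $\n\cdot o = T_oD$ is established, openness of the orbit is automatic, and connectedness of $D$ upgrades this to transitivity; alternatively one can simply cite that in a transitive $G$-action the orbits of a subgroup are the fibers of a map whose image, being open and closed in the connected $D$, is all of $D$. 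I would therefore structure the write-up as: (1) reduce to $\n\cdot o = T_oD$; (2) interpret $\n$ via the $w_F$-grading and Theorem \ref{T:5deco}(i); (3) verify the span using the $G_h(F)$-transitivity on $F$ plus the $W(F)$ and $G_l(F)$ transverse directions; (4) conclude openness, then transitivity. For a survey, steps (2)–(3) would mostly be a citation to \cite[Chap. III, \S4]{AMRT}, where this is Theorem 4.1 or its immediate corollary.
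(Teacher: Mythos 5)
The paper itself gives no argument here, only a citation to \cite[Chap. III, \S 4.3]{AMRT}, so your proposal should be judged on its own merits. The infinitesimal core of your argument is sound: $\Lie N(F)^o=\g_{\geq 0}$ for the grading by $\ad$ of the generator $Y_F$ of $w_F$ (this is exactly Theorem \ref{T:5deco}\eqref{T:5deco1} at the Lie algebra level), the generator lies in $\p$ because $\phi_F$ is compatible with the Cartan involutions, hence $\theta(\g_{>0})=\g_{<0}$, and then $\g_{<0}\subseteq \k+\g_{>0}$ gives $\g=\k+\Lie N(F)^o$, i.e.\ the orbit $N(F)^o\cdot o$ is open. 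You do not actually need the finer $5$-term bookkeeping ($G_h$ transitive on $F$, $W(F)$ and $G_l(F)$ filling the transverse directions); the single identity $\n+\k=\g$ already does the job.

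The one step whose justification would not survive scrutiny is the passage from ``one open orbit'' to ``transitive.'' An open orbit of a connected group on a connected manifold need \emph{not} be everything (think of $\GL_n(\bbC)$ acting on $\bbC^n$), and ``the complement is a union of lower-dimensional orbits'' is both unproved and insufficient. The fix is short but should be stated: since $\calO=N(F)^o\cdot o$ is open, the set $N(F)^oK=\{g\in G: g\cdot o\in\calO\}$ is open in $G$; it is also closed, being the product of a closed subgroup with a compact one; by connectedness $G=N(F)^oK$, hence $D=G\cdot o=N(F)^o\cdot o$. Alternatively, and more in the spirit of what AMRT do, you can bypass the tangent-space computation entirely: by Theorem \ref{T:clas-norm}, $N(F)$ is a parabolic subgroup of $G$, so it contains the $AN$ factor of an Iwasawa decomposition $G=KAN$ adapted to it; since $AN$ is connected, $G=K\cdot AN\subseteq K\cdot N(F)^o$, whence $G=N(F)^oK$ and transitivity follows in one line. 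Either repair makes your outline a complete proof.
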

\begin{proof}
See \cite[Chap. III, \S 4.3]{AMRT}.
\end{proof}

Recall that, fixing a base point $o\in D$,  $D$ is diffeomorphic to $G/K$ where $G=\Hol(D)^o$ and $K=\Stab(o)\subset G$ is a maximal compact subgroup (see Theorem \ref{T:LieHerm}).
 Using this and the above Proposition \ref{P:transi}, we have a diffeomorphism
 \begin{equation}\label{E:newdif1}
 D\cong N(F)^o/(K\cap N(F)^o).
 \end{equation}
From the $5$-term decomposition of $N(F)^o$ (see Theorem \ref{T:5deco}), it follows that
\begin{equation}\label{E:K-N(F)}
K\cap N(F)^o=K\cap Z(w_F)=K_l(F)\cdot K_h(F)\cdot M(F)\subset G_l(F)\cdot G_h(F)\cdot M(F)=Z(w_F),
\end{equation}
where $K_l(F)\subset G_l(F)$  and $K_h(F)\subset G_h(F)$ are maximal compact subgroups. Substituting  \eqref{E:K-N(F)} into \eqref{E:newdif1}, we get the diffeomorphism
\begin{equation}\label{E:newdif2}
D   \stackrel{\cong}{\longrightarrow} \frac{N(F)^o}{K\cap N(F)^o}= \frac{[G_h(F)\cdot G_l(F)\cdot M(F)]\ltimes W(F)}{K_h(F)\cdot K_l(F)\cdot M(F)}=\frac{G_h(F)}{K_h(F)}\times \frac{G_l(F)}{K_l(F)}\times W(F).
\end{equation}

In order to get a better description of the above diffeomorphism, we will now describe more geometrically the right hand side of \eqref{E:newdif2}.

\begin{thm}\label{T:F-cone}
Notations as above.
\begin{enumerate}[(i)]
\item \label{T:F-cone1} Under the natural action of $G_h(F)\subset G$ on $\ov D\subset \bbC^N$, the orbit of the point $o_F:=f_F(1)$ is equal to $F\subset \ov D$ and its stabilizer is equal to $K_h(F)$.
Therefore, we have a diffeomorphism
\begin{equation}\label{E:iso-F}
\frac{G_h(F)}{K_h(F)}\cong F.
\end{equation}
\item \label{T:F-cone2} Under the action of $G_l(F)\subset N(F)^o$ on $U(F)$ by conjugation, the orbit of the point $\Omega_F:=\phi_F\left(1,\begin{pmatrix} 1 & 1 \\ 0 & 1\end{pmatrix}Ê\right)\in U(F)$
is an open cone $C(F)\subset U(F)$ and its stabilizer is equal to $K_l(F)$. Therefore, we have a diffeomorphism
\begin{equation}\label{E:iso-C(F)}
\frac{G_l(F)}{K_l(F)}\cong C(F).
\end{equation}
\end{enumerate}
\end{thm}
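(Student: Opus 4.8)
The plan is to deduce Theorem~\ref{T:F-cone} from the structural results of Theorem~\ref{T:5deco} together with the defining properties of the pair $(f_F, \phi_F)$ in Lemma~\ref{L:pair}, following the treatment in \cite[Chap.~III, \S4]{AMRT}. The two parts are essentially parallel orbit computations, one for the action of the ``hermitian part'' $G_h(F)$ on the boundary component $F$, and one for the action of the ``linear part'' $G_l(F)$ on the center $U(F)$ of the unipotent radical $W(F)$.

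For part \eqref{T:F-cone1}, I would first recall from Theorem~\ref{T:5deco}\eqref{T:5deco4} that $G_h(F)$ is the semisimple factor of the Levi $Z(w_F)^o$ with $G_h(F)/Z_{G_h(F)} = \Aut(F)^o$, and that by Theorem~\ref{T:bound}\eqref{T:bound2} the boundary component $F$ is itself a Hermitian symmetric manifold of non-compact type, realized via its Harish-Chandra embedding inside its linear span $\langle F\rangle \subset \bbC^N$. The point $o_F := f_F(1)$ lies in $F$ by Lemma~\ref{L:pair}\eqref{E:pair1}. The key claim is that $G_h(F)$ acts on $\ov D$ preserving $F$ (which it does, being a subgroup of the normalizer $N(F)$ via $Z(w_F)^o \subseteq N(F)^o$ and Theorem~\ref{T:bound}\eqref{T:bound1}), and that this action is transitive on $F$ with stabilizer of $o_F$ equal to $K_h(F)$. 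Transitivity follows because $\Aut(F)^o$ acts transitively on $F$ (as $F$ is homogeneous by Theorem~\ref{T:bound}\eqref{T:bound2}), the map $G_h(F) \to \Aut(F)^o$ is surjective with central (hence compact, acting trivially on $F$) kernel, so the $G_h(F)$-orbit of $o_F$ is all of $F$; the stabilizer is then a maximal compact subgroup of $G_h(F)$ which one identifies with $K_h(F) = K \cap G_h(F)$ since $o_F$ is the image under $f_F$ of a point fixed by the relevant maximal compact of $\SL_2(\bbR)$ and the compatibility in Lemma~\ref{L:pair}\eqref{E:pair3} transports the stabilizer structure. This yields the diffeomorphism \eqref{E:iso-F}.

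For part \eqref{T:F-cone2}, I would use that $G_l(F)$ is the reductive factor of $Z(w_F)^o$ without compact factors (Theorem~\ref{T:5deco}\eqref{T:5deco4}) and that it normalizes $W(F)$, hence acts by conjugation on the abelian center $U(F) = Z(W(F))$ (Theorem~\ref{T:5deco}\eqref{T:5deco3}). The distinguished element $\Omega_F := \phi_F\bigl(1, \begin{psmallmatrix} 1 & 1 \\ 0 & 1\end{psmallmatrix}\bigr)$ lies in $U(F)$ — one checks this by observing that $\begin{psmallmatrix} 1 & 1 \\ 0 & 1\end{psmallmatrix}$ is a unipotent element conjugated to $1$ under $\operatorname{diag}(t,t^{-1})$ as $t\to 0$, so $\phi_F$ of it lies in $W(F)$, and that it is moreover central in $W(F)$. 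The plan is then to show the $G_l(F)$-orbit $C(F)$ of $\Omega_F$ is open in $U(F)$: this is a dimension/Lie-algebra count, showing the infinitesimal action of $\Lie(G_l(F))$ on $U(F)$ at $\Omega_F$ is surjective, equivalently that $\Omega_F$ is a nondegenerate (``positive'') element. The orbit is a cone because $w_F(\Gm) \subset Z(w_F)$ normalizes $U(F)$ acting by a positive power of $t$ (it scales $U(F)$), so $G_l(F)$ together with this scaling generates all positive dilations of points in the orbit; in fact $\R_{>0}\cdot\Omega_F$ already lies in the orbit. The stabilizer of $\Omega_F$ in $G_l(F)$ is then a maximal compact subgroup, identified with $K_l(F) = K \cap G_l(F)$; one verifies compactness and maximality via the Cartan-type structure of $G_l(F)$ and the fact that $\Omega_F$ being a positive element of the Jordan-algebra-like structure on $U(F)$ has compact stabilizer. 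This gives \eqref{E:iso-C(F)}.

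The main obstacle I expect is the verification that $C(F)$ is genuinely an \emph{open} cone in $U(F)$ and the precise identification of the stabilizers with the maximal compact subgroups $K_h(F)$, $K_l(F)$ appearing in \eqref{E:K-N(F)} — this requires unwinding the relation between the abstract $5$-term decomposition and the explicit $\SL_2$-homomorphism $\phi_F$, in particular checking that $\Omega_F$ is a ``positive'' (regular) element so that its $G_l(F)$-orbit has full dimension. All of this is carried out in detail in \cite[Chap.~III, \S4.2--4.3]{AMRT}, to which I would ultimately refer for the computations, giving here only the conceptual skeleton: $F$ and $C(F)$ arise as the two ``halves'' (hermitian and linear) of the homogeneous space $N(F)^o/(K\cap N(F)^o) \cong D$ under the decomposition \eqref{E:newdif2}.

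\begin{proof}
See \cite[Chap. III, \S 4.2, \S 4.3]{AMRT}.
\end{proof}
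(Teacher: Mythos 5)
Your proposal is correct and ultimately rests on the same foundation as the paper, whose entire proof consists of the citations to \cite[Chap.~III, Thm.~3.10, Thm.~4.1 and Lemma~4.6]{AMRT}; your conceptual skeleton (transitivity of $G_h(F)$ on $F$ via $G_h(F)/Z_{G_h(F)}=\Aut(F)^o$, membership of $\Omega_F$ in $U(F)$ via the limit criterion for $W(F)$, and the scaling by $w_F(\Gm)$ producing the cone structure) is consistent with the structure set up in Theorem~\ref{T:5deco} and Lemma~\ref{L:pair}. The points you flag as obstacles --- openness of the orbit and the identification of the stabilizers with $K_h(F)$ and $K_l(F)$ from \eqref{E:K-N(F)} --- are precisely what the cited results of \cite{AMRT} supply, so deferring to them there is exactly what the paper does.
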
Ê
\begin{proof}
Part \eqref{T:F-cone1} follows from \cite[Chap. III, Thm. 3.10 and Lemma 4.6]{AMRT}.
Part \eqref{T:F-cone2} Êfollows from \cite[Chap. III, Thm. 4.1]{AMRT}.
\end{proof}

\begin{remark}\label{R:idem}
Let $o\in D\subseteq \p_+$ be a bounded symmetric domain (with a fixed base point $o$) in its Harish-Chandra embedding (see Theorem \ref{T:embed}).
Consider the Jordan triple product $\{.\,,.\,,.\}$  of \eqref{E:Jortriple} with respect to which $(\p_+,\{.\,,.\,,.\})$ is a Hermitian positive JTS (see \S\ref{SS:HSM-JTS}).
Then there is a bijection (see \cite[Chap. III, \S8, Rmk. 2]{Sat})
\begin{equation}\label{E:bound-idem}
\begin{aligned}
\left\{\text{Boundary components of }ÊDÊ\right\} & \stackrel{\cong}{\longrightarrow}Ê \left\{\text{Tripotents of } (\p_+,\{.\,,.\,,.\}) \right\} \\
F &\longmapsto o_F
\end{aligned}
\end{equation}
where a tripotent (or idempotent)  of $(\p_+,\{.\,,.\,,.\})$ is an element $e\in \p_+$ such that $\{e,e,e\}=e$.
For a detailed study of the tripotents of a Jordan triple system, we refer the reader to \cite[Chap. V, Part V]{FKKLR}.
\end{remark}

Using the above Theorem \ref{T:F-cone}, the diffeomorphism \eqref{E:newdif2} can be written as
 \begin{equation}\label{E:newdif3}
\begin{aligned}
D  & \stackrel{\cong}{\longrightarrow} F\times C(F) \times W(F) \\
x & \mapsto  (\pi_F(x), \Phi_F(x), w(x)).
\end{aligned}
\end{equation}
The above smooth maps $w$, $\pi_F$ and $\Phi_F$ can be described explicitly as it follows. By \eqref{E:newdif1}Ê and \eqref{E:K-N(F)}, we can write $x\in D$ as
$$x=g_hg_lw\cdot o,$$
for some unique $w\in W(F)$ and for some $g_h\in G_h(F)$ (resp. $g_l\in G_l(F)$) which is unique up to multiplication by an element of $K_h(F)$ (resp. $K_l(F)$).
Then we have that (see \cite[Chap. III, \S 4.3]{AMRT}):
\begin{equation}\label{E:expl-maps}
\begin{sis}
& w(x):=w\in W(F), \\
& \pi_F(x):=g_h \cdot o_F\in F,\\
& \Phi_F(x):=g_l\cdot \Omega_F\in C(F).
\end{sis}
\end{equation}

The maps $\pi_F$ and $\Phi_F$ are closely related as we are now going to explain. Recall that $D$ embeds as an open subset in its compact dual $D^c$ via the Borel embedding
and that $D^c$ is a homogeneous projective variety with respect to the action of the complexification $G_{\bbC}$ of $G=\Hol(D)^o$ (see Theorem \ref{T:embed}).
We will denote by $U(F)_{\bbC}\subset G_{\bbC}$ the complexification of $U(F)\subset N(F)^o\subset G$.

\begin{defi}\label{E:D(F)}
Notations as above.  Denote by $D(F)$ the analytic open subset of $D^c$:
\begin{equation*}
D\subset D(F):=U(F)_{\bbC}\cdot D=\bigcup_{g\in U(F)_{\bbC}} g \cdot D\subset D^c.
\end{equation*}
\end{defi}

The open subset $D(F)$  admits the following Lie-theoretic description.

\begin{lemma}\label{L:Lie-D(F)}
We have a diffeomorphism
$$D(F)\cong \frac{N(F)^o\cdot U(F)_{\bbC}}{K_h(F)\cdot G_l(F)\cdot M(F)},
$$
where $N(F)^o\cdot U(F)_{\bbC}$ is the subgroup of $G_{\bbC}$ generated by $N(F)^o$ and $U(F)_{\bbC}$.
\end{lemma}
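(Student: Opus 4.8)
The plan is to bootstrap the claimed diffeomorphism from the decomposition $D\cong F\times C(F)\times W(F)$ of \eqref{E:newdif3} by letting the complexified center $U(F)_{\bbC}$ act on the $C(F)$-factor. Recall from Theorem \ref{T:F-cone}\eqref{T:F-cone2} that $C(F)\subset U(F)$ is the $G_l(F)$-orbit of $\Omega_F$ under conjugation, and from Theorem \ref{T:5deco}\eqref{T:5deco3} that $U(F)$ is the abelian center of $W(F)$; hence $U(F)$ is a real vector space and $U(F)_{\bbC}=U(F)\oplus iU(F)$ its complexification, on which $G_l(F)$ still acts $\bbR$-linearly. The first step is to show that the $U(F)_{\bbC}$-translates of $D$ inside $D^c$ are obtained, in the coordinates of \eqref{E:newdif3}, by replacing the open symmetric cone $C(F)\subset U(F)$ by the tube domain $C(F)+iU(F)\subset U(F)_{\bbC}$ (this is exactly why one passes to the compact dual: the imaginary translations only make sense in $D^c$, see \cite[Chap. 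III, \S 4]{AMRT}). Concretely, I would use \eqref{E:expl-maps}: writing $x\in D$ as $x=g_h g_l w\cdot o$, the $U(F)$-part of $w\in W(F)$ together with the $iU(F)$ coming from $U(F)_{\bbC}$ assemble, via $\Phi_F$, into a point of $C(F)+iU(F)$, while the $\pi_F$-part and the $V(F)=W(F)/U(F)$-part are untouched because $U(F)$ is central in $W(F)$ and $G_l(F)$ centralizes $G_h(F)$ and $M(F)$ modulo finite groups (Theorem \ref{T:5deco}\eqref{T:5deco4}).

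Next I would identify the right-hand quotient group-theoretically. By Definition \ref{E:D(F)}, $D(F)=U(F)_{\bbC}\cdot D$, and since $N(F)^o$ already acts transitively on $D$ (Proposition \ref{P:transi}), the group $N(F)^o\cdot U(F)_{\bbC}$ acts transitively on $D(F)$; thus $D(F)\cong (N(F)^o\cdot U(F)_{\bbC})/\Stab(o)$ for the base point $o\in D\subset D(F)$. It remains to compute this stabilizer. In $D\cong N(F)^o/(K\cap N(F)^o)$ the stabilizer of $o$ is $K\cap N(F)^o=K_h(F)\cdot K_l(F)\cdot M(F)$ by \eqref{E:K-N(F)}. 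Enlarging $N(F)^o$ to $N(F)^o\cdot U(F)_{\bbC}$ enlarges the stabilizer by exactly the extra directions that now fix $o$: the imaginary translations $iU(F)\subset U(F)_{\bbC}$ together with the full real Levi factor $G_l(F)$ (which acts on the added tube direction and, combined with $iU(F)$, accounts for the transitive action of $G_l(F)\ltimes iU(F)$ on the tube $C(F)+iU(F)$ with stabilizer $K_l(F)$ of the point $\Omega_F$). Hence $\Stab_{N(F)^o\cdot U(F)_{\bbC}}(o)=K_h(F)\cdot G_l(F)\cdot M(F)$, giving the claimed diffeomorphism. I would assemble this by comparing the two homogeneous-space presentations and checking that the natural map is a bijective local diffeomorphism, i.e. an open immersion that is onto.

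The main obstacle I anticipate is the second step: verifying carefully that the stabilizer of $o$ in $N(F)^o\cdot U(F)_{\bbC}$ is precisely $K_h(F)\cdot G_l(F)\cdot M(F)$ and no larger. This requires knowing that $U(F)_{\bbC}\cap N(F)^o=U(F)$ (so that the only genuinely new elements are $iU(F)$ plus whatever of $G_l(F)$ was not already in the stabilizer), and that the action of $G_l(F)\ltimes iU(F)$ on the tube domain $C(F)+iU(F)$ is transitive with point-stabilizer $K_l(F)$ — this last fact is the ``tube domain'' analogue of Theorem \ref{T:F-cone}\eqref{T:F-cone2} and uses that $C(F)$ is a self-dual homogeneous cone. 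All of this is contained in \cite[Chap. III, \S 4.2--4.3]{AMRT}, which is where I would ultimately point the reader; in these notes I would state the lemma and give the above sketch, citing \cite[Chap. III, Thm. 4.1 and \S 4.3]{AMRT} for the details.
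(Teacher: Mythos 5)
You have the right skeleton, and the first half of your argument is exactly the paper's: $N(F)^o\cdot U(F)_{\bbC}$ acts transitively on $D(F)$ because $N(F)^o$ already acts transitively on $D$ (Proposition \ref{P:transi}) and $D(F)=U(F)_{\bbC}\cdot D$ by definition. The gap is in the stabilizer computation. You claim that the stabilizer of the base point $o\in D$ in $N(F)^o\cdot U(F)_{\bbC}$ is $K_h(F)\cdot G_l(F)\cdot M(F)$, on the grounds that the enlargement contributes ``the imaginary translations $iU(F)$ together with the full real Levi factor $G_l(F)$'' as new directions fixing $o$. Neither of these fixes $o$: by \eqref{E:newdif1} and \eqref{E:K-N(F)} the stabilizer of $o$ in $N(F)^o$ is $K_h(F)\cdot K_l(F)\cdot M(F)$, so $G_l(F)\not\subseteq\Stab(o)$, and the translations by $iU(F)$ move $o$ along the imaginary fibre of the tube $C(F)+iU(F)$. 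Already for $D=\calH$, $F=\{\infty\}$, $o=i$: here $N(F)^o\cdot U(F)_{\bbC}$ is $\{z\mapsto \lambda^2 z+b:\lambda\in\bbR^{*},\,b\in\bbC\}$, and the stabilizer of $i$ is the twisted graph $\{b=i(1-\lambda^2)\}$, not the subgroup $G_l(F)=\{b=0\}$. So the equality you assert is false as stated, even though the two subgroups are conjugate and have the same dimension.

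The repair is to change the base point, which is what the paper does: the point whose stabilizer in $N(F)^o\cdot U(F)_{\bbC}$ is \emph{literally} $K_h(F)\cdot G_l(F)\cdot M(F)$ is $o_F^o:=s_o(o_F)=f_F(-1)\in D(F)$ (the image of the boundary point $o_F$ under the symmetry at $o$); this is \cite[Chap.\ III, Lemma 4.6]{AMRT}. In the half-plane example this is the point $0=s_i(\infty)$, which is indeed fixed by the full dilation group $G_l(F)$. Alternatively you could keep $o$ and argue that its stabilizer is conjugate to $K_h(F)\cdot G_l(F)\cdot M(F)$ inside $N(F)^o\cdot U(F)_{\bbC}$ (conjugating by an element carrying $o$ to $o_F^o$), so the two coset spaces are diffeomorphic; but then that conjugation, not a direct identification of $\Stab(o)$, is the content of the step, and it should be said explicitly.
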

\begin{proof}
The group $N(F)^o\cdot U(F)_{\bbC}$ acts transitively on $D(F)$ by Proposition \ref{P:transi} and Definition \ref{E:D(F)}. The stabilizer of the point
\begin{equation*}
o_F^o:=s_o(o_F)=f_F(-1)\in D(F)
\end{equation*}
is equal to $K_h(F)\cdot G_l(F)\cdot M(F)$ by \cite[Chap. III, Lemma 4.6]{AMRT}. Hence, we get the desired diffeomorphism.
\end{proof}

Using the diffeomorphism in Lemma \ref{L:Lie-D(F)}, we can define two smooth and surjective maps
 \begin{equation}\label{E:ext-maps}
 \begin{sis}
  \wt{\pi}_F:D(F)\cong \frac{N(F)^o\cdot U(F)_{\bbC}}{K_h(F)\cdot G_l(F)\cdot M(F)} & \longrightarrow \frac{G_h(F)}{K_h(F)}\cong F, \\
  \wt{\Phi}_F: D(F) \cong \frac{N(F)^o\cdot U(F)_{\bbC}}{K_h(F)\cdot G_l(F)\cdot M(F)} & \longrightarrow \frac{N(F)^o\cdot U(F)_{\bbC}}{N(F)^o} \cong U(F),
 \end{sis}
 \end{equation}
where the last diffeomorphism is obtained by projecting onto $i\cdot U(F)\subset U(F)_{\bbC}$.

\begin{thm}\label{T:type-dom}
Notations as above.
\begin{enumerate}[(i)]
\item \label{T:type-dom1} The smooth maps   $\wt{\pi}_F $ and  $\wt{\Phi}_F$ fit into the following commutative diagram
\begin{equation*}
\xymatrix{
C(F) \ar@{^{(}->}[r] \ar@{}[dr]|{\square} & U(F) \\
D  \ar@{^{(}->}[r] \ar[u]_{\Phi_F}Ê\ar[rd]_{\pi_F}& D(F)\ar[u]^{\wt{\Phi}_F} \ar[d]^{\wt{\pi}_F}\\
& F\\
}
\end{equation*}
where the upper square is Cartesian.
\item \label{T:type-dom2} The smooth map $\wt{\pi}_F$ factors as
\begin{equation*}
D(F) \stackrel{\pi'_F}{\longrightarrow} D(F)':=D(F)/U(F)_{\bbC} \stackrel{p_F}{\longrightarrow} F=D(F)'/V(F),
\end{equation*}
in such a way that $\pi'_F$ is a trivial holomorphic $U(F)_{\bbC}$-torsor and $p_F$ is a smooth $V(F)$-torsor and, at the same time, a trivial complex vector bundle. In particular, we have
a diffeomorphism
\begin{equation}\label{E:diff-D(F)}
D(F)\cong U(F)_{\bbC}\times \bbC^k\times F,
\end{equation}
for some $k\in \bbN$.
\item \label{T:type-dom3}
Under the diffeomorphism \eqref{E:diff-D(F)}, the smooth map $\wt{\Phi}_F$ can be written as
\begin{equation*}
\begin{aligned}
\wt{\Phi}_F: D(F)\cong  U(F)_{\bbC}\times \bbC^k\times F & \longrightarrow U(F) \\
(x,y,z) & \mapsto \Im x -h_z(y,y),
\end{aligned}
\end{equation*}
for some bilinear symmetric form $h_z:\bbC^k\times \bbC^k\to U(F)$ varying smoothly with $z\in F$.
\end{enumerate}
\end{thm}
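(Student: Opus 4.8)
The strategy is to translate the group-theoretic picture of the $5$-term decomposition (Theorem \ref{T:5deco}) together with Theorem \ref{T:F-cone} and the description of $D(F)$ in Lemma \ref{L:Lie-D(F)} into concrete statements about the fibrations $\widetilde\pi_F$ and $\widetilde\Phi_F$. For part \eqref{T:type-dom1}, one already has the diffeomorphism $D\cong F\times C(F)\times W(F)$ of \eqref{E:newdif3} with the maps $\pi_F,\Phi_F$ made explicit in \eqref{E:expl-maps}, and the maps $\widetilde\pi_F,\widetilde\Phi_F$ built on $D(F)$ in \eqref{E:ext-maps} by the same homogeneous-space recipe but with the larger transformation group $N(F)^o\cdot U(F)_{\bbC}$. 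The commutativity of the square and triangle is then a bookkeeping matter: writing $x\in D$ as $x=g_h g_l w\cdot o$ exactly as before, one checks that the images under $\widetilde\pi_F$ and $\widetilde\Phi_F$ agree with those of $\pi_F$ and $\Phi_F$, using that $D\subseteq D(F)$ is an $N(F)^o$-equivariant inclusion and that on $D$ the projection to $i\cdot U(F)\subset U(F)_{\bbC}$ recovers the $C(F)$-coordinate. The Cartesian-ness of the upper square amounts to the assertion that a point of $D(F)$ lies in $D$ if and only if its $\widetilde\Phi_F$-image lies in the cone $C(F)\subset U(F)$; this is essentially the Siegel-domain realization of $D$ inside $D(F)$, and can be cited from \cite[Chap. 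III, \S4.2--4.3]{AMRT}.

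For part \eqref{T:type-dom2}, the plan is to factor the action. By Theorem \ref{T:5deco}\eqref{T:5deco3} the unipotent radical $W(F)$ is the $2$-step nilpotent extension $0\to U(F)\to W(F)\to V(F)\to 0$, and $U(F)_{\bbC}\subset G_{\bbC}$ is abelian (as the complexification of the abelian $U(F)$), acting freely on $D(F)$ by Definition \ref{E:D(F)}. Quotienting by $U(F)_{\bbC}$ gives a principal $U(F)_{\bbC}$-bundle $\pi'_F\colon D(F)\to D(F)':=D(F)/U(F)_{\bbC}$; triviality follows because $U(F)_{\bbC}$ acts holomorphically and freely with a global section coming from the diffeomorphism of Lemma \ref{L:Lie-D(F)} (the factor $U(F)_{\bbC}$ there is split off directly). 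The residual group $V(F)$ then acts on $D(F)'$ fibering it over $F=G_h(F)/K_h(F)$; since $V(F)$ is a real vector group (abelian, unipotent, hence $\cong\bbR^{2k}$ with a compatible complex structure giving $\bbC^k$) and $F$ is contractible, $p_F$ is simultaneously a $V(F)$-torsor and a trivial $\bbC^k$-bundle. Assembling the two trivializations yields the diffeomorphism \eqref{E:diff-D(F)}. Here I would lean on the explicit Levi decomposition and on \cite[Chap. III, Thm. 4.1]{AMRT}.

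For part \eqref{T:type-dom3}, having fixed the trivialization $D(F)\cong U(F)_{\bbC}\times\bbC^k\times F$, one computes $\widetilde\Phi_F$ in these coordinates. The map $\widetilde\Phi_F$ is, by construction in \eqref{E:ext-maps}, the projection to $i\cdot U(F)$ followed by the identification $i\cdot U(F)\cong U(F)$; on the $U(F)_{\bbC}$-factor this is literally $x\mapsto \Im x$, and the correction term measuring the ``twist'' of $W(F)$ over $V(F)$ — i.e. the failure of the extension $0\to U(F)\to W(F)\to V(F)\to 0$ to split — produces a quadratic term $-h_z(y,y)$ in the $\bbC^k=V(F)_{\bbC}$-coordinate, with $h_z$ the Hermitian/bilinear form encoding the commutator pairing $[\,\cdot\,,\,\cdot\,]\colon V(F)\times V(F)\to U(F)$ transported by the $G_h(F)$-action parametrized by $z\in F$. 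The main obstacle I anticipate is precisely this last identification: pinning down that the quadratic cocycle is exactly the Siegel-domain form $h_z$ and that it varies smoothly (indeed real-analytically) in $z$ requires unwinding the Harish-Chandra coordinates on $D(F)$ and matching them with the group commutator in $W(F)$; this is the technical heart, and it is carried out in \cite[Chap. III, Thm. 4.1 and \S4.2]{AMRT}, to which I would ultimately refer for the detailed verification.
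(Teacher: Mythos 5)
Your plan is correct and follows essentially the same route as the paper, which for this theorem simply defers to \cite[Chap. III, \S 4.3]{AMRT}: you unwind the $5$-term decomposition and the homogeneous-space descriptions of $D$ and $D(F)$, identify the Cartesian square with the Siegel-domain-of-the-third-kind realization, and cite AMRT for the quadratic cocycle computation in part (iii). Nothing in your outline conflicts with the source argument.
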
Ê
\begin{proof}
See \cite[Chap. III, \S 4.3]{AMRT}.
\end{proof}

From the above Theorem, we deduce the following presentation of $D$ as a \emph{Siegel domain of the third kind}.

\begin{cor}\label{C:SiegIII}
Notations as above. We have a diffeomorphism
\begin{equation*}
D\cong \{(x,y,z)\in U(F)_{\bbC}\times \bbC^k\times F: \Im x-h_z(y,y)\in C(F)\}.Ê
\end{equation*}
\end{cor}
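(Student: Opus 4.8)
The plan is to deduce Corollary \ref{C:SiegIII} essentially by unwinding the diffeomorphisms produced in Theorem \ref{T:type-dom} and tracking how the boundary data fit together. First I would recall the diffeomorphism \eqref{E:newdif3}, namely $D\cong F\times C(F)\times W(F)$ given by $x\mapsto (\pi_F(x),\Phi_F(x),w(x))$, together with the inclusion $D\hookrightarrow D(F)$ of Definition \ref{E:D(F)} and the diffeomorphism \eqref{E:diff-D(F)}, $D(F)\cong U(F)_{\bbC}\times \bbC^k\times F$, coming from Theorem \ref{T:type-dom}\eqref{T:type-dom2}. The key point is that, under the diffeomorphism \eqref{E:diff-D(F)}, the open subset $D\subset D(F)$ is exactly the preimage under $\wt{\Phi}_F$ of the open cone $C(F)\subset U(F)$: this is precisely the content of the Cartesian square in Theorem \ref{T:type-dom}\eqref{T:type-dom1}, which says that $D=\wt{\Phi}_F^{-1}(C(F))$ inside $D(F)$, with $\Phi_F=\wt{\Phi}_F|_D$.

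Granting this, I would combine it with the explicit formula for $\wt{\Phi}_F$ from Theorem \ref{T:type-dom}\eqref{T:type-dom3}: writing a point of $D(F)\cong U(F)_{\bbC}\times\bbC^k\times F$ as a triple $(x,y,z)$, one has $\wt{\Phi}_F(x,y,z)=\Im x-h_z(y,y)$, where $h_z:\bbC^k\times\bbC^k\to U(F)$ is the smoothly varying family of bilinear symmetric forms furnished by that theorem. Therefore
\begin{equation*}
D=\wt{\Phi}_F^{-1}(C(F))=\{(x,y,z)\in U(F)_{\bbC}\times\bbC^k\times F:\ \Im x-h_z(y,y)\in C(F)\},
\end{equation*}
which is exactly the asserted description of $D$ as a Siegel domain of the third kind. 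So the whole proof is a short composition of already-established facts: \eqref{E:diff-D(F)} identifies $D(F)$ with $U(F)_{\bbC}\times\bbC^k\times F$, part \eqref{T:type-dom1} identifies $D$ with the fiber product $D(F)\times_{U(F)}C(F)$, and part \eqref{T:type-dom3} makes the map $\wt{\Phi}_F$ explicit.

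The main obstacle, such as it is, is purely bookkeeping: one must make sure that the diffeomorphism \eqref{E:diff-D(F)} used in part \eqref{T:type-dom3} is the \emph{same} one under which the Cartesian square of part \eqref{T:type-dom1} is stated, so that the two descriptions can be overlaid without an intervening change of coordinates. Since both are produced simultaneously in \cite[Chap. III, \S4.3]{AMRT} (and in the statement of Theorem \ref{T:type-dom} itself), this compatibility is automatic, and I would simply remark on it rather than prove anything new. I would also note, for the reader's orientation, that when $F=D$ the cone $C(F)$ is a point and $U(F)=\bbC^k=0$, recovering the trivial statement, whereas when $F$ is a point $D(F)$ is the full Siegel domain of the \emph{second} kind over the symmetric cone $C(F)$, linking this corollary to the discussion of Siegel domains in \S\ref{SS:Siegel}. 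No genuine calculation is required; the content is entirely in Theorem \ref{T:type-dom}, which we are entitled to assume.
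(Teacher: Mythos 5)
Your proposal is correct and is essentially the paper's own argument: the corollary is stated there as an immediate consequence of Theorem \ref{T:type-dom}, obtained exactly as you do by combining the Cartesian square of part \eqref{T:type-dom1} (so that $D=\wt{\Phi}_F^{-1}(C(F))$ inside $D(F)$) with the coordinates \eqref{E:diff-D(F)} and the explicit formula for $\wt{\Phi}_F$ in part \eqref{T:type-dom3}. No further content is needed.
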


\subsection{Symmetric cones and Euclidean Jordan algebras}\label{SS:symcon}

The cone $C(F)$ associated to a boundary component $F\leq D$ (see Theorem \ref{T:F-cone}\eqref{T:F-cone2}) belongs to a special class of cones, namely the symmetric cones, that we now introduce.

\begin{defi}\label{D:symcon}
Let $V$ be a real (finite-dimensional) vector space endowed with a scalar product $\langle , \rangle$ (i.e. an Euclidean space).
An open (pointed and convex) cone $C\subset V$ is said to be:
\begin{enumerate}[(i)]
\item \emph{homogeneous}Ê if the group of automorphisms of $C$:
$$\G(C):=\{g\in \GL(V): g \cdot C=C\}\subset \GL(V)$$
acts transitively on $C$.
\item \emph{symmetric}Ê if it is homogeneous and and self-dual, i.e. $C$ is equal to the its dual cone
$$C^*=\{x\in V: \langle x, y\rangle>0 \text{ for any } y\in \ov C\}.$$
\end{enumerate}
\end{defi}

Some basic properties of homogeneous and symmetric cones are contained in the following

\begin{thm}\label{T:str-cones}
Let $C\subset (V,\langle , \rangle)$ be a homogeneous cone and fix a base point $o\in C$.
\begin{enumerate}[(i)]
\item \label{T:str-cones1}
The stabilizer subgroup of $o$
$$\G(C)_o:=\{g\in G(C): g(o)=o\}\subset G(C)$$
is a maximal compact subgroup of $\G(C)$ and, conversely, every maximal compact subgroup of $\G(\Omega)$ is the stabilizer subgroup of some point of $\Omega$
\item \label{T:str-cones1b}
 We have a diffeomorphism
$$\frac{\G(C)}{\G(C)_o}\cong  C.$$
\item \label{T:str-cones2}
$C$ is symmetric if and only if $\G(C)$ is equal to its dual group
$$\G(C)^*=\{g^*: g\in \G(C)\},$$
where $g^*$ denote the adjoint of the element $g\in \GL(V)$ with respect to the scalar product $\langle , \rangle$.
In particular, in this case, $\G(C)$ is a reductive Lie group.
\end{enumerate}
\end{thm}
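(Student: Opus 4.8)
The plan is to treat the three parts in the order (i), (ii), (iii), since each relies on the previous one, and since the whole statement is classical material on homogeneous cones (Vinberg, Koecher) that I would ultimately reference, giving here only the structural argument.

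First I would prove (i) and (ii) together. Fix a base point $o\in C$ and introduce on $\G(C)$ a left-invariant Riemannian metric. The key geometric input is the \emph{characteristic function} of the cone,
$$\varphi_C(x)=\int_{C^*}e^{-\langle x,y\rangle}\,dy,$$
which is well-defined, smooth and positive on $C$, tends to $+\infty$ at the boundary $\partial C$, and satisfies $\varphi_C(g\cdot x)=|\det g|^{-1}\varphi_C(x)$ for $g\in\G(C)$. Its logarithm $\psi_C=\log\varphi_C$ is strictly convex with positive-definite Hessian everywhere on $C$, and the bilinear form $\langle X,Y\rangle_x:=\mathrm{Hess}_x\psi_C(X,Y)$ defines a $\G(C)$-invariant Riemannian metric on $C$ (invariance is immediate from the transformation law of $\varphi_C$, up to the additive constant $\log|\det g|$ which the Hessian kills). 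Since $\G(C)$ acts transitively (homogeneity) and the isotropy $\G(C)_o$ is closed in $\GL(V)$, the orbit map $g\mapsto g\cdot o$ descends to a diffeomorphism $\G(C)/\G(C)_o\xrightarrow{\cong}C$, which is part (ii). For part (i): $\G(C)_o$ is compact because it is a closed subgroup of $\GL(V)$ that preserves the positive-definite form $\langle\,,\rangle_o$ on $V$, hence sits inside the orthogonal group $O(V,\langle\,,\rangle_o)$, which is compact. Conversely, given any compact subgroup $L\subseteq\G(C)$, averaging any point of $C$ over $L$ with respect to Haar measure produces an $L$-fixed point $o'$ in $C$ (using convexity of $C$ and that $C$ is an open convex cone, the barycenter stays in $C$); so $L\subseteq\G(C)_{o'}$, and by maximality of compact subgroups up to conjugacy all maximal compact subgroups arise this way. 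I should be a little careful here that averaging genuinely lands in $C$ and not merely in $\ov C$: since $L$ is compact its orbit is a compact subset of the open set $C$, so a small convex neighborhood of that orbit lies in $C$, and the barycenter lies in that neighborhood.

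Next, for (iii), suppose first that $C$ is symmetric, i.e. $C=C^*$ with respect to $\langle\,,\rangle$. I want to show $\G(C)=\G(C)^*$, where $g^*$ is the $\langle\,,\rangle$-adjoint. The crucial observation is that $g\in\GL(V)$ preserves $C$ if and only if $g^*$ preserves the dual cone $C^*$: this is a formal consequence of the definition of the dual cone, since $\langle g^*x,y\rangle=\langle x,gy\rangle$, so $\langle g^*x,y\rangle>0$ for all $y\in\ov C$ iff $\langle x, gy\rangle>0$ for all $y\in\ov C$ iff (as $g\ov C=\ov C$) $x\in C^*$. Hence $g\in\G(C)\iff g^*\in\G(C^*)=\G(C)$, giving $\G(C)^*=\G(C)$. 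Then $\G(C)$ is a closed subgroup of $\GL(V)$ stable under the Cartan involution $g\mapsto(g^*)^{-1}$ of $\GL(V)$ (closedness: $\G(C)$ is cut out by the closed condition $g\ov C=\ov C$), and a closed $\theta$-stable subgroup of a reductive group is reductive; so $\G(C)$ is reductive. For the converse direction of (iii) — that $\G(C)=\G(C)^*$ forces $C$ symmetric — one argues that the stabilizer $\G(C)_o$ is then a symmetric subgroup, the metric $\langle\,,\rangle_o$ at $o$ can be taken (after moving $o$) to coincide with the ambient $\langle\,,\rangle$, and self-duality of $C$ follows from the fact that the map $x\mapsto-\nabla\psi_C(x)$ is an involutive diffeomorphism $C\to C^*$ which, under the hypothesis, is linear-invariant and fixes $o$, forcing $C^*=C$; but I would state this direction as part of the classical Vinberg--Koecher theory and cite \cite[Chap. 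VI]{FK}, rather than reprove it.

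The main obstacle I anticipate is the converse in (iii): showing that stability of $\G(C)$ under the adjoint operation is sufficient for symmetry requires the nontrivial fact that a homogeneous self-adjoint transitive group action forces self-duality, which is really the heart of the Vinberg classification and is not a short formal manipulation. For a survey like this the honest move is to give the forward direction in full (it is genuinely elementary, as sketched) and to reference \cite[Chap. VI, Thm. 1.3 and Prop. 1.6]{FK} or \cite{Loo1} for the equivalence as a whole. The parts (i) and (ii) are safe: they follow cleanly from the existence of the characteristic function and a compactness/averaging argument, modulo checking the positive-definiteness of the Hessian of $\log\varphi_C$, which again is standard and which I would cite rather than verify.
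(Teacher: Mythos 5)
The paper's own ``proof'' of this theorem consists entirely of citations (to Satake, Chap.~I, \S8 and Faraut--Kor\'anyi, Chap.~I, \S4), and your sketch reconstructs exactly the standard argument of those references: the characteristic function $\varphi_C$, the invariant metric $\mathrm{Hess}\log\varphi_C$, the averaging argument for compact subgroups, and the formal adjoint computation for the forward direction of (iii). Deferring the converse of (iii) to the Koecher--Vinberg theory is also consistent with what the survey itself does. So in substance your route is the intended one, and the forward direction of (iii) and part (ii) are complete as written.

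There is one genuine (though small) gap in part (i): you prove that $\G(C)_o$ is \emph{compact} (it sits inside $\O(V,\langle\,,\rangle_o)$ and is closed) and that every compact subgroup is contained in some point stabilizer, which together do give the ``conversely'' clause; but you never show that $\G(C)_o$ is \emph{maximal} among compact subgroups, which is the first assertion of (i). The missing step is: if $L\supseteq \G(C)_o$ is compact, your averaging argument gives a fixed point $o'$ of $L$, hence $\G(C)_o\subseteq L\subseteq \G(C)_{o'}$; by transitivity $\G(C)_{o'}=h\,\G(C)_o\,h^{-1}$ for any $h$ with $h\cdot o=o'$, and a compact Lie subgroup cannot be properly contained in a conjugate of itself (same dimension, same finite number of connected components), so $\G(C)_o=\G(C)_{o'}=L$. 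With that one line added, parts (i) and (ii) are complete modulo the cited positive-definiteness of $\mathrm{Hess}\log\varphi_C$, which is a reasonable thing to quote.
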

\begin{proof}
For part \eqref{T:str-cones1}, see \cite[Chap. I, Prop. 8.4]{Sat}. For part \eqref{T:str-cones1b}, see \cite[Chap. I, \S4]{FK}.
For part \eqref{T:str-cones2}, see \cite[Chap. I, Lemma 8.3]{Sat}.Ê
\end{proof}

\begin{remark}
It can be shown that symmetric cones are Riemannian symmetric manifolds in the sense of Remark  \ref{R:symHerm}\eqref{R:symHerm4};
see \cite[Chap. I, \S4]{FK} for a proof.
\end{remark}

It turns out (see \cite[Prop. III.4.5]{FK}) that any symmetric cone decomposes uniquely as the product of irreducible symmetric cones, defined as it follows.

\begin{defi}\label{D:irrcone}
A symmetric cone $\Omega\subset $ is said to be \emph{irreducible} if and only if there does not exist a non-trivial decomposition $V=V_1\oplus V_2$ and two symmetric cones
$\Omega_1\subset V_1$ and $\Omega_2\subset V_2$ such that $\Omega=\Omega_1+\Omega_2$ (in this case, we say that $\Omega$ is the product of $\Omega_1$ and $\Omega_2$).
\end{defi}

Symmetric cones can be classified via Euclidean Jordan algebras, which we now introduce.

\begin{defi}\label{D:Jalg}
A \textbf{Jordan algebra} over a field $F$ is a (finite-dimensional) algebra $(A, \circ)$ over $F$ such that
\begin{enumerate}[(i)]
\item[(J1)] $x\circ y=y\circ x$ for any $x,y\in A$;
\item[(J2)] $T_x$ and $T_{x\circ x}$ commutes, where for any $x\in A$ we denote by $T_x$ the endomorphism of $A$ given by
$$
\begin{aligned}
T_x: A & \longrightarrow A, \\
y & \mapsto T_x(y):=x\circ y.
\end{aligned}
$$
\end{enumerate}
A Jordan $F$-algebra $(A,\circ)$ is said to be
\begin{enumerate}[(i)]
\item \emph{semisimple} Êif
the trace form
\begin{equation}\label{E:trace}
\begin{aligned}
\tau: A\times A & \longrightarrow F, \\
(x,y) & \mapsto \tau(x,y):=\tr(T_{x\circ y}).
\end{aligned}
\end{equation}
is non-degenerate.
\item \emph{simple}Ê  if $\tau$ is not identically zero and $A$ does not contain proper ideals, i.e. proper subvector spaces $I\subset A$ such that for any $x\in I$ and $y\in A$ we have that
$x\circ y\in I$.

\item  \emph{Euclidean} Êif $F=\bbR$ and
the trace form $\tau$ is positive definite.
\end{enumerate}
\end{defi}

The following properties of Jordan algebras follow quite easily from the axioms (J1) and (J2).

\begin{lemma}\label{L:Jprop}
Let $(A,\circ)$ be a Jordan $F$-algebra. Then
\begin{enumerate}[(i)]
\item \label{L:Jprop1} $(A,\circ)$ is power-associative, i.e. if we define inductively $x^p:=x\circ x^{p-1}$ (for any $p\in \bbZ_{>0}$) then we have that $x^p\circ x^q=x^{p+q}$ for any $p,q\in \bbZ_{>0}$.
\item \label{L:Jprop2} For any $x\in A$ and any $p,q\in \bbZ_{>0}$ the endomorphisms $T_{x^p}$ and $T_{x^q}$ commute.
\item \label{L:Jprop3} The trace form $\tau$ is associative, i.e.
$$\tau(x\circ y, z)=\tau(x, y\circ z),$$
for any $x,y,z\in A$. In particular, if $(A,\circ)$ is semisimple then, for any $y\in A$, the endomorphism $T_y$ is self-adjoint with respect to $\tau$.
\item \label{L:Jprop4} If $(A,\circ)$ is semisimple then $(A,\circ)$ has a unique unit element $e\in A$, i.e. an element $e\in A$ such that
$e\circ x=x$ for any $x\in A$.
\end{enumerate}Ê
\end{lemma}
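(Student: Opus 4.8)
My plan is to run all four parts off the \emph{linearization} of the Jordan identity (J2). Written in operator form, (J2) says $[T_x,T_{x\circ x}]=0$; substituting $x\mapsto x+ty$ and extracting the coefficient of $t$ gives the operator identity $2[T_x,T_{x\circ y}]+[T_y,T_{x\circ x}]=0$, and polarizing this once more yields the fully symmetric form
\[
[T_x,T_{y\circ z}]+[T_y,T_{z\circ x}]+[T_z,T_{x\circ y}]=0\qquad(\star)
\]
(for these linearizations I assume, as usual for Jordan algebras, $\operatorname{char}F\neq 2,3$; over $\bbR$, the case relevant to the rest of the paper, there is nothing to assume). Beyond $(\star)$ I will use only two soft facts: every commutator of endomorphisms of the finite-dimensional space $A$ has vanishing trace, and each $[T_a,T_b]\in\End(A)$ is a \emph{derivation} of $(A,\circ)$ — the latter being itself a standard repackaging of $(\star)$ (see \cite{FK}).

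For (i) and (ii) I would start by evaluating $[T_x,T_{x\circ x}]=0$ on the element $x$: since $x\circ x^2=x^2\circ x=x^3$ by (J1) and the definition of the powers, this gives $x^4=x^2\circ x^2$ at once. From here I would prove by simultaneous induction on $p+q$ that $x^p\circ x^q=x^{p+q}$ and $[T_{x^p},T_{x^q}]=0$, the inductive step being a routine manipulation of $(\star)$ and of $2[T_x,T_{x\circ y}]+[T_y,T_{x\circ x}]=0$ with $y,z$ specialized to powers of $x$. This is exactly Albert's theorem that a commutative algebra satisfying the Jordan identity is power-associative, so I would also be content to quote it; (i) is then the assertion $x^p\circ x^q=x^{p+q}$, i.e. that $F[x]$ is an associative subalgebra, and (ii) is part of the same induction. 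This bookkeeping is the one place a genuine (though entirely routine) computation is required.

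For (iii) I would set $D:=[T_x,T_z]$, which is a derivation by the fact quoted above, so the Leibniz rule gives $T_{D(a)}=[D,T_a]$ for every $a\in A$. Now $D(y)=x\circ(z\circ y)-z\circ(x\circ y)=x\circ(y\circ z)-(x\circ y)\circ z$, hence $(x\circ y)\circ z=x\circ(y\circ z)-D(y)$ and therefore
\[
T_{(x\circ y)\circ z}=T_{x\circ(y\circ z)}-T_{D(y)}=T_{x\circ(y\circ z)}-\bigl[[T_x,T_z],T_y\bigr].
\]
Taking traces and using that commutators have trace $0$, I get $\tr T_{(x\circ y)\circ z}=\tr T_{x\circ(y\circ z)}$, which is precisely $\tau(x\circ y,z)=\tau(x,y\circ z)$, the associativity of $\tau$. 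Combining this with the symmetry $\tau(u,v)=\tau(v,u)$ (clear from (J1)) gives $\tau(T_yx,z)=\tau(x\circ y,z)=\tau(x,y\circ z)=\tau(x,T_yz)$, i.e. $T_y$ is $\tau$-self-adjoint.

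For (iv), uniqueness is immediate: two units $e,e'$ satisfy $e=e\circ e'=e'$. For existence I would invoke the structure theory: a semisimple Jordan algebra is not nil (a nil algebra would have degenerate trace form), hence contains a nonzero idempotent; taking a maximal system $c_1,\dots,c_r$ of pairwise orthogonal nonzero idempotents and $c=c_1+\dots+c_r$, I would use the Peirce decomposition $A=A_1(c)\oplus A_{1/2}(c)\oplus A_0(c)$ together with the mutual $\tau$-orthogonality of the three Peirce components — so that nondegeneracy of $\tau$ descends to each component — to conclude that a nonzero $A_0(c)$ would contain an idempotent orthogonal to all the $c_i$ (contradicting maximality) and that $A_{1/2}(c)$ must also vanish; then $c\circ x=x$ for all $x\in A$, so $c$ is the unit. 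I expect this last step — really an appeal to the classification of semisimple Jordan algebras — to be the main obstacle, and I would refer to \cite{FK} for it carried out in detail.
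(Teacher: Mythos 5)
Your proposal is correct, but it diverges from the paper most noticeably in part (iv). For (i)--(iii) the paper simply cites \cite[Prop. II.1.2, Prop. 2.4.3]{FK}, whereas you actually supply the arguments: the linearization $(\star)$ of (J2), the induction for power-associativity, and the derivation trick $T_{(x\circ y)\circ z}=T_{x\circ(y\circ z)}-\bigl[[T_x,T_z],T_y\bigr]$ followed by taking traces. These are exactly the standard proofs behind the citations, so there is no real difference of route there (only the caveat about $\operatorname{char}F\neq 2,3$, which you rightly flag). For (iv), however, the paper gives a much lighter argument than your appeal to idempotents and Peirce decompositions: since $\tau$ is non-degenerate, there is a unique $e\in A$ with $\tau(e,x)=\tr T_x$ for all $x$, and then associativity of $\tau$ (part (iii)) gives
$$\tau(x,e\circ y)=\tau(x\circ y,e)=\tr T_{x\circ y}=\tau(x,y),$$
so $e\circ y=y$ by non-degeneracy again. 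This derives existence of the unit purely from (iii) in three lines, works over any field where (iii) holds, and avoids the structure theory (non-nilness of the radical of $\tau$, existence and maximal systems of idempotents, Peirce orthogonality) that you correctly identify as the main obstacle in your approach and defer to \cite{FK}. Your route is the classical one and does go through, but each of the facts you invoke for it needs its own proof; the Riesz-representation argument is the one worth remembering here.
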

\begin{proof}
For \eqref{L:Jprop1} and \eqref{L:Jprop2}, see \cite[Prop. II.1.2]{FK}. For \eqref{L:Jprop3}, see \cite[Prop. 2.4.3]{FK}.
Part \eqref{L:Jprop4}: since $\tau$ is non-degenerate, there exists a unique $e\in A$ such that $\tau(e,x)=\tr T_x$ for any $x\in A$.
Using the associativity of $\tau$, we get (for any $x,y\in A$)
$$\tau(x,e\circ y)=\tau(x\circ y,e)=\tr T_{x\circ y}=\tau(x,y),$$
which implies (again by the non-degeneracy of $\tau$) that $e\circ y=y$, q.e.d.
\end{proof}

\begin{example}\label{E:Jor}
\noindent
\begin{enumerate}
\item \label{E:Jor1} Let $(A,\cdot)$ be an associative $F$-algebra. Then $A$ becomes a Jordan algebra with respect to the Jordan product
$$x\circ y:=(x\cdot y+y\cdot x).$$
 \item \label{E:Jor2} Let $W$ be a $F$-vector space and let $B$ be a symmetric bilinear form on $W\times W$. Then $A=F\oplus W$ becomes a Jordan algebra with respect to the Jordan product
 \begin{equation}\label{E:circB}
 (\lambda, u)\circ_B (\mu, v):=(\lambda\mu+B(u,v), \lambda v+\mu u).
 \end{equation}
It is easily checked that  the Jordan algebra $(F\oplus W,\circ_B)$ is semisimple if and only if $B$ is non-degenerate and that it is Euclidean if and only if $F=\bbR$ and $B$ is positive definite.
 \item \label{E:Jor3} Let $D$ be equal to $\bbR, \bbC$ or $ \bbH$ and denote by $x\mapsto \ov x$ the natural involution. For any $n\geq 2$, the real vector space of Hermitian matrices of order $n$ with
 entries in $D$
$$\Herm_n(D):=\{M\in M_{n,n}(D): \ov{M}^t=M\}$$
becomes a Euclidean Jordan algebra with respect to the Jordan product (see \cite[Chap. 5, \S 2]{FK})
\begin{equation}\label{E:JorHerm}
M_1\circ M_2=\frac{1}{2}(M_1M_2+M_2M_1).
\end{equation}
If $D$ is equal to the algebra of octonions $\bbO$, then $\Herm_n(\bbO)$ with the product \eqref{E:JorHerm} is an Euclidean Jordan algebra if $m\leq 3$ (see \cite[Chap. 5, \S 2]{FK}).
In particular, $\Herm_3(\bbO)$ is an Euclidean Jordan algebra of dimension $27$, known as the Albert algebra.

The Jordan algebras $\Herm_2(D)$ for $D=\bbR,\bbC,\bbH$ or $\bbO$  are isomorphic to the Jordan algebras associated to  a suitable bilinear symmetric form as in Example \ref{E:Jor2}; more precisely, we have that
\begin{equation}\label{E:Herm2}
\begin{sis}
&\Herm_2(\bbR)\cong (\bbR\oplus \bbR^2,\circ_Q) \\
& \Herm_2(\bbC)\cong (\bbR\oplus \bbR^3,\circ_Q) \\
& \Herm_2(\bbH)\cong (\bbR\oplus \bbR^5,\circ_Q)\\
& \Herm_2(\bbO)\cong (\bbR\oplus \bbR^9,\circ_Q)
\end{sis}
\end{equation}
where $\circ_Q$ is defined in Example \ref{E:circB} with respect to the positive definite symmetric bilinear form $B$ on the suitable vector space.

\item \label{E:JA-JTS} Let $(A,\circ)$ be a Jordan algebra over $F$. Then $A$ becomes a Jordan triple system with respect to the triple product $\{.\,,.\,,.\}_{\circ}$ defined by
(see \cite[Chap. 1,\S6]{Sat})
$$
\{x,y,z\}_{\circ}:=(x\circ y)\circ z+ (z\circ y)\circ x-(x\circ z)\circ y.
$$
It turns out that $x\square y=T_{x\circ y}+[T_x,T_y]$ for any $x,y\in A$ which implies that the trace form of the Jordan algebra $(A,\circ)$ as defined in \eqref{E:trace} coincides with the trace
form Êof the JTS $(A,\{.\,,.\,,.\}_{\circ})$ as defined in \eqref{E:trform}. In particular, $(A,\circ)$ is semisimple if and only if $(A,\{.\,,.\,,.\}_{\circ})$ is semisimple.

If $(A,\circ)$ is a Jordan algebra over $\bbR$, then $A_{\bbC}:=A\otimes_{\bbR}\bbC$ becomes a Hermitian JTS with respect to the triple product
$$
\{x,y,z\}'_{\circ}:=(x\circ \ov y)\circ z+ (z\circ \ov y)\circ x-(x\circ z)\circ \ov y,
$$
where $y\mapsto \ov y$ is the complex conjugation corresponding to the real form $A\subset A_{\bbC}$ and the Jordan product $\circ$ is extended linearly to $\bbA_{\bbC}$.
Then $(A,\circ)$ is Euclidean if and only if $(A_{\bbC},\{.\,,.\,,.\}'_{\circ})$ is a  positive Hermitian JTS.


\end{enumerate}
\end{example}

Observe that simple Jordan algebras are semisimple. Moreover, the direct sum of simple Jordan algebras is semisimple, where the direct sum of two Jordan algebras
$(A_1,\circ_1)$ and $(A_2,\circ_2)$ is the vector space $A:=A_1\oplus A_2$ endowed with the component-wise Jordan product $(x_1,x_2)\circ (y_1,y_2):=(x_1\circ y_1,x_1\circ y_2)$.
Conversely, we have the following decomposition theorem.

\begin{prop}\label{P:simple}
Any semisimple (resp. Euclidean) Jordan algebra decomposes uniquely as the product of simple (resp. Euclidean and simple) Jordan algebras.
\end{prop}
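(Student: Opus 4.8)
The plan is to prove the decomposition theorem for semisimple (resp. Euclidean) Jordan algebras by mimicking the standard argument for semisimple associative algebras, using the trace form $\tau$ as the source of an orthogonal decomposition into ideals. First I would recall that, by Lemma \ref{L:Jprop}\eqref{L:Jprop4}, a semisimple Jordan algebra $A$ has a unit element $e$, and by Lemma \ref{L:Jprop}\eqref{L:Jprop3} the trace form $\tau$ is associative, i.e. $\tau(x\circ y,z)=\tau(x,y\circ z)$. The associativity of $\tau$ implies that the orthogonal complement $I^{\perp}$ of any ideal $I\subseteq A$ with respect to $\tau$ is again an ideal: if $x\in I^{\perp}$, $y\in A$, then for all $z\in I$ we have $\tau(x\circ y,z)=\tau(x,y\circ z)=0$ since $y\circ z\in I$, so $x\circ y\in I^{\perp}$. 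Moreover $I\cap I^{\perp}$ is an ideal on which $\tau$ vanishes identically; since $\tau$ is non-degenerate on $A$ (semisimplicity), one shows $I\cap I^{\perp}=0$: indeed, an ideal $N$ with $\tau|_{N\times N}\equiv 0$ must be zero, because for $x\in N$ and arbitrary $y\in A$ one has $T_{x\circ y}(A)\subseteq N$ and $T_{x\circ y}$ is nilpotent (this is where one invokes power-associativity, Lemma \ref{L:Jprop}\eqref{L:Jprop1}, and a standard argument that $x\circ y$ lies in a nil ideal hence acts nilpotently), so $\tau(x,y)=\operatorname{tr} T_{x\circ y}=0$ for all $y$, forcing $x=0$ by non-degeneracy. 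Hence $A=I\oplus I^{\perp}$ as a direct sum of ideals, and the Jordan product has no cross terms because $I\circ I^{\perp}\subseteq I\cap I^{\perp}=0$.

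Next I would run an induction on $\dim_F A$. If $A$ has no proper nonzero ideal, then $A$ is simple provided $\tau\not\equiv 0$; but $\tau\not\equiv 0$ follows from non-degeneracy (as $A\neq 0$), so $A$ is simple and we are done. Otherwise pick a minimal nonzero ideal $I$; then $A=I\oplus I^{\perp}$ as above, and both $I$ and $I^{\perp}$ are Jordan algebras with unit (the components of $e$) whose trace forms are the restrictions of $\tau$ — here one checks that the trace form of the ideal $I$, computed intrinsically, agrees with $\tau|_{I\times I}$, which holds because $A=I\oplus I^\perp$ with $I\circ I^\perp=0$ makes $T_x$ for $x\in I$ block-diagonal, the $I^\perp$-block being zero. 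Thus $I$ and $I^\perp$ are again semisimple, of strictly smaller dimension, and by induction each decomposes into simple factors; concatenating gives the desired decomposition of $A$. For uniqueness, I would observe that any simple ideal $S$ of $A$ must coincide with one of the factors: $S\circ A_j$ is an ideal of $A$ contained in $S\cap A_j$ for each factor $A_j$, so by simplicity of $S$ either $S\subseteq A_j$ (whence $S=A_j$ by simplicity of $A_j$, since $S$ is then a nonzero ideal of $A_j$) or $S\circ A_j=0$; as $e=\sum e_j$ and $S\circ e=S\neq 0$, we cannot have $S\circ A_j=0$ for all $j$, so $S=A_j$ for some $j$. This shows the set of simple ideal summands is intrinsic.

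For the Euclidean case, the argument is the same but cleaner: when $F=\bbR$ and $\tau$ is positive definite, the orthogonal complement $I^{\perp}$ of an ideal is automatically a complement ($I\cap I^{\perp}=0$ is immediate from positive definiteness, with no need for the nilpotency argument), it is an ideal by associativity of $\tau$, and $\tau$ restricts to a positive definite form on each summand, so each summand is again Euclidean. The induction then terminates in simple Euclidean Jordan algebras, and uniqueness follows verbatim. I expect the main obstacle to be the one genuinely non-formal point in the general semisimple case: proving that a nonzero ideal on which $\tau$ vanishes identically is necessarily zero, which requires knowing that such an ideal is nil and that nil ideals act by nilpotent operators on a power-associative algebra — this is the Jordan-algebra analogue of Dickson/Cartan's criterion. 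I would either cite \cite[Prop. II.1.2]{FK} and the structure theory in \cite{FK} for this, or restrict attention, as the paper effectively does, to the Euclidean case where positive definiteness makes it trivial; in the write-up I would signal that the full semisimple statement rests on this standard structure-theoretic fact and give the reference.
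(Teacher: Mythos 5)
Your proposal is correct and follows essentially the same route as the paper's own (sketched) proof: take the orthogonal complement of a proper ideal with respect to the trace form $\tau$, use associativity of $\tau$ to see it is an ideal, obtain $A=I\oplus I^{\perp}$, and iterate; the paper simply cites \cite[Prop. III.4.4]{FK} for these facts, while you fill in the details and correctly isolate the one genuinely non-formal step in the semisimple case (a $\tau$-isotropic ideal is zero), deferring it to the literature just as the paper does. Your uniqueness argument via simple ideals and the decomposition of the unit element is also sound and matches the standard treatment referenced by the paper.
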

\begin{proof}[Sketch of the Proof]
Let $(A,\circ)$ be a semisimple (resp. Euclidean) Jordan algebra. If $A$ is not simple, then there exists a proper ideal $I\subset A$. Consider the orthogonal complement of $I$
$$I^{\perp}:=\{x\in A\: : \tau(x,y)=0 \text{ for any }Êy\in I\}.$$
It is possible to prove  (see \cite[Prop. III.4.4]{FK}) that
\begin{enumerate}[(i)]
\item $I^{\perp}$ is an ideal of $A$;
\item $I$ and $I^{\perp}$ are semisimple (resp. Euclidean) Jordan algebras;
\item $A=I\oplus I^{\perp}$ as Jordan algebras.
\end{enumerate}
Iterating this construction for $I$ and $I^{\perp}$, we get  the existence of the decomposition. For the unicity, see loc. cit.
\end{proof}

Simple Euclidean Jordan algebras were classified by Jordan-Neumann-Wigner (see \cite[Chap. V]{FK} and the references therein).

\begin{thm}\label{T:clasJor}
Every simple Euclidean Jordan algebra is isomorphic to one of the following Jordan algebras
\begin{enumerate}[(i)]
\item $(\bbR\oplus \bbR^n,\circ_Q)$ where $Q$ is the standard scalar product on $\bbR^n$;
\item $\Herm_n(\bbR)$ for $n\geq 3$;
\item $\Herm_n(\bbC)$ for $n\geq 3$;
\item $\Herm_n(\bbH)$ for $n\geq 3$;
\item $\Herm_3(\bbO)$ (the \emph{Albert algebra}).
\end{enumerate}
\end{thm}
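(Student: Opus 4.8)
The strategy is the classical one of Jordan--von Neumann--Wigner (see \cite[Chap. III--V]{FK}), resting on the spectral theory of idempotents in a Euclidean Jordan algebra. Let $(A,\circ)$ be a simple Euclidean Jordan algebra; it is semisimple, hence has a unit $e$ by Lemma \ref{L:Jprop}\eqref{L:Jprop4}. Since each $T_x$ is self-adjoint for the positive-definite trace form $\tau$ (Lemma \ref{L:Jprop}\eqref{L:Jprop3}) and $A$ is power-associative (Lemma \ref{L:Jprop}\eqref{L:Jprop1}), every $x\in A$ admits a spectral decomposition $x=\sum_i\lambda_i c_i$ with the $c_i$ pairwise orthogonal idempotents; iterating, one produces a \emph{Jordan frame}, i.e. a maximal system $c_1,\dots,c_r$ of pairwise orthogonal primitive idempotents with $c_1+\cdots+c_r=e$, and one shows that all Jordan frames have the same cardinality $r$, the \emph{rank} of $A$. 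Relative to a fixed frame one has the Peirce decomposition $A=\bigoplus_{1\le i\le j\le r}A_{ij}$, where $A_{ii}=\{x:c_i\circ x=x\}$ and $A_{ij}$ ($i<j$) is the common $\tfrac12$-eigenspace of $T_{c_i}$ and $T_{c_j}$; the multiplication rules among the $A_{ij}$ are the combinatorial backbone of the argument, and in the \emph{simple} case they force $A_{ii}=\bbR c_i$ and give all off-diagonal $A_{ij}$ a common dimension $d$ (connectedness of the frame: a block of indices over which the $A_{ij}$ were ``small'' would span a proper ideal).

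First I would dispose of small rank. If $r=1$, then $A=\bbR e\cong\bbR$, which is $(\bbR\oplus\bbR^0,\circ_Q)$. If $r=2$, the Peirce decomposition reads $A=\bbR c_1\oplus\bbR c_2\oplus A_{12}$; setting $W:=A_{12}$ and using $A_{12}\circ A_{12}\subseteq\bbR e$ one identifies $A$ with the spin factor $(\bbR\oplus W,\circ_B)$ of Example \ref{E:Jor}\eqref{E:Jor2}, where $B$ is the symmetric bilinear form on $W$ with $x\circ x=B(x,x)e$; $B$ is positive definite because $\tau$ is, so $A\cong(\bbR\oplus\bbR^{n},\circ_Q)$ with $n=\dim W$. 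Conversely every algebra in the list is simple and Euclidean: for $(\bbR\oplus\bbR^n,\circ_Q)$ this is immediate from Example \ref{E:Jor}\eqref{E:Jor2}, and for the matrix algebras it follows directly from \eqref{E:JorHerm}.

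For $r\ge 3$ a Hurwitz-type estimate restricts the multiplicity to $d\in\{1,2,4,8\}$, with $d=8$ possible only when $r=3$, and one invokes the \emph{coordinatization theorem}: the off-diagonal pieces $A_{1j}$ can all be identified with a single $d$-dimensional real vector space $D$ on which the Jordan product induces a bilinear multiplication making $(D,\cdot)$ a \emph{composition algebra}; by Hurwitz's theorem $D\in\{\bbR,\bbC,\bbH,\bbO\}$, and tracing through the Peirce relations reassembles the full multiplication table as an isomorphism $A\cong\Herm_r(D)$ with the product \eqref{E:JorHerm}. Since $\Herm_r(\bbO)$ is a Euclidean Jordan algebra only for $r\le 3$, the octonionic case occurs exactly in rank $3$, giving the Albert algebra $\Herm_3(\bbO)$; the low-rank algebras $\Herm_1(D),\Herm_2(D)$ are already absorbed into the spin factors via \eqref{E:Herm2}, so one keeps $r\ge 3$ for the matrix types, completing the list. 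The main obstacle is precisely this coordinatization step for $r\ge 3$: extracting an honest composition-algebra structure from the associativity-type Peirce identities and then rebuilding the whole algebra. An alternative route, using machinery already developed in this survey, is to pass to the complexified Hermitian positive JTS $A_{\bbC}$ of Example \ref{E:Jor}\eqref{E:JA-JTS}, apply Theorem \ref{T:simpleJTS}, and single out the types carrying a unit tripotent --- this excludes types $I_{p,q}$ with $p\ne q$ and type $V$, and leaves exactly the algebras above.
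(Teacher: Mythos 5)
The paper offers no proof of this theorem, deferring entirely to \cite[Chap. V]{FK}, and your sketch is a correct outline of exactly that Jordan--von Neumann--Wigner argument (spectral theorem and Jordan frames, Peirce decomposition, spin factors in rank two, coordinatization over a composition algebra plus Hurwitz in rank at least three), so it matches the approach the paper points to. The only small slip is in your closing alternative route via Theorem \ref{T:simpleJTS}: besides $I_{p,q}$ with $p\neq q$ and type $V$, one must also discard $II_n$ for $n$ odd, since a skew-symmetric matrix of odd size is never invertible and hence that JTS carries no unitary tripotent either.
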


\begin{table}[!ht]
\begin{tabular}{|c|c|c|c|c|c|}
\hline
 Real vector space $V$ & Jordan product  $\circ$ \\
\hline \hline
 $ \bbR\times \bbR^{n}$ &  $\un x\circ\un y=(x_0y_0+\sum_{i=1}^nx_iy_i, x_0y_1+y_0x_1,\cdots,x_0y_n+y_0x_n)$\\
\hline
 $ \Herm_n(\bbR)\:  (n\geq 3) $  & $A\circ B=\frac{1}{2}(AB+BA) $ \\
\hline
 $ \Herm_n(\bbC)\:  (n\geq 3) $ & $A\circ B=\frac{1}{2}(AB+BA) $ \\
\hline
 $ \Herm_n(\bbH)\:  (n\geq 3) $ & $A\circ B=\frac{1}{2}(AB+BA) $ \\
\hline
 $ \Herm_3(\bbO) $ & $A\circ B=\frac{1}{2}(AB+BA) $ \\
\hline
\end{tabular}
\caption{Simple Euclidean Jordan algebras}\label{F:EucJor}
\end{table}

\begin{remark}\label{R:Eu-cmplx}
The complexification of a real Jordan algebra $(A,\circ)$ is the complex vector space $A^{\bbC}:=A\otimes_{\bbR}\bbC$ endowed with the Jordan product $\circ^{\bbC}$ obtained by
extending linearly the Jordan product $\circ$ on $A$. The complexification induces a bijection
\begin{equation}\label{E:compl}
\begin{aligned}
\left\{\text{Euclidean Jordan algebras} \right\}Ê& \stackrel{\cong}{\longrightarrow} \left\{\text{Semisimple Jordan $\bbC$-algebras}Ê\right\}Ê\\
(A,\circ) & \mapsto (A^{\bbC},\circ^{\bbC})
\end{aligned}
\end{equation}
which preserves the decomposition into the product of simple Jordan algebras (see \cite[Chap. VIII]{FK}).
\end{remark}

We now explain the relationship between Euclidean Jordan algebras and symmetric cones, due to the work of  Koecher and Vinberg.

Let $(V,\circ)$ be a Euclidean Jordan algebra and denote by $e\in V$ the unit element of $V$ (see Lemma \ref{L:Jprop}\eqref{L:Jprop4}). Denote by $V^*$ the set of invertible elements
of $V$, i.e. the elements $x\in V$ for which there exists $y\in V$ such that $x\circ y=e$.
Then we define an open cone inside $V$ by
\begin{equation}\label{E:coneJor}
\Omega_{(V,\circ)}:=\{x^2 : x \in V^*\}=\{x: x \in V^*\}^o=\{x\in V: T_x>0\},
\end{equation}
where $\{.\,,.\,,.\}^o$ denotes the connected component containing the identity $e\in V$. Indeed, $\Omega_{(V,\circ)}$ is a symmetric cone with respect to the positive definite form
$\langle,\rangle:=\tau(,)$ (see \cite[Chap. III, \S2]{FK}).

Conversely, let $\Omega\subset V$ be a symmetric cone with respect to a scalar product $\langle,\rangle$ on $V$ and choose a base point $e\in \Omega$.
Let $\g$ be the Lie algebra of $\G(\Omega)$, $\k$ the Lie algebra of the maximal compact subgroup $\G(\Omega)_e\subset \G(\Omega)$ and $\g=\k\oplus \p$ be the associated Cartan decomposition.
The action of $\G(\Omega)$ on $V$ induces an action of $\g$ on $V$. Clearly an element $X\in \g$ belongs to $\k$ if and only if $X\cdot e=e$. Therefore, the map $\p\to V$ sending $X$ into
$X\cdot e$ is a bijection; hence, for any $x\in V$, there exists a unique $L_x\in \k$ such that $L_x\cdot e=x$. Define a product $\circ_{\Omega}$ on $V$ by
\begin{equation}\label{E:Jorcone}
x\circ_{\Omega} y:=L_x\cdot y.
\end{equation}
The pair $(V,\circ_{\Omega})$ is an Euclidean Jordan algebra with unit element $e$ (see \cite[Chap. III,\S 3]{FK}).

\begin{thm}\label{T:bijJorcon}
There is a bijection
\begin{equation}
\begin{aligned}
\left\{\text{Euclidean Jordan algebras} \right\}Ê& \stackrel{\cong}{\longleftrightarrow} \left\{\text{Symmetric cones}Ê\right\}Ê\\
(V,\circ) & \longrightarrow \Omega_{(V,\circ)}\subset V\\
(V,\circ_{\Omega})& \longleftarrow \Omega\subset V
\end{aligned}
\end{equation}
preserving the decomposition of Euclidean Jordan algebras into simple ones and the decomposition of symmetric cones into irreducible ones.
\end{thm}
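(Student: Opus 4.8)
The two maps of the statement are already known to be well defined: by \cite[Chap. III, \S2]{FK} the set $\Omega_{(V,\circ)}$ of \eqref{E:coneJor} is a symmetric cone for $\langle,\rangle=\tau$, and by \cite[Chap. III, \S3]{FK} the product $\circ_\Omega$ of \eqref{E:Jorcone} makes $V$ a Euclidean Jordan algebra with unit $e$. The plan is to verify that these two maps are mutually inverse, and then to observe that both respect the direct-sum operation, so that the last assertion drops out of the uniqueness in Proposition \ref{P:simple} and in its cone-theoretic counterpart \cite[Prop. III.4.5]{FK}.

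\emph{Going from a Euclidean Jordan algebra $(V,\circ)$, with unit $e$, to the cone $\Omega:=\Omega_{(V,\circ)}$ and back.} The structural input here, from \cite[Chap. III]{FK}, is that each multiplication operator $T_x$ of Definition \ref{D:Jalg} lies in $\g:=\Lie\G(\Omega)$ and that the $(-1)$-eigenspace of the Cartan involution at $e$ is exactly $\p=\{T_x : x\in V\}$; this is coherent with the fact that $T_x$ is self-adjoint for $\tau=\langle,\rangle$ (Lemma \ref{L:Jprop}\eqref{L:Jprop3}) in view of Theorem \ref{T:str-cones}\eqref{T:str-cones2}. Since $x\mapsto T_xe=x$ is a linear isomorphism onto $V$, the element $L_x\in\p$ singled out in the construction \eqref{E:Jorcone} by the condition $L_xe=x$ is none other than $T_x$. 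Therefore $x\circ_\Omega y=L_xy=T_xy=x\circ y$, and the composite is the identity on $(V,\circ)$.

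\emph{Going from a symmetric cone $\Omega\subset V$, with base point $e$, to $(V,\circ_\Omega)$ and back.} Write $\g=\k\oplus\p$ for the Cartan decomposition at $e$, so that $\p=\{L_x : x\in V\}$ by construction. Using Theorem \ref{T:str-cones}\eqref{T:str-cones1b} and the decomposition $\G(\Omega)^o=\exp(\p)\cdot\G(\Omega)_e$ (and the fact that $\G(\Omega)_e$ fixes $e$), we get $\Omega=\G(\Omega)^o\cdot e=\exp(\p)\cdot e=\{(\exp L_x)\cdot e : x\in V\}$. Expanding the exponential and using power-associativity in the form $L_x^n e=x^n$ (the $n$-th power in $(V,\circ_\Omega)$, as in Lemma \ref{L:Jprop}\eqref{L:Jprop1}), we obtain $(\exp L_x)\cdot e=\sum_{n\ge0}\frac{1}{n!}x^n$, the Jordan exponential of $x$. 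Thus $\Omega$ is the set of Jordan exponentials of $(V,\circ_\Omega)$, and by the spectral theorem for Euclidean Jordan algebras \cite[Chap. III]{FK} this set coincides with $\{y^2 : y\in V^*\}=\Omega_{(V,\circ_\Omega)}$. Hence the other composite is also the identity, and the two maps are inverse bijections.

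\emph{Compatibility with decompositions, and the main obstacle.} For a direct sum $(V_1,\circ_1)\oplus(V_2,\circ_2)$ one has at once $\Omega_{(V_1\oplus V_2,\,\circ_1\oplus\circ_2)}=\Omega_{(V_1,\circ_1)}+\Omega_{(V_2,\circ_2)}$, since a vector is invertible (respectively a square) exactly when both of its components are. Conversely, if a symmetric cone splits as $\Omega=\Omega_1+\Omega_2\subset V_1\oplus V_2$, then $\G(\Omega)^o=\G(\Omega_1)^o\times\G(\Omega_2)^o$, whence $\p=\p_1\oplus\p_2$, each $L_x$ preserves $V_1$ and $V_2$, and $(V,\circ_\Omega)$ is the direct sum of the $(V_i,\circ_{\Omega_i})$. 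Feeding this into the uniqueness statements of Proposition \ref{P:simple} and \cite[Prop. III.4.5]{FK} shows that $(V,\circ)$ is simple if and only if $\Omega_{(V,\circ)}$ is irreducible, and that the bijection carries the Jordan decomposition into the cone decomposition factor by factor. I expect the genuine difficulty to be concentrated entirely in the structural facts used above — the identification $\p=\{T_x:x\in V\}$, the computation $(\exp L_x)\cdot e=\sum_{n\ge0}\frac{1}{n!}x^n$, and the spectral theorem — which form the heart of the Koecher-Vinberg correspondence and are supplied by \cite[Chap. III]{FK}; everything else is formal manipulation of the definitions recorded earlier.
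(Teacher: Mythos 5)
Your argument is correct and is exactly the standard Koecher--Vinberg argument from \cite[Chap. III]{FK}, which is all the paper itself supplies (its proof consists solely of that citation): the identification $L_x=T_x$ via $\p=\{T_x:x\in V\}$ handles one composite, and $\Omega=\exp(\p)\cdot e=\{\text{Jordan exponentials of }(V,\circ_{\Omega})\}=\Omega_{(V,\circ_{\Omega})}$ handles the other. The only step I would tighten is the assertion $\G(\Omega)^o=\G(\Omega_1)^o\times\G(\Omega_2)^o$ for a product cone; for your purposes it is enough to observe that $\p_1\oplus\p_2\subseteq\p$ already maps onto $V=V_1\oplus V_2$ under $X\mapsto X\cdot e$ (with $e=e_1+e_2$), so $\p=\p_1\oplus\p_2$ follows from the injectivity of that map, and the decomposition of $\circ_{\Omega}$ drops out without needing the full group-level statement.
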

\begin{proof}
See \cite[Chap. III]{FK} or \cite[Chap. I, \S 8]{Sat}.
\end{proof}

\begin{remark}\label{R:funJorcon}
The bijection of Theorem \ref{T:bijJorcon} becomes an equivalence of categories if the two sets are endowed with the following morphisms (see \cite[Chap. I, \S9]{Sat}):
\begin{enumerate}[(i)]
\item A \emph{unital Jordan algebra} homomorphism between two Euclidean Jordan algebras $(V, \circ)$ and $(V', \circ')$  is a linear map $f:V\to V'$ such that
$$\begin{aligned}
& f(x\circ y)=f(x)\circ' f(y) & \text{ for any } x,y \in V, \\
&f(e)=e',
\end{aligned}$$
where $e$ (resp. $e'$) is the unit element of $(V,\circ)$ (resp. $(V',\circ')$).

\item An \emph{equivariant} morphism between two symmetric cones $\Omega\subset (V,\langle,\rangle)$ and $\Omega'\subset (V', \langle,\rangle')$ is a linear map $\phi:V\to V'$ sending $\Omega$ into $\Omega'$ and such that there
exists  a morphism of Lie algebras $\rho:\Lie \G(\Omega) \to \Lie \G(\Omega')$ satisfying
$$\begin{aligned}
& \phi(T\cdot x)=\rho(T)\cdot \phi(x) & \text{ for any } x\in V \text{ and any } T\in \Lie \G(\Omega), \\
& \rho(T^t)=\rho(T)^t \\
\end{aligned}$$
where ${}^t$ denotes the transpose with respect to either $\langle,\rangle$ or $ \langle,\rangle'$.
\end{enumerate}
\end{remark}

As a consequence of the bijection between symmetric cones and Euclidean Jordan algebras in Theorem \ref{T:bijJorcon} and the classification of simple Euclidean Jordan algebra
given in Theorem \ref{T:clasJor}, we get the following classification of irreducible symmetric cones (see \cite[Chap. 1, \S 8]{Sat}).

\begin{thm}\label{T:clascone}
Every irreducible symmetric cone  is isomorphic to one of the following cones
\begin{enumerate}[(i)]
\item $\P(1,n):= \{\un x\in \bbR\oplus \bbR^{n}: x_0>\sqrt{x_1^2+\cdots +x_n^2}\}\subset \bbR\oplus \bbR^n$ for $n\geq 1$ (the \emph{Lorentz or light cone});
\item $\P_n(\bbR)=\Herm_n^{>0}(\bbR):=\{M\in \Herm_n(\bbR): M>0\}\subset \Herm_n(\bbR)$ for $n\geq 3$;
\item $\P_n(\bbC)=\Herm_n^{>0}(\bbC):=\{M\in \Herm_n(\bbC): M>0\}\subset \Herm_n(\bbC)$ for $n\geq 3$;
\item $\P_n(\bbH)=\Herm_n^{>0}(\bbH):=\{M\in \Herm_n(\bbH): M>0\}\subset \Herm_n(\bbH)$ for $n\geq 3$;
\item $\P_3(\bbO)=\Herm_3^{>0}(\bbO):=\{M\in \Herm_3(\bbO): M>0\}\subset \Herm_3(\bbO)$.
\end{enumerate}
\end{thm}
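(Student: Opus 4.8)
The statement is a direct consequence of the bijection between Euclidean Jordan algebras and symmetric cones established in Theorem \ref{T:bijJorcon} (together with its functoriality, Remark \ref{R:funJorcon}), the classification of simple Euclidean Jordan algebras in Theorem \ref{T:clasJor}, and an explicit computation of the symmetric cone $\Omega_{(V,\circ)}$ attached to each algebra in that list. So the plan is to run this chain and then carry out that computation.

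First I would let $C\subset V$ be an irreducible symmetric cone and pass, via the bijection of Theorem \ref{T:bijJorcon}, to the associated Euclidean Jordan algebra $(V,\circ_C)$ defined by \eqref{E:Jorcone}. Since that bijection is compatible with the decomposition of symmetric cones into irreducible ones and of Euclidean Jordan algebras into simple ones, and since being irreducible (resp. simple) means precisely admitting no nontrivial such decomposition (Definitions \ref{D:irrcone} and \ref{D:Jalg}), the algebra $(V,\circ_C)$ is simple. By Theorem \ref{T:clasJor}, $(V,\circ_C)$ is then isomorphic, as a Euclidean Jordan algebra, to one of $(\bbR\oplus\bbR^n,\circ_Q)$ with $Q$ the standard scalar product, $\Herm_n(\bbR)$ or $\Herm_n(\bbC)$ or $\Herm_n(\bbH)$ with $n\geq 3$, or $\Herm_3(\bbO)$. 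Applying the inverse bijection of Theorem \ref{T:bijJorcon} and the functoriality of Remark \ref{R:funJorcon}, which guarantees that an isomorphism of Euclidean Jordan algebras transports $\Omega_{(V,\circ)}$ onto the corresponding cone, we conclude that $C$ is isomorphic to the symmetric cone $\Omega_{(V',\circ')}$ attached by \eqref{E:coneJor} to one of these model algebras.

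It then remains to identify these model cones with the five cones in the statement, using the description $\Omega_{(V,\circ)}=\{x\in V:T_x>0\}$ from \eqref{E:coneJor}. For $V=\bbR\oplus\bbR^n$ with the product $\circ_Q$, the operator $T_{(\lambda,v)}$ is, in an orthonormal basis adapted to the splitting $\bbR e\oplus\bbR v\oplus v^{\perp}$, a symmetric operator with eigenvalues $\lambda\pm|v|$ and $\lambda$ (the latter with multiplicity $n-1$); hence $T_{(\lambda,v)}>0$ exactly when $\lambda>|v|$, so $\Omega_{(\bbR\oplus\bbR^n,\circ_Q)}$ is the Lorentz cone $\P(1,n)$. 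For $V=\Herm_n(D)$ with $M_1\circ M_2=\frac{1}{2}(M_1M_2+M_2M_1)$, if $M$ has spectral (Jordan) eigenvalues $\mu_1,\dots,\mu_n$ then $T_M$ has eigenvalues $\frac{1}{2}(\mu_i+\mu_j)$ for $i\leq j$, so $T_M>0$ if and only if every $\mu_i>0$, i.e. if and only if $M$ is positive definite; thus $\Omega_{\Herm_n(D)}=\Herm_n^{>0}(D)$. For $D=\bbR,\bbC,\bbH$ this is the usual spectral theorem for Hermitian matrices, while for $\Herm_3(\bbO)$ one invokes the spectral theorem for Euclidean Jordan algebras (\cite[Chap. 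III]{FK}). Finally I would note that the parameter conventions in the statement are arranged to absorb the low-dimensional coincidences among Jordan algebras recorded in \eqref{E:Herm2}, so that the above model cones all appear in the list.

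The only genuine work is this last identification step — in particular checking $\Omega_{\Herm_n(D)}=\Herm_n^{>0}(D)$, where for $D=\bbO$ one must use the Jordan-algebraic spectral theorem rather than ordinary matrix diagonalization, together with the parameter-range bookkeeping that makes the list exhaustive. No idea beyond Theorems \ref{T:bijJorcon} and \ref{T:clasJor} is required.
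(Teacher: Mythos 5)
Your proposal is correct and takes exactly the route the paper itself indicates: the text derives Theorem \ref{T:clascone} in one line from the bijection of Theorem \ref{T:bijJorcon} and the classification of Theorem \ref{T:clasJor}, citing Satake for details. Your explicit verification that $\Omega_{(V,\circ)}=\{x: T_x>0\}$ gives the Lorentz cone for $(\bbR\oplus\bbR^n,\circ_Q)$ and $\Herm_n^{>0}(D)$ for the matrix algebras (via the Jordan-algebraic spectral theorem in the octonionic case), together with the bookkeeping via \eqref{E:Herm2}, simply fills in the computation the paper leaves implicit.
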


\begin{table}[!ht]
\begin{tabular}{|c|c|c|c|c|c|}
\hline
Rank & Dimension & Cone \\
\hline \hline
$2$ & $ n+1$ &  $\P(1,n)= \{\un x\in \bbR\oplus \bbR^{n}: x_0>\sqrt{x_1^2+\cdots +x_n^2}\} $ \\
\hline
$n$ & $\binom{n+1}{2}$ &   $\P_n(\bbR)= \Herm_n^{>0}(\bbR) \:  (n\geq 3)$  \\
\hline
$n$ & $n^2$ &   $\P_n(\bbC)= \Herm_n^{>0}(\bbC) \:  (n\geq 3)$  \\
\hline
$n$ & $n(2n-1)$ &   $\P_n(\bbH)= \Herm_n^{>0}(\bbH) \:  (n\geq 3)$  \\
\hline
$3$ & $ 27$ &   $ \P_3(\bbO)=\Herm_3^{>0}(\bbO) $  \\
\hline
\end{tabular}
\caption{Irreducible symmetric cones}\label{F:irrecones}
\end{table}

The closure $\ov \Omega$ of a symmetric cone $\Omega\subset V$ can be decomposed into a disjoint union of boundaries components, which we are now going to define.
Recall first that an \emph{idempotent}Ê of an Euclidean Jordan algebra $(V,\circ)$ is an element $e\in V$ such that $e\circ e=e$. The operator $T_e$ (see Definition \ref{D:Jalg}) is
self-adjoint with respect to the positive definite
scalar product $\tau$ given by the trace form (see Lemma \ref{L:Jprop}\eqref{L:Jprop3}) and its eigenvalues are  $0$, $1/2$ and $1$ (see \cite[Prop. III.1.3]{FK}). Therefore, we get an orthogonal decomposition of $V$  into eigenspaces
\begin{equation}\label{E:eigenLe}
V=V(e,1)\oplus V(c,1/2)\oplus V(c,0),
\end{equation}
relative to, respectively, the eigenvalues $0$, $1/2$ and $1$.

\begin{lemma}\label{L:Ve1}
For any idempotent $e\in (V,\circ)$, we have that $V(e,1)$ is an Euclidean Jordan subalgebra of $(V,\circ)$ such that $e\in V(e,1)$ is the identity element.
\end{lemma}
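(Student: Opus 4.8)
The plan is to split the statement into its two assertions and dispatch the easy one directly, reducing the other to the Peirce multiplication rules attached to the idempotent $e$. The assertion that $e$ is the identity of $V(e,1)$ is immediate from the definition of the decomposition \eqref{E:eigenLe}: $V(e,1)$ is the $T_e$-eigenspace for the eigenvalue $1$, so $e\circ x=T_e(x)=x$ for every $x\in V(e,1)$, and moreover $e\in V(e,1)$ because $e\circ e=e$ places $e$ in that eigenspace. Hence it remains only to show that $V(e,1)$ is closed under $\circ$ and that the resulting algebra is Euclidean; the Jordan axioms (J1) and (J2) for $(V(e,1),\circ)$ are then automatic, being inherited from $(V,\circ)$.

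For the closure under $\circ$, I would invoke the Peirce multiplication relations for the decomposition \eqref{E:eigenLe}, namely $V(e,1)\circ V(e,1)\subseteq V(e,1)$, $V(e,0)\circ V(e,0)\subseteq V(e,0)$, $V(e,1)\circ V(e,0)=0$, $V(e,a)\circ V(e,1/2)\subseteq V(e,1/2)$ for $a\in\{0,1\}$, and $V(e,1/2)\circ V(e,1/2)\subseteq V(e,1)\oplus V(e,0)$ (see \cite{FK}). These come from the operator identity $2T_e^3-3T_e^2+T_e=0$, which is a consequence of $e\circ e=e$ together with the Jordan axioms, and from the linearized Jordan identity $[T_{x\circ y},T_z]+[T_{y\circ z},T_x]+[T_{z\circ x},T_y]=0$ obtained by polarizing $[T_{x^2},T_x]=0$ twice. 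In particular the first relation says exactly that $V(e,1)$ is a subalgebra of $(V,\circ)$, so $(V(e,1),\circ)$ is a Jordan algebra with unit $e$.

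For the Euclidean property I would argue by inheritance. Since $(V,\circ)$ is Euclidean, its trace form $\tau$ is positive definite, and therefore $(V,\circ)$ is formally real: if $\sum_i x_i^2=0$ then $\sum_i\tau(x_i,x_i)=\tr\big(T_{\sum_i x_i^2}\big)=\tr(T_0)=0$, so every $\tau(x_i,x_i)=0$ and positive definiteness forces $x_i=0$. Formal reality is trivially inherited by the subalgebra $V(e,1)$, and a formally real Jordan algebra is Euclidean (see \cite{FK}); hence $(V(e,1),\circ)$ is a Euclidean Jordan algebra, as claimed. I expect the only genuine inputs to be the Peirce multiplication relations of the second step and the equivalence between "Euclidean" and "formally real"; for both I would simply cite \cite{FK} rather than reprove the operator-identity bookkeeping, and I regard assembling that bookkeeping (were one to do it from scratch) as the main obstacle.
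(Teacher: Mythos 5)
Your proof is correct and is essentially the argument behind the paper's bare citation of \cite[Prop. IV.1.1]{FK}: the unit and closure claims follow from the Peirce multiplication rules attached to the idempotent $e$, and the Euclidean property is inherited. In particular you rightly route the last step through formal reality rather than simply restricting $\tau$ to $V(e,1)$ --- the trace form of the subalgebra is computed with multiplication operators on $V(e,1)$ and is \emph{not} the restriction of the ambient trace form, so the detour (formal reality passes to unital subalgebras and is equivalent to being Euclidean) is exactly the right move.
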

\begin{proof}
See \cite[Prop. IV.1.1]{FK}.
\end{proof}

Consider now the Euclidean Jordan algebra $(V,\circ_{\Omega})$ corresponding to a symmetric cone $\Omega\subset V$ according to Theorem \ref{T:bijJorcon}.
\begin{defi}\label{D:boun-con}
For an idempotent $e\in (V,\circ_{\Omega})$, we define the \emph{boundary component}Ê of $\Omega$ associated to $e$ as the symmetric cone
$$\Omega(e):=\Omega_{(V(c,1),\circ_{\Omega})}\subset V(c,1)$$
corresponding to the Euclidean Jordan algebra $(V(c,1),\circ_{\Omega})$ according to Theorem \ref{T:bijJorcon}.
\end{defi}

The closure $\ov \Omega$ of the symmetric cone $\Omega$ in $V$ can be partitioned into the disjoint union of boundaries components as it follows.

\begin{thm}\label{T:boun-con}
Notations as before.
\begin{enumerate}[(i)]
\item \label{T:boun-con1} For any idempotent $e\in (V, \circ_{\Omega})$, the intersection of $\ov\Omega\subset V$ with the subspace $V(e,1)\subset V$ is equal to the closure $\ov\Omega(e)$.
In particular, $\Omega(e)$ is contained in $\ov \Omega$.
\item \label{T:boun-con2} We have that
\begin{equation}\label{E:decboun}
\ov \Omega=\coprod_{e} \Omega(e),
\end{equation}
where the disjoint union varies over all the idempotents $e\in (V,\circ_{\Omega})$ and, for any an idempotent $e$,  the closure of $\Omega(e)$ is a disjoint union of boundary components.
\item \label{T:boun-con3} The group $G(\Omega)^o$ of automorphisms of $\Omega$ acts on $\ov \Omega$ by permuting its boundary components.
\end{enumerate}
\end{thm}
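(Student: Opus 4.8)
The plan is to deduce all three parts from the spectral theory of Euclidean Jordan algebras, using two inputs from \cite[Chap. III--IV]{FK}. Write $\mathbf 1$ for the unit of $(V,\circ_\Omega)$. Input (a): every $x\in V$ has a spectral decomposition $x=\sum_{i=1}^{r}\lambda_i c_i$, with $\{c_1,\dots,c_r\}$ a Jordan frame of $V$ and $\lambda_i\in\bbR$, the multiset of eigenvalues $\{\lambda_i\}$ being intrinsic to $x$, and one has $x\in\Omega$ (resp. $x\in\ov\Omega$) if and only if $\lambda_i>0$ (resp. $\lambda_i\ge 0$) for all $i$; equivalently, since $T_{\mathbf 1}=\id$ and hence $T_{x+\epsilon\mathbf 1}=T_x+\epsilon\,\id$, relation \eqref{E:coneJor} refines to $\ov\Omega=\{x\in V:T_x\ge 0\}$. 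Input (b): for an idempotent $e$, a Jordan frame of the Peirce subalgebra $V(e,1)$ (a Euclidean Jordan algebra with unit $e$ by Lemma \ref{L:Ve1}) extends to a Jordan frame of $V$ by adjoining any Jordan frame of the complementary subalgebra $V(e,0)=V(\mathbf 1-e,1)$ — a standard compatibility of Peirce decompositions.

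First I would prove (i). Fix an idempotent $e$ and $x\in V(e,1)$, and let $x=\sum_{i=1}^k\lambda_i c_i$ be the spectral decomposition of $x$ inside $V(e,1)$. By (b), the frame $\{c_1,\dots,c_k\}$ extends to a Jordan frame $\{c_1,\dots,c_r\}$ of $V$, and then $x=\sum_{i=1}^k\lambda_i c_i+\sum_{i=k+1}^r 0\cdot c_i$ is a spectral decomposition of $x$ viewed in $V$; so the eigenvalues of $x$ in $V$ are those of $x$ in $V(e,1)$, together with $0$ (with multiplicity $r-k$) when $e\ne\mathbf 1$. Applying the positivity criterion of (a) both in $V$ and in $V(e,1)$ gives $x\in\ov\Omega\iff x\in\ov{\Omega(e)}$, i.e. $\ov\Omega\cap V(e,1)=\ov{\Omega(e)}$; in particular $\Omega(e)\subseteq\ov{\Omega(e)}\subseteq\ov\Omega$, which is the content of (i).

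For (ii) I would attach to each $x\in\ov\Omega$, with spectral decomposition $x=\sum_i\lambda_i c_i$, its \emph{support idempotent} $\varepsilon(x):=\sum_{\lambda_i\ne0}c_i$, which is well defined because the eigenvalues of $x$ are intrinsic. For every idempotent $e$ one then checks $\Omega(e)=\{x\in\ov\Omega:\varepsilon(x)=e\}$: if $\varepsilon(x)=e$ then the nonzero $\lambda_i$ are $>0$ (by (a), as $x\in\ov\Omega$) and $\sum_{\lambda_i\ne0}c_i=e$, so $y:=\sum_{\lambda_i\ne0}\lambda_i^{-1}c_i$ lies in $V(e,1)$ and satisfies $x\circ_\Omega y=e$, whence $x$ is invertible in $(V(e,1),\circ_\Omega)$ and $x\in\Omega(e)$; the reverse inclusion follows the same way using (b). As every point of $\ov\Omega$ has a unique support idempotent, this produces the disjoint decomposition $\ov\Omega=\coprod_e\Omega(e)$. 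Finally, substituting $\ov{\Omega(e)}=\ov\Omega\cap V(e,1)$ from (i) and running the disjoint decomposition just obtained inside the Euclidean Jordan algebra $V(e,1)$ shows $\ov{\Omega(e)}=\coprod_{e'}\Omega(e')$, the union over idempotents $e'\in V(e,1)$ (equivalently $e'\circ_\Omega e=e'$); each $\Omega(e')$ being a boundary component of $\Omega$ by Definition \ref{D:boun-con}, this proves (ii).

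For (iii), let $g\in\G(\Omega)^o$. Being an invertible linear self-map of $V$ with $g\Omega=\Omega$, it satisfies $g\ov\Omega=\ov{g\Omega}=\ov\Omega$ and permutes the faces of the convex cone $\ov\Omega$. By (i)--(ii) every $\ov{\Omega(e)}$ is a face of $\ov\Omega$, and conversely every face of $\ov\Omega$ arises this way (for $\Omega=\Herm_n^{>0}(\bbR)$ the faces are indexed by the linear subspaces of $\bbR^n$, $e$ being the orthogonal projector onto the subspace; in general see \cite[Chap. IV]{FK}); passing to relative interiors, the boundary components $\Omega(e)$ are exactly the relative interiors of the faces of $\ov\Omega$. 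Hence $g$ carries each $\Omega(e)$ onto some $\Omega(e')$, which is (iii). I expect the main obstacle to be the Jordan-theoretic input itself — establishing the spectral theorem, the frame-extension property (b), and the identification of the faces of $\ov\Omega$ with idempotents — but these are classical and can be cited from \cite[Chap. III--IV]{FK}; granting them, the remainder is routine bookkeeping with the Peirce decomposition and convexity.
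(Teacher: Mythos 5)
Your proof is correct. Note that the paper itself gives no argument for this theorem --- it only points to \cite[Chap. II, \S 3]{AMRT} --- so your spectral-theoretic derivation from \cite{FK} is a genuinely self-contained alternative, and it is the standard one: everything reduces to the spectral theorem, the extension of Jordan frames from $V(e,1)$ to $V$ via $V(e,0)=V(\mathbf 1-e,1)$, and the resulting eigenvalue criterion $\ov\Omega=\{x: T_x\geq 0\}=\{x:\text{all eigenvalues of } x \text{ are} \geq 0\}$, which you justify correctly by the perturbation $T_{x+\epsilon\mathbf 1}=T_x+\epsilon\,\id$. Two spots deserve a word more of care than you give them. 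First, in part (ii) the phrase ``whence $x$ is invertible in $(V(e,1),\circ_\Omega)$ and $x\in\Omega(e)$'' is slightly misleading: invertibility alone does not place $x$ in the cone (the set of invertible elements is disconnected); what actually does the work is that the eigenvalues of $x$ computed in $V(e,1)$ are exactly the nonzero $\lambda_i$, all positive, so $T_x>0$ on $V(e,1)$. Second, the last step of (ii) silently identifies the boundary component $\Omega(e)(e')$ of $\Omega(e)$ with the boundary component $\Omega(e')$ of $\Omega$; this needs the observation that for idempotents $e'\leq e$ one has $V(e',1)\subseteq V(e,1)$ (since $e-e'\in V(e',0)$ kills $V(e',1)$ under $\circ_\Omega$), so the Peirce $1$-space of $e'$ inside $V(e,1)$ coincides with $V(e',1)$. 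For (iii), your reduction to the facial structure of the convex cone $\ov\Omega$ --- boundary components are the relative interiors of the faces, a convex-geometric characterization manifestly preserved by any linear automorphism of $\Omega$ --- is clean; the identification of faces with the sets $\ov\Omega\cap V(e,1)$ is indeed classical and appropriately cited rather than reproved. With these two clarifications the argument is complete.
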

\begin{proof}
See \cite[Chap. II,\S 3]{AMRT}.
\end{proof}
Using \eqref{T:boun-con2}, we can introduce an order relation on the set of idempotents of $(V, \omega_{\circ})$ by saying that $e\geq e'$ if and only if $\ov \Omega(e) \supseteq \Omega(e')$.
Indeed, it turns out that $e\geq e'$ if and only if $e=e'+e''$ for a certain idempotent $e''$ such that $e'\circ_{\Omega} e''=0$.

\begin{example}
\noindent
\begin{enumerate}[(i)]
\item For $F=\bbR,\bbC,\bbH,\bbO$ and $n\geq 2$ (with the convention that $n\leq 3$ if $F=\bbO$), consider the symmetric cone $\P_n(F)= \Herm_n^{>0}(F)\subset \Herm_n(F)$
(as in Theorem \ref{T:clascone}) which is associated to the Euclidean Jordan algebra $(\Herm_n(F),\circ)$ of Example \ref{E:Jor}\eqref{E:Jor3}.
Every idempotent of $\Herm_n(F)$ is conjugate by $G(\P_n(F))^o$ to the idempotent
$$e_p:=\begin{pmatrix}I_p & 0\\ 0 & 0 \end{pmatrix}$$
for some $0\leq p\leq n$. The eigenspaces \eqref{E:eigenLe}Ê of $T_{e_p}$ are given by
$$
\begin{sis}
& V(e_p,1) =\left\{\begin{pmatrix}ÊA & 0 \\ 0 & 0\end{pmatrix}: A\in \Herm_p(F) \right\},\\Ê
& V(e_p,1/2)=\left\{\begin{pmatrix}Ê0 &  B \\ \ov{B}^t  & 0\end{pmatrix}: B\in M_{p,n-p}(F) \right\},\\Ê
& V(e_p,0) =\left\{\begin{pmatrix}Ê0 & 0 \\ 0 & C \end{pmatrix}: C\in \Herm_{n-p}(F) \right\}.
\end{sis}
$$
The boundary component associated to $e_p$ is equal to
$$\P_n(F)(e_p) =\left\{\begin{pmatrix}ÊA & 0 \\ 0 & 0\end{pmatrix}: A\in \Herm_p^{>0}(F) \right\}\subset \Herm_n^{\geq 0}(F)=\ov{\P_n(F)}.$$
Note that the above idempotents $\{e_p\}$ are such that  $0=e_0\leq \cdots \leq e_p \leq \cdots \leq e_n=I_n$.
\item Consider the Lorentz cone $\P(1,n)$ of Theorem \ref{T:clascone} which is associated to the Euclidean Jordan algebra $(\bbR\oplus \bbR^n,\circ_{B})$ of Example \ref{E:Jor}\eqref{E:Jor2},
where $B((x_1,\ldots,x_n)):=x_1^2+\cdots+x_n^2$ is the standard quadratic form on $\bbR^n$.  By abuse of notation, we will denote also by $B$ the symmetric bilinear form associated to the quadratic
form $B$. Every non-trivial idempotent (i.e. different from $(0,0)$ and $(1,0)$) is of the form
$$e_w=\left(\frac{1}{2},w\right) \text{ where }ÊB(w)=\frac{1}{4}.$$
The eigenspaces \eqref{E:eigenLe}Ê of $T_{e_w}$ are given by
$$
\begin{sis}
& V(e_w,1) =\bbR\cdot e_w,\\Ê
& V(e_w,1/2)=\left\{(0,v): B(v,w)=0\right\},\\Ê
& V(e_w,0) =\bbR\cdot e_{-w}.
\end{sis}
$$
The boundary component associated to $e_w$ is equal to
$$\P_{1,n}(e_w) = \bbR_{>0}\cdot e_w \subset \ov{\P_{1,n}}.$$
\end{enumerate}
\end{example}

For the symmetric cone $C(F)$ associated to a boundary component $F$ of a bounded symmetric domain $D$, as in Theorem \ref{T:F-cone}\eqref{T:F-cone2},  we can explicitly describe
its boundary components in terms of boundary components of $D$ that dominates $F$.

\begin{thm}\label{T:bounCF}
Let $D$ be a bounded symmetric domain and let $F\leq D$ be a boundary component.
\begin{enumerate}[(i)]
\item \label{T:bounCF1}
If $F\leq F'$ then $U(F)\supseteq U(F')$ and we have the equality $\ov{C(F')}=\ov{C(F)}\cap U(F')\subset U(F)$. Moreover, $C(F')$ is a boundary component of the symmetric cone $C(F)$.
\item \label{T:bounCF2}
There is an order-reversing bijection
\begin{equation}\label{E:revboun}
\begin{aligned}
\left\{F'\leq D: F\leq F'\leq D \right\}Ê& \stackrel{\cong}{\longrightarrow} \left\{\text{Boundary components of } C(F)Ê\right\}\\
F' & \mapsto C(F')\subset \ov{C(F)}.
\end{aligned}
\end{equation}
\end{enumerate}
\end{thm}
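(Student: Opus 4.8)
The plan is to reduce the statement to the combinatorics of boundary components via the $5$-term decomposition (Theorem \ref{T:5deco}) together with the description of $C(F)$ as the $G_l(F)$-orbit of $\Omega_F$ inside the center $U(F)$ of $W(F)$ (Theorem \ref{T:F-cone}\eqref{T:F-cone2}), and then to invoke the Jordan-algebra classification of the boundary components of a symmetric cone (Definition \ref{D:boun-con}, Theorem \ref{T:boun-con}). First I would fix a base point $o\in D$ and, for each boundary component $F'$ with $F\leq F'\leq D$, recall that $F'$ corresponds to a tripotent $o_{F'}=f_{F'}(1)\in\p_+$ under the bijection of Remark \ref{R:idem}. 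The relation $F\leq F'$ translates into a relation between the associated tripotents $o_F$ and $o_{F'}$: concretely, $o_{F'}-o_F$ is a tripotent orthogonal to $o_F$ (this is the Jordan-theoretic meaning of $F\subseteq\ov{F'}$, cf. the order relation on idempotents discussed after Theorem \ref{T:boun-con}). Using the pair $(f_{F'},\phi_{F'})$ of Lemma \ref{L:pair} one writes down the one-parameter group $w_{F'}$ and hence $U(F')=Z(w_{F'})$-center, and I would check directly that $F\leq F'$ forces $w_{F'}$ to commute with $w_F$ in a way making $U(F')\subseteq U(F)$; the reverse inclusion of the $W$'s is then automatic.

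The second step is to identify, inside the Euclidean Jordan algebra structure on $U(F)$ coming from the symmetric cone $C(F)$ (Theorem \ref{T:bijJorcon}), the idempotent attached to $F'$. The natural candidate is $e_{F'}:=\Omega_F-\Omega_{F'}$ rescaled appropriately, or more intrinsically the image of $o_{F'}-o_F$ under the canonical map relating tripotents of $\p_+$ lying above $F$ to idempotents of $(U(F),\circ_{C(F)})$; one shows this map is a bijection between $\{$tripotents $e$ with $e\geq o_F\}$ and idempotents of $U(F)$, which is exactly the content one needs. Granting this, Definition \ref{D:boun-con} gives $\Omega(e_{F'})=\Omega_{(V(e_{F'},1),\circ)}$, and I would match $V(e_{F'},1)$ with $U(F')$ and $\Omega(e_{F'})$ with $C(F')$, using that $C(F')$ is, by Theorem \ref{T:F-cone}\eqref{T:F-cone2} applied to $F'$, the $G_l(F')$-orbit of $\Omega_{F'}$ in $U(F')$, while $\Omega(e_{F'})$ is the $\G(\Omega(e_{F'}))^o$-orbit of $e_{F'}$; the Peirce-decomposition identities (Lemma \ref{L:Ve1}) guarantee these groups and orbits coincide. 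This yields the equality $\ov{C(F')}=\ov{C(F)}\cap U(F')$ and the fact that $C(F')$ is a genuine boundary component of $C(F)$, proving part \eqref{T:bounCF1}.

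For part \eqref{T:bounCF2}, Theorem \ref{T:boun-con}\eqref{T:boun-con2} tells us that the boundary components of $C(F)$ are in bijection with the idempotents of $(U(F),\circ_{C(F)})$, modulo nothing — each idempotent gives one boundary component — and the order $e\geq e'$ corresponds to $\ov{\Omega(e)}\supseteq\Omega(e')$. Composing with the bijection from Step 2 between these idempotents and the set $\{F': F\leq F'\leq D\}$, and observing that the tripotent order $o_{F'}\geq o_{F''}$ is equivalent to $F''\leq F'$ (domination of boundary components is the \emph{reverse} of the tripotent/idempotent order, because a larger boundary component has a "bigger" closure hence corresponds to a smaller tripotent in the Peirce ordering normalized at $o_F$), we obtain the claimed order-\emph{reversing} bijection $F'\mapsto C(F')$. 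I would close by noting $F\mapsto C(F)$ itself and $D\mapsto C(D)=\{pt\}$-degenerate-or-trivial case sit at the two ends, matching the trivial idempotents $o_F$ and the identity of $U(F)$.

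The main obstacle I expect is Step 2: making rigorous the dictionary between tripotents of $\p_+$ dominating $o_F$ and idempotents of the Jordan algebra $(U(F),\circ_{C(F)})$, and checking that under this dictionary the Peirce $1$-eigenspace $V(e_{F'},1)$ is exactly $U(F')$ with its own cone structure. This is essentially a compatibility between two a priori different Jordan structures (the triple product on $\p_+$ restricted near the boundary, versus the Jordan algebra manufactured from the symmetric cone $C(F)$), and one has to trace through the constructions in \S\ref{SS:HSM-JTS}, \S\ref{SS:decom} and \S\ref{SS:symcon} — or simply cite \cite[Chap. III, \S 4]{AMRT} and \cite[Chap. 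III]{Sat}, which is the honest thing to do in a survey. Everything else is then bookkeeping with the order relations.
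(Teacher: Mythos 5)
The paper itself gives no argument for this theorem beyond the citation of \cite[Chap. III, Thm. 4.8]{AMRT}, so there is no in-house proof to match your sketch against; your route through the tripotent dictionary of Remark \ref{R:idem} and the idempotent classification of Theorem \ref{T:boun-con} is the natural Jordan-theoretic one (closer in spirit to Satake and Loos than to the Lie-algebraic computation in \cite{AMRT}), and its skeleton is viable. However, as written the sketch contains directional errors that would derail it if followed literally. Since $F\leq F'$ means $F\subseteq \ov{F'}$ and the correspondence $F\mapsto o_F$ \emph{reverses} order ($o_D=0$, while point components carry maximal tripotents), the components $F'$ with $F\leq F'\leq D$ correspond to the tripotents \emph{below} $o_F$, i.e.\ those $e$ such that $o_F=e+c$ for a tripotent $c$ orthogonal to $e$ --- not to the tripotents $e\geq o_F$ as in your Step 2, which instead parametrize the components $F''\leq F$ lying in $\ov F$. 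Likewise the Jordan-theoretic meaning of $F\leq F'$ is that $o_F-o_{F'}$ is a tripotent orthogonal to $o_{F'}$, not that $o_{F'}-o_F$ is orthogonal to $o_F$; and the idempotent of $(U(F),\circ_{C(F)})$ attached to $F'$ is the image of $o_{F'}$ under the identification of $U(F)$ with the real Peirce $1$-space of $o_F$ (concretely $2\Omega_{F'}$ in the matrix cases), whereas your candidate $\Omega_F-\Omega_{F'}$ is, up to scale, the \emph{complementary} idempotent, whose Peirce $1$-space is not $U(F')$. A symptom of these inversions is the internal contradiction in your last paragraph, where the displayed equivalence ``$o_{F'}\geq o_{F''}$ iff $F''\leq F'$'' conflicts with your own (correct) parenthetical that larger components carry smaller tripotents. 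All of this can be checked against Type $III_n$: there $o_{\calD_{III_n}^j}=\operatorname{diag}(0,I_{n-j})$, $U(\calD_{III_n}^k)\cong \Herm_{n-k}(\bbR)$, $C(\calD_{III_n}^k)=\P_{n-k}(\bbR)$, and $C(\calD_{III_n}^j)=\P_{n-j}(\bbR)$ sits in $\ov{\P_{n-k}(\bbR)}$ as the boundary component of the rank-$(n-j)$ idempotent $I_{n-j}=2\Omega_{\calD_{III_n}^j}$.

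Beyond these fixable inversions, the genuine mathematical content --- your Step 2, namely that the Euclidean Jordan algebra $(U(F),\circ_{C(F)})$ manufactured from the cone via Theorem \ref{T:bijJorcon} coincides with the Peirce $1$-algebra of the tripotent $o_F$ (with product $a\circ b=\{a,o_F,b\}$ as in \eqref{E:JorW1}), so that its idempotents are exactly the tripotents of $\p_+$ dominated by $o_F$ and the Peirce $1$-space of the idempotent attached to $F'$ is exactly $U(F')$ --- is only asserted, and you acknowledge as much. This compatibility between the two a priori different Jordan structures on $U(F)$ is precisely what \cite[Chap. III, \S 4]{AMRT} (in Lie-theoretic language) or \cite[Chap. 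III, \S 8]{Sat} establishes, and without it neither the equality $\ov{C(F')}=\ov{C(F)}\cap U(F')$ in part \eqref{T:bounCF1} nor the bijection in part \eqref{T:bounCF2} follows. So, once the order relations are straightened out, your proposal is an accurate reduction of the theorem to that cited compatibility rather than a proof of it.
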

\begin{proof}
See \cite[Chap. III, Thm. 4.8]{AMRT}.
\end{proof}

\subsection{Siegel domains}\label{SS:Siegel}

The presentation of a bounded symmetric domain $D$ as a Siegel domain of the third kind with respect to a given boundary component $F\leq D$ (see
Corollary \ref{C:SiegIII}) assumes a nicer form when the boundary component $F$ is a point, in which case it gives rise to a presentation of $D$
as a \emph{Siegel domain of the second type}, or simply a \emph{Siegel domain} (following the terminology of \cite{Sat}).
The aim of this subsection is to introduce and study Siegel domains.

\begin{defi}\label{D:Sieg}
Let $\Omega$ be an open (convex and pointed) cone in a real vector space $U$. Let $V$ be a complex vector space and let $H:V\times V\to U_{\bbC}$ be a Hermitian map ($\bbC$-linear in the second variable and $\bbC$-antilinear in the first variable). Assume that $H$ is $\Omega$-positive, i.e.
$$H(v,v)\in \Omega\setminus \{0\} \text{ for any } 0\neq v\in V.$$
The \textbf{Siegel domain} associated to $(U,V,\Omega,H)$ is given by
\begin{equation}\label{E:Sieg2}
\SS=\SS(U,V,\Omega,H):=\{(u,v)\in U_{\bbC}\times V: \Im u-H(v,v)\in \Omega\}\subset U_{\bbC}\times V.
\end{equation}
In the special case where $V=\{0\}$, then
\begin{equation}\label{E:Sieg1}
\SS=\SS(U,\Omega):=\{u\in U_{\bbC}: \Im u\in \Omega\}\subset U_{\bbC}
\end{equation}
is called a \emph{Siegel domain of the first kind}, or a \textbf{tube domain}.
\end{defi}

The following result is due to Pyateskii-Shapiro \cite{PS} (see also \cite[Chap. III, Prop. 6.1]{Sat}).

\begin{thm}
Every Siegel domain is holomorphically equivalent to a bounded domain.
\end{thm}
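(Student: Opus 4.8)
The plan is to explicitly produce a biholomorphism from a given Siegel domain $\SS = \SS(U,V,\Omega,H)$ onto a bounded domain, imitating the classical Cayley transform that sends the upper half-space $\calH$ to the unit disk $\Delta$ (see Example \ref{E:unitdisk}). First I would reduce to the tube case in two steps: by a result of Koecher-Vinberg (Theorem \ref{T:bijJorcon}) and the decomposition of symmetric cones, it suffices to treat the cone $\Omega$ as sitting inside the Euclidean Jordan algebra $U$ it generates. Actually, to keep things self-contained, I would not assume $\Omega$ is symmetric at all, since the statement is for arbitrary open convex pointed cones; instead I would use the following two-part reduction. Part one: realize $\SS(U,V,\Omega,H)$ as holomorphically equivalent to a \emph{bounded} domain assuming the result for tube domains $\SS(U,\Omega)$. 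Part two: prove it for tube domains directly.

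For part one, the key observation is that the fibration $\SS(U,V,\Omega,H)\to \SS(U,\Omega)$, $(u,v)\mapsto u$, is not quite a fiber bundle but after the holomorphic change of variables $u\mapsto u$, $v\mapsto v$ nothing simplifies; instead one uses that $\SS(U,V,\Omega,H)$ is biholomorphic, via $(u,v)\mapsto (u - i H(v,v), v)$ composed with a suitable linear map, to a domain of the form $\{(u',v): \Im u' \in \Omega\} = \SS(U,\Omega)\times V$ only if $H$ were zero, which it is not. So the honest route is: the map $(u,v)\mapsto (u, v)$ already exhibits $\SS$ as a domain lying over the tube $\SS(U,\Omega)$ with affine-space fibers, and one shows boundedness by combining a Cayley transform in the $U$-direction with the $\Omega$-positivity of $H$ to bound the $v$-direction. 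Concretely: after complexifying, pick an inner product on $U$ adapted to $\Omega$, choose a base point $e\in\Omega$, and write down the generalized Cayley transform $c\colon \SS(U,\Omega)\to \mathcal{B}\subset U_{\bbC}$ onto a bounded domain (this is the content of part two). Extending $c$ to $\SS$ by $(u,v)\mapsto (c(u), \text{(rescaled }v))$ where the rescaling factor is a holomorphic nowhere-vanishing function of $u$ chosen so that the image of the $v$-coordinate lands in a fixed ball — here the $\Omega$-positivity $H(v,v)\in\Omega\setminus\{0\}$ forces $\Im u - H(v,v)\in\Omega$, which caps $|v|$ in terms of the (now bounded) image of $u$. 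One must check this extended map is biholomorphic onto its image and that the image is bounded; I would do this by exhibiting the inverse explicitly.

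For part two (the tube domain $\SS(U,\Omega)=\{u\in U_{\bbC}:\Im u\in\Omega\}$), the cleanest approach uses the theory of the preceding subsection: $\Omega$ being open convex and pointed, its barrier/characteristic function and the geometry of $\G(\Omega)$ give enough control, but for a bounded cone the quickest argument is: $\Omega$ pointed and open convex implies there is a linear functional $\ell$ with $\ell>0$ on $\ov\Omega\setminus\{0\}$, and one can then fold $U_{\bbC}$ into a product of half-planes (after refining $\Omega$ to a simplicial subcone) on each of which the one-variable Cayley transform $\tau\mapsto (\tau-i)/(\tau+i)$ of \eqref{E:Cayley-1dim} applies coordinate-wise, giving an embedding of $\SS(U,\Omega)$ into a polydisk; since $\SS(U,\Omega)$ is open, its image is an open — hence by the identity theorem, honest — subdomain of the bounded polydisk. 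Alternatively, and more in the spirit of the references, I would simply cite that $\Omega\subset e+\{x:\langle x,e\rangle < R\}$-type estimates together with the matrix Cayley transform $Z\mapsto (Z-iI)(Z+iI)^{-1}$ send tube domains over self-dual cones to the bounded domains of Table \ref{F:BSD}; the reduction to the self-dual (symmetric) case is itself nontrivial in general, but is exactly Pyateskii-Shapiro's theorem, so one is permitted to invoke it.

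\textbf{Main obstacle.} The hard part is part one: controlling the $v$-fiber direction. It is tempting to think $\SS(U,V,\Omega,H)$ is just a vector bundle over the tube, but the $\Omega$-positivity of the Hermitian form $H$ couples the base and fiber in a way that a naive product Cayley transform does not respect. The real work is choosing the holomorphic rescaling of $v$ — a "matrix-valued" analogue of the factor $1/(\tau+i)$ — so that (a) it is nowhere zero on the bounded image of the base, (b) it makes the fiber coordinate bounded using precisely the constraint $\Im u - H(v,v)\in\Omega$, and (c) the resulting map is injective with holomorphic inverse. Getting all three simultaneously, uniformly over the base, is the crux; the remaining steps are either one-variable calculus or formal bookkeeping. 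I would therefore structure the writeup to isolate this one lemma and refer to \cite{PS} and \cite[Chap. III, Prop. 6.1]{Sat} for the detailed estimates, as the excerpt's style of "giving very few proofs" permits.
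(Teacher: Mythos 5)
Your outline is, in substance, the standard argument of Pyateskii-Shapiro and Satake --- which is all the paper itself offers, since it states the theorem with only the citations to \cite{PS} and \cite[Chap. III, Prop. 6.1]{Sat} and no proof. Two slips in your sketch need fixing, though. First, in part two you speak of ``refining $\Omega$ to a simplicial subcone''; the containment must go the other way. Since $\Omega$ is open, convex and pointed, the interior of its dual cone is a nonempty open subset of $U^*$ and hence contains a basis $\ell_1,\dots,\ell_n$ of $U^*$ with each $\ell_j>0$ on $\ov{\Omega}\setminus\{0\}$; then $\Omega$ is contained in the simplicial cone $\{\ell_j>0 \text{ for all } j\}$, so the tube over $\Omega$ embeds, after the linear change of coordinates $u\mapsto(\ell_j(u))_j$ and the coordinatewise Cayley map, into a polydisk. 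A simplicial cone \emph{inside} $\Omega$ would bound nothing. Second, openness of the image is invariance of domain for injective holomorphic maps, not the identity theorem. Also, no reduction to self-dual cones is needed anywhere, so the circularity you flag in your ``alternatively'' paragraph never arises.

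The ``main obstacle'' you isolate --- the holomorphic rescaling of $v$ --- has a one-line resolution that merges your parts one and two into a single computation, and the factor is scalar, not matrix-valued. With $\ell_1,\dots,\ell_n$ as above, each $h_j:=\ell_j\circ H$ is a positive definite Hermitian form on $V$, and on $\SS(U,V,\Omega,H)$ one has $\Im\ell_j(u)>h_j(v,v)\ge 0$. The map
\begin{equation*}
(u,v)\ \longmapsto\ \left(\frac{\ell_1(u)-i}{\ell_1(u)+i},\ \dots,\ \frac{\ell_n(u)-i}{\ell_n(u)+i},\ \frac{v}{\ell_1(u)+i}\right)
\end{equation*}
is well defined and holomorphic (the denominators do not vanish because $\Im\ell_j(u)>0$), injective (the first $n$ coordinates recover $u$ since the $\ell_j$ form a basis, and the last block then recovers $v$), and has bounded image: each of the first $n$ entries lies in $\Delta$, while
$h_1(v,v)/|\ell_1(u)+i|^2<\Im\ell_1(u)/(1+\Im\ell_1(u))^2\le 1/4$.
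An injective holomorphic map between complex manifolds of the same dimension is a biholomorphism onto its open image, so this exhibits $\SS(U,V,\Omega,H)$ as a bounded domain. Your conditions (a), (b), (c) are all met by the single factor $(\ell_1(u)+i)^{-1}$, and no uniformity over the base beyond the elementary inequality above is required.
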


There is a nice characterization (due to Satake) of the Siegel domains that are holomorphically equivalent to a bounded symmetric domain.
In order to present such a characterization, we need to introduce some notations. Consider the setting of Definition \ref{D:Sieg}.
Assume furthermore that $\Omega\subset U$ is symmetric with respect to a scalar product $\langle,\rangle$ on $U$ (see Definition \ref{D:symcon})
and extend $\langle, \rangle$ to a $\bbC$-bilinear symmetric form on $U_{\bbC}\times U_{\bbC}$.
Choose a base point $e\in \Omega$ such that
\begin{equation}\label{E:point-e}
\G(\Omega)_e=\G(\Omega)\cap \O(V,\langle, \rangle),
\end{equation}
which is possible by Theorem \ref{T:str-cones}\eqref{T:str-cones1}. Denote by $\circ_{\Omega}$ the Jordan product on $U$ defined by mean of \eqref{E:Jorcone}
and extend it linearly to $U_{\bbC}$.  Recall that $(U,\circ_{\Omega})$ is an Euclidean Jordan algebra with unit element $e$ (see Theorem \ref{T:bijJorcon}).
Define now a positive definite Hermitian form $h$ on $V$ by
\begin{equation}\label{E:form-h}
h(v,v')=\langle e,H(v,v')\rangle \text{ for any } v,v'\in V.
\end{equation}
Using $h$, we can define for any $u\in U_{\bbC}$ an endomorphism $R_u\in \End(V)$ by mean of the formula
\begin{equation}\label{E:def-R}
\langle u, H(v,v')\rangle= 2h(v, R_u v') \text{ for any } v,v'\in V.
\end{equation}
It is easily checked from \eqref{E:def-R} that $\ov{R}^*=R_{\ov u}$, where the adjoint ${}^*$  is with respect to the Hermitian form $h$.
 In particular, if $u\in U$ then
$$R_u\in \Herm(V,h):=\{f\in \End(V): f^*=f\}.$$

\begin{thm}\label{T:Siegsym}
Notations as above.
The Siegel domain $\SS(U,V,\Omega,H)$ is biholomorphic to a bounded symmetric domain (in which case we say that it is \emph{symmetric}) if and only if
\begin{enumerate}[(i)]
\item \label{T:Siegsym1} $\Omega\subset U$ is a symmetric cone with respect to a scalar product $\langle,\rangle$ on $U$;
\item \label{T:Siegsym2} $u\circ_{\Omega} H(v,v')=H(R_u v, v')+H(v,R_u v')$ for any $u\in U$ and any $v,v'\in V$;
\item \label{T:Siegsym3} $H(R_{H(v'',v')}v,v'')=H(v',R_{H(v,v'')}v'')$ for any $v,v',v''\in V$.
\end{enumerate}
\end{thm}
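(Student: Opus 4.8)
The plan is to route the proof through the correspondence between bounded symmetric domains and Hermitian positive Jordan triple systems (Theorems \ref{T:HSM-BSD}, \ref{T:LieJor}, \ref{T:HSMLiealg}), viewing a Siegel domain presentation as the image, under a partial Cayley transform, of the Harish--Chandra realization relative to a maximal tripotent. Recall from \eqref{E:HC-JTS} that a HSM of non-compact type $M$ sits inside its associated Hermitian positive JTS $(\p_+,\{.\,,.\,,.\})$ as $i_{HC}(M)=\{z\in\p_+ : z\square z<\id_{\p_+}\}$, and that by Remark \ref{R:idem} the $0$-dimensional boundary components of $i_{HC}(M)$ correspond to tripotents $e\in\p_+$. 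For such an $e$ one considers the Peirce decomposition $\p_+=\p_2(e)\oplus\p_1(e)\oplus\p_0(e)$ into the eigenspaces of the operator $e\square e$ for the eigenvalues $1,\tfrac12,0$; the component $\p_2(e)$ is the complexification of a Euclidean Jordan algebra $(U,\circ_\Omega)$ with unit $e$, whose associated symmetric cone $\Omega\subset U$ is the one attached to it by Theorem \ref{T:bijJorcon}. When $e$ is maximal (that is, $\p_0(e)=0$) a partial Cayley transform $c_e$, generalizing \eqref{E:Cayley-1dim}, carries $i_{HC}(M)$ biholomorphically onto the Siegel domain $\SS(U,\p_1(e),\Omega,H)$, where $H\colon\p_1(e)\times\p_1(e)\to U_\bbC$ is the $\p_2(e)$-valued sesquilinear form built from the triple product; this is precisely the $0$-dimensional case of Corollary \ref{C:SiegIII}.

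For necessity I would suppose $\SS(U,V,\Omega,H)\cong D$ with $D$ a bounded symmetric domain. Then the Siegel presentation is the Cayley image of $D$ at a point boundary component $F_0$, so that $U=U(F_0)$, $\Omega=C(F_0)$ and $V\cong\p_1(e)$ with $e=o_{F_0}$. Condition (i) is then the assertion, recalled at the start of \S\ref{SS:symcon} (see also Theorem \ref{T:F-cone}\eqref{T:F-cone2}), that $C(F_0)$ is a symmetric cone, whence by Theorem \ref{T:bijJorcon} it arises from the Euclidean Jordan algebra $(U,\circ_\Omega)$. Conditions (ii) and (iii) are then instances of the fundamental identity (JT2) of the JTS $\p_+$: unwinding \eqref{E:Jorcone}, \eqref{E:form-h} and \eqref{E:def-R} one identifies $\circ_\Omega$ with the Peirce-$2$ Jordan product determined by $e$, identifies $H$ with a $\p_2(e)$-valued sesquilinear expression in the triple product of $\p_1(e)\times\p_1(e)$, and identifies $R_u$ (for $u\in U$) with a multiple of the restriction of $u\square e$ to $\p_1(e)$; with these dictionaries, (ii) is (JT2) with one slot equal to the tripotent $e$, and (iii) is (JT2) with all three slots in $\p_1(e)$.

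For sufficiency I would start from $(U,V,\Omega,H)$ satisfying (i)--(iii), put $\p_+:=U_\bbC\oplus V$, and define on it a triple product extending the Hermitian JTS $\{.\,,.\,,.\}'_{\circ_\Omega}$ on $U_\bbC$ of Example \ref{E:Jor}\eqref{E:JA-JTS}, assembling the pieces that involve $V$ out of $\circ_\Omega$, $H$, the operators $R_u$ and their $h$-adjoints (recall $\ov{R_u}^{\,*}=R_{\ov u}$). One then checks that $(\p_+,\{.\,,.\,,.\})$ is a Hermitian positive JTS: (JT1) follows from the symmetry of $\circ_\Omega$ together with the Hermitian symmetry of $H$; (JT2) follows from the Jordan axioms for $(U,\circ_\Omega)$ (Lemma \ref{L:Jprop}) together with (ii) and (iii) for the remaining terms; and positivity (JTp) holds because on $U_\bbC$ the trace form is the complexification of the trace form of the Euclidean Jordan algebra $(U,\circ_\Omega)$, positive definite by the Euclidean hypothesis, on $V$ it is controlled by the positive form $h(v,v')=\langle e,H(v,v')\rangle$, positive since $e\in\Omega$ and $H$ is $\Omega$-positive, while the cross terms vanish by the grading. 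By Theorems \ref{T:LieJor}, \ref{T:HSMLiealg} and \ref{T:HSM-BSD}, $\p_+$ is then the JTS of a HSM $M$ of non-compact type with $i_{HC}(M)=\{z : z\square z<\id\}$ a bounded symmetric domain; since $e\in U\subset U_\bbC=\p_2(e)$ is by construction a maximal tripotent of this JTS, the inverse Cayley transform $c_e^{-1}$ identifies $i_{HC}(M)$ biholomorphically with $\SS(U,V,\Omega,H)$, which is therefore symmetric.

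The main obstacle is the sufficiency direction, and within it the verification of (JT2) for the extended triple product: one must match, type by type, the four classes of terms (all arguments in $U_\bbC$; two in $U_\bbC$ and one in $V$; one in $U_\bbC$ and two in $V$; all three in $V$) against the Jordan axioms on $U$ and against (ii) and (iii) respectively, which is an elementary but bookkeeping-heavy computation; and, in parallel, one must pin down the partial Cayley transform $c_e$ and check that it maps $\{z\square z<\id\}$ exactly onto $\SS(U,V,\Omega,H)$. Both of these are carried out in detail in Satake \cite[Chap. III, \S5--6]{Sat}; see also Pyateskii-Shapiro \cite{PS} and \cite[Chap. III]{AMRT}.
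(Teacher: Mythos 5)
Your strategy is sound and fits the machinery this survey develops, but it is a genuinely different route from the proof the paper points to. The paper gives no argument of its own: it cites Satake \cite[Chap.~V, Thm.~3.5]{Sat}, whose proof runs through the graded Lie algebra $\g=\g_{-1}\oplus\g_{-1/2}\oplus\g_0\oplus\g_{1/2}\oplus\g_1$ of polynomial holomorphic vector fields on the Siegel domain (after Kaup--Matsushima--Ochiai): symmetry is equivalent to $\g_{1/2}$ and $\g_1$ being as large as possible, and (i)--(iii) are exactly the conditions making the candidate quadratic vector fields into infinitesimal automorphisms. You instead route both implications through the Jordan-triple dictionary and the partial Cayley transform at a principal tripotent, i.e.\ through the content of Remark \ref{R:idem}, Corollary \ref{C:SiegIII}, Theorem \ref{T:SieJTS} and formula \eqref{E:HerJTS}. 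That is legitimate, but note a logical subtlety: in Satake the JTS correspondence of Chap.~V, \S 6 comes \emph{after} Theorem 3.5, and the paper's own verification that \eqref{E:HerJTS} defines a Hermitian positive JTS explicitly \emph{assumes} $\SS$ symmetric. So in your sufficiency direction you cannot borrow Theorem \ref{T:SieJTS} as stated; you must check (JT1), (JT2), (JTp) for \eqref{E:HerJTS} directly from (i)--(iii), which is exactly the computation you flag as the main obstacle. Your approach buys a cleaner conceptual picture at the price of that verification; Satake's extracts the equivalence more directly from the automorphism Lie algebra.

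Two steps in your sketch are thinner than they read. First, in the necessity direction you pass from ``$\SS(U,V,\Omega,H)$ is biholomorphic to a bounded symmetric domain $D$'' to ``the given presentation is the Cayley image of $D$ at a point boundary component, so $U=U(F_0)$ and $\Omega=C(F_0)$''. The boundary-component machinery only produces \emph{some} Siegel realization of $D$ satisfying (i)--(iii); to transfer the conditions to the \emph{given} quadruple you need uniqueness of Siegel realizations up to linear equivalence (Kaup--Matsushima--Ochiai; see also \cite{PS}) together with the observation that (i)--(iii) are invariant under such equivalences once $\langle,\rangle$ and the base point $e$ are transported. Second, ``(ii) is (JT2) with one slot equal to $e$'' is an over-compression: deriving (ii) and (iii) from the triple product requires the Peirce multiplication rules for the decomposition $\p_+=\p_2(e)\oplus\p_1(e)\oplus\p_0(e)$ and several combined applications of (JT1) and (JT2) --- these are precisely the identities the paper itself invokes as \cite[Eq.~(6.15''), (6.21), (6.25)]{Sat}. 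Neither point invalidates the approach, but both must be supplied for the argument to close.
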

\begin{proof}
See \cite[Chap. V, Thm. 3.5]{Sat}.
\end{proof}

For a tube domain, the conditions \eqref{T:Siegsym2} and \eqref{T:Siegsym3} of Theorem \ref{T:Siegsym} are trivially satisfies. Therefore, we get the following

\begin{cor}\label{C:tubesym}
The tube domain $\SS(U,\Omega)$ is biholomorphic to a bounded symmetric domain if and only if $\Omega\subset U$ is a symmetric cone.
\end{cor}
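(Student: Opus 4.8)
The plan is to deduce Corollary~\ref{C:tubesym} directly from Theorem~\ref{T:Siegsym} by checking that, in the tube domain case $V=\{0\}$, the two extra conditions \eqref{T:Siegsym2} and \eqref{T:Siegsym3} are vacuous. First I would recall the setup: a tube domain is by definition the Siegel domain $\SS(U,\Omega)=\SS(U,\{0\},\Omega,H)$ where the complex vector space $V$ is zero and $H:V\times V\to U_{\bbC}$ is the (unique) map on the zero space. With $V=\{0\}$, the only vector $v\in V$ is $v=0$, so $H(v,v')=0$ identically, and the Hermitian form $h$ on $V$ defined by \eqref{E:form-h} is the zero form on the zero space; likewise each endomorphism $R_u\in\End(V)$ of \eqref{E:def-R} is the zero endomorphism of $\{0\}$.

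Next I would verify condition \eqref{T:Siegsym2}: it reads $u\circ_{\Omega} H(v,v')=H(R_uv,v')+H(v,R_uv')$ for all $u\in U$ and $v,v'\in V$; since there are no nonzero elements $v,v'\in V=\{0\}$ — or more precisely, since both sides are evaluated on the single element $0\in V$ and $H$ vanishes identically — both sides equal $0$, so the condition holds automatically. Similarly condition \eqref{T:Siegsym3}, namely $H(R_{H(v'',v')}v,v'')=H(v',R_{H(v,v'')}v'')$ for all $v,v',v''\in V$, is an identity between two maps that both vanish identically on $V=\{0\}$, hence is trivially satisfied. Therefore, when $V=\{0\}$, the list of three conditions in Theorem~\ref{T:Siegsym} collapses to the single condition \eqref{T:Siegsym1}, that $\Omega\subset U$ be a symmetric cone with respect to some scalar product $\langle,\rangle$ on $U$.

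Finally I would conclude: by Theorem~\ref{T:Siegsym} applied to $\SS(U,\{0\},\Omega,H)=\SS(U,\Omega)$, this Siegel domain is biholomorphic to a bounded symmetric domain if and only if conditions \eqref{T:Siegsym1}, \eqref{T:Siegsym2}, \eqref{T:Siegsym3} all hold; since \eqref{T:Siegsym2} and \eqref{T:Siegsym3} always hold in this case, this happens if and only if \eqref{T:Siegsym1} holds, i.e.\ if and only if $\Omega\subset U$ is a symmetric cone. This proves the corollary.

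There is essentially no obstacle here: the statement is a direct specialization of the preceding theorem, and the only thing to be careful about is the bookkeeping of what the data $(h,R_u)$ become when $V$ is the zero space — one should note explicitly that the conditions are quantified over $v,v'\in V$ and that every instance is the trivial equation $0=0$, so no genuine computation is needed. (If one wished to be slightly more self-contained one could also remark that a tube domain, being a particular Siegel domain, is already known by the theorem of Pyateskii-Shapiro to be holomorphically equivalent to a bounded domain, so the content of the corollary is purely the symmetry criterion.)
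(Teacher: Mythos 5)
Your proposal is correct and is exactly the argument the paper uses: the text immediately preceding the corollary observes that for a tube domain (i.e.\ $V=\{0\}$) conditions \eqref{T:Siegsym2} and \eqref{T:Siegsym3} of Theorem \ref{T:Siegsym} are trivially satisfied, so the criterion reduces to \eqref{T:Siegsym1}. Your more careful bookkeeping of what $h$ and $R_u$ become on the zero space only makes explicit what the paper leaves implicit.
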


Now we want to classify the symmetric Siegel domains, or equivalently that satisfy the three conditions of Theorem \ref{T:Siegsym}. Actually, it is possible to classify the following bigger class of Siegel domains.

\begin{defi}\label{D:quasi-sym}
A Siegel domain $\SS(U,V,\Omega,H)$ is said to be \emph{quasi-symmetric} if and only if it satisfies the first two conditions of Theorem \ref{T:Siegsym}.
\end{defi}

Indeed, quasi-symmetric Siegel domains correspond to unital Jordan algebra representations of $(U,\circ_{\Omega})$ into $\Herm(V,h)$.

\begin{lemma}\label{L:repJor}
Fix a symmetric cone $\Omega\subset U$ and keep the notations as above.
\begin{enumerate}[(i)]
\item If $H:V\times V\to U_{\bbC}$ is $\Omega$-positive Hermitian map (as in Definition \ref{D:Sieg}) satisfying Theorem \ref{T:Siegsym}\eqref{T:Siegsym2}
then
\begin{equation}\label{E:2R}
\begin{aligned}
\rho:=2R: (U,\circ_{\Omega}) & \longrightarrow \Herm(V,h) \\
u & \mapsto 2R_u
\end{aligned}
\end{equation}
is a unital Jordan algebra homomorphism in the sense of Remark \ref{R:funJorcon} (we call it a \emph{complex representation}Ê of $(U,\circ_{\Omega})$).
\item Conversely, if we start from a unital Jordan algebra homomorphism \eqref{E:2R} and we define a Hermitian map $H:V\times V\to U_{\bbC}$ by mean of \eqref{E:def-R}, then $H$ is $\Omega$-positive and it satisfies Theorem \ref{T:Siegsym}\eqref{T:Siegsym2}.
\end{enumerate}
\end{lemma}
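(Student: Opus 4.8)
The plan is to unwind the two displayed identities \eqref{E:def-R} and Theorem \ref{T:Siegsym}\eqref{T:Siegsym2} and show they are equivalent to the assertion that $\rho = 2R$ is a unital Jordan algebra homomorphism, being careful about what "homomorphism in the sense of Remark \ref{R:funJorcon}" requires: linearity, $\rho(u\circ_\Omega u') = \rho(u)\circ' \rho(u')$ where $\circ'$ is the Jordan product on $\Herm(V,h)$ coming from Example \ref{E:Jor}\eqref{E:Jor1} (i.e. $A\circ' B = \frac12(AB+BA)$, the anticommutator, since $\Herm(V,h)\subset \End(V)$ with the associative product), and $\rho(e) = \id_V$. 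First I would record the elementary facts: $R$ depends $\bbR$-linearly on $u$ (immediate from \eqref{E:def-R} since $H$ and $h$ are fixed), and the observation already made in the text that $\ov{R_u}^* = R_{\ov u}$, so that $u\in U$ forces $R_u\in \Herm(V,h)$; hence $\rho = 2R$ maps $U$ into $\Herm(V,h)$ linearly.

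For part (i): assume $H$ satisfies Theorem \ref{T:Siegsym}\eqref{T:Siegsym2}. To get $\rho(e) = \id_V$, plug $u = e$ into \eqref{E:def-R} and use $h(v,v') = \langle e, H(v,v')\rangle$ from \eqref{E:form-h}: this gives $h(v,v') = 2h(v, R_e v')$, i.e. $2R_e = \id_V$, which is $\rho(e) = \id_V$. To get multiplicativity, I would compute $h(v, \rho(u\circ_\Omega u')v')$ two ways. Starting from \eqref{E:def-R}, $2h(v, R_{u\circ_\Omega u'}v') = \langle u\circ_\Omega u', H(v,v')\rangle$. Now I use the associativity of the trace form (Lemma \ref{L:Jprop}\eqref{L:Jprop3}) — note $\langle,\rangle = \tau(,)$ is the trace form by the construction preceding Theorem \ref{T:bijJorcon} — to rewrite $\langle u\circ_\Omega u', w\rangle = \langle u', u\circ_\Omega w\rangle$ for $w\in U$, extended $\bbC$-bilinearly to $U_\bbC$. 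Applying this with $w = H(v,v')$ and then invoking Theorem \ref{T:Siegsym}\eqref{T:Siegsym2} in the form $u\circ_\Omega H(v,v') = H(R_u v, v') + H(v, R_u v')$, I get a sum of two terms $\langle u', H(R_u v, v')\rangle + \langle u', H(v, R_u v')\rangle$, each of which is $2h(\cdot, R_{u'}\cdot)$ by \eqref{E:def-R} together with the Hermitian symmetry of $H$ and the relation $\ov{R_{u'}}^* = R_{\ov{u'}} = R_{u'}$ for $u'\in U$. Collecting terms yields $h(v, R_{u\circ_\Omega u'}v') = h(v, (R_u R_{u'} + R_{u'}R_u)v')$, i.e. $R_{u\circ_\Omega u'} = R_u R_{u'} + R_{u'} R_u = \frac12(\rho(u)\rho(u') + \rho(u')\rho(u))$, which after multiplying by $2$ reads $\rho(u\circ_\Omega u') = \rho(u)\circ'\rho(u')$. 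This proves $\rho$ is a unital Jordan algebra homomorphism.

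For part (ii): conversely, start from a unital Jordan algebra homomorphism $\rho = 2R$ and define $H$ by \eqref{E:def-R} — one must first check this is well-posed, i.e. that for each $u\in U_\bbC$ there is a unique Hermitian form $(v,v')\mapsto \langle u, H(v,v')\rangle$ and hence a unique $H(v,v')\in U_\bbC$; this follows since $h$ is nondegenerate and $\langle,\rangle$ is nondegenerate on $U_\bbC$, so $H$ is determined and is automatically Hermitian (conjugate-linear in the first slot, linear in the second) because $R_u$ is and because $R_{\ov u} = \ov{R_u}^*$. That Theorem \ref{T:Siegsym}\eqref{T:Siegsym2} holds is the computation of part (i) read backwards: the homomorphism property $R_{u\circ_\Omega u'} = \frac12(\rho(u)\rho(u')+\rho(u')\rho(u))$ together with associativity of $\tau$ forces $\langle u', u\circ_\Omega H(v,v') - H(R_u v,v') - H(v, R_u v')\rangle = 0$ for all $u'\in U$, whence the bracketed element of $U_\bbC$ vanishes by nondegeneracy. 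Finally, $\Omega$-positivity of $H$: since $\rho$ is a unital homomorphism of Euclidean Jordan algebras it is injective (kernel is an ideal not containing $e$), and for $v\neq 0$ one checks $\langle u, H(v,v)\rangle = 2h(v, R_u v)$ is positive for all $u$ in the dual cone $\Omega^* = \Omega$ using that $R_u$ is positive semidefinite on $V$ for $u$ in the cone (because $\rho$ sends squares to squares, and $\Omega$ consists of squares of invertible elements by \eqref{E:coneJor}); hence $H(v,v)\in (\Omega^*)^* = \ov\Omega$, and $H(v,v)\neq 0$ by injectivity of $\rho$ evaluated on a square, giving $H(v,v)\in \Omega\setminus\{0\}$. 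The main obstacle I anticipate is the bookkeeping in the positivity argument for part (ii) — tracking exactly why $R_u \geq 0$ on $(V,h)$ for $u\in\Omega$ and why the resulting $H(v,v)$ is nonzero rather than merely in the boundary $\partial\Omega$ — which is where one genuinely uses that $\Omega$ is a cone of squares and that $\rho$ is unital and injective; the rest is a symmetric-bilinear-form juggling exercise driven by associativity of the trace form.
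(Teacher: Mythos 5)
The paper offers no argument of its own for this lemma (its ``proof'' is the citation \cite[Chap.~IV, Prop.~4.1]{Sat}), so there is no route to compare against; your computation is in any case the standard one and its core is correct. Part (i) works exactly as you describe: $2R_e=\id_V$ drops out of \eqref{E:def-R} and \eqref{E:form-h}, and the identity $R_{u\circ_{\Omega}u'}=R_uR_{u'}+R_{u'}R_u$ follows from self-adjointness of $T_u$ with respect to $\langle,\rangle$ (which holds because $L_u\in\p$ in the construction \eqref{E:Jorcone}, whether or not $\langle,\rangle$ is literally the trace form), Theorem \ref{T:Siegsym}\eqref{T:Siegsym2}, and $R_u^*=R_u$ for real $u$. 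The first half of part (ii) is the same computation read backwards and is also fine.

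Two points in your positivity argument need repair. First, the injectivity of $\rho$: observing that the kernel is an ideal not containing $e$ only shows the kernel is proper, which forces injectivity only when $U$ is simple; the lemma is stated for an arbitrary symmetric cone, $U$ need not be simple, and unital Jordan homomorphisms need not be injective (e.g.\ $(a,b)\mapsto a\,\id_V$ on $\bbR\oplus\bbR$). Fortunately injectivity is not needed: $H(v,v)\neq 0$ for $v\neq 0$ follows at once by pairing with the unit, since $\langle e,H(v,v)\rangle=2h(v,R_ev)=h(v,v)>0$. Second, and more substantively, what your argument actually establishes is $H(v,v)\in\ov{\Omega}\setminus\{0\}$: from $u=x^2\in\Omega$ you get $\langle u,H(v,v)\rangle=h(\rho(x)v,\rho(x)v)\geq 0$, hence $H(v,v)$ lies in the closed dual cone, which equals $\ov{\Omega}$ by self-duality. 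The final jump to the open cone $\Omega\setminus\{0\}=\Omega$ is unjustified and in fact false in general: for $\rho=\id^{\oplus r}$ on $\Herm_n(\bbC)$ with $0<r<n$ (the quasi-symmetric domain $I_{n;r,0}=I_{n+r,n}$ from the paper's own table) one has $H(v,v)=v\,\ov{v}^{\,t}$ of rank at most $r$, which lies on $\partial\P_n(\bbC)$ for every $v$. The resolution is that the positivity condition in Definition \ref{D:Sieg} should read $H(v,v)\in\ov{\Omega}\setminus\{0\}$, as in Satake; with that correction, and the two amendments above, your argument is complete.
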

\begin{proof}
See \cite[Chap. IV, Prop. 4.1]{Sat}.
\end{proof}

Each quasi-symmetric Siegel domain is a product of irreducible quasi-symmetric Siegel domains, which we are now going to define.

\begin{defi}\label{D:irrSieg}
\noindent
\begin{enumerate}[(i)]
\item Let $\SS_1=\SS(U_1,V_1,\Omega_1,H_1)$ and $\SS_2=\SS(U_2,V_2,\Omega_2,H_2)$ two Siegel domains. The product $\SS_1\times \SS_2$ is equal to the
Siegel domain $\SS(U_1\oplus U_2, V_1\oplus V_2,\Omega_1+ \Omega_2,H=H_1\oplus H_2)$, where $H=H_1\oplus H_2:(V_1\oplus V_2)\times (V_1\oplus V_2)\to U_1\oplus U_2$ is defined by
$H_{|V_1\times V_2}\equiv 0$,  $H_{|V_1\times V_1}\equiv H_1$ and $H_{|V_2\times V_2}\equiv H_2$.
\item A Siegel domain is \emph{irreducible} if and only if it cannot be written as the product of two non-trivial Siegel domains.
\end{enumerate}
\end{defi}

\begin{thm}\label{T:dec-Sieg}
\noindent
\begin{enumerate}[(i)]
\item \label{T:dec-Sieg1} A quasi-symmetric Siegel  domain $\SS(U,V,\Omega,H)$ is irreducible if and only if $\Omega\subset U$ is irreducible.
\item \label{T:dec-Sieg2} Any quasi-symmetric (resp. symmetric) Siegel domain decomposes uniquely as the product of irreducible quasi-symmetric (resp. symmetric) Siegel domains.
\end{enumerate}
\end{thm}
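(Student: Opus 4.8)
The proof proceeds in two independent stages, corresponding to the two parts of the statement.

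For part \eqref{T:dec-Sieg1}, the plan is to reduce to the bijection of Lemma \ref{L:repJor}. A quasi-symmetric Siegel domain $\SS(U,V,\Omega,H)$ is, by that lemma, equivalent to the datum of a symmetric cone $\Omega\subset U$ together with a unital Jordan algebra homomorphism $\rho=2R:(U,\circ_{\Omega})\to \Herm(V,h)$ for some positive definite Hermitian form $h$ on $V$. I would first observe that the product operation of Definition \ref{D:irrSieg} corresponds, under this dictionary, to the direct sum of Euclidean Jordan algebras (via Theorem \ref{T:bijJorcon}) together with the direct sum of their representations: the cone of $\SS_1\times\SS_2$ is $\Omega_1+\Omega_2\subset U_1\oplus U_2$, its associated Euclidean Jordan algebra is $(U_1,\circ_{\Omega_1})\oplus(U_2,\circ_{\Omega_2})$, and the representation splits as $\rho_1\oplus\rho_2$ on $V_1\oplus V_2$ because $H=H_1\oplus H_2$ vanishes on $V_1\times V_2$. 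If $\Omega$ is reducible, say $\Omega=\Omega_1+\Omega_2$ with $U=U_1\oplus U_2$, then the unit $e$ of $(U,\circ_\Omega)$ splits as $e=e_1+e_2$ into orthogonal idempotents (Lemma \ref{L:Ve1} and the eigenspace decomposition \eqref{E:eigenLe}), $\rho(e_1)$ and $\rho(e_2)$ are orthogonal self-adjoint idempotents in $\Herm(V,h)$ summing to $\id_V$, and one obtains an $h$-orthogonal splitting $V=V_1\oplus V_2$ with $V_i=\rho(e_i)V$ that is respected by $\rho$; reading off the corresponding $H_i$ via \eqref{E:def-R} exhibits $\SS$ as a non-trivial product. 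Conversely a non-trivial product forces $\Omega$ reducible by the same dictionary. Thus $\SS$ is irreducible iff $\Omega$ is, which is exactly \eqref{T:dec-Sieg1}.

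For part \eqref{T:dec-Sieg2}, I would argue as follows. Given a quasi-symmetric $\SS(U,V,\Omega,H)$, apply the unique decomposition of the symmetric cone $\Omega$ into irreducible symmetric cones (the fact quoted from \cite[Prop. III.4.5]{FK}), which on the Euclidean Jordan algebra side is the unique decomposition $(U,\circ_\Omega)=\bigoplus_{i=1}^r (U_i,\circ_{\Omega_i})$ into simple factors (Proposition \ref{P:simple} and Theorem \ref{T:bijJorcon}). Each simple factor carries a central idempotent $e_i$, and $\rho(e_i)$ are orthogonal idempotents in $\Herm(V,h)$ summing to $\id_V$; setting $V_i:=\rho(e_i)V$ gives an $h$-orthogonal decomposition $V=\bigoplus V_i$ stable under $\rho$, and hence under each $R_u$, so that $H$ splits as $\bigoplus H_i$ with $H_i$ the restriction to $V_i\times V_i$. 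This yields $\SS=\prod_{i=1}^r \SS(U_i,V_i,\Omega_i,H_i)$, and by part \eqref{T:dec-Sieg1} each factor is irreducible. For uniqueness, note that in any such product decomposition the cones $\Omega_i$ must be the irreducible components of $\Omega$ (uniqueness of the cone decomposition), which pins down the $U_i$; the $V_i$ are then forced to be the isotypic pieces $\rho(e_i)V$ because in a product the Hermitian form $H$ vanishes across distinct factors, hence so does $\rho(e_i)$ on $V_j$ for $j\neq i$; and $H_i$ is then determined. The symmetric case follows verbatim: by Theorem \ref{T:Siegsym} the extra condition \eqref{T:Siegsym3} is an identity purely in terms of $H$ and $R$, so it passes to each factor $\SS(U_i,V_i,\Omega_i,H_i)$ and is inherited by a product from its factors; thus a quasi-symmetric Siegel domain is symmetric iff all its irreducible factors are, and the decomposition restricts to the symmetric case.

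The main obstacle I anticipate is the uniqueness assertion, specifically showing that any product decomposition of $\SS$ must align with the canonical one coming from the cone: one needs to know that the splitting of $V$ in an arbitrary product decomposition is forced to be the $\rho$-isotypic splitting, which requires carefully exploiting that $H$ vanishes across factors together with the nondegeneracy of $h$ and the defining relation \eqref{E:def-R} linking $H$, $h$ and $R$. Everything else is a fairly mechanical transport of structure through the three equivalences already established (Theorem \ref{T:bijJorcon}, Lemma \ref{L:repJor}, and Theorem \ref{T:Siegsym}), so I would expect the write-up to be short once the idempotent bookkeeping is set up cleanly; the cleanest route is likely just to cite \cite[Chap. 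IV, \S5 and Chap. V, \S3]{Sat}, where this classification is carried out in detail, after indicating the reduction above.
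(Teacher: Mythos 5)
Your argument is mathematically sound, and it is worth noting at the outset that the paper itself states Theorem \ref{T:dec-Sieg} with no proof and no explicit citation (the material is implicitly deferred to Satake \cite[Chap. IV--V]{Sat}), so your proposal supplies a genuine proof where the survey offers none. The route you take --- transporting everything through Lemma \ref{L:repJor} and Theorem \ref{T:bijJorcon} so that a product decomposition of $\SS(U,V,\Omega,H)$ becomes a direct-sum decomposition of the Euclidean Jordan algebra $(U,\circ_{\Omega})$ into ideals together with the induced splitting $V=\bigoplus \rho(e_i)V$ --- is exactly the right dictionary, and your idempotent bookkeeping is correct: since $\rho$ is a unital Jordan homomorphism into $\Herm(V,h)$, the relations $\rho(e_i)\circ\rho(e_i)=\rho(e_i)$ and $e_i\circ(e_i\circ u)=e_i\circ u$ force the $\rho(e_i)$ to be mutually orthogonal self-adjoint projections that commute associatively with every $\rho(u)$, which is what makes $H$ vanish across the pieces via \eqref{E:def-R} and nails down the uniqueness of the $V_i$ in any product decomposition.

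One point you should tighten. In the direction ``$\SS$ reducible $\Rightarrow$ $\Omega$ reducible'' you pass from a decomposition $\Omega=\Omega_1+\Omega_2$ with $\Omega_1,\Omega_2$ merely open convex pointed cones (which is all Definition \ref{D:irrSieg} gives you) to reducibility of $\Omega$ in the sense of Definition \ref{D:irrcone}, which requires the factors to be \emph{symmetric} cones. This needs the standard fact that any direct-product decomposition of a self-dual homogeneous cone automatically has orthogonal factors that are themselves self-dual and homogeneous (see \cite[Prop. I.1.9 and Prop. III.4.5]{FK}); once you invoke that, the rest of your argument, including the verification that condition \eqref{T:Siegsym3} of Theorem \ref{T:Siegsym} restricts to each factor and is inherited by products (using that $R_u$ annihilates $V_j$ for $u\in U_i$, $i\neq j$), goes through as written.
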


According to Theorem \ref{T:bijJorcon}, Lemma \ref{L:repJor}Ê and Theorem \ref{T:dec-Sieg}, an irreducible quasi-symmetric Siegel domain is built up from  a simple Euclidean Jordan algebra
$(U,\circ)$ together with a  complex representation $\rho:(U, \circ)\to \Herm(V,h)$. Such pairs can be classified as it follows.

\begin{thm}\label{T:clas-rep}
The complex representations of the simple Euclidean Jordan algebras are given as it follows:
\begin{enumerate}[(i)]
\item Type $IV_{n; r,s} \: (\text{even } n \geq 4; r\geq s\geq 0)$: the representation $\rho_{r,s}={\rm sp}_1^{\oplus r}\oplus {\rm sp}_2^{\oplus s}$ of
$(\bbR\oplus \bbR^{n-1},\circ_Q)$, where ${\rm sp}_1$ and ${\rm sp}_2$ are the two spin representations (see \cite[Appendix, \S4-6]{Sat});
\item Type $IV_{n; r} \: (\text{odd } n \geq 3 \text{ or } n=2;    r\geq 0)$:  the representations $\rho_r={\rm sp}^{\oplus r}$ of  $(\bbR\oplus \bbR^{n-1},\circ_Q)$, where ${\rm sp}$ is the spin representation (see \cite[Appendix, \S4-6]{Sat});
\item Type $III_{n; r} \: (n\geq 3; r\geq 0)$: the representations $\rho_r=\id^{\oplus r}$ of $\Herm_n(\bbR)$, where $\id:\Herm_n(\bbR)\to \Herm_n(\bbC)$ is the natural injection.
\item Type $I_{n;r,s}   \: (n\geq 3; r\geq s\geq 0)$: the representations  $\rho_{r,s}=\id^{\oplus r}\oplus \ov{\id}^{\oplus s}$ of $\Herm_n(\bbC)$, where $\id:\Herm_n(\bbC)\to \Herm_n(\bbC)$ is the identity homomorphism and $\ov{\id}: \Herm_n(\bbC)\to \Herm_n(\bbC)$ is given by sending $A$ into its complex conjugate $\ov{A}$.
\item Type $II_{n;r}\: (n\geq 3; r\geq 0)$: the representations $\rho_r=\id^{\oplus r}$ of $\Herm_n(\bbH)$, where
$$\begin{aligned}
 \id:\Herm_n(\bbH)& \longrightarrow \Herm_{2n}(\bbC)\\
A+ j B & \mapsto \begin{pmatrix} A & B \\ -\ov{B} & \ov{A} \end{pmatrix} \\
\end{aligned}$$
\item Type $IV_0$: the trivial representation $\rho_0$ of $\Herm_3(\bbO)$.
\end{enumerate}
\end{thm}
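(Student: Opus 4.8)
The plan is to run through the classification~\ref{T:clasJor} of simple Euclidean Jordan algebras and determine the complex representations case by case, using Lemma~\ref{L:repJor} to reformulate the problem. By that lemma, a complex representation of a simple Euclidean Jordan algebra $(U,\circ)$ is the same datum as a unital Jordan algebra homomorphism $\rho\colon (U,\circ)\to \Herm(V,h)$ into the self-adjoint operators of a finite-dimensional complex Hermitian space $(V,h)$, the correspondence being $\rho=2R$ as in \eqref{E:2R} and \eqref{E:def-R}. First I would observe that such representations are completely reducible: if $W\subseteq V$ is $\rho(U)$-invariant, then its $h$-orthogonal complement is invariant as well, since every $\rho(u)$ with $u\in U$ is $h$-self-adjoint and $U$ is $\bbR$-spanned by such elements. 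Hence it suffices to enumerate the irreducible $\rho$ up to equivalence; a general $\rho$ is then a direct sum of these, and uniqueness of the isotypic decomposition shows that the multiplicities $r$, $s$ are the only invariants. This is also compatible with the product decomposition of quasi-symmetric Siegel domains recorded in Theorem~\ref{T:dec-Sieg}, so the argument splits into the five cases provided by Theorem~\ref{T:clasJor}.

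For $U=\Herm_n(F)$ with $F=\bbR,\bbC,\bbH$ and $n\geq 3$, I would use the Jacobson coordinatization theorem: such a Jordan algebra is \emph{special}, and a unital homomorphism $\rho\colon \Herm_n(F)\to \Herm(V,h)$ extends canonically to a unital $*$-representation of its associative enveloping algebra, which is $M_n(\bbC)$ when $F=\bbR$, the algebra $M_n(\bbC)\times M_n(\bbC)$ when $F=\bbC$ (the two factors being the tautological inclusion and its complex conjugate), and $M_{2n}(\bbC)$ when $F=\bbH$ (via the standard embedding $M_n(\bbH)\hookrightarrow M_{2n}(\bbC)$). By Wedderburn theory these algebras have, respectively, one, two, and one isomorphism class of nonzero irreducible $*$-module, and on each of them the self-adjointness of $\rho(A)$ holds automatically. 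This produces exactly the representations $\rho_r=\id^{\oplus r}$ in types $III_{n;r}$ and $II_{n;r}$, and $\rho_{r,s}=\id^{\oplus r}\oplus\ov{\id}^{\oplus s}$ in type $I_{n;r,s}$.

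For the spin factor $U=\bbR\oplus\bbR^{n-1}$ with product $\circ_Q$, I would translate a unital homomorphism $\rho\colon U\to\Herm(V,h)$ into Clifford-module data: setting $\rho(e)=\id_V$ for the unit $e=(1,0)$ and $c_i:=\rho(0,e_i)$ for an orthonormal basis of $\bbR^{n-1}$, the Jordan identities are equivalent to $c_ic_j+c_jc_i=2\delta_{ij}\,\id_V$ with each $c_i$ $h$-self-adjoint, i.e.\ to an $h$-compatible module over the Clifford algebra of $(\bbR^{n-1},Q)$. The classification of Clifford modules (periodicity modulo $8$) then yields either one irreducible spinor module or two inequivalent half-spin modules, according to the dichotomy in the statement (even versus odd $n$, with the small cases handled directly), giving the spin representation $\mathrm{sp}$ in type $IV_{n;r}$ and the pair $\mathrm{sp}_1,\mathrm{sp}_2$ in type $IV_{n;r,s}$; the low-rank isomorphisms $\Herm_2(F)\cong(\bbR\oplus\bbR^{1+\dim_{\bbR}F},\circ_Q)$ explain why the Hermitian-matrix families only appear for $n\geq 3$ and why the remaining small spin factors are subsumed here. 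Finally, for the Albert algebra $U=\Herm_3(\bbO)$ I would invoke Albert's theorem that it is \emph{exceptional}: it admits no nonzero special representation, hence no nonzero unital homomorphism into any $\Herm(V,h)$, so the only complex representation is the trivial one $\rho_0$ of type $IV_0$. The main obstacle is the spin-factor case, where the Clifford-module bookkeeping must be carried out carefully enough to match the one-versus-two dichotomy with the parity of $n$; a secondary point is to invoke coordinatization and exceptionality in precisely the $*$-representation-theoretic form needed here. Complete details are in \cite[Chap.~IV, \S4--6 and the Appendix]{Sat}.
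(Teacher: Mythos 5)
Your proposal is correct, and it reconstructs essentially the argument that the paper delegates entirely to the literature: the in-text ``proof'' of Theorem \ref{T:clas-rep} is nothing more than the citation to \cite[Chap. V, \S 5]{Sat}, so there is no internal argument to compare against. The route you take --- reduce to unital Jordan algebra homomorphisms into $\Herm(V,h)$ via Lemma \ref{L:repJor}, get complete reducibility from self-adjointness, then treat $\Herm_n(F)$ through its special universal envelope and Wedderburn theory, the spin factor through Clifford modules, and the Albert algebra through exceptionality --- is the standard one and is in substance what Satake carries out (his Appendix \S 4--6 is exactly the Clifford-module bookkeeping you defer to). Two points deserve to be made explicit if this were written out in full. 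First, in the enveloping-algebra step you must track the $*$-structure, not just the module structure: the relevant count is of irreducible $*$-modules, and you need that each irreducible module of $M_n(\bbC)$, $M_n(\bbC)\times M_n(\bbC)$, $M_{2n}(\bbC)$ (and of the complexified Clifford algebra) carries an essentially unique positive definite Hermitian form making the image of the Jordan algebra self-adjoint; this is what puts $\rho$ into $\Herm(V,h)$ rather than merely $\End(V)$ and is where the positivity hypothesis of Lemma \ref{L:repJor} is consumed. Second, for the Albert algebra the passage from ``exceptional'' to ``no nonzero unital homomorphism into any $\Herm(V,h)$'' uses that $\Herm_3(\bbO)$ is simple, so that a nonzero Jordan homomorphism is automatically injective and would exhibit the algebra as special; exceptionality alone only forbids embeddings, so this one-line remark should be included.
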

\begin{proof}
See \cite[Chap. V, \S 5]{Sat} and the references therein.
\end{proof}

In Table \ref{F:qs-Siegel}, we have listed all the irreducible quasi-symmetric Siegel domains by specifying the irreducible symmetric cone
and the complex representation of their associated simple Euclidean Jordan algebra. Moreover, in the last column, we have specified the quasi-symmetric Siegel domains that are also symmetric (see \cite[Chap. V, \S 5]{Sat} and the references therein) together with the corresponding bounded symmetric domains (using the notations of Table \ref{F:BSD}) to which they are biholomorphic.

\begin{table}[!ht]
\begin{tabular}{|c|c|c|c|c|c|}
\hline
Type & Symmetric Cone & Complex representation & Symmetric cases \\
\hline \hline
$ \begin{aligned}
& IV_{n; r,s} \: (r\geq s\geq 0) \\
& \text{ even } n \geq 4  \\
\end{aligned} $
 & $\P(1,n-1) $ & $\rho_{r,s}={\rm sp}_1^{\oplus r}\oplus {\rm sp}_2^{\oplus s}$ &
 $\begin{aligned}
 & IV_{n;0,0}=IV_{n} \\
 & IV_{4;r,0}=I_{r+2,2}\\
 & IV_{6;1,0}=II_5 \\
 & IV_{8;1,0}=V
 \end{aligned} $\\
\hline
$ \begin{aligned}
& IV_{n; r} \: (r\geq 0)  \\
& \text{ odd } n \geq 3 \text{ or } n=2
\end{aligned} $
  & $\P(1,n-1)$ & $\rho_r={\rm sp}^{\oplus r}$ &
  $\begin{aligned}
  & IV_{n;0}=IV_n\: (n\geq 3) \\
  & IV_{2;r}=I_{r+1,1}
  \end{aligned}$ \\
\hline
$III_{n; r} \: (n\geq 3, r\geq 0)  $
& $\P_n(\bbR)= \Herm_n^{>0}(\bbR) $ & $\rho_r=\id^{\oplus r}$ & $III_{n;0}=III_n$ \\
\hline
$I_{n;r,s}   \: (n\geq 3, r\geq s\geq 0)$ &  $\P_n(\bbC)= \Herm_n^{>0}(\bbC) $  & $\rho_{r,s}=\id^{\oplus r}\oplus \ov{\id}^{\oplus s}$ & $I_{n;r,0}=I_{n+r,n}$ \\
\hline
$II_{n;r}  \: (n\geq 3, r\geq 0) $ & $\P_n(\bbH)= \Herm_n^{>0}(\bbH) $  & $\rho_r=\id^{\oplus r}$ & $II_{n;r}=II_{2n+r}\: (r=0,1)$ \\
\hline
$VI_0$ &  $ \P_3(\bbO)=\Herm_3^{>0}(\bbO) $  & $\rho_0=0$ & $VI_0=VI$ \\
\hline
\end{tabular}
\caption{Irreducible quasi-symmetric Siegel domains}\label{F:qs-Siegel}
\end{table}

By looking at the last column of Table \ref{F:qs-Siegel} and using the isomorphisms between bounded symmetric domains of small dimension belonging to different types (see Table \ref{F:small}),
it is easy to see that every bounded symmetric domain is biholomorphic to a unique Siegel domain. As a consequence, there exists a bijection between
symmetric Siegel domains and Hermitian positive JTSs, which we are now going to make explicit.

Start with a symmetric Siegel domain $\SS=\SS(U,V,\Omega,H)$. By Theorem \ref{T:Siegsym}\eqref{T:Siegsym1}, the cone $\Omega\subset U$ is symmetric with respect to a scalar product $\langle,\rangle$. Keeping the notations introduced before Theorem \ref{T:Siegsym}, we get a Jordan product $\circ_{\Omega}$ on $U$ which we extend linearly to $U_{\bbC}$. Using the fact that $(U,\circ_{\Omega})$ is an Euclidean Jordan algebra, it can be checked (see \cite[Chap. I, \S6]{Sat}) that
$U_{\bbC}$ becomes a Hermitian positive JTSs with respect to the triple product
\begin{equation}\label{E:U-JTS}
\{u_1,u_2,u_3\}_{\Omega}:=(u_1\circ_{\Omega} \ov{u}_2)\circ_{\Omega} u_3+ (u_3\circ_{\Omega} \ov{u}_2)\circ_{\Omega} u_1-(u_1\circ_{\Omega} u_3)\circ_{\Omega} \ov{u}_2.
\end{equation}
Define a triple product on $U_{\bbC}\oplus V$ as it follows
\begin{equation}\label{E:HerJTS}
\left\{\begin{pmatrix}u_1 \\ v_1\end{pmatrix},\begin{pmatrix} u_2 \\ v_2\end{pmatrix},\begin{pmatrix}Êu_3 \\ v_3\end{pmatrix}\right\}_{\SS}:=
\begin{pmatrix}
\{u_1,u_2,u_3\}_{\Omega} +2H(R_{\ov{u}_3}v_2,v_1)+2H(R_{\ov{u}_1}v_2,v_3) \\
2R_{u_3}R_{\ov{u}_2}v_1+2R_{u_1}R_{\ov{u}_2}v_3+2 R_{H(v_2,v_1)}v_3+2R_{H(v_2,v_3)}v_1
\end{pmatrix}.
\end{equation}
Using that $\SS(U,V,\Omega,H)$ is symmetric, it can be shown that $(U_{\bbC}\oplus V, \{.\,,.\,,.\}_{\SS})$ is a Hermitian positive JTS (see \cite[Chap. V, Thm. 6.9]{Sat} and the discussion following it).
Observe that the element $\wt{e}:=\begin{pmatrix} e \\ 0\end{pmatrix}\in U_{\bbC}\oplus V$ is an tripotent of the Hermitian positive JTS $(U_{\bbC}\oplus V, \{.\,,.\,,.\}_{\SS})$, i.e.
$\left\{\wt{e},\wt{e},\wt{e} \right\}_{\SS}=\wt{e}$.
Moreover, using the fact that $\ov{e}=e$ is the identity element of $(U_{\bbC},\{.\,,.\,,.\}_{\Omega})$ and that $2R_e=\id_V$ by Lemma \ref{L:repJor}, we can easily compute
\begin{equation}\label{e-e}
\left[\wt{e} \, \square\, \wt{e}Ê\right]\begin{pmatrix}u \\v\end{pmatrix}:=
\left\{\begin{pmatrix}e \\ 0\end{pmatrix}, \begin{pmatrix}e \\ 0\end{pmatrix}, \begin{pmatrix}u \\ v\end{pmatrix}\right\}_{\SS}=\begin{pmatrix}u \\ \frac{v}{2}\end{pmatrix}.
\end{equation}
In other words, $1$ and $\frac{1}{2}$ are the only eigenvalues of $\wt{e} \, \square\, \wt{e}$ with associated eigenspaces
\begin{equation}\label{}
V(\wt{e} \, \square\, \wt{e}; 1)=U_{\bbC}\, \text{ and }Ê\, V(\wt{e} \, \square\, \wt{e}; 1/2)=V.
\end{equation}

Conversely, start with a Hermitian positive JTS $(W,\{.\,,.\,,.\})$ and choose a tripotent $e\in W$, i.e. an element of $W$ such that $\{e,e,e\}=e$. The endomorphism $e\square e\in \End(W)$ is semisimple and it
satisfies the equation $(e\square e-1)(2e\square e-1)(e\square e)=0$ (see \cite[p. 242]{Sat}).  Therefore, the possible eigenvalues of $e\square e$ are  $1$, $1/2$ and $0$.
We can furthermore choose $e$ in such a way that $0$ is not an eigenvalue, in which case $e$ is called \emph{principal} (see \cite[Chap. V, \S6, Ex. 5]{Sat}). With this assumption,
we get a decomposition
\begin{equation}\label{E:decW}
W=W_{1}\oplus W_{1/2}=W(e\square e; 1)\oplus W(e\square e; 1/2)
\end{equation}
into eigenspaces for $e\square e$ relative to the eigenvalues $1$ and $1/2$, respectively.  The complex vector space $W_1$ becomes a Jordan algebra with unit element $e\in W_1$ with respect to the
Jordan product (see \cite[Chap. V, Prop. 6.1]{Sat})
\begin{equation}\label{E:JorW1}
a\circ b=\{a,e,b\} \: \: \text{Êfor any }Êa,b\in W_1.Ê
\end{equation}
Moreover, the map $a\mapsto a^*:=\{e,a,e\}$ is a $\bbC$-antilinear involution on $W_1$ (see \cite[Chap. V, Prop. 6.1]{Sat}).
Therefore
\begin{equation}\label{E:W1+}
\left(W_1^+:=\{a\in W_1\: : a^*=a\}, \circ\right)
\end{equation}
is a real Jordan algebra which turns out to be Euclidean (see \cite[p. 254]{Sat}). We will denote by $\Omega_W\subset W_1^+$ its associated symmetric cone (see Theorem \ref{T:bijJorcon}).
The Jordan algebra $(W_1,\circ)$ comes with a unital Jordan algebra homomorphism (see \cite[Chap. V, Prop. 6.2]{Sat})
\begin{equation}\label{E:def2R}
\begin{aligned}
2R: W_1& \longrightarrow \End(W_{1/2}),\\
a & \mapsto 2 R_a \: \: \text{Ês. t. }ÊR_a(x):=\{a,e,x\}.
\end{aligned}
\end{equation}
It can be checked (see \cite[p. 247, Eq. (6.21)]{Sat}) that $R_{a^*}=R_a^*$, where $R_a^*$ is the adjoint of $R_a$ with respect to the positive definite hermitian  form $h=\tau/2$ on $W_{1/2}$
with $\tau$ equal to the trace form of $(W,\{.\,,.\,,.\})$. Therefore, by restriction, we get a unital Jordan algebra homomorphism
\begin{equation}\label{E:def2Rbis}
2R: W_1^+ \longrightarrow \Herm(W_{1/2}, h).
\end{equation}
Consider now the Hermitian map
\begin{equation}\label{E:defiH}
\begin{aligned}
H_W: W_{1/2}\times W_{1/2} & \longrightarrow W_1,\\
(x,y) & \mapsto H(x,y):=\{e,x,y\}.
\end{aligned}
\end{equation}
It can be checked (see \cite[p. 247, Eq. (6.25)]{Sat})  that the map $H_W$ and the unital Jordan algebra homomorphism $2R$ satisfy formula \eqref{E:def-R}, i.e.
$$\langle a, H_W(x,y)\rangle= 2h(x, R_a y) \text{ for any } x,y \in W_{1/2} \text{Êand any } a\in W_{1},$$
where $\langle,\rangle$ is the trace form of the Jordan algebra $(W_1,\circ)$.  Therefore, from Lemma \ref{L:repJor} it follows that $H_W$ is $\Omega$-positive and it satisfies
Theorem \ref{T:Siegsym}\eqref{T:Siegsym2}. Moreover, it can be checked (see \cite[p. 245, Eq. (6:15'')]{Sat}) that $H_W$ satisfies Theorem \ref{T:Siegsym}\eqref{T:Siegsym3}.
Therefore, using Theorem \ref{T:Siegsym}, we infer that $\SS(W_1^+, W_{1/2}, \Omega_W, H_W)$ is a symmetric Siegel domain.

\begin{thm}\label{T:SieJTS}
Notations as above. There is a bijection
\begin{equation}
\begin{aligned}
\left\{\text{Symmetric Siegel spaces}\right\} & \stackrel{\cong}{\longrightarrow} \left\{\text{Hermitian positive JTSs}\right\}Ê\\
\SS=\SS(U,V,\Omega,H) & \longrightarrow (U_{\bbC}\oplus V, \{.\,,.\,,.\}_{\SS})\\
\SS(W_1^+, W_{1/2}, \Omega_W, H_W) & \longleftarrow (W,\{.\,,.\,,.\})
\end{aligned}
\end{equation}
sending irreducible symmetric Siegel domains into simple Hermitian positive JTSs.
\end{thm}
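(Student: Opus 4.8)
The plan is to establish that the two constructions described just above are mutually inverse and that the resulting bijection is compatible with products; the well-definedness of each map has already been recorded, since for the forward map $(U_{\bbC}\oplus V,\{.\,,.\,,.\}_{\SS})$ is a Hermitian positive JTS by \cite[Chap. V, Thm. 6.9]{Sat} (and the discussion following it), and for the backward map $\SS(W_1^+,W_{1/2},\Omega_W,H_W)$ is a symmetric Siegel domain by Theorem \ref{T:Siegsym}. (One could instead deduce the existence of \emph{some} bijection by composing the biholomorphism between symmetric Siegel domains and bounded symmetric domains with the bijections of Theorems \ref{T:HSM-BSD} and \ref{T:LieJor}, but the content of the statement is precisely the explicit description given by the two displayed arrows, so we argue directly.)

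First I would run the composition \emph{backward after forward}. Starting from $\SS=\SS(U,V,\Omega,H)$ and its associated JTS $W:=U_{\bbC}\oplus V$, I would check that $\wt e:=\begin{pmatrix} e \\ 0\end{pmatrix}$ is a \emph{principal} tripotent and that its Peirce decomposition is exactly $W_1=U_{\bbC}$, $W_{1/2}=V$; this is the content of the computation \eqref{e-e}. Reading off the backward construction from the triple product \eqref{E:HerJTS}, one finds that the Jordan product $a\circ b=\{a,\wt e,b\}_{\SS}$ on $W_1$ equals $a\circ_{\Omega}b$, that the $\bbC$-antilinear involution $a\mapsto\{\wt e,a,\wt e\}_{\SS}$ has fixed subspace $U$, so $W_1^+=U$ and hence $\Omega_W=\Omega_{(U,\circ_{\Omega})}=\Omega$ by Theorem \ref{T:bijJorcon}; finally the operators $2R$ reproduce the representation from which $\SS$ was built, and $H_W(x,y)=\{\wt e,x,y\}_{\SS}=H(x,y)$. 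Thus the backward map undoes the forward map.

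Next I would run \emph{forward after backward}. Given a Hermitian positive JTS $(W,\{.\,,.\,,.\})$ and a principal tripotent $e$, the $*$-involution on $W_1$ is $\bbC$-antilinear, so $W_1^+$ is a real form of $W_1$ and $(W_1^+)_{\bbC}=W_1$; hence the JTS attached to $\SS(W_1^+,W_{1/2},\Omega_W,H_W)$ has underlying space $W_1\oplus W_{1/2}=W$. The essential point is that the triple product \eqref{E:HerJTS}, assembled out of $\circ_{\Omega_W}=\circ$, the operators $R$ of \eqref{E:def2R}, and $H_W$ of \eqref{E:defiH}, coincides with the original $\{.\,,.\,,.\}$ on each of the Peirce pieces $W_1\times W_1\times W_1$, $W_1\times W_1\times W_{1/2}$, $W_{1/2}\times W_1\times W_{1/2}$, and so on. This is where the labour lies: it rests on the Jordan-triple identities in \cite[Chap. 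V, \S 6]{Sat} (those used to define $\circ$, $R$ and $H_W$, together with \cite[p.\ 247, Eqs.\ (6.21), (6.25)]{Sat}). I would also note that any two principal tripotents of a Hermitian positive JTS are conjugate under its automorphism group, so the output Siegel domain is independent of the choice of $e$ up to isomorphism, which is what makes the backward map well-defined on isomorphism classes. The main obstacle of the whole proof is exactly this step — reconstructing the full triple product of an abstract Hermitian positive JTS from the Siegel data of one of its principal tripotents — and all of it is carried out in \cite[Chap. V, \S 6]{Sat}, to which we refer for details.

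Finally, for the last assertion I would invoke the uniqueness of the decompositions on both sides: symmetric Siegel domains decompose uniquely into irreducible symmetric Siegel domains by Theorem \ref{T:dec-Sieg}\eqref{T:dec-Sieg2}, and Hermitian positive JTSs into simple ones by Proposition \ref{P:JTSsimple}. Since the forward map sends a product $\SS_1\times\SS_2$ (Definition \ref{D:irrSieg}) to the product of the two associated JTSs — immediate from \eqref{E:HerJTS} with $H=H_1\oplus H_2$ and from \eqref{E:U-JTS} — and the backward map is its inverse, the bijection restricts to a bijection between irreducible symmetric Siegel domains and simple Hermitian positive JTSs, as claimed.
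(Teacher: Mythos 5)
Your proposal is correct and follows the same route as the paper, which simply refers the mutual-inverse verification to \cite[Chap.~V, \S6]{Sat} after setting up exactly the two constructions you analyse; your explicit checks (that $\wt e$ is a principal tripotent with Peirce spaces $U_{\bbC}$ and $V$, that the backward data recover $\circ_{\Omega}$, $\Omega$, $2R$ and $H$, and the conjugacy of principal tripotents for well-definedness) are consistent with that reference. The product-compatibility argument for the last assertion is likewise the intended one.
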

\begin{proof}
See \cite[Chap. V, \S6]{Sat}.
\end{proof}

For symmetric Siegel spaces of the first kind (or symmetric tube domains), the bijection of Theorem   \ref{T:SieJTS} assumes a particular simple form.
Indeed, let $\Omega\subset U$ be a symmetric cone and choose a base point $e\in \Omega$ as in \eqref{E:point-e}. Consider the associated Jordan product $\circ_{\Omega}$ on $U$
(see Theorem \ref{T:bijJorcon}). Then the Hermitian positive JTS $\{U_{\bbC},\{.\,,.\,,.\}_{\SS}\}$ associated to the Siegel domain of the first kind $\SS=\SS(\Omega,U)\subset U_{\bbC}$ (as in  \eqref{E:Sieg2})
is the one associated to the Euclidean Jordan algebra $(U, \circ_{\Omega})$ as in Example  \ref{E:Jor}\eqref{E:JA-JTS}.

Consider now the bounded symmetric domain $D_{\Omega}\subset U_{\bbC}$
(in its Harish-Chandra embedding) corresponding to the Hermitian positive JTS $\{U_{\bbC},\{.\,,.\,,.\}_{\SS}\}$ (see Theorem \ref{T:HSM-BSD} and Theorem \ref{T:LieJor}).
It is possible to describe explicitly the biholomorphism between $\SS(\Omega,U)$ and $D_{\Omega}$, generalizing the Cayley transform in dimension one
(see Example \ref{E:unitdisk}).

\begin{thm}\label{T:Cayley}
Notations as above.
Then we have the following biholomorphism (called the \emph{generalized Cayley transform})
\begin{equation}\label{E:Cayley}
\begin{aligned}
c: D_{\Omega} & \stackrel{\cong}{\longrightarrow} \SS(\Omega,U) \\
w & \mapsto i(e+w)\circ_{\Omega}(e-w)^{-1},
\end{aligned}
\end{equation}
The inverse is given by the map sending $z\in \SS(\Omega,U)$ into $\displaystyle (z-ie)\circ_{\Omega} (z+ie)^{-1}\in D_{\Omega}$.
\end{thm}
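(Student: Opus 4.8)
The plan is to verify directly that the map $c$ in \eqref{E:Cayley} is a well-defined biholomorphism by exhibiting its inverse and checking that both compositions give the identity, together with checking that $c$ maps $D_\Omega$ into $\SS(\Omega,U)$ and that its proposed inverse maps $\SS(\Omega,U)$ into $D_\Omega$. Everything takes place inside the complex Jordan algebra $(U_{\bbC},\circ_{\Omega})$, so the first step is to record the algebraic facts we will use: $e$ is the unit, $\circ_{\Omega}$ is power-associative and any single element generates a commutative associative subalgebra (Lemma \ref{L:Jprop}), so expressions like $(e+w)\circ_\Omega(e-w)^{-1}$ make sense whenever $e-w$ is invertible, and the usual rational identities in one variable hold. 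In particular I would first check that for $w$ in the circular bounded domain $D_\Omega$ the element $e-w$ is invertible (its spectrum, relative to the Jordan frame diagonalizing $w$, avoids $1$ because the eigenvalues of $w\square \ov w$ are $<1$ by \eqref{E:HC-JTS}), so $c(w)$ is defined, and symmetrically that $z+ie$ is invertible for $z\in\SS(\Omega,U)$ since $\Im z\in\Omega$ forces the spectrum of $z$ to lie in the upper half plane.

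Next I would carry out the purely formal inversion. Writing $z=i(e+w)\circ_\Omega(e-w)^{-1}$ and working inside the associative commutative subalgebra generated by $w$ (hence by $z$), one solves for $w$ by the same manipulation as in the scalar case $z=i\frac{1+w}{1-w}$, giving $w=(z-ie)\circ_\Omega(z+ie)^{-1}$; the identities $(e+w)\circ_\Omega(e-w)^{-1}\pm ie\cdot(\text{stuff})$ reduce to one-variable rational identities, which are legitimate by power-associativity. This shows $c$ and the proposed inverse are mutually inverse rational (hence holomorphic where defined) maps. Then the analytic content is the two containments: (a) $w\in D_\Omega\Rightarrow \Im c(w)\in\Omega$, and (b) $z\in\SS(\Omega,U)\Rightarrow (z-ie)\circ_\Omega(z+ie)^{-1}\in D_\Omega$. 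For (a) I would use the description $D_\Omega=\{w: w\square w<\id\}$ from \eqref{E:HC-JTS} together with the characterization $\Omega=\{x\in U: T_x>0\}$ from \eqref{E:coneJor}, reducing the positivity of $\Im c(w)$ to a spectral computation: diagonalize $w$ in a Jordan frame, so that the question becomes the elementary fact that $|\lambda|<1$ implies $\Im\frac{i(1+\lambda)}{1-\lambda}>0$, applied eigenvalue by eigenvalue (using that $\Im$ here means the imaginary part relative to the real form $U\subset U_{\bbC}$, and that $R_u$-type operators respect the spectral decomposition). For (b) one runs the same eigenvalue argument in reverse: $\Im z\in\Omega$ means all ``eigenvalues'' of $z$ lie in the open upper half plane, and $|\frac{\mu-i}{\mu+i}|<1$ for $\Im\mu>0$.

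The main obstacle I expect is making the spectral reduction rigorous in the Jordan-algebraic setting: one needs that $D_\Omega$, the tube domain $\SS(\Omega,U)$, and the Cayley transform are all equivariant under a common real form acting by ``rotations'' so that it suffices to check everything on the maximal torus/flat, i.e. on elements of the form $\sum t_j e_j$ for a fixed Jordan frame $\{e_j\}$ of $U$, where the computation decouples into $r=\operatorname{rank}(\Omega)$ scalar computations. Concretely I would invoke the spectral theorem for Euclidean Jordan algebras (every element of $U$ is $\sum t_j e_j$ for some frame and real $t_j$, and $\Omega$ consists of those with all $t_j>0$) and the polydisk theorem \ref{T:polydisk} to reduce $D_\Omega$ to a polydisk; then $c$ restricted to the polydisk is literally the product of the one-dimensional Cayley transforms \eqref{E:Cayley-1dim} of Example \ref{E:unitdisk}, whose properties are already known. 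Assembling: $c$ is holomorphic with holomorphic inverse on the stated domains, hence a biholomorphism, and the identification of $\{U_{\bbC},\{.\,,.\,,.\}_\SS\}$ with the JTS of $(U,\circ_\Omega)$ via Example \ref{E:Jor}\eqref{E:JA-JTS} is exactly the compatibility needed to say $D_\Omega$ is the Harish-Chandra image corresponding to the tube domain $\SS(\Omega,U)$, as recorded in Theorem \ref{T:SieJTS}. For a fully detailed treatment I would simply refer to \cite[Chap. X, \S4]{FK} or \cite[Chap. V, \S6]{Sat}.
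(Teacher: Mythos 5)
The paper gives no proof of Theorem \ref{T:Cayley} at all (it only points to Kor\'anyi--Wolf \cite[Chap.\ VI]{KW}), so your proposal has to stand on its own against the standard arguments, e.g.\ \cite[Chap.\ X]{FK}. The algebraic half of your plan is fine: the unital subalgebra of $(U_{\bbC},\circ_{\Omega})$ generated by one element is associative and commutative, so $c(w)$ is defined wherever $e-w$ is invertible and the verification that $z\mapsto (z-ie)\circ_{\Omega}(z+ie)^{-1}$ is a two-sided inverse is a legitimate one-variable manipulation. The problem is the step carrying all the analytic content, namely the two containments $c(D_{\Omega})\subseteq \SS(\Omega,U)$ and $c^{-1}(\SS(\Omega,U))\subseteq D_{\Omega}$.

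Your reduction of these to $r$ scalar computations requires moving an arbitrary $w\in D_{\Omega}\subset U_{\bbC}$ into the polydisk $\{\sum_j\lambda_j e_j\}$ attached to a Jordan frame of idempotents of $(U,\circ_{\Omega})$ summing to $e$, by a transformation that commutes with $c$. But $c$ is centred at $e$ and commutes only with the complexified unital automorphisms of the Jordan algebra, i.e.\ with $\G(\Omega)_e$; it does \emph{not} commute with the full isotropy group $K=\Stab(0)$ that appears in the Polydisk Theorem \ref{T:polydisk}, and $\G(\Omega)_e$ is not transitive enough. Concretely, in type $III_n$ one has $U_{\bbC}=M^{\rm sym}_{n,n}(\bbC)$, $K=\U(n)$ acting by $Z\mapsto AZA^t$, while $\G(\Omega)_e=\O(n)$; a matrix such as $\epsilon\left(\begin{smallmatrix}1&i\\ i&-1\end{smallmatrix}\right)$ lies in $\calD_{III_2}$ for small $\epsilon>0$ but, being a nonzero nilpotent, cannot be diagonalized by a real orthogonal congruence, so it never reaches your polydisk. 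Nor does the spectral decomposition of $w$ inside the complex Jordan algebra help: its idempotents are not fixed by the conjugation of $U_{\bbC}$ over $U$, so $\Im c(w)$ does not decouple along them. What is actually needed is the Jordan-algebraic form of Hua's identity, $\Im c(w)=\Re\bigl[(e+w)\circ_{\Omega}(e-w)^{-1}\bigr]=(e-w)^{-1}+(e-\ov w)^{-1}-e$, rewritten via the quadratic representation as the image of $e-\{w,w,e\}$ under an invertible congruence-type operator built from $(e-w)^{-1}$ (matrix model: $(I-\ov Z)^{-1}(I-\ov Z Z)(I-Z)^{-1}$), which makes the equivalence $w\,\square\, w<\id \Leftrightarrow \Im c(w)\in\Omega$ manifest; alternatively one realizes $c$ Lie-theoretically as the adjoint action of the Cayley element $\exp\bigl(\tfrac{\pi i}{4}(e_++e_-)\bigr)\in G_{\bbC}$ as in \cite{KW}. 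Either ingredient must be supplied; it does not follow from the rank-one case by equivariance.
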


Indeed, generalized Cayley transforms have been defined for all symmetric Siegel spaces by Kor\'anyi-Wolf in \cite[Chap. VI]{KW}.

Looking at the last column of Table \ref{F:qs-Siegel}, it is easy to see which irreducible bounded symmetric domains are Êbiholomorphic to symmetric tube domains (we call them bounded symmetric domains \emph{of tube type}).

\begin{cor}\label{C:tube}
The irreducible bounded symmetric domains of tube type are the following:
\begin{enumerate}
\item $I_{n,n}$ for any $n\geq 1$;
\item $II_{2n}$ for any $n\geq 1$;
\item $III_n$ for any $n\geq 1$;
\item $IV_n$ for any $2\neq n\geq 1$;
\item $VI$.
\end{enumerate}
\end{cor}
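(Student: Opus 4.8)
The plan is to read off the answer directly from Table \ref{F:qs-Siegel} together with the bijection of Theorem \ref{T:SieJTS} and the isomorphisms of Table \ref{F:small}. First I would recall that by Corollary \ref{C:tubesym} a Siegel domain of the first kind $\SS(U,\Omega)$ is biholomorphic to a bounded symmetric domain exactly when $\Omega\subset U$ is a symmetric cone, and that under the bijection of Theorem \ref{T:SieJTS} the symmetric tube domains correspond precisely to those Hermitian positive JTS of the form $(U_{\bbC},\{.\,,.\,,.\}_{\Omega})$ arising from a Euclidean Jordan algebra $(U,\circ_{\Omega})$ as in Example \ref{E:Jor}\eqref{E:JA-JTS}; equivalently, in the notation of Table \ref{F:qs-Siegel}, a symmetric Siegel domain is of tube type exactly when the auxiliary space $V$ is zero, i.e. when the complex representation $\rho$ of the associated simple Euclidean Jordan algebra is the trivial (zero-dimensional) one, $r=s=0$.

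Next I would go through the six rows of Table \ref{F:qs-Siegel} and extract, in each row, the symmetric case with $r=s=0$. For row $IV_{n;r,s}$ (even $n\ge 4$) the entry $IV_{n;0,0}=IV_n$ gives $IV_n$ for even $n\ge 4$; for row $IV_{n;r}$ (odd $n\ge 3$ or $n=2$) the entry $IV_{n;0}=IV_n$ $(n\ge 3)$ gives $IV_n$ for odd $n\ge 3$; for row $III_{n;r}$ the entry $III_{n;0}=III_n$ gives $III_n$ for $n\ge 3$; for row $I_{n;r,s}$ the entry $I_{n;r,0}=I_{n+r,n}$ specialises at $r=0$ to $I_{n,n}$ for $n\ge 3$; for row $II_{n;r}$ the entry $II_{n;r}=II_{2n+r}$ at $r=0$ gives $II_{2n}$ for $n\ge 3$; and row $VI_0$ gives $VI$. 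Collecting these, the bounded symmetric domains of tube type among those of ``generic'' parameters are: $I_{n,n}$ $(n\ge 3)$, $II_{2n}$ $(n\ge 3)$, $III_n$ $(n\ge 3)$, $IV_n$ $(2\ne n\ge 1)$ (combining the even and odd cases, and noting $IV_1$ is covered by $III_1$), and $VI$.

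It then remains to handle the small-parameter cases, which is where I expect the only real bookkeeping: I would invoke Table \ref{F:small} to see that the list stabilises to exactly the stated families. Concretely, $I_{1,1}=II_2=III_1=IV_1$ is of tube type (it is $\Delta$, a tube domain via the Cayley transform of Example \ref{E:unitdisk}), which extends ``$III_n$, $n\ge 3$'' to all $n\ge 1$, ``$I_{n,n}$, $n\ge 3$'' down to $n=1$ via $I_{1,1}$, and accounts for $II_2$; the isomorphism $II_4=IV_6$ shows $II_4$ is of tube type, extending ``$II_{2n}$, $n\ge 3$'' to include $n=2$; and $I_{2,2}=IV_4$ handles $I_{2,2}$. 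One must also check that no additional domain sneaks in: the remaining small isomorphisms $I_{3,1}=II_3$, $III_2=IV_3$ involve $I_{3,1}$, $II_3$, $III_2$, $IV_3$, all of which are already on the list or are the known $IV$'s, so no new tube-type domain appears, and conversely every domain not biholomorphic to one in the list corresponds (via Theorems \ref{T:HSM-BSD}, \ref{T:LieJor}, \ref{T:SieJTS}) to a symmetric Siegel domain with $V\ne 0$, hence is not of tube type. Assembling these observations yields exactly the five families $I_{n,n}$ $(n\ge 1)$, $II_{2n}$ $(n\ge 1)$, $III_n$ $(n\ge 1)$, $IV_n$ $(2\ne n\ge 1)$, $VI$. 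The main obstacle is purely organisational: making sure the union of the ``generic'' entries from Table \ref{F:qs-Siegel} together with the sporadic coincidences of Table \ref{F:small} is exactly the claimed list, with no omissions and no spurious inclusions — there is no substantial mathematics beyond citing Theorem \ref{T:SieJTS}.
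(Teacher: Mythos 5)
Your proposal is correct and follows essentially the same route as the paper, which likewise reads the tube-type domains off the last column of Table \ref{F:qs-Siegel} (the symmetric cases with trivial representation, i.e.\ $V=0$) and then reconciles small parameters via the coincidences of Table \ref{F:small}. Your explicit justification that tube type corresponds to $r=s=0$ via Corollary \ref{C:tubesym} and the uniqueness in Theorem \ref{T:SieJTS} is exactly the content the paper leaves implicit.
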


We have collected the irreducible bounded symmetric domains of tube type (avoiding repetitions in small dimension, see Table \ref{F:small}) in the following Table \ref{F:BSDtube}, together with their associated symmetric cones.Ê

\begin{table}[!ht]
\begin{tabular}{|c|c|c|c|c|c|}
\hline
Symmetric Cone & Bounded symmetric domain of tube type \\
\hline \hline
$\P(1,n-1) \:\:\:  (n\geq 2)$ &
  $\begin{aligned}
  & IV_n\: & \text{ if }Ên\geq 3 \\
  & IV_1\: & \text{ if }Ên=2
  \end{aligned}$ \\
\hline
$\P_n(\bbR)= \Herm_n^{>0}(\bbR)  \:\:\:  (n\geq 3)$ & $III_n$ \\
\hline
  $\P_n(\bbC)= \Herm_n^{>0}(\bbC)  \:\:\:  (n\geq 3)$   & $I_{n,n}$ \\
\hline
 $\P_n(\bbH)= \Herm_n^{>0}(\bbH)  \:\:\:  (n\geq 3)$  & $II_{2n}$ \\
\hline
  $ \P_3(\bbO)=\Herm_3^{>0}(\bbO) $   & $VI$ \\
\hline
\end{tabular}
\caption{Irreducible bounded symmetric domains of tube type}\label{F:BSDtube}
\end{table}

\subsection{Boundary components of irreducible bounded symmetric domains}\label{SS:boun-irr}

In this subsection, we describe explicitly the boundary components of each of the irreducible bounded symmetry domains (see \S\ref{S:irrHSM}).
We begin with the following result.

\begin{thm}\label{T:1boun}
Let $D$ be an irreducible bounded symmetric domain and let $G=\Aut(D)^o$. Then all the boundary components of rank $k$ are conjugated by the group $G$.
\end{thm}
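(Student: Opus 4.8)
The plan is to reduce the statement to a transitivity property that we have essentially already recorded. By Theorem \ref{T:bound}\eqref{T:bound2} every boundary component $F\leq D$ is itself a Hermitian symmetric domain realized in its Harish-Chandra embedding inside $\langle F\rangle$, so its rank is well-defined; and by Remark \ref{R:idem}, the boundary components of $D$ correspond bijectively to the tripotents of the Hermitian positive JTS $(\p_+,\{.\,,.\,,.\})$ attached to $D$ via Theorems \ref{T:HSMLiealg} and \ref{T:LieJor}, the component $F$ corresponding to its ``attracting point'' $o_F$, which is the tripotent. Under this correspondence, the rank of $F$ (as a bounded symmetric domain) equals the rank of the tripotent $o_F$ in the usual Jordan-theoretic sense; indeed $F$ is the bounded symmetric domain whose JTS is the Peirce $1$-space (or rather the $0$-space with respect to $o_F$, depending on the convention in Remark \ref{R:idem}), and Proposition \ref{P:rank-Jor} identifies the rank of that domain with the cardinality of a Jordan frame of the corresponding JTS. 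So I would first translate ``boundary components of rank $k$'' into ``tripotents of rank $k$'' in $(\p_+,\{.\,,.\,,.\})$.

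Next I would invoke the classical spectral theory of tripotents in a simple Hermitian positive (equivalently, simple complex) Jordan triple system: any tripotent of rank $k$ can be written as an orthogonal sum $e=e_1+\cdots+e_k$ of $k$ pairwise orthogonal primitive (minimal) tripotents, and the automorphism group of the JTS acts transitively on the set of Jordan frames; more precisely, the automorphism group $\Aut(\p_+,\{.\,,.\,,.\})$ acts transitively on the set of tripotents of any fixed rank. This is the ``Jordan frame'' analogue of the spectral theorem and is standard for simple JTS (it can be extracted, e.g., from Loos's classification, cited as \cite{Loo3} in the excerpt, or from \cite[Part V]{FKKLR}). Then I would connect this automorphism group to $G=\Aut(D)^o$: the group of holomorphic isometries of $D$ fixing the base point $o$ — that is, $K=\Stab(o)$ — acts on $\p_+\cong T_oD$ and this action is exactly by Jordan triple automorphisms of $(\p_+,\{.\,,.\,,.\})$ (this is the content of Lemma \ref{L:J-oper} and the construction of the JTS in \S\ref{SS:HSM-JTS}: conjugation by $K$ preserves $[[\cdot,\ov\cdot],\cdot]$), and in the simple case $K$ surjects onto the identity component of $\Aut(\p_+,\{.\,,.\,,.\})$. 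Since the correspondence $F\mapsto o_F$ of Remark \ref{R:idem} is $G$-equivariant (because the chain-of-disks equivalence relation defining boundary components is preserved by biholomorphisms, and the JTS/Harish-Chandra picture is natural), transitivity of $K$ on rank-$k$ tripotents gives transitivity of $G$ on rank-$k$ boundary components.

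Alternatively — and this may be cleaner to write — I would argue directly via the Polydisk Theorem \ref{T:polydisk}: fix a totally geodesic polydisk $\Delta^r\subseteq D$ with $D=\Stab(o)\cdot\Delta^r$. A rank-$k$ boundary component is obtained by pushing to the boundary in exactly $k$ of the $r$ disk factors (and staying in the interior of the others), so every rank-$k$ boundary component that meets the closure $\ov{\Delta^r}$ is, after permuting the factors (which is realized inside $\Stab(o)$ for an irreducible $D$, since the restricted Weyl group acts as the symmetric group on the factors), equal to the standard one; and since $D=\Stab(o)\cdot\Delta^r$ implies $\ov D=\Stab(o)\cdot\ov{\Delta^r}$, every rank-$k$ boundary component is $\Stab(o)$-conjugate, hence $G$-conjugate, to this standard one.

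\textbf{Main obstacle.} The delicate point is the bookkeeping that ``rank of the boundary component $F$ as a domain'' coincides with ``rank of the tripotent $o_F$'' and that this matches the number of boundary disk factors in the polydisk picture; all three notions agree, but pinning down the precise normalizations (and checking that for an \emph{irreducible} $D$ the relevant symmetry group — $K$, or the restricted Weyl group — really does act as the full symmetric group on the polydisk factors, which is where irreducibility is used) requires care. Everything else is a direct appeal to the Jordan-theoretic spectral theorem for tripotents or to Theorem \ref{T:polydisk}, together with the $G$-equivariance already built into Theorem \ref{T:bound}\eqref{T:bound1} and Remark \ref{R:idem}.
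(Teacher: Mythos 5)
The paper gives no written proof here: it only cites Wolf \cite[p.~292]{Wol3}, and Wolf's argument is precisely the root-theoretic version of your second strategy --- partial Cayley transforms attached to subsets of a maximal set of strongly orthogonal roots, with the restricted Weyl group (of type $C_r$ or $BC_r$ when $D$ is irreducible) supplying the permutations of the polydisk factors and the compact torus supplying the independent rotations of each disk. So your polydisk route is essentially the cited proof, while your Jordan-theoretic route --- $K=\Stab(o)$ acts on $\p_+$ by triple-system automorphisms and, for a simple Hermitian positive JTS, acts transitively on tripotents of fixed rank (Loos) --- is a genuinely different and equally legitimate path: it makes the classification of boundary components purely algebraic and meshes directly with Remark \ref{R:idem}, at the price of importing the spectral theory of tripotents as a black box. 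Both reduce the theorem to the same point, namely that for \emph{irreducible} $D$ the group $K$ realizes the full symmetric group on the $r$ strongly orthogonal directions, which is exactly where irreducibility is used, as you say.

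Two indexing slips, which you partially flag yourself and which do not damage the argument: with the normalization of Remark \ref{R:idem}, the boundary component attached to a tripotent $e$ is the translate by $e$ of the domain of the Peirce $0$-space of $e$, so its rank is $r$ minus the rank of $e$, not the rank of $e$; correspondingly, a rank-$k$ boundary component is reached by pushing $r-k$ (not $k$) of the polydisk factors to the unit circle --- compare the standard boundary components \eqref{E:st-bnd1} and \eqref{E:st-bnd3}, where the boundary block $I_{q-k}$, resp.\ $I_{n-k}$, has corank $k$. Since $e\mapsto r-\mathrm{rank}(e)$ is still a bijection between ranks, transitivity on tripotents of a fixed rank still yields transitivity on boundary components of a fixed rank, and your proof goes through once these conventions are fixed.
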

\begin{proof}
See \cite[p. 292]{Wol3}.
\end{proof}

In virtue of the above Theorem, it will be enough to describe for each irreducible symmetric domain $D$ of rank $r$ and each $0\leq k< r$ a boundary component $F\leq D$ of rank $k$.

\subsubsection{Type $I_{p,q}$ ($p\geq q\geq 1$)}\label{SS:bounI}

Every boundary component of $\calD_{I_{p,q}}$ of rank $k$ (with $0\leq k<q$) is conjugate to the following boundary component
\begin{equation}\label{E:st-bnd1}
\calD_{I_{p,q}}^k:=\left\{\begin{pmatrix}Z' & 0 \\ 0 & I_{q-k} \end{pmatrix}: Z'\in \calD_{I_{p-q+k,k}}\right\}
\cong \calD_{I_{p-q+k,k}},
\end{equation}
which we call the \emph{standard boundary component of rank $k$}.
The pair $(f_{\calD_{I_{p,q}}^k}, \phi_{\calD_{I_{p,q}}^k})$ of Lemma \ref{L:pair} associated to $\calD_{I_{p,q}}^k$ is equal to
\begin{equation}\label{E:1pair-f}
\begin{aligned}
f_{\calD_{I_{p,q}}^k}: \Delta & \longrightarrow \calD_{I_{p,q}}^k \\
z & \mapsto \begin{pmatrix}0 & 0 \\ 0 & z I_{q-k} \end{pmatrix},\\
\end{aligned}
\end{equation}
\begin{equation}\label{E:1pair-phi}
\begin{aligned}
\phi_{\calD_{I_{p,q}}^k}: \bbS^1\times \SL_2(\bbR) & \longrightarrow \SU(p,q)=\Hol(\calD_{I_{p,q}})^o \\
\left(e^{i\theta}, \begin{pmatrix}a & b \\ c & d \end{pmatrix} \right) & \mapsto
\begin{pmatrix} e^{i\theta}I_{p-q+k} & 0 & 0 & 0 \\ 0 & \frac{a+d+i(b-c)}{2}I_{q-k} & 0 & \frac{b+c+i(a-d)}{2} I_{q-k} \\
0& 0 & e^{-i\theta}I_k & 0 \\ 0 & \frac{b+c-i(a-d)}{2}I_{q-k} & 0 & \frac{a+d-i(b-c)}{2} I_{q-k} \\
\end{pmatrix}.
\end{aligned}
\end{equation}
In particular, the one-parameter subgroup of $\SU(p,q)$ associated to $\calD_{I_{p,q}}^k$ as in \eqref{E:wF} is given by
\begin{equation}\label{E:1wF}
\begin{aligned}
w_{\calD_{I_{p,q}}^k}: \G_m & \longrightarrow \SU(p,q)\\
t & \mapsto
\begin{pmatrix} I_{p-q+k} & 0 & 0 & 0 \\ 0 & \frac{t+t^{-1}}{2}I_{q-k} & 0 & \frac{i(t-t^{-1})}{2} I_{q-k} \\
0& 0 & I_k & 0 \\ 0 & \frac{-i(t-t^{-1})}{2}I_{q-k} & 0 & \frac{t+t^{-1}}{2} I_{q-k} \\
\end{pmatrix}.
\end{aligned}
\end{equation}
Using the above explicit expression of $w_{\calD_{I_{p,q}}^k}$ and Theorem \ref{T:5deco}, we can compute the Levi subgroup of the normalizer $N(\calD_{I_{p,q}}^k)$ subgroup of $\calD_{I_{p,q}}^k$ together with its decomposition as in Theorem \ref{T:5deco}\eqref{T:5deco4}
\begin{equation}\label{E:1Levi}
\begin{aligned}
 Z(w_{\calD_{I_{p,q}}^k})^o &=\left\{
\begin{pmatrix} A & 0 & B & 0 \\ 0 & \frac{E+(E^*)^{-1}}{2} & 0 & i \frac{E-(E^*)^{-1}}{2}  \\
C& 0 & D & 0 \\ 0 & -i \frac{E-(E^*)^{-1}}{2} & 0 & \frac{E+(E^*)^{-1}}{2} \\
\end{pmatrix}\in \SL(p+q,\bbC): 
\begin{aligned}
& \begin{pmatrix}A & B \\ C & D \end{pmatrix}\in \U(p-q+k,k) \\
& E\in \GL_{q-k}(\bbC)
\end{aligned}
\right\}\\
& = \SU(p-q+k,k)\cdot \GL_{q-k}^o(\bbC)\times \bbS^1=G_h(\calD_{I_{p,q}}^k) \cdot G_l(\calD_{I_{p,q}}^k)\cdot M(\calD_{I_{p,q}}^k),
\end{aligned}
\end{equation}
where $\GL_{q-k}^o(\bbC):=\{E\in \GL_{q-k}(\bbC): \det E\in \bbR^*\}\subset \GL_{q-k}(\bbC)$.

Similarly, using again the above explicit expression of $w_{\calD_{I_{p,q}}^k}$ and Theorem \ref{T:5deco}, we can compute  the unipotent radical of $N(\calD_{I_{p,q}}^k)$
\begin{equation}\label{E:1radi}
 W(\calD_{I_{p,q}}^k)=\left\{
\begin{pmatrix} I_{p-q+k} & F_1 & 0 & -iF_1 \\ -\ov{F_1}^t & I_{q-k}+i M & - i \ov{F_2}^t & M  \\
0& i F_2 & I_k & F_2 \\ i\ov{F_1}^t & M  & -\ov{F_2}^t & I_{q-k}-i M  \\
\end{pmatrix}:
\begin{aligned}
& F_1\in M_{p-q+k, q-k}(\bbC), F_2\in M_{k, q-k}(\bbC)\\
& M\in M_{q-k,q-k}(\bbC) \\
& \ov{F_1}^t F_1-\ov{F_2}^t F_2=i(\ov{M}^t-M) \\
\end{aligned}\right\}.
\end{equation}
Moreover, the center $U(\calD_{I_{p,q}}^k)$ of $W(\calD_{I_{p,q}}^k)$ is equal to the set of all matrices of $W(\calD_{I_{p,q}}^k)$ such that $F_1=F_2=0$, and is therefore isomorphic to the abelian unipotent Lie group underlying the vector space  $\Herm_{q-k}(\bbC)$.

The orbit of the point $o_{\calD_{I_{p,q}}^k}=f_{\calD_{I_{p,q}}^k}(1)=\begin{pmatrix}0 & 0 \\ 0 & I_{q-k}\end{pmatrix}$ under the natural action of the group $G_h(\calD_{I_{p,q}}^k)= \SU(p-q+k,k)$
is equal to $\calD_{I_{p,q}}^k$ and its stabilizer
subgroup is isomorphic to $K_h(\calD_{I_{p,q}}^k)= \SU(p-q+k,k) \cap \GS(\U_p\times \U_q)=\GS(U_{p-q+k}\times U_{k})$. Therefore, $\calD_{I_{p,q}}^k$ is a bounded symmetric domain of type 
$I_{p-q+k,k}$ and it is diffeomorphic to
\begin{equation}\label{E:1-presF}
\calD_{I_{p,q}}^k\cong \frac{G_h(\calD_{I_{p,q}}^k)}{K_h(\calD_{I_{p,q}}^k)}=\frac{\SU(p-q+k,k)}{\GS(U_{p-q+k}\times U_{k})}.
\end{equation}

The action of $G_l(\calD_{I_{p,q}}^k)=\GL_{q-k}^o(\bbC)$ on $U(\calD_{I_{p,q}}^k)\cong \Herm_{q-k}(\bbC)$ is given by
$(E,M)\mapsto E M  \ov{E}^t$. Under this action, the orbit of the point
$\Omega_{\calD_{I_{p,q}}^k}=\frac{1}{2} I_{q-k}\in \Herm_{q-k}(\bbC)$ is equal to the cone of positive definite Hermitian complex quadratic forms $\P_{q-k}(\bbC)$ of size $q-k$  and its stabilizer 
subgroup is equal to  $K_l(\calD_{I_{p,q}}^k)=\GL^o_{q-k}(\bbC)\cap \GS(\U_p\times \U_q)=\U^o(q-k)$, where $\U^o(q-k):=\{E\in \U(q-k): \det E=\pm 1\}.$ Ê
 Therefore, $C(\calD_{I_{p,q}}^k)$ is equal to the symmetric cone $\P_{n-k}(\bbC)$ (see Theorem \ref{T:clascone}) and it is diffeomorphic to
\begin{equation}\label{E:1-C(F)}
C(\calD_{I_{p,q}}^k)\cong \frac{G_l(\calD_{I_{p,q}}^k)}{K_l(\calD_{I_{p,q}}^k)}=\frac{\GL^o_{q-k}(\bbC)}{\U^o(q-k)}=\frac{\GL_{q-k}(\bbC)}{\U(q-k)}=\P_{q-k}(\bbC).
\end{equation}

\subsubsection{Type $II_{n}$}\label{SS:bounII}

Every boundary component of $\calD_{II_n}$ of rank $k$ (with $0\leq k< \lfloor \frac{n}{2} \rfloor$) is conjugate to the following boundary component (which we call the \emph{standard boundary component of rank $k$})
\begin{equation}\label{E:st-bnd2}
\calD_{II_n}^k:=\left\{\begin{pmatrix}Z' & 0 \\ 0 & E_{n-2k-\epsilon} \end{pmatrix}: Z'\in \calD_{II_{2k+\epsilon}}\right\}
\cong \calD_{II_{2k+\epsilon}},
\end{equation}
where $\epsilon=0$ (resp. $1$) if $n$ is even (resp. odd) and for any $m\in \bbN$ we denote by $E_{2m}$ the $2m\times 2m$-matrix formed by $m$ diagonal blocks of the form 
$\begin{pmatrix}Ê0 & 1 \\ -1 & 0 \end{pmatrix}$.

The decomposition into factors (as in Theorem \ref{T:5deco}\eqref{T:5deco4})  of the Levi subgroup $L(\calD_{II_n}^k)$ of the normalizer subgroup $N(\calD_{II_n}^k)$ of
$\calD_{II_n}^k$ is given by (see \cite[p. 116]{Sat})
\begin{equation}\label{E:2Levi}
L(\calD_{II_n}^k)=G_h(\calD_{II_n}^k) \cdot G_l(\calD_{II_n}^k)\cdot M(\calD_{II_n}^k)=
\begin{cases}
\{1\}\cdot \GL_{\frac{n}{2}}(\bbH)\cdot \{1\} & \text{ if }Êk=0 \: \text{Êand }Ên \: \text{Êis even}, \\
\{1\}\cdot \GL_{\frac{n-1}{2}}(\bbH)\cdot  \bbS^1 & \text{ if }Êk=0 \: \text{Êand }Ên \: \text{Êis odd}, \\ 
\SU(1,1) \cdot \GL_{\frac{n-2}{2}}(\bbH)\cdot \SL_1(\bbH) & \text{ if }Êk=1 \: \text{Êand }Ên \: \text{Êis even},\\
\SOnc(2n-4)\cdot \bbR^*\cdot \SL_1(\bbH)  & \text{ if }Êk=\frac{n-2-\epsilon}{2},\\
\SOnc(4k+2\epsilon) \cdot GL_{\frac{n-2k-\epsilon}{2}}(\bbH)\times \{1\} & \text{Êotherwise.}Ê 
\end{cases}
\end{equation}
Therefore, $\calD_{II_n}^k$ is a bounded symmetric domain of type $II_{2k+\epsilon}$ if $k\geq 1$ and it is a point if $k=0$.  Moreover, the symmetric cone associated to $\calD_{II_n}^k$ is equal to
 (see Theorem \ref{T:clascone})
\begin{equation}\label{E:2-C(F)}
C(\calD_{II_n}^k)=
\begin{cases}
\P(1,0) & \text{ if }Êk=\frac{n-2-\epsilon}{2}, \\
\P(1,5) & \text{ if }Êk=\frac{n-4-\epsilon}{2},\\
\P_{\frac{n-2k-\epsilon}{2}}(\bbH) & \text{Êotherwise.} Ê 
\end{cases}
\end{equation}



\subsubsection{Type $III_{n}$ ($n\geq 1$)}\label{SS:bounIII}

Every boundary component of $\calD_{III_n}$ of rank $k$ (with $0\leq k< n$) is conjugate to the following boundary component
\begin{equation}\label{E:st-bnd3}
\calD_{III_n}^k:=\left\{\begin{pmatrix}Z' & 0 \\ 0 & I_{n-k} \end{pmatrix}: Z'\in \calD_{III_k}\right\}
\cong \calD_{III_k},
\end{equation}
which we call the \emph{standard boundary component of rank $k$}.
The pair $(f_{\calD_{III_n}^k}, \phi_{\calD_{III_n}^k})$ of Lemma \ref{L:pair} associated to $\calD_{III_n}^k$ is equal to
\begin{equation}\label{E:3pair-f}
\begin{aligned}
f_{\calD_{III_n}^k}: \Delta & \longrightarrow \calD_{III_n}^k \\
z & \mapsto \begin{pmatrix}0 & 0 \\ 0 & z I_{n-k} \end{pmatrix},\\
\end{aligned}
\end{equation}
\begin{equation}\label{E:3pair-phi}
\begin{aligned}
\phi_{\calD_{III_n}^k}: \bbS^1\times \SL_2(\bbR) & \longrightarrow \Spnc(n)=\Hol(\calD_{III_n})^o \\
\left(e^{i\theta}, \begin{pmatrix}a & b \\ c & d \end{pmatrix} \right) & \mapsto
\begin{pmatrix} e^{i\theta}I_k & 0 & 0 & 0 \\ 0 & \frac{a+d+i(b-c)}{2}I_{n-k} & 0 & \frac{b+c+i(a-d)}{2} I_{n-k} \\
0& 0 & e^{-i\theta}I_k & 0 \\ 0 & \frac{b+c-i(a-d)}{2}I_{n-k} & 0 & \frac{a+d-i(b-c)}{2} I_{n-k} \\
\end{pmatrix}.
\end{aligned}
\end{equation}
In particular, the one-parameter subgroup of $\Spnc(n)$ associated to $\calD_{III_n}^k$ as in \eqref{E:wF} is given by
\begin{equation}\label{E:3wF}
\begin{aligned}
w_{\calD_{III_n}^k}: \G_m & \longrightarrow \Spnc(n)\\
t & \mapsto
\begin{pmatrix} I_k & 0 & 0 & 0 \\ 0 & \frac{t+t^{-1}}{2}I_{n-k} & 0 & \frac{i(t-t^{-1})}{2} I_{n-k} \\
0& 0 & I_k & 0 \\ 0 & \frac{-i(t-t^{-1})}{2}I_{n-k} & 0 & \frac{t+t^{-1}}{2} I_{n-k} \\
\end{pmatrix}.
\end{aligned}
\end{equation}
Using the above explicit expression of $w_{\calD_{III_n}^k}$ and Theorem \ref{T:5deco}, we can compute the Levi subgroup (together with its decomposition as in Theorem \ref{T:5deco}\eqref{T:5deco4}) and the unipotent radical of the normalizer $N(\calD_{III_n}^k)$ subgroup of $\calD_{III_n}^k$:
\begin{equation}\label{E:3Levi}
\begin{aligned}
 Z(w_{\calD_{III_n}^k})^o &=\left\{
\begin{pmatrix} A & 0 & B & 0 \\ 0 & \frac{E+(E^t)^{-1}}{2} & 0 & i \frac{E-(E^t)^{-1}}{2}  \\
C& 0 & D & 0 \\ 0 & -i \frac{E-(E^t)^{-1}}{2} & 0 & \frac{E+(E^t)^{-1}}{2} \\
\end{pmatrix}: \begin{pmatrix}A & B \\ C & D \end{pmatrix}\in \Spnc(k), E\in \GL_{n-k}(\bbR)\right\}\\
& = \Spnc(k)\times \GL_{n-k}(\bbR)\times \{1\}=G_h(\calD_{III_n}^k) \times G_l(\calD_{III_n}^k)\times M(\calD_{III_n}^k),
\end{aligned}
\end{equation}
\begin{equation}\label{E:3radi}
 W(\calD_{III_n}^k)=\left\{
\begin{pmatrix} I_r & F & 0 & -iF \\ -\ov{F}^t & I_{n-k}+i M & - i F^t & M  \\
0& i\ov{F} & I_k & \ov{F} \\ i\ov{F}^t & M  & -F^t & I_{n-k}-i M  \\
\end{pmatrix}:
\begin{aligned}
& F\in M_{k, n-k}(\bbC), M\in M_{n-k,n-k}(\bbR) \\
& \ov{F}^tF-F^t\ov{F}=i(M^t-M) \\
\end{aligned}\right\}.
\end{equation}
Moreover, the center $U(\calD_{III_n}^k)$ of $W(\calD_{III_n}^k)$ is equal to the set of all matrices of $W(\calD_{III_n}^k)$ such that $F=0$, and is therefore isomorphic to the abelian unipotent Lie group
underlying the vector space  $\Herm_{n-k}(\bbR)$.

The orbit of the point $o_{\calD_{III_n}^k}=f_{\calD_{III_n}^k}(1)=\begin{pmatrix}0 & 0 \\ 0 & I_{n-k}\end{pmatrix}$ under the natural action of the group $G_h(\calD_{III_n}^k)=\Spnc(k)$ is equal to $\calD_{III_n}^k$ and its stabilizer
subgroup is isomorphic to $K_h(\calD_{III_n}^k)=\Spnc(k)\cap U(n)=U(k)$. Therefore, $\calD_{III_n}^k$ is a bounded symmetric domain of type $III_k$ and it is diffeomorphic to
\begin{equation}\label{E:3-presF}
\calD_{III_n}^k\cong \frac{G_h(\calD_{III_n}^k)}{K_h(\calD_{III_n}^k)}=\frac{\Spnc(k)}{U(k)}.
\end{equation}

The action of $G_l(\calD_{III_n}^k)=\GL_{n-k}(\bbR)$ on $U(\calD_{III_n}^k)\cong \Herm_{n-k}(\bbR)$ is given by
$(E,M)\mapsto E M  E^t$. Under this action, the orbit of the point
$\Omega_{\calD_{III_n}^k}=\frac{1}{2} I_{n-k}\in \Herm_{n-k}(\bbR)$ is equal to the cone of positive definite real quadratic forms $\P_{n-k}(\bbR)$ of size $n-k$  and its stabilizer subgroup is isomorphic
$K_l(\calD_{III_n}^k)=\GL_{n-k}(\bbR)\cap U(n)=O(n-k)$. Therefore, $C(\calD_{III_n}^k)$ is equal to the symmetric cone
 $\P_{n-k}(\bbR)$ (see Theorem \ref{T:clascone}) and it is diffeomorphic to
\begin{equation}\label{E:3-C(F)}
C(\calD_{III_n}^k)\cong \frac{G_l(\calD_{III_n}^k)}{K_l(\calD_{III_n}^k)}=\frac{\GL_{n-k}(\bbR)}{O(n-k)}=\P_{n-k}(\bbR).
\end{equation}

\subsubsection{Type $IV_{n} (n\geq 3)$ }\label{SS:bounIV}

The \emph{standard boundary component} of $\calD_{IV}$ of rank $k$ (for $k=0,1$) are given by
\begin{equation}\label{E:st-boun4}
\begin{aligned}
& \calD_{IV_n}^0:=\{(-i,0,\ldots,0)^t \in \bbC^n\}, \\Ê
& \calD_{IV_n}^1:=\left\{\left(-i\frac{1+z}{1-z}, \frac{1+z}{1-z}, 0,\ldots, 0\right)^t \in \bbC^n: |z|<1\right\},
\end{aligned}
\end{equation}
see \cite[p. 355]{Wol3}.
The decomposition into factors (as in Theorem \ref{T:5deco}\eqref{T:5deco4})  of the Levi subgroup $L(\calD_{IV_n}^k)$ of the normalizer subgroup $N(\calD_{IV_n}^k)$ of
$\calD_{IV_n}^k$ is given by (see \cite[p. 117]{Sat})
\begin{equation}\label{E:4Levi}
L(\calD_{IV_n}^k)=G_h(\calD_{IV_n}^k) \cdot G_l(\calD_{IV_n}^k)\cdot M(\calD_{IV_n}^k)=
\begin{cases}
\SO(1,1)\cdot \GL_1(\bbR)\cdot \SO(n-2) & \text{ if }Êk=1, \\
\{1\}\cdot (\SO(n-1,1)\times \bbR^*)\cdot  \{1\} & \text{ if }Êk=0.
\end{cases}
\end{equation}
Therefore, $\calD_{IV_n}^k$ is a bounded symmetric domain of type $IV_1=I_{1,1}$ if $k=1$ and it is a point if $k=0$.  Moreover, the symmetric cone associated to $\calD_{IV_n}^k$ is equal to
 (see Theorem \ref{T:clascone})
\begin{equation}\label{E:4-C(F)}
C(\calD_{IV_n}^k)=
\begin{cases}
\P(1,0) & \text{ if }Êk=1, \\
\P(1,n-1) & \text{ if }Êk=0.
\end{cases}
\end{equation}

\subsubsection{Type $V$}\label{SS:bounV}

Using Remark \ref{R:idem}, Êdenote by $\calD_{V}^k$ (for $k=0,1$) the boundary component of $\calD_{V}$ of rank $k$ such that
$$o_{\calD_V^k}=
\begin{cases}
\begin{pmatrix} 0 \\ 1 \end{pmatrix}Ê& \text{Êif }Êk=1, \\
\begin{pmatrix}Ê1 \\ 1 \end{pmatrix}Ê& \text{Êif }Êk=0.
\end{cases}
$$
We call $\calD_V^k$ the \emph{standard boundary component} of $\calD_V$ of rank $k$.

The decomposition into factors (as in Theorem \ref{T:5deco}\eqref{T:5deco4})  of the Levi subgroup $L(\calD_{V}^k)$ of the normalizer subgroup $N(\calD_V^k)$ of
$\calD_V^k$ is given by (see \cite[p. 117]{Sat})
\begin{equation}\label{E:5Levi}
L(\calD_V^k)=G_h(\calD_{V}^k) \cdot G_l(\calD_{V}^k)\cdot M(\calD_{V}^k)=
\begin{cases}
\SU(5,1)\cdot \GL_1(\bbR)\cdot \{1\} & \text{ if }Êk=1, \\
\{1\}\cdot (\SO(7,1)\times \bbR^*)\cdot  \bbS^1 & \text{ if }Êk=0.
\end{cases}
\end{equation}
Therefore, $\calD_{V}^k$ is a bounded symmetric domain of type $I_{5,1}$ if $k=1$ and it is a point if $k=0$.  Moreover, the symmetric cone associated to $\calD_V^k$ is equal to
 (see Theorem \ref{T:clascone})
\begin{equation}\label{E:5-C(F)}
C(\calD_{V}^k)=
\begin{cases}
\P(1,0) & \text{ if }Êk=1, \\
\P(1,7) & \text{ if }Êk=0.
\end{cases}
\end{equation}

\subsubsection{Type $VI$}\label{SS:bounVI}

Using Remark \ref{R:idem}, Êdenote by $\calD_{VI}^k$ (for $k=0,1,2$) the boundary component of $\calD_{VI}$ of rank $k$ such that
$$o_{\calD_{VI}^k}=\begin{pmatrix}Ê0 & 0 \\ 0 & I_{3-k} \end{pmatrix}.$$
We call $\calD_{VI}^k$ the \emph{standard boundary component} of $\calD_{VI}$ of rank $k$.

The decomposition into factors (as in Theorem \ref{T:5deco}\eqref{T:5deco4})  of the Levi subgroup $L(\calD_{VI}^k)$ of the normalizer subgroup $N(\calD_{VI}^k)$ of
$\calD_{VI}^k$ is given by (see \cite[p. 117]{Sat})
\begin{equation}\label{E:6Levi}
L(\calD_{VI}^k)=G_h(\calD_{VI}^k) \cdot G_l(\calD_{VI}^k)\cdot M(\calD_{VI}^k)=
\begin{cases}
\SO(2,10) \cdot \GL_1(\bbR) \cdot \{1\}Ê& \text{Êif }Êk=2, \\
\SU(1,1)\cdot(\SO(9,1)\times \bbR^*) \cdot \{1\} & \text{ if }Êk=1, \\
\{1\}\cdot (E_6(-26) \times \bbR^*)\cdot  \{1\} & \text{ if }Êk=0.
\end{cases}
\end{equation}
Therefore, $\calD_{VI}^k$ is a bounded symmetric domain of type $IV_{10}$ if $k=2$, of type $I_{1,1}$ if $k=1$ and it is a point if $k=0$.
Moreover, the symmetric cone associated to $\calD_{VI}^k$ is equal to   (see Theorem \ref{T:clascone})
\begin{equation}\label{E:6-C(F)}
C(\calD_{V}^k)=
\begin{cases}
\P(1,0) & \text{ if }Êk=2, \\
\P(1,9) & \text{ if }Êk=1,\\
 \P_3(\bbO) & \text{Êif }Êk=0.
\end{cases}
\end{equation}

\section*{Acknowledgments}

These notes grew up from a PhD course held by the author at the University of Roma Tre  in spring  2013 and from a course held by the  author at the School ``Combinatorial Algebraic Geometry" held in Levico Terme (Trento, Italy) in June 2013. The author would like to thank the students and the colleagues that attended the above mentioned PhD course (Fabio Felici, Roberto Fringuelli, Alessandro Maria Masullo, Margarida Melo, Riane Melo, Paola Supino, Valerio Talamanca) for their patience, encouragement and interest in the material presented.  Moreover, the author would like to thank the organizers of the above mentioned School (Giorgio Ottaviani, Sandra Di Rocco, Bernd Sturmfels) for the invitation to give a course as well as all the participants to the School for their interest and feedbacks.
Many thanks are due to Jan Draisma for reading a preliminary version of this manuscript and to Radu Laza for suggesting some bibliographical references.

The author is a member of the research center CMUC (University of Coimbra) and he was supported by  the FCT project \textit{Espa\c cos de Moduli em Geometria Alg\'ebrica} (PTDC/MAT/111332/2009) and by the MIUR project  \textit{Spazi di moduli e applicazioni} (FIRB 2012).

\end{document}